\documentclass{amsart}

\usepackage{amssymb}
\usepackage{amsrefs}
\usepackage{fullpage}
\usepackage{tikz}
\usetikzlibrary{matrix,arrows,decorations.pathmorphing, cd}

\newcommand{\Bilin}     {\operatorname{Bilin}}
\newcommand{\End}       {\operatorname{End}}
\newcommand{\Ext}       {\operatorname{Ext}}
\newcommand{\Hom}       {\operatorname{Hom}}
\newcommand{\Map}       {\operatorname{Map}}
\newcommand{\Tor}       {\operatorname{Tor}}

\newcommand{\cok}       {\operatorname{cok}}
\newcommand{\image}     {\operatorname{image}}
\newcommand{\img}       {\operatorname{img}}
\newcommand{\supp}      {\operatorname{supp}}
\newcommand{\tors}      {\operatorname{tors}}

\newcommand{\Z}         {\mathbb{Z}}
\newcommand{\Zp}        {\mathbb{Z}_p}
\newcommand{\Zpl}       {\mathbb{Z}_{(p)}}
\newcommand{\Zpi}       {\mathbb{Z}/p^\infty}
\newcommand{\R}         {\mathbb{R}}
\newcommand{\N}         {\mathbb{N}}
\newcommand{\Q}         {{\mathbb{Q}}}
\newcommand{\QZ}        {{\mathbb{Q}/\mathbb{Z}}}

\newcommand{\CA}        {{\mathcal{A}}}
\newcommand{\CC}        {{\mathcal{C}}}
\newcommand{\CD}        {{\mathcal{D}}}
\newcommand{\CE}        {{\mathcal{E}}}
\newcommand{\CJ}        {{\mathcal{J}}}
\newcommand{\CS}        {{\mathcal{S}}}

\newcommand{\ov}[1]     {\overline{#1}}
\newcommand{\st}        {\;|\;}
\newcommand{\tm}        {\times}
\newcommand{\ot}        {\otimes}
\newcommand{\sm}        {\setminus}
\newcommand{\sse}       {\subseteq}
\newcommand{\pinv}      {{\textstyle\frac{1}{p}}}
\newcommand{\ip}[1]     {\langle #1\rangle}
\newcommand{\ib}        {\overline{\imath}}
\newcommand{\jb}        {\overline{\jmath}}
\newcommand{\kb}        {\overline{k}}
\newcommand{\psb}[1]    {[\![#1]\!]}
\newcommand{\nl}        {\ \par}

\newcommand{\xla}       {\xleftarrow}
\newcommand{\xra}       {\xrightarrow}
\newcommand{\era}       {\twoheadrightarrow}
\newcommand{\mra}       {\rightarrowtail}
\newcommand{\mar}       {\ar@{ >->}}
\newcommand{\ear}       {\ar@{->>}}

\newcommand{\al}        {\alpha}
\newcommand{\bt}        {\beta} 
\newcommand{\gm}        {\gamma}
\newcommand{\dl}        {\delta}
\newcommand{\ep}        {\epsilon}
\newcommand{\zt}        {\zeta}
\newcommand{\lm}        {\lambda}
\newcommand{\sg}        {\sigma}
\newcommand{\om}        {\omega}

\renewcommand{\:}       {\colon}

\newcommand{\bcf}[2]{\left(\begin{array}{c}{#1}\\{#2}\end{array}\right)}
\newcommand{\invlim} {\operatornamewithlimits{\underset{\longleftarrow}{lim}}}
\newcommand{\colim}  {\operatornamewithlimits{\underset{\longrightarrow}{lim}}}

\definecolor{olivegreen}{cmyk}{0.64,0,0.95,0.40}
\definecolor{rawsienna}{cmyk}{0,0.72,1,0.45}

\newtheorem{theorem}{Theorem}
\numberwithin{theorem}{section}

\newtheorem{lemma}[theorem]{Lemma}
\newtheorem{proposition}[theorem]{Proposition}
\newtheorem{corollary}[theorem]{Corollary}

\theoremstyle{definition}

\newtheorem{definition}[theorem]{Definition}
\newtheorem{example}[theorem]{Example}

\newtheorem{remark}[theorem]{Remark}

\setcounter{tocdepth}{1}

\begin{document}
\title{Algebraic theory of abelian groups}
\author{N.~P.~Strickland}
\date{\today}
\bibliographystyle{abbrv}

\maketitle 
\tableofcontents

\section{Introduction}
\label{sec-intro}

This document aims to give a self-contained account of the parts of
abelian group theory that are most relevant for algebraic topology.
It is almost purely expository, although there are some slightly
unusual features in the treatment of tensor products, torsion products
and $\Ext$ groups.  The book~\cite{boka:hlc} is a good reference for
Sections~\ref{sec-towers} and~\ref{sec-completion}.  Earlier sections
are more standard and can be found in very many sources.

\section{Exactness and splittings}
\label{sec-exact}

\begin{definition}\label{defn-exact}
 Consider a sequence $A_0\xra{f_0}A_1\xra{}\dotsb\xra{f_{r-1}}A_r$ of
 abelian groups and homomorphisms.  We say that the sequence is
 \emph{exact} at $A_i$ if $\image(f_{i-1})=\ker(f_i)\leq A_i$ (which
 implies that $f_i\circ f_{i-1}=0$).  We say that the whole sequence
 is exact if it is exact at $A_i$ for $0<i<r$.

 Next, we say that a sequence $A\xra{f}B\xra{g}C$ is
 \emph{short exact} if it is exact, and also $f$ is injective and $g$
 is surjective.
\end{definition}

\begin{remark}\label{rem-exact-omni}
 One can easily check the following facts.
 \begin{itemize}
  \item[(a)] A sequence $A\xra{f}B\xra{0}C$ is exact iff $f$ is
   surjective.  In particular, a sequence $A\xra{f}B\xra{}0$ is exact
   iff $f$ is surjective.
  \item[(b)] A sequence $A\xra{0}B\xra{g}C$ is exact iff $g$ is
   injective.  In particular, a sequence $0\xra{}B\xra{g}C$ is exact
   iff $g$ is injective.
  \item[(c)] A sequence $A\xra{0}B\xra{g}C\xra{0}D$ is exact iff $g$
   is an isomorphism.
  \item[(d)] A sequence $0\xra{}A\xra{f}B\xra{g}C\xra{}0$ is exact iff
   $A\xra{f}B\xra{g}C$ is short exact.
  \item[(e)] Suppose we have an exact sequence
   \[ A \xra{f} B \xra{g} C \xra{h} D \xra{k} E. \]
   Then $g$ induces a map from $\cok(f)=B/f(A)$ to $C$, and $h$ can be
   regarded as a map from $C$ to $\ker(k)$, and the resulting sequence 
   \[ \cok(f) \xra{g} C \xra{h} \ker(k) \]
   is short exact.
  \item[(f)] If $A\xra{f}B\xra{g}C$ is short exact, then $f$ induces
   an isomorphism $A\to f(A)$ and $g$ induces an isomorphism
   $B/f(A)\to C$.  Thus, if $A$, $B$ and $C$ are finite we have
   $|B|=|f(A)|.|B/f(A)|=|A||C|$.  Similarly, if $A$ and $C$ are free
   abelian groups of ranks $n$ and $m$, then $B$ is a free abelian
   group of rank $n+m$.
 \end{itemize}
\end{remark}

\begin{proposition}[The five lemma]\label{lem-five}
 Suppose we have a commutative diagram as follows, in which the rows
 are exact, and $p_0$, $p_1$, $p_3$ and $p_4$ are isomorphisms:
 \begin{center}
  \begin{tikzcd}
   A_0 \arrow[r,"f_0"] \arrow[d,"p_0"',"\simeq"] & 
   A_1 \arrow[r,"f_1"] \arrow[d,"p_1"',"\simeq"] & 
   A_2 \arrow[r,"f_2"] \arrow[d,"p_2"']        & 
   A_3 \arrow[r,"f_3"] \arrow[d,"p_3"',"\simeq"] & 
   A_4                 \arrow[d,"p_4"',"\simeq"] \\
   B_0 \arrow[r,"g_0"'] &
   B_1 \arrow[r,"g_1"'] &
   B_2 \arrow[r,"g_2"'] &
   B_3 \arrow[r,"g_3"'] &
   B_4.
  \end{tikzcd}
 \end{center}
 Then $p_2$ is also an isomorphism.
\end{proposition}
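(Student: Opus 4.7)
The plan is to do the standard two-part diagram chase: prove $p_2$ is injective, then prove $p_2$ is surjective. Each part uses exactness at the middle term together with isomorphism properties of the adjacent $p_i$; in each step we either push an element along a row and use an injectivity hypothesis to get information in the top row, or lift via a surjectivity hypothesis. Since we are working with abelian groups we can simply manipulate elements, and no hypotheses on $p_0$ or $p_4$ beyond (respectively) surjectivity and injectivity will actually be needed, though both are given as isomorphisms.

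For injectivity, I would start with $a_2 \in \ker(p_2)$ and chase to the right: commutativity gives $p_3(f_2(a_2)) = g_2(p_2(a_2)) = 0$, so $f_2(a_2)=0$ since $p_3$ is injective. By exactness at $A_2$, write $a_2 = f_1(a_1)$. Then $g_1(p_1(a_1)) = p_2(a_2)=0$, so $p_1(a_1)\in\ker(g_1)=\image(g_0)$, say $p_1(a_1) = g_0(b_0)$. Use surjectivity of $p_0$ to lift $b_0 = p_0(a_0)$, so $p_1(a_1) = p_1(f_0(a_0))$; injectivity of $p_1$ then gives $a_1 = f_0(a_0)$, hence $a_2 = f_1(f_0(a_0)) = 0$ by exactness at $A_1$.

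For surjectivity, I would take $b_2\in B_2$ and first lift $g_2(b_2)$ through $p_3$ to some $a_3\in A_3$. The composite $p_4 f_3(a_3) = g_3 g_2(b_2) = 0$ forces $f_3(a_3)=0$ by injectivity of $p_4$, so exactness at $A_3$ gives $a_3 = f_2(a_2)$ for some $a_2$. Then $p_2(a_2)$ and $b_2$ have the same image under $g_2$, so their difference lies in $\image(g_1)$ by exactness at $B_2$; using surjectivity of $p_1$ to lift a preimage in $B_1$ to $A_1$ and applying commutativity of the middle square produces an explicit $a_2'\in A_2$ with $p_2(a_2') = b_2$.

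The two chases are entirely routine once the bookkeeping is set up; the only thing to be careful about is making sure I invoke each isomorphism hypothesis in the correct direction. There is no genuine obstacle, so I expect the main effort to lie in presenting the argument cleanly rather than in any mathematical difficulty; I might even display the diagram once and annotate the chase rather than narrate it fully in prose.
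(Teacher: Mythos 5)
Your proof is correct and follows essentially the same two-part diagram chase as the paper, with identical steps in both the injectivity and surjectivity halves. Your observation that only surjectivity of $p_0$ and injectivity of $p_4$ are actually needed is accurate (and a standard sharpening), though the paper does not make this explicit.
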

\begin{proof}
 First suppose that $a_2\in A_2$ and $p_2(a_2)=0$.  It follows that
 $p_3f_2(a_2)=g_2p_2(a_2)=g_2(0)=0$, but $p_3$ is an isomorphism,
 so $f_2(a_2)=0$, so $a_2\in\ker(f_2)$.  The top row is exact, so
 $\ker(f_2)=\image(f_1)$, so we can choose $a_1\in A_1$ with
 $f_1(a_1)=a_2$.  Put $b_1=p_1(a_1)\in B_1$.  We then have
 $g_1(b_1)=g_1p_1(a_1)=p_2f_1(a_1)=p_2(a_2)=0$, so $b_1\in\ker(g_1)$.
 The bottom row is exact, so $\ker(g_1)=\image(g_0)$, so we can choose
 $b_0\in B_0$ with $g_0(b_0)=b_1$.  As $p_0$ is an isomorphism, we can
 now put $a_0=p_0^{-1}(b_0)\in A_0$.  We then have
 $p_1f_0(a_0)=g_0p_0(a_0)=g_0(b_0)=b_1=p_1(a_1)$.  Here $p_1$ is an
 isomorphism, so it follows that $f_0(a_0)=a_1$.  We now have
 $a_2=f_1(a_1)=f_1f_0(a_0)$.  However, as the top row is exact we have
 $f_1f_0=0$, so $a_2=0$.  We conclude that $p_2$ is injective.

 Now suppose instead that we start with an element $b_2\in B_2$.  Put
 $b_3=g_2(b_2)\in B_3$ and $a_3=p_3^{-1}(b_3)\in A_3$.  We then have
 $p_4f_3(a_3)=g_3p_3(a_3)=g_3(b_3)=g_3g_2(b_2)=0$ (because
 $g_3g_2=0$).  As $p_4$ is an isomorphism, this means that
 $f_3(a_3)=0$, so $a_3\in\ker(f_3)$.  As the top row is exact we have
 $\ker(f_3)=\image(f_2)$, so we can choose $a_2\in A_2$ with
 $f_3(a_2)=a_3$.  Put $b'_2=b_2-p_2(a_2)\in B_2$.  We have
 $g_2(b'_2)=g_2(b_2)-g_2p_2(a_2)=b_3-p_3f_2(a_2)=b_3-p_3(a_3)=0$,
 so $b'_2\in\ker(g_2)=\image(g_1)$.  We can thus choose $b'_1\in B_1$
 with $g_1(b'_1)=b'_2$.  Now put $a'_1=p_1^{-1}(b'_1)\in A_1$ and
 $a'_2=f_1(a'_1)\in A_2$.  We find that
 $p_2(a'_2)=p_2f_1(a'_1)=g_1p_1(a'_1)=g_1(b'_1)=b'_2=b_2-p_2(a_2)$, so
 $p_2(a_2+a'_2)=b_2$.  This shows that $p_2$ is also surjective, and
 so is an isomorphism as claimed.
\end{proof}

\begin{proposition}\label{prop-snake-lemma}
 Suppose we have a commutative diagram as follows, in which the rows
 are short exact sequences:
 \begin{center}
  \begin{tikzcd}
   A \arrow[rightarrowtail,r,"j"]    \arrow[d,"f"'] & 
   B \arrow[twoheadrightarrow,r,"q"] \arrow[d,"g"'] &
   C                                 \arrow[d,"h"'] \\
   A' \arrow[rightarrowtail,r,"j'"'] &
   B' \arrow[twoheadrightarrow,r,"q'"']  &
   C'
  \end{tikzcd}
 \end{center}  
 Then there is a unique homomorphism $\dl\:\ker(h)\to\cok(f)$ such
 that $\dl(q(b))=a'+f(A)$ whenever $g(b)=j'(a')$.  Moreover, this fits
 into an exact sequence 
 \[ 0 \to \ker(f) \xra{j} \ker(g) \xra{q} \ker(h) \xra{\dl} 
     \cok(f) \xra{j} \cok(g) \xra{q} \cok(h) \to 0.
 \]
\end{proposition}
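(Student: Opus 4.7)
The plan is to construct $\dl$ by a diagram chase, then verify exactness at each of the six spots in the long sequence. To define $\dl$, given $c\in\ker(h)$, I would pick any $b\in B$ with $q(b)=c$ (possible because $q$ is surjective). Then $q'g(b)=hq(b)=h(c)=0$, so $g(b)\in\ker(q')=\image(j')$, and since $j'$ is injective there is a unique $a'\in A'$ with $j'(a')=g(b)$. Set $\dl(c)=a'+f(A)$. Well-definedness with respect to the choice of $b$ is the first thing to check: if $b_1,b_2$ are two lifts, then $b_1-b_2\in\ker(q)=\image(j)$, so $b_1-b_2=j(a)$ for some $a\in A$, and then $j'(a'_1-a'_2)=g(b_1)-g(b_2)=gj(a)=j'f(a)$, whence $a'_1-a'_2=f(a)\in f(A)$. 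Additivity of $\dl$ is immediate since all the lifts can be taken additively. Uniqueness of $\dl$ is built into the defining formula.

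Next I would verify that $j$ and $q$ restrict to $\ker$'s and descend to $\cok$'s: this uses commutativity of the square and requires no choices. For example, if $a\in\ker(f)$ then $gj(a)=j'f(a)=0$ so $j(a)\in\ker(g)$; dually $q$ sends $f(A)$ into $g(B)$ and so induces $\cok(f)\to\cok(g)$.

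For exactness I would proceed left-to-right at each of the six positions. The routine spots are: $\ker(f)\to\ker(g)$ is injective because $j$ is; $\cok(g)\to\cok(h)$ is surjective because $q$ is. Exactness at $\ker(g)$ and at $\cok(g)$ follows by a short chase using exactness of the original rows (e.g.\ if $b\in\ker(g)$ lies in $\ker(q)$ then $b=j(a)$ for some $a$, and injectivity of $j'$ forces $a\in\ker(f)$). The two spots involving $\dl$ are the substantive ones. At $\ker(h)$, I need that $\dl(c)=0$ iff $c=q(b)$ for some $b\in\ker(g)$; the defining recipe shows this directly: $\dl(c)=0$ means $a'\in f(A)$, say $a'=f(a)$, and then replacing $b$ by $b-j(a)$ produces a lift in $\ker(g)$. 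At $\cok(f)$, I need that $a'+f(A)$ lies in $\image(\dl)$ iff $j'(a')\in g(B)$ (i.e.\ iff it maps to zero in $\cok(g)$); again the recipe makes this transparent, since $j'(a')=g(b)$ exhibits $a'+f(A)=\dl(q(b))$.

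The main obstacle is purely bookkeeping: keeping straight which choices are arbitrary and which are forced, and reliably using injectivity of $j'$ to identify elements of $A'$ with their images in $B'$. No deep idea beyond systematic diagram chasing is required, and the construction of $\dl$ together with its well-definedness is where the only real content lies.
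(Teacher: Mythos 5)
Your proof is correct and takes essentially the same approach as the paper: both are direct diagram chases. The paper merely packages the construction of $\dl$ slightly differently (via a ``snake'' quadruple $(c,b,a',\ov{a})$ and the observation that snakes form a subgroup, which absorbs well-definedness and additivity into one argument), but the underlying chase and the verification of exactness at each of the six positions coincide with what you describe.
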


\begin{proof}
 A \emph{snake} for the above diagram is a list $(c,b,a',\ov{a})$ such
 that
 \begin{itemize}
  \item[(1)] $c\in\ker(h)\leq C$
  \item[(2)] $b\in B$ with $qb=c$
  \item[(3)] $a'\in A'$ with $j'a'=gb\in B'$
  \item[(4)] $\ov{a}$ is the image of $a'$ in $\cok(f)$.
 \end{itemize}
 It is easy to see that the snakes form a subgroup of
 $\ker(h)\tm B\tm A'\tm\cok(f)$.  We claim that for all $c\in\ker(h)$,
 there exists a snake starting with $c$.  Indeed, as $q$ is
 surjective, we can choose $b\in B$ satisfying~(2).  Then
 $q'g(b)=hq(b)=h(c)=0$, so $g(b)\in\ker(q')=\img(j')$, so we can
 choose $a'\in A'$ satisfying~(3).  Finally, we can define $\ov{a}$ to
 be the image of $a'$ in $\cok(f)$, so that~(4) is satisfied: this
 gives a snake as required.  Next, we claim that any two snakes
 starting with $c$ have the same endpoint.  By subtraction we reduce
 to the following claim: if $(0,b,a',\ov{a})$ is a snake, then
 $\ov{a}=0$, or equivalently $a'\in\img(f)$.  Indeed, condition~(2)
 says that $b\in\ker(q)=\img(j)$, so we can find $a\in A$ with
 $b=ja$.  Now $j'(fa-a')=j'fa-gb=gja-gb=gb-gb=0$, and $j'$ is
 injective, so $fa=a'$ as required.  This allows us to construct a map 
 $\dl\:\ker(h)\to\cok(f)$ as follows: we define $\dl(c)$ to be the
 endpoint of any snake starting with $c$.

 We now need to check exactness of the resulting sequence.
 \begin{itemize}
  \item[(1)] As $j\:A\to B$ is injective, it is clear that the
   restricted map $j\:\ker(f)\to\ker(g)$ is also injective.
  \item[(2)] As the composite $A\xra{j}B\xra{q}C$ is zero, the same is
   true of the restricted composite
   $\ker(f)\xra{j}\ker(g)\xra{q}\ker(h)$.  Moreover, suppose we
   have $b\in\ker(g)$ with $qb=0$.  By the original exactness
   assumption we can find $a\in A$ with $ja=b$.  Now $j'fa=gja=gb=0$
   but $j'$ is injective so $fa=0$ so $a\in\ker(f)$.  Thus, $b$ is in
   the image of the map $j\:\ker(f)\to\ker(g)$.
  \item[(3)] Suppose we have $b\in\ker(g)$.  Then $(qb,b,0,0)$ is a
   snake starting with $qb$, showing that $\dl qb=0$.  Conversely,
   suppose that $c\in\ker(h)$ with $\dl c=0$, so there exists a snake
   $(c,b,a',0)$.  By the last snake condition, we must have
   $a'\in\img(f)$, say $a'=fa$ for some $a\in A$.  Put $b'=b-ja\in B$.
   Snake condition~(2) gives $qb=c$ but also $qj=0$ so $qb'=c$.  On
   the other hand, snake condition~(3) gives $gb=j'a'=j'fa=gja$ so
   $gb'=0$.  This means that $c$ is in the image of the map
   $q\:\ker(g)\to\ker(h)$. 
  \item[(4)] Suppose we have $c\in\ker(h)$ with $\dl(c)=\ov{a}$.  This
   means that there is a snake $(c,b,a',\ov{a})$.  We claim that the
   induced map $j'\:\cok(f)\to\cok(g)$ sends $\ov{a}$ to $0$, or
   equivalently that $j'a'\in\img(g)$.  This is clear because
   $j'a'=gb$ by the snake axioms.  Conversely, suppose that
   $\ov{a}\in\cok(f)$ and that $\ov{a}$ maps to $0$ in $\cok(g)$.
   This means that we can find $a'\in A$ representing $\ov{a}$ and
   that $ja'$ lies in the image of $g$, say $ja'=gb$ for some
   $b\in B$.  If we put $c=qb\in C$ we find that
   $hc=hqb=q'gb=q'j'a'=0$, so $c\in\ker(h)$.  By construction we see
   that $(c,b,a',\ov{a})$ is a snake so $\ov{a}\in\img(\dl)$.
  \item[(5)] As the composite $A'\xra{j'}B'\xra{q'}C'$ is zero, the
   same is clearly true for the induced maps
   $\cok(f)\to\cok(g)\to\cok(h)$.  Conversely, suppose we have an
   element $\ov{b}\in\cok(g)$ that maps to zero in $\cok(h)$.  We can
   choose $b'\in B'$ representing $\ov{b}$, and then $q'b'$ must lie
   in $\img(h)$, say $q'b'=hc$.  As $q$ is surjective we can choose
   $b\in B$ with $qb=c$.  This gives $q'b'=hqb=q'gb$, so the element
   $b'-gb$ lies in $\ker(q')$, which is the same as $\img(j')$.  We
   can therefore choose $a'\in A'$ with $b'=gb+j'a'$.  If we let
   $\ov{a}$ denote the image of $a'$ in $\cok(f)$, we find that
   $\ov{b}=j'\ov{a}$ in $\cok(g)$.
  \item[(6)] Finally, suppose we have $\ov{c}\in\cok(h)$.  We can then
   choose a representing element $c'\in C'$.  As $q'$ is surjective we
   can choose $b'\in B'$ with $q'b'=b$, then we can put
   $\ov{b}=[b']\in\cok(g)$.  We find that $q'\ov{b}=\ov{c}$.  This
   shows that $q'\:\cok(g)\to\cok(h)$ is surjective.
 \end{itemize}
\end{proof}

\begin{definition}
 A \emph{split short exact sequence} is a diagram
 \begin{center}
  \begin{tikzcd}
   A \arrow[r,bend left=20,"i"] &
   B \arrow[r,bend left=20,"p"] \arrow[l,bend left=20,"r"] &
   C                            \arrow[l,bend left=20,"s"]
  \end{tikzcd}
 \end{center}  
 where 
 \[ pi=0 \hspace{3em} 
    rs=0 \hspace{3em} 
    ri=1_A \hspace{3em} 
    ps=1_C \hspace{3em}
    ir+sp=1_B.
 \]
 This can also be displayed as 
 \begin{center}
  \begin{tikzcd}
   A \arrow[rr,"0"] \arrow[twoheadrightarrow,dr,"i"] \arrow[dd,"1"'] &&
   C \\ &
   B \arrow[twoheadrightarrow,ur,"p"] \arrow[twoheadrightarrow,dl,"r"] \\
   A &&
   C \arrow[ll,"0"] \arrow[rightarrowtail,ul,"s"] \arrow[uu,"1"']
  \end{tikzcd}
 \end{center}  
\end{definition}

\begin{example}
 Given abelian groups $A$ and $C$, there is a split short exact
 sequence 
 \begin{center}
  \begin{tikzcd}[sep=1.3cm]
   A         \arrow[r,bend left=20,"i'"] &
   A\oplus C \arrow[r,bend left=20,"p'"] \arrow[l,bend left=20,"r'"] &
   C                                     \arrow[l,bend left=20,"s'"]
  \end{tikzcd}
 \end{center}  
 given by 
 \begin{align*}
  i'(a) &= (a,0) & p'(a,c) &= c \\
  s'(c) &= (0,c) & r'(a,c) &= a.
 \end{align*}
\end{example}

The above example is essentially the only example, as we see from the
following result:
\begin{proposition}\label{prop-split-sum}
 Suppose we have a split short exact sequence
 \begin{center}
  \begin{tikzcd}
   A \arrow[r,bend left=20,"i"] &
   B \arrow[r,bend left=20,"p"] \arrow[l,bend left=20,"r"] &
   C                            \arrow[l,bend left=20,"s"]
  \end{tikzcd}
 \end{center}  
 Then there is an isomorphism $f\:B\to A\oplus C$ given by
 $f(b)=(r(b),p(b))$ with inverse $f^{-1}(a,c)=i(a)+s(c)$.  Moreover,
 the diagram 
 \begin{center}
  \begin{tikzcd}[sep=1.3cm]
   A \arrow[r,bend left=20,"i"]                         \arrow[equal,d] &
   B \arrow[r,bend left=20,"p"] \arrow[l,bend left=20,"r"] \arrow[d,"f","\simeq"'] &
   C                            \arrow[l,bend left=20,"s"] \arrow[equal,d] \\
   A         \arrow[r,bend left=20,"i'"] &
   A\oplus C \arrow[r,bend left=20,"p'"] \arrow[l,bend left=20,"r'"] &
   C                                     \arrow[l,bend left=20,"s'"]
  \end{tikzcd}
 \end{center}  
 commutes in the sense that
 \[ fi=i' \hspace{4em}
    fs=s' \hspace{4em}
    pf=p' \hspace{4em}
    rf=f'.
 \]
\end{proposition}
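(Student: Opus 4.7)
The plan is to verify the claimed formulas directly using the five defining identities of a split short exact sequence. The work splits into two halves: checking that $f$ and the proposed inverse map $g\:A\oplus C\to B$, $g(a,c)=i(a)+s(c)$, are mutually inverse homomorphisms, and then checking the four compatibility relations with the standard split short exact sequence on $A\oplus C$.

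First I would note that both $f$ and $g$ are homomorphisms, since $f=(r,p)$ is assembled componentwise from the homomorphisms $r$ and $p$, and $g$ is a sum of composites $i\circ\pi_A$ and $s\circ\pi_C$. To check $g\circ f=1_B$, I would compute $g(f(b))=i(r(b))+s(p(b))=(ir+sp)(b)$, which equals $b$ by the single axiom $ir+sp=1_B$. To check $f\circ g=1_{A\oplus C}$, I would compute
\[ f(g(a,c)) = \bigl(r(i(a)+s(c)),\,p(i(a)+s(c))\bigr) = (ri(a)+rs(c),\,pi(a)+ps(c)), \]
which collapses to $(a,c)$ by the four remaining axioms $ri=1_A$, $rs=0$, $pi=0$, $ps=1_C$.

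For the compatibility relations with the canonical split sequence, each identity reduces to a single application of the definitions of $i',s',p',r'$ combined with the axioms. Specifically, $fi(a)=(ri(a),pi(a))=(a,0)=i'(a)$ and $fs(c)=(rs(c),ps(c))=(0,c)=s'(c)$, giving $fi=i'$ and $fs=s'$. In the other direction, for any $b\in B$ we have $f(b)=(r(b),p(b))$, so $p'f(b)=p(b)$ and $r'f(b)=r(b)$, yielding the remaining two relations.

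I do not expect any real obstacle: every assertion follows from a one-line calculation using exactly one of the five split-short-exact-sequence identities, and the inverse formula for $f$ is literally dictated by the axiom $ir+sp=1_B$. The only thing to be careful about is organising the bookkeeping so that each summand in the expanded expressions is simplified using the correct axiom, rather than mixing them up.
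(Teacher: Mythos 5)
Your proposal is correct and follows essentially the same approach as the paper's proof: define $g(a,c)=i(a)+s(c)$, verify $gf=1_B$ via the axiom $ir+sp=1_B$ and $fg=1_{A\oplus C}$ via the remaining four axioms, then check the four compatibility identities by direct computation. Nothing is missing.
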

\begin{proof}
 We can certainly define homomorphisms $B\xra{f}A\oplus C\xra{g}B$ by
 $f(b)=(r(b),p(b))$ and $g(a,c)=i(a)+s(c)$.  We then have
 $gf(b)=(ir+sp)(b)=b$ and $fg(a,c)=(ri(a)+rs(c),pi(a)+ps(c))=(a,c)$ so
 $f$ and $g$ are mutually inverse isomorphisms.  We also have
 $fi(a)=(ri(a),pi(a))=(a,0)=i'(a)$, and the equations $fs=s'$, $pf=p'$
 and $rf=f'$ can be verified equally easily.
\end{proof}

Our terminology is justified by the following observation:
\begin{lemma}\label{lem-split-exact}
 If 
 \begin{center}
  \begin{tikzcd}
   A \arrow[r,bend left=20,"i"] &
   B \arrow[r,bend left=20,"p"] \arrow[l,bend left=20,"r"] &
   C                            \arrow[l,bend left=20,"s"]
  \end{tikzcd}
 \end{center}  
 is a split short exact sequence, then 
 \[ A \xra{i} B \xra{p} C \]
 is a short exact sequence.
\end{lemma}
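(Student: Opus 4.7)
The plan is to verify the three defining properties of a short exact sequence—namely that $i$ is injective, $p$ is surjective, and $\image(i)=\ker(p)$—by reading them off directly from the five given identities $pi=0$, $rs=0$, $ri=1_A$, $ps=1_C$, and $ir+sp=1_B$. No auxiliary constructions are needed; every step is an immediate consequence of one of these relations.

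First, I would dispatch injectivity of $i$ and surjectivity of $p$. The relation $ri=1_A$ exhibits $r$ as a left inverse for $i$, which forces $i$ to be injective: if $i(a)=i(a')$ then $a=ri(a)=ri(a')=a'$. Dually, $ps=1_C$ exhibits $s$ as a right inverse for $p$, so for any $c\in C$ we have $c=ps(c)\in\image(p)$, hence $p$ is surjective.

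Next I would show $\image(i)=\ker(p)$. The inclusion $\image(i)\subseteq\ker(p)$ is immediate from $pi=0$: if $b=i(a)$ then $p(b)=pi(a)=0$. For the reverse inclusion, take $b\in\ker(p)$ and apply the identity $ir+sp=1_B$ to obtain $b=ir(b)+sp(b)=ir(b)+s(0)=i(r(b))$, so $b\in\image(i)$.

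There is no real obstacle here; the only thing worth noting is that the proof does not use the identity $rs=0$ at all. That relation is extra data ensuring compatibility of the splittings on both sides, but it plays no role in verifying that the underlying three-term sequence is short exact.
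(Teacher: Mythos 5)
Your proof is correct and follows essentially the same line of reasoning as the paper: $ri=1_A$ gives injectivity of $i$, $ps=1_C$ gives surjectivity of $p$, $pi=0$ gives $\image(i)\subseteq\ker(p)$, and $ir+sp=1_B$ gives the reverse inclusion. Your closing observation that $rs=0$ is not needed is a nice (and accurate) remark, though the paper does not make it.
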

\begin{proof}
 Suppose that $i(a)=0$.  As $ri=1_A$ we have $a=r(i(a))=r(0)=0$.  This
 shows that $\ker(i)=0$, so $i$ is injective.  Next, we have $ps=1_C$,
 so for all $c\in C$ we have $c=p(s(c))\in\image(p)$; so $p$ is
 surjective.  We also have $pi=0$, so $\image(i)\leq\ker(p)$.
 Finally, we have $ir+sp=1_B$, so for $b\in B$ we have
 $b=i(r(b))+s(p(b))$.  If $b\in\ker(p)$ this reduces to
 $b=i(r(b))\in\image(i)$, so $\ker(p)\leq\image(i)$ as required. 
\end{proof}

\begin{proposition}\label{prop-half-split}
 Let $A\xra{i}B\xra{p}C$ be a short exact sequence.
 \begin{itemize}
  \item[(a)] For any map $r\:B\to A$ with $ri=1_A$, there is a unique
   map $s\:C\to B$ such that $(i,p,r,s)$ gives a split short exact
   sequence. 
  \item[(b)] For any map $s\:C\to B$ with $ps=1_C$, there is a unique
   map $r\:B\to A$ such that $(i,p,r,s)$ gives a split short exact
   sequence. 
 \end{itemize}
\end{proposition}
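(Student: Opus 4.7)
The plan for part~(a) is as follows. Given $r\:B\to A$ with $ri=1_A$, the unknown $s$ must satisfy $ir+sp=1_B$, which suggests defining $s$ via the relation $sp=1_B-ir$. I first check that $1_B-ir\:B\to B$ factors through $p$: since $(1_B-ir)i=i-iri=i-i=0$, the map $1_B-ir$ vanishes on $\img(i)=\ker(p)$. Because $p$ is surjective with kernel $\img(i)$, the universal property of the quotient gives a unique $s\:C\to B$ with $sp=1_B-ir$. This uniqueness (together with surjectivity of $p$) will also yield the uniqueness claim in~(a).

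It then remains to verify the split exact sequence axioms. For $ir+sp=1_B$, substitute directly. For $ps=1_C$, compute $psp=p(1_B-ir)=p-pir=p$ (using $pi=0$) and cancel the surjective $p$. For $rs=0$, compute $rsp=r(1_B-ir)=r-rir=r-r=0$ and again cancel $p$. The remaining identity $pi=0$ is already given by exactness.

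Part~(b) is formally dual. Given $s\:C\to B$ with $ps=1_C$, the identity $ir+sp=1_B$ forces $ir=1_B-sp$. I check that this map has image in $\img(i)$: compute $p(1_B-sp)=p-psp=p-p=0$, so $\img(1_B-sp)\sse\ker(p)=\img(i)$. Because $i$ is injective, every $b\in B$ has a unique preimage under $i$ for $(1_B-sp)(b)$; this defines a unique homomorphism $r\:B\to A$ with $ir=1_B-sp$, giving existence and uniqueness. The remaining axioms follow by analogous cancellations: $iri=i-spi=i$ forces $ri=1_A$ by injectivity of $i$, and $irs=s-sps=0$ forces $rs=0$.

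There is no real obstacle here — everything reduces to straightforward manipulations of the defining identities, with the only slight subtleties being the observation that $1_B-ir$ kills $\img(i)$ in~(a) and that $1_B-sp$ lands in $\img(i)$ in~(b), each of which is immediate from $pi=0$ and $ps=1_C$ or $ri=1_A$.
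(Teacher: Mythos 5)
Your proof is correct and takes essentially the same approach as the paper: in part~(a) you define $s$ by factoring $1_B - ir$ through $p$ and verify the identities by composing and cancelling the epimorphism $p$, exactly as the paper does. The only difference is that the paper proves~(a) and leaves the dual~(b) to the reader, whereas you spell out the dual argument explicitly.
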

\begin{proof}
 We will prove~(a) and leave the similar proof of~(b) to the reader.
 Define $f=1-ir\:B\to B$.  As $ri=1$ we have $fi=i-i(ri)=0$, so $f$
 vanishes on $\image(i)$, which is the same as $\ker(p)$.  We
 therefore have a well-defined map $s\:C\to B$ given by $s(c)=f(b)$
 for any $b$ with $p(b)=c$.  This means that $sp=f=1-ir$, or in other
 words $1_B=ir+sp$.  We also have $pi=0$ so $psp=p(1-ir)=p$, so
 $(ps-1)p=0$.  As $p$ is surjective this implies that $ps-1=0$ or
 $ps=1_C$.  Finally, we have $ri=1$ so $rsp=r(1-ir)=r-(ri)r=0$ but $p$
 is surjective so $rs=0$.  Thus, all the conditions for a split short
 exact sequence are verified.  If $s'\:C\to B$ is another map giving a
 split short exact sequence then we can subtract the equations
 $ir+sp=1$ and $ir+s'p=1$ to get $(s-s')p=0$ but $p$ is surjective so
 $s=s'$; this shows that $s$ is unique.
\end{proof}

\begin{proposition}\label{prop-internal-splitting}
 Let $B$ be an abelian group, and let $A$ and $C$ be subgroups such
 that $B=A+C$ and $A\cap C=0$.  Let $i\:A\to B$ and $s\:C\to B$ be the
 inclusion maps.  The there is a unique pair of homomorphisms
 $A\xla{r}B\xra{p}C$ giving a split short exact sequence.
\end{proposition}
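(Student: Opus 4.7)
The plan is to use the hypotheses $B=A+C$ and $A\cap C=0$ to obtain a unique decomposition $b=a+c$ (with $a\in A$, $c\in C$) for every $b\in B$, and then define $r$ and $p$ as the two projection maps arising from this decomposition.

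First I would establish the decomposition lemma: existence of the expression $b=a+c$ is immediate from $B=A+C$, and uniqueness follows because if $a+c=a'+c'$ then $a-a'=c'-c$ lies in $A\cap C=0$. Since the decomposition is additive (the sum of two decompositions is a decomposition of the sum), the assignments $r(b)=a$ and $p(b)=c$ are well-defined group homomorphisms $r\:B\to A$ and $p\:B\to C$.

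Next I would verify the five identities defining a split short exact sequence by direct computation against the unique decomposition. For $a\in A$ the decomposition of $i(a)=a$ is $a+0$, giving $ri(a)=a$ and $pi(a)=0$; symmetrically, for $c\in C$ the decomposition of $s(c)=c$ is $0+c$, giving $ps(c)=c$ and $rs(c)=0$. Finally, for $b=a+c$ we have $(ir+sp)(b)=i(a)+s(c)=a+c=b$, so $ir+sp=1_B$.

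For uniqueness, if $(r',p')$ is any other pair giving a split short exact sequence, then the identity $1_B=ir'+sp'$ expresses each $b\in B$ as $i(r'(b))+s(p'(b))=r'(b)+p'(b)$ with $r'(b)\in A$ and $p'(b)\in C$; by the uniqueness of the decomposition this forces $r'=r$ and $p'=p$. There is no real obstacle here: the entire argument rests on the observation that $B=A+C$ and $A\cap C=0$ together say precisely that $B$ is an internal direct sum, after which everything is a mechanical check.
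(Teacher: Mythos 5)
Your proof is correct and follows essentially the same approach as the paper: establish that each $b\in B$ has a unique decomposition $b=a+c$ with $a\in A$, $c\in C$, then define $r$ and $p$ as the resulting projections. You have simply spelled out the verification of the split-exactness identities and the uniqueness argument, which the paper leaves as a "straightforward check."
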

\begin{proof}
 Consider an element $b\in B$.  As $B=A+C$ we can find
 $(a,c)\in A\oplus C$ such that $b=a+c$.  Suppose we have another pair
 $(a',c')\in A\oplus C$ with $b=a'+c'$.  We put $x=a-a'$, and by
 rearranging the equation $a+c=a'+c'$ we see that $x=c'-c$.  The first
 of these expressions shows that $x\in A$, and the second that
 $x\in C$.  As $A\cap C=0$ this means that $x=0$, so $a=a'$ and
 $c=c'$.  Thus, the pair $(a,c)$ is unique, so we can define maps
 $A\xla{r}B\xra{p}C$ by $r(b)=a$ and $p(b)=c$.  It is straightforward
 to check that these give a split short exact sequence.
\end{proof}

\begin{proposition}\label{prop-idempotent-splitting}
 Let $B$ be an abelian group, and let $e\:B\to B$ be a homomorphism
 with $e^2=e$.  Then $\image(e)=\ker(1-e)$ and $\ker(e)=\image(1-e)$
 and $B=\image(e)\oplus\image(1-e)$.
\end{proposition}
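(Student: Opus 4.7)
The plan is to prove the two equalities of subgroups first, since the direct-sum decomposition will follow quickly from them via Proposition~\ref{prop-internal-splitting}.

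For the equality $\image(e) = \ker(1-e)$, I would argue by double inclusion. If $b = e(a)$ lies in $\image(e)$, then $(1-e)(b) = e(a) - e^2(a) = e(a) - e(a) = 0$, so $b \in \ker(1-e)$. Conversely, if $(1-e)(b) = 0$, then $b = e(b)$, which exhibits $b$ as an element of $\image(e)$. For the equality $\ker(e) = \image(1-e)$, the cleanest approach is to observe that $1-e$ is itself idempotent, since $(1-e)^2 = 1 - 2e + e^2 = 1 - 2e + e = 1-e$. Applying the previous result to $1-e$ in place of $e$ gives $\image(1-e) = \ker(1-(1-e)) = \ker(e)$.

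For the direct-sum decomposition, I would write $b = e(b) + (1-e)(b)$ for each $b \in B$, which immediately shows $B = \image(e) + \image(1-e)$. To check that $\image(e) \cap \image(1-e) = 0$, I would use the two equalities just proved: any $b$ in this intersection satisfies both $b \in \ker(1-e)$ and $b \in \ker(e)$, so $b = e(b) = 0$. With the subgroups $A = \image(e)$ and $C = \image(1-e)$ satisfying $B = A+C$ and $A \cap C = 0$, Proposition~\ref{prop-internal-splitting} yields $B = \image(e) \oplus \image(1-e)$.

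There is really no hard step here; the only subtle point is recognizing that $1-e$ is also idempotent, which lets me avoid repeating the double-inclusion argument for the second equality. Everything else is a direct computation from $e^2 = e$.
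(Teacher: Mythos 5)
Your proof is correct and follows essentially the same route as the paper: double inclusion for $\image(e)=\ker(1-e)$, the observation that $1-e$ is idempotent so the same argument yields $\ker(e)=\image(1-e)$, and then the decomposition $b=e(b)+(1-e)(b)$ plus the trivial-intersection check. The only cosmetic difference is that you explicitly cite Proposition~\ref{prop-internal-splitting} at the end, whereas the paper simply states that the sum is direct; the substance is identical.
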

\begin{proof}
 First, if $b\in\image(e)$ then $b=e(a)$ for some $a$, so
 $(1-e)(b)=e(a)-e^2(a)=0$, so $b\in\ker(1-e)$.  Conversely, if
 $b\in\ker(1-e)$ then $b-e(b)=0$ so $b=e(b)\in\image(e)$.  This shows
 that $\image(e)=\ker(1-e)$ as claimed.  Now put $f=1-e$.  We then have
 $f^2=1-2e+e^2=1-2e+e=f$, so $f$ is another idempotent endomorphism of
 $B$.  We can thus apply the same logic to see that
 $\image(f)=\ker(1-f)$, or in other words $\image(1-e)=\ker(e)$.

 Now consider an arbitary element $b\in B$.  We can write $b$ as
 $e(b)+(1-e)(b)$, so $b\in\image(e)+\image(1-e)$; this shows that
 $B=\image(e)+\image(1-e)$.  Now suppose that
 $b\in\image(e)\cap\image(1-e)=\ker(1-e)\cap\ker(e)$.  This means that
 $(1-e)(b)=e(b)=0$, so $b=e(b)=0$.  This means that
 $\image(e)\cap\image(1-e)=0$, so the sum is direct.
\end{proof}

We now give a useful application of Proposition~\ref{prop-split-sum}
to the theory of additive functors.  We recall the definition:
\begin{definition}\label{defn-functor}
 A \emph{covariant functor} from abelian groups to abelian groups is a
 construction that gives an abelian group $F(A)$ for each abelian
 group $A$, and a homomorphism $f_*\:F(A)\to F(B)$ for each
 homomorphism $f\:A\to B$, in such a way that:
 \begin{itemize}
  \item[(a)] For identity maps we have $(1_A)_*=1_{F(A)}$ for all
   $A$.
  \item[(b)] For homomorphisms $A\xra{f}B\xra{g}C$ we have
   $(gf)_*=g_*f_*\:F(A)\to F(C)$.
 \end{itemize}
 We say that $F$ is a \emph{additive} if
 $(f_0+f_1)_*=(f_0)_*+(f_1)_*$ for all $f_0,f_1\:A\to B$.
\end{definition}
\begin{example}
 Fix an integer $n>0$.  We can then define an additive functor $F$ by
 $F(A)=A[n]=\{a\in A\st na=0\}$, and another additive functor $G$ by
 $G(A)=A/nA$.  In both cases the homomorphisms $f_*$ are just the
 obvious ones induced by $f$.
\end{example}

\begin{proposition}\label{prop-additive-functor}
 Let $F$ be an additive covariant functor as above.  Then for any
 abelian groups $A$ and $C$ we have an natural isomorphism
 $f\:F(A\oplus C)\to F(A)\oplus F(C)$ given by
 $f(b)=(r'_*(b),p'_*(b))$ with inverse $f^{-1}(a,c)=i'_*(a)+s'_*(c)$.
\end{proposition}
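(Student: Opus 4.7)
The plan is to apply the functor $F$ to the canonical split short exact sequence for $A\oplus C$ described in the preceding example, and then invoke Proposition~\ref{prop-split-sum}. In effect, the claim reduces to showing that additive functors preserve split short exact sequences.

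First I would record the elementary fact that any additive functor sends zero homomorphisms to zero: the identity $(0+0)_*=0_*+0_*$ forces $0_*=0$. Using functoriality (for the composite relations) together with additivity and this zero-preservation (for the sum relation), I would then check that applying $F$ to the five defining identities $p'i'=0$, $r's'=0$, $r'i'=1_A$, $p's'=1_C$ and $i'r'+s'p'=1_{A\oplus C}$ yields the corresponding identities among $i'_*$, $p'_*$, $r'_*$, $s'_*$. This shows that
\[ F(A) \xra{i'_*} F(A\oplus C) \xra{p'_*} F(C) \]
equipped with retractions $r'_*$ and $s'_*$ is a split short exact sequence of abelian groups.

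Proposition~\ref{prop-split-sum} then immediately yields the desired isomorphism $f\:F(A\oplus C)\to F(A)\oplus F(C)$ with $f(b)=(r'_*(b),p'_*(b))$ and inverse $(a,c)\mapsto i'_*(a)+s'_*(c)$, matching the formulas in the statement. Naturality of $f$ in $A$ and $C$ follows from functoriality applied to any pair of maps $A\to A'$ and $C\to C'$, combined with the evident naturality of the structure maps $i'$, $p'$, $r'$, $s'$ in the example.

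The only nontrivial step is verifying the sum relation $i'_*r'_*+s'_*p'_*=1_{F(A\oplus C)}$, which is precisely where additivity of $F$, as opposed to mere functoriality, is required: without additivity one cannot pass from $(i'r'+s'p')_*=1_{F(A\oplus C)}$ to the decomposition $i'_*r'_*+s'_*p'_*$, and the reduction to Proposition~\ref{prop-split-sum} would collapse.
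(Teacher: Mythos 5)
Your proof is correct and follows essentially the same route as the paper: apply $F$ to the split short exact sequence for $A\oplus C$, use additivity to verify the five defining identities, and invoke Proposition~\ref{prop-split-sum}. The one small thing you do more carefully than the paper is to note explicitly that additivity forces $F$ to send zero maps to zero (needed for $p'i'=0\Rightarrow p'_*i'_*=0$), a step the paper silently assumes.
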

\begin{proof}
 Put $B=A\oplus C$, and recall that $1_B=i'r'+s'p'$.  As $F$ is an
 additive functor we have 
 \[ 1_{F(B)}=(1_B)_*=(i'r')_*+(s'p')_*=i'_*r'_*+s'_*p'_*. \]
 In the same way the equations $r'i'=1$, $p's'=1$, $p'i'=0$ and
 $r's'=0$ give $r'_*i'_*=1$, $p'_*s'_*=1$, $p'_*i'_*=0$ and
 $r'_*s'_*=0$, so we have a split short exact sequence 
 \begin{center}
  \begin{tikzcd}[sep=1.3cm]
   F(A) \arrow[r,bend left=20,"i'_*"] &
   F(B) \arrow[r,bend left=20,"p'_*"] \arrow[l,bend left=20,"r'_*"] &
   F(C)                               \arrow[l,bend left=20,"s'_*"]
  \end{tikzcd}
 \end{center}  
 Thus, Proposition~\ref{prop-split-sum} gives us an isomorphism
 $F(A\oplus C)=F(B)\to F(A)\oplus F(C)$, and by unwinding the
 definitions we see that this is given by the stated formulae.
\end{proof}

There is a similar statement for contravariant functors as follows.
\begin{definition}\label{defn-cofunctor}
 A \emph{contravariant functor} from abelian groups to abelian groups
 is a construction that gives an abelian group $F(A)$ for each abelian
 group $A$, and a homomorphism $f^*\:F(B)\to F(A)$ for each
 homomorphism $f\:A\to B$, in such a way that:
 \begin{itemize}
  \item[(a)] For identity maps we have $(1_A)_*=1_{F(A)}$ for all
   $A$.
  \item[(b)] For homomorphisms $A\xra{f}B\xra{g}C$ we have
   $(gf)^*=f^*g^*\:F(C)\to F(A)$.
 \end{itemize}
 We say that $F$ is \emph{additive} if
 $(f_0+f_1)^*=f_0^*+f_1^*$ for all $f_0,f_1\:A\to B$.
\end{definition}
\begin{proposition}\label{prop-additive-cofunctor}
 Let $F$ be an additive contravariant functor as above.  Then for any
 abelian groups $A$ and $C$ we have an natural isomorphism
 $f\:F(A\oplus C)\to F(A)\oplus F(C)$ given by
 $f(b)=((i')^*(b),(s')^*(b))$ with inverse
 $f^{-1}(a,c)=(r')^*(a)+(p')^*(c)$. 
\end{proposition}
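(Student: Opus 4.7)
The plan is to mimic the proof of Proposition~\ref{prop-additive-functor}, with the only real care being bookkeeping: since $F$ is contravariant, applying it to the maps $i', p', r', s'$ reverses their directions, and the five defining identities of a split short exact sequence must be re-matched to the template in the Definition.

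Concretely, set $B = A \oplus C$ and start from the five relations
\[ r'i' = 1_A, \qquad p's' = 1_C, \qquad p'i' = 0, \qquad r's' = 0, \qquad i'r' + s'p' = 1_B. \]
Applying $F$ and using functoriality turns each composition identity into the reversed composition of the starred maps, and additivity of $F$ turns $(i'r' + s'p')^*$ into $(r')^*(i')^* + (p')^*(s')^*$. Thus we obtain
\[ (i')^*(r')^* = 1_{F(A)}, \quad (s')^*(p')^* = 1_{F(C)}, \quad (i')^*(p')^* = 0, \quad (s')^*(r')^* = 0, \quad (r')^*(i')^* + (p')^*(s')^* = 1_{F(B)}. \]

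Next I would identify these with the axioms of a split short exact sequence. Reading off the pattern of the Definition, the correct identification is to view $(r')^*\:F(A)\to F(B)$ as the injection $i$, the map $(s')^*\:F(B)\to F(C)$ as the surjection $p$, with retraction $r = (i')^*$ and section $s = (p')^*$. A direct comparison shows the five conditions above become exactly $ri=1$, $ps=1$, $pi=0$, $rs=0$ and $ir+sp=1$ for this relabelling. Hence we have a bona fide split short exact sequence
\begin{center}
 \begin{tikzcd}[sep=1.3cm]
  F(A) \arrow[r,bend left=20,"(r')^*"] &
  F(B) \arrow[r,bend left=20,"(s')^*"] \arrow[l,bend left=20,"(i')^*"] &
  F(C)                                 \arrow[l,bend left=20,"(p')^*"]
 \end{tikzcd}
\end{center}

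Finally, Proposition~\ref{prop-split-sum} applied to this split short exact sequence produces an isomorphism $f\:F(B)\to F(A)\oplus F(C)$ given by $f(b) = (r(b), p(b)) = ((i')^*(b), (s')^*(b))$, with inverse $f^{-1}(a,c) = i(a) + s(c) = (r')^*(a) + (p')^*(c)$. These are precisely the formulas claimed. The main source of potential confusion — and hence the only point requiring care — is the order-reversal under $F$: one must choose the correct labelling of the image maps before invoking Proposition~\ref{prop-split-sum}, or else one obtains $F(C)\oplus F(A)$ with the factors swapped; no other step involves any nontrivial computation.
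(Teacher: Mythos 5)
Your proposal is correct and is exactly the argument the paper has in mind: the paper's own proof reads only ``Essentially the same as Proposition~\ref{prop-additive-functor},'' and you have spelled out precisely what that entails, including the order-reversal and the correct relabelling of $(r')^*,(s')^*,(i')^*,(p')^*$ as $(i,p,r,s)$ so that Proposition~\ref{prop-split-sum} applies verbatim.
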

\begin{proof}
 Essentially the same as Proposition~\ref{prop-additive-functor}.
\end{proof}

\section{Products and coproducts}
\label{sec-biprod}

If we have a finite list of abelian groups $A_1,\dotsc,A_n$, we can
form the product group $\prod_{i=1}^nA_i=A_1\tm\dotsb\tm A_n$, which
is also denoted by $\bigoplus_{i=1}^nA_i=A_1\oplus\dotsb\oplus A_n$.
This should be familiar.  These constructions can be generalised to
cover families of abelian groups $A_i$ indexed by a set $I$ that may
be infinite, and need not be ordered in any natural way.  This is a
little more subtle, and in particular $\bigoplus_iA_i$ is not the same
as $\prod_iA_i$ in this context.  In this section we will briefly
outline the relevant definitions and properties.

\begin{definition}\label{defn-cartesian-product}
 Let $I$ be a set, and let $(A_i)_{i\in I}$ be a family of abelian
 groups indexed by $I$.  The \emph{product group} $\prod_{i\in I}A_i$
 is the set of all systems $a=(a_i)_{i\in I}$ consisting of an element
 $a_i\in A_i$ for each $i\in I$.  We make this into an abelian group
 by the obvious rule
 \[ (a_i)_{i\in I} \pm (b_i)_{i\in I} = (a_i\pm b_i)_{i\in I}. \]
 For each $k\in I$ we define $\pi_k\:\prod_{i\in I}A_i\to A_k$ by
 $\pi_k((a_i)_{i\in I})=a_k$.  This is clearly a homomorphism.  We
 also define $\iota_k\:A_k\to\prod_{i\in I}A_i$ by 
 \[ \iota_k(a)_i = \begin{cases}
                    a \in A_k & \text{ if } i=k \\
                    0 \in A_i & \text{ if } i\neq k.
                   \end{cases}
 \]
\end{definition}

\begin{example}\label{eg-finite-product}
 If $I=\{1,2,\dotsc,n\}$, then $\prod_{i\in I}A_i$ is just the set of
 $n$-tuples $(a_1,\dotsc,a_n)$ with $a_i\in A_i$, as before.  
\end{example}
\begin{example}\label{eg-diagonal-product}
 Suppose we have a fixed group $U$, and we take $A_i=U$ for all $i$.
 Then $\prod_{i\in I}A_i$ is just the set $\Map(I,U)$ of all functions
 from $I$ to $U$, considered as a group under pointwise addition.
\end{example}

\begin{remark}\label{rem-categorical-product}
 It is easy to see that a homomorphism $f\:U\to\prod_{i\in I}A_i$ is
 essentially the same thing as a family of homomorphisms
 $f_i\:U\to A_i$, one for each $i\in I$.  Indeed, given such a family
 we define $f\:U\to\prod_{i\in I}A_i$ by $f(u)=(f_i(u))_{i\in I}$, and
 we can then recover the original homomorphisms $f_i$ as the
 composites $\pi_i\circ f$.  This means that $\prod_{i\in I}A_i$ is a
 product for the groups $A_i$ in the general sense considered in
 category theory.
\end{remark}

\begin{definition}\label{defn-coproduct}
 Given an element $a=(a_i)_{i\in I}\in\prod_{i\in I}A_i$, the
 \emph{support} of $a$ is the set 
 \[ \supp(a) = \{i\in I\st a_i\neq 0\} \sse I. \]
 We put 
 \[ \bigoplus_{i\in I} A_i =
     \{a\in\prod_{i\in I}A_i\st\supp(a) \text{ is a finite set } \}.
 \]
 It is easy to see that $\supp(a\pm b)\sse\supp(a)\cup\supp(b)$, and
 thus that $\bigoplus_{i\in I}A_i$ is a subgroup of
 $\prod_{i\in I}A_i$.  We call it the \emph{coproduct} of the family
 $(A_i)_{i\in I}$.  We also note that $\supp(\iota_k(a))\sse\{k\}$, so
 $\iota_k$ can be regarded as a homomorphism
 $A_k\to\bigoplus_{i\in I}A_i$.
\end{definition}

\begin{remark}\label{rem-finite-biproduct}
 If the index set $I$ is finite then all supports are automatically
 finite and so the coproduct is the same as the product.  In fact, we
 only need the set $I'=\{i\st A_i\neq 0\}$ to be finite for this to
 hold. 
\end{remark}

Definition~\ref{defn-coproduct} is again compatible with the more
general definition coming from category theory, as we see from the
following result:
\begin{proposition}\label{prop-categorical-coproduct}
 Suppose we have an abelian group $V$, and a system of homomorphisms
 $g_i\:A_i\to V$ for all $i\in I$.  Then there is a unique
 homomorphism $g\:\bigoplus_{i\in I}A_i\to V$ such that
 $g\circ\iota_k=g_k$ for all $k\in I$.
\end{proposition}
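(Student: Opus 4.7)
The plan is to construct $g$ by the only formula consistent with the required identity $g\circ\iota_k=g_k$, namely
\[ g(a) \;=\; \sum_{i\in\supp(a)} g_i(a_i), \]
for $a=(a_i)_{i\in I}\in\bigoplus_{i\in I}A_i$.  This is a finite sum (because $\supp(a)$ is finite by definition of the coproduct), so it is a well-defined element of $V$.  One could equivalently sum over any finite subset $S\sse I$ containing $\supp(a)$, since the extra terms contribute $g_i(0)=0$; this reformulation is convenient for checking additivity.

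The key steps, in order, are as follows.  First, I would verify that $g$ is a homomorphism: given $a,b\in\bigoplus_i A_i$, choose a common finite $S\sse I$ containing $\supp(a)\cup\supp(b)\supseteq\supp(a\pm b)$, and compute
\[ g(a\pm b) \;=\; \sum_{i\in S} g_i(a_i\pm b_i) \;=\; \sum_{i\in S} g_i(a_i) \pm \sum_{i\in S} g_i(b_i) \;=\; g(a)\pm g(b), \]
using that each $g_i$ is a homomorphism.  Second, I would check the compatibility condition: for $k\in I$ and $a\in A_k$, the element $\iota_k(a)$ has support contained in $\{k\}$, so $g(\iota_k(a))=g_k(a)$ directly from the definition.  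Third, for uniqueness, suppose $g'\:\bigoplus_i A_i\to V$ is any homomorphism with $g'\circ\iota_k=g_k$ for all $k$.  For $a=(a_i)_{i\in I}$ with $\supp(a)=\{i_1,\dotsc,i_n\}$ we have the finite decomposition $a=\sum_{r=1}^n\iota_{i_r}(a_{i_r})$ in $\bigoplus_iA_i$, so
\[ g'(a) \;=\; \sum_{r=1}^n g'(\iota_{i_r}(a_{i_r})) \;=\; \sum_{r=1}^n g_{i_r}(a_{i_r}) \;=\; g(a), \]
forcing $g'=g$.

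I do not expect a serious obstacle here; the only point requiring a little care is the decomposition $a=\sum_r\iota_{i_r}(a_{i_r})$, which is valid precisely because the support is finite, and this is really what forces the passage from $\prod_i A_i$ (where no such decomposition is available in general) to $\bigoplus_i A_i$ when constructing maps out of a coproduct.
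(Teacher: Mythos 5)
Your proof is correct and follows essentially the same route as the paper's: define $g(a)=\sum_{i\in\supp(a)}g_i(a_i)$, check additivity by passing to a common finite index set, verify $g\circ\iota_k=g_k$ from $\supp(\iota_k(a))\sse\{k\}$, and establish uniqueness via the finite decomposition $a=\sum_{i\in\supp(a)}\iota_i(a_i)$. No substantive differences.
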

\begin{proof}
 Given a point $a=(a_i)_{i\in I}\in\bigoplus_{i\in I}A_i$, we define 
 \[ g(a) = \sum_{i\in\supp(a)} g_i(a_i) \in V. \]
 The terms in the sum are meaningful because $a_i\in A_i$ and
 $g_i\:A_i\to V$, and $\supp(a)$ is finite so there only finitely many
 terms so it is not a problem to add them up.  If we replace
 $\supp(a)$ by some larger finite set $J\sse I$ then this gives us
 some extra terms but they are all zero so the sum is unchanged.
 After taking $J=\supp(a)\cup\supp(b)$ it becomes easy to see that
 $g(a+b)=g(a)+g(b)$, so $g$ is a homomorphism.  Using
 $\supp(\iota_k(a))\sse\{k\}$ we see that $g\circ\iota_k=g_k$, as
 required.  Let $g'\:\bigoplus_{i\in I}A_i\to V$ be another
 homomorphism with $g'\circ\iota_k=g_k$ for all $k$.  If we have an
 element $a$ as before, we observe that
 $a=\sum_{i\in\supp(a)}\iota_i(a_i)$, and by applying $g'$ to this we
 get 
 \[ g'(a) = \sum_{i\in\supp(a)} g'(\iota_i(a_i)) =
     \sum_{i\in\supp(a)} g_i(a_i) = g(a),
 \]
 so $g$ is unique as claimed.
\end{proof}
\begin{remark}\label{rem-infinite-sum}
 It would at worst be a tiny abuse of notation to say that
 $g(a)=\sum_{i\in I}g_i(a_i)$.  This is a sum with infinitely many
 terms, which would not normally be meaningful, but only finitely many
 of the terms are nonzero, so the rest can be ignored.
\end{remark}

\begin{remark}\label{rem-internal-coproduct}
 Suppose we have an abelian group $A$, and a family of subgroups
 $(A_i)_{i\in I}$.  There is then a unique homomorphism
 $\sg\:\bigoplus_{i\in I}A_i\to A$ such that
 $\sg\circ\iota_k\:A_k\to A$ is just the inclusion for all $k$.  More
 explicitly, we just have $\sg(a)=\sum_{i\in\supp(a)}a_i$.  If this
 map $\sg$ is an isomorphism, we will say (with another slight abuse
 of notation) that $A=\bigoplus_{i\in I}A_i$.
\end{remark}

\section{Torsion groups}
\label{sec-torsion}

\begin{definition}\label{defn-torsion}
 Let $A$ be an abelian group.
 \begin{itemize}
  \item[(a)] We say that an element $a\in A$ is a \emph{torsion
    element} if $na=0$ for some integer $n>0$.
  \item[(b)] We write $\tors(A)$ for the set of torsion elements of
   $A$. This is easily seen to be a subgroup, because if $na=0$ and
   $mb=0$ then $nm(a\pm b)=0$.
  \item[(c)] We say that $A$ is a \emph{torsion group} if every
   element is torsion, or equivalently $\tors(A)=A$.  At the other
   extreme, we say that $A$ is \emph{torsion-free} if $\tors(A)=0$.
  \item[(d)] Now fix a prime $p$.  We say that $a$ is a
   \emph{$p$-torsion element} if $p^ka=0$ for some $k\geq 0$.  We
   write $\tors_p(A)$ for the set of $p$-torsion elements, which is
   again a subgroup.
  \item[(e)] We say that $A$ is a \emph{$p$-torsion group} if every
   element is $p$-torsion, or equivalently $\tors_p(A)=A$.  At the
   other extreme, we say that $A$ is \emph{$p$-torsion free} if
   $\tors_p(A)=0$.
 \end{itemize}
\end{definition}

\begin{remark}\label{rem-multiple-identity}
 We write $n.1_A$ for the endomorphism of $A$ given by $a\mapsto na$.
 Then $\tors(A)=\bigcup_{n>0}\ker(n.1_A)$, and $A$ is torsion-free if
 and only if the maps $n.1_A$ (for $n>0$) are all injective.
\end{remark}

\begin{example}\label{eg-finite-torsion}
 If $A$ is a finite abelian group with $|A|=n$ then Lagrange's Theorem
 tells us that $na=0$ for all $a\in A$, so $A$ is a torsion group.
 For another instructive proof of the same fact, consider the element
 $z=\sum_{x\in A}x$.  As $x$ runs over $A$, the elements $a+x$ also
 run over $A$, so $z=\sum_{x\in A}(a+x)=na+z$, so $na=0$.  It is a
 curious fact, which we leave to the reader, that $z$ itself is
 actually zero in all cases except when $|A|=2$.
\end{example}

\begin{example}\label{eg-torsion-free}
 It is clear that any free abelian group is torsion-free.  The groups
 $\Q$ and $\R$ are torsion-free but not free.
\end{example}
\begin{example}\label{eg-QZ}
 Consider the quotient group $A=\QZ$.  The subset
 \[ A_n =
   \{\Z=\tfrac{0}{n}+\Z,\tfrac{1}{n}+\Z,\dotsc,\tfrac{n-1}{n}+\Z\}
 \]
 is a cyclic subgroup of order $n$.  Any element $a\in\QZ$ can be
 written as $a=m/n+\Z$ for some $m,n\in\Z$ with $n>0$.  We can also
 write $m$ as $qn+r$ for some $q,r\in\Z$ with $0\leq r<n$ and observe
 that $a=m/n+\Z=r/n+q+\Z=r/n+\Z\in A_n$.  This proves that $A$ is the
 union of the subgroups $A_n$.  As $na=0$ for all $a\in A_n$, we see
 that $A$ is a torsion group.  One can also check that $A_n\leq A_m$
 if and only if $n$ divides $m$.  In particular, for each prime $p$ we
 have a chain of subgroups
 \[ A_p \leq A_{p^2} \leq A_{p^3} \leq \dotsb \leq
     \bigcup_k A_{p^k} = \tors_p(A).
 \]
\end{example}
\begin{example}\label{eg-RZ}
 Now consider instead the group $\R/\Z$.  Suppose we have a torsion
 element $a=t+\Z$.  This means that for some integer $n>0$ we have
 $nt\in\Z$, which implies that $t$ is rational.  It follows that
 $\tors(\R/\Z)=\QZ$.  A similar argument shows that $\R/\Q$ is
 torsion-free. 
\end{example}

The following result is known as the Chinese Remainder Theorem.
\begin{proposition}\label{prop-chinese}
 Suppose we have positive integers $n_1,\dotsc,n_r$ any two of which
 are coprime, and we put $n=\prod_in_i$.  Define 
 \[ \phi\: \Z/n \to (\Z/n_1) \tm \dotsb\tm (\Z/n_r) \]
 by 
 \[ \phi(k+n\Z) = (k+n_1\Z,\dotsc,k+n_r\Z). \]
 Then 
 \begin{itemize}
  \item[(a)] There exist integers $e_1,\dotsc,e_r$ such that
   $\sum_ie_i=1$ and $e_i=1\pmod{n_i}$ and $e_i=0\pmod{n/n_i}$.
  \item[(b)] The map $\phi$ is an isomorphism.
 \end{itemize}
\end{proposition}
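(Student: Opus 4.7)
The approach is to build idempotents $e_i$ using B\'ezout's identity and then use them to write down an explicit inverse to $\phi$.  For part~(a), the crucial arithmetic input is that $n_i$ and $n/n_i$ are coprime for each $i$, which follows from the pairwise coprimality hypothesis since $n/n_i=\prod_{j\neq i}n_j$.  I would apply B\'ezout to write $u_in_i+v_i(n/n_i)=1$ and set $\tilde e_i=v_i(n/n_i)$.  Reading this formula gives $\tilde e_i\equiv 0\pmod{n/n_i}$ and $\tilde e_i=1-u_in_i\equiv 1\pmod{n_i}$ directly.  Moreover, for $j\neq i$ we have $n_i\mid n/n_j$, so $\tilde e_j\equiv 0\pmod{n_i}$; combining these gives $\sum_j\tilde e_j\equiv 1\pmod{n_i}$ for each $i$, and pairwise coprimality then promotes this to $\sum_j\tilde e_j\equiv 1\pmod n$.

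The one subtle point, and really the only obstacle, is that the conclusion of~(a) asks for the exact equality $\sum_ie_i=1$, not merely a congruence modulo $n$.  I would handle this by writing $\sum_j\tilde e_j=1+kn$ for some $k\in\Z$ and then setting $e_1=\tilde e_1-kn$ and $e_i=\tilde e_i$ for $i\geq 2$.  Because both $n_1$ and $n/n_1$ divide $n$, this adjustment preserves the two congruence conditions for $e_1$ while forcing $\sum_ie_i=1$ on the nose.

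For part~(b), I would build an explicit inverse $\psi\:(\Z/n_1)\tm\dotsb\tm(\Z/n_r)\to\Z/n$ by the formula $\psi(k_1+n_1\Z,\dotsc,k_r+n_r\Z)=\bigl(\sum_ik_ie_i\bigr)+n\Z$.  Well-definedness amounts to checking that $n_je_j\equiv 0\pmod n$ for each $j$, which holds because $n/n_j$ divides $e_j$.  The key computation $e_i\equiv\delta_{ij}\pmod{n_j}$ established in~(a) then makes $\phi\circ\psi=1$ immediate.  For $\psi\circ\phi=1$ it suffices to show $\phi$ is injective: if $\phi(k+n\Z)=0$ then $n_i\mid k$ for every $i$, and pairwise coprimality upgrades this to $n\mid k$, so $k+n\Z=0$.
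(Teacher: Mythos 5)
Your proof is correct, and it takes a genuinely different route from the paper's.  The paper applies B\'ezout \emph{pairwise} to each pair $(n_i,n_j)$ with $i\neq j$, forms elements $f_{ij}\equiv 1\pmod{n_i}$, $f_{ij}\equiv 0\pmod{n_j}$, and then takes products $g_i=\prod_{j\neq i}f_{ij}$; you instead apply B\'ezout once per index to the pair $(n_i,\,n/n_i)$, which gets you to the key congruences $\tilde e_i\equiv\delta_{ij}\pmod{n_j}$ in one step (at the cost of silently invoking the standard lemma that pairwise coprimality of $n_i$ with each $n_j$ implies coprimality with the product $n/n_i$).  More substantively, the two arguments reverse the logical order of the two parts.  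The paper proves~(b) first by a cardinality count (surjectivity via $\phi(\sum_i m_ig_i+n\Z)=(m_1+n_1\Z,\dotsc,m_r+n_r\Z)$, plus domain and codomain both of order $n$) and then \emph{uses} the already-established injectivity of $\phi$ to deduce that $\sum_ig_i\equiv 1\pmod n$, after which a final adjustment of one of the $g_i$ gives exact equality in~(a).  You prove~(a) first, arguing the congruence $\sum_j\tilde e_j\equiv 1\pmod n$ directly from the mod-$n_i$ congruences and pairwise coprimality, then make the same kind of adjustment, and finally use the finished $e_i$ to write down an explicit two-sided inverse $\psi$ for~(b).  Your route gives an explicit inverse map (which the paper never constructs) and avoids the counting argument, but it does rely on the auxiliary number-theoretic lemma that $n_i\mid k$ for all $i$ forces $n\mid k$; the paper's counting argument sidesteps that.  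Both are complete; the paper's is a little slicker in how it harvests~(a) as a free byproduct of~(b), while yours is more constructive.
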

\begin{proof}
 For $i\neq j$ we know that $n_i$ and $n_j$ are coprime, so we can
 choose integers $a_{ij}$ and $b_{ij}$ with $a_{ij}n_i+b_{ij}n_j=1$.
 We then put $f_{ij}=b_{ij}n_j=1-a_{ij}n_i$, so $f_{ij}=1\pmod{n_i}$
 and $f_{ij}=0\pmod{n_j}$.  Now fix $i$, and let $g_i$ be the product
 of the numbers $f_{ij}$ as $j$ runs over the remaining indices.  We
 find that $g_i=1\pmod{n_i}$, but $g_i$ is divisible by the product of
 all the $n_j$, or equivalently by $n/n_i$.  Thus, the numbers $g_i$
 almost have property~(a), but we will need a slight adjustment to
 make the sum equal to one.  However, we are now ready to prove~(b).  
 Given any integers $m_1,\dotsc,m_r$, we have 
 \[ \phi(\sum_im_ig_i+n\Z) = (m_1+n_1\Z,\dotsc,m_r+n_r\Z). \]
 This proves that $\phi$ is surjective, and the domain and codomain of
 $\phi$ both have order $n$, so $\phi$ must actually be an
 isomorphism.  By construction we have
 $\phi(\sum_ig_i+n\Z)=\phi(1+n\Z)$, and $\phi$ is injective, so
 $\sum_ig_i=1+nk$ for some $k$.  We define $e_i=g_i$ for $i<r$, and
 $e_r=1-\sum_{i<r}g_i=g_r-nk$; these clearly satisfy~(a).
\end{proof}

The following special case is often useful:
\begin{corollary}\label{cor-chinese}
 Suppose that the prime factorisation of $n$ is
 $n=p_1^{v_1}\dotsb p_r^{v_r}$, where the primes $p_i$ are all
 distinct.  Then there are integers $e_1,\dotsc,e_r$ such that
 $\sum_ie_i=1$ and $e_i=1\pmod{p_i^{v_i}}$ and
 $e_i=0\pmod{n/p_i^{v_i}}$.  Moreover, the natural map 
 \[ \Z/n \to (\Z/p_1^{v_1}) \tm \dotsb \tm (\Z/p_r^{v_r}) \]
 is an isomorphism. \qed
\end{corollary}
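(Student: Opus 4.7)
The plan is to deduce this directly from Proposition~\ref{prop-chinese}, since the only content beyond that proposition is the observation that prime powers for distinct primes are pairwise coprime.

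First I would set $n_i = p_i^{v_i}$ for $i=1,\dotsc,r$, so that $n = \prod_i n_i$. The hypothesis that the $p_i$ are distinct primes means that for $i\neq j$, the integers $n_i$ and $n_j$ have no common prime factor, hence $\gcd(n_i,n_j)=1$. Thus the family $(n_1,\dotsc,n_r)$ satisfies the pairwise coprimality hypothesis of Proposition~\ref{prop-chinese}.

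Next I would simply invoke Proposition~\ref{prop-chinese} with this choice of $n_i$. Part~(a) of that proposition gives integers $e_1,\dotsc,e_r$ with $\sum_i e_i = 1$, $e_i \equiv 1 \pmod{n_i}$ and $e_i \equiv 0 \pmod{n/n_i}$, which translated back to our notation reads $e_i \equiv 1 \pmod{p_i^{v_i}}$ and $e_i \equiv 0 \pmod{n/p_i^{v_i}}$, as required. Part~(b) provides the isomorphism $\Z/n \to \prod_i \Z/n_i = \prod_i \Z/p_i^{v_i}$.

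There is no real obstacle here; the only thing to check is the coprimality, which is immediate from unique factorisation. The whole proof amounts to verifying that the hypotheses of Proposition~\ref{prop-chinese} hold in this particular setting and quoting the conclusion.
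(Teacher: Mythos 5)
Your proof is correct and is exactly the intended argument: the paper marks the corollary \qed precisely because it is a direct specialisation of Proposition~\ref{prop-chinese} with $n_i=p_i^{v_i}$, and pairwise coprimality of distinct prime powers is immediate. Nothing to add.
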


\begin{proposition}\label{prop-tors-split}
 For any abelian group $A$ we have $\tors(A)=\bigoplus_p\tors_p(A)$.
\end{proposition}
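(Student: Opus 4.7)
The plan is to produce the isomorphism as the map $\sigma$ supplied by Remark~\ref{rem-internal-coproduct}, applied to the family of subgroups $\{\tors_p(A)\}_{p\text{ prime}}$ of $\tors(A)$. So $\sigma\:\bigoplus_p\tors_p(A)\to\tors(A)$ sends $(a_p)_p$ to $\sum_{p\in\supp(a)}a_p$, and I only need to check that $\sigma$ is a bijection.

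For surjectivity, I would take $a\in\tors(A)$, pick $n>0$ with $na=0$, and factor $n=p_1^{v_1}\dotsb p_r^{v_r}$.  By Corollary~\ref{cor-chinese}, there are integers $e_1,\dotsc,e_r$ with $\sum_i e_i=1$, $e_i\equiv 1\pmod{p_i^{v_i}}$, and $e_i\equiv 0\pmod{n/p_i^{v_i}}$.  Then $a=\sum_i e_i a$, and for each $i$ the element $e_i a$ lies in $\tors_{p_i}(A)$: writing $e_i=(n/p_i^{v_i})m_i$, we have $p_i^{v_i}(e_i a)=n m_i a=0$.  Thus $a$ is a finite sum of elements of the subgroups $\tors_{p_i}(A)$, so it lies in the image of $\sigma$.

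For injectivity, suppose $(a_p)_p\in\bigoplus_p\tors_p(A)$ satisfies $\sum_p a_p=0$.  Enumerate the finitely many primes $p_1,\dotsc,p_r$ where $a_{p_i}\neq 0$ may occur, and for each $i$ pick $v_i\geq 0$ with $p_i^{v_i}a_{p_i}=0$.  Fix an index $j$ and set $N_j=\prod_{i\neq j}p_i^{v_i}$.  Applying $N_j$ to the relation $a_{p_j}=-\sum_{i\neq j}a_{p_i}$ kills the right-hand side term by term, since for $i\neq j$ the factor $p_i^{v_i}$ divides $N_j$.  Hence $N_j a_{p_j}=0$, and we also have $p_j^{v_j}a_{p_j}=0$.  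Since $\gcd(p_j^{v_j},N_j)=1$, we can write $1=up_j^{v_j}+vN_j$ and conclude $a_{p_j}=0$.  Since $j$ was arbitrary, $\sigma$ is injective.

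I expect no serious obstacle: the surjectivity step is an immediate application of the Chinese Remainder decomposition just proved, and injectivity is a standard coprimality argument.  The only thing worth being careful about is keeping the bookkeeping in the infinite coproduct straight, which is handled by restricting attention to the finite support of any given element before running the coprimality argument.
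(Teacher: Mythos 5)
Your proof is correct. The surjectivity step is the same as the paper's: both write a torsion element $a$ with $na=0$, factor $n=\prod_i p_i^{v_i}$, and use the idempotents $e_i$ from Corollary~\ref{cor-chinese} to get $a=\sum_i e_i a$ with each $e_i a\in\tors_{p_i}(A)$. For injectivity you take a slightly different (and mildly more elementary) tack: the paper re-invokes the CRT idempotents, multiplying $\sum_j a_j=0$ by $e_i$ and using $e_i a_i=a_i$, $e_i a_j=0$ for $j\neq i$ to conclude $a_i=0$; you instead fix one index $j$, multiply by $N_j=\prod_{i\neq j}p_i^{v_i}$ to get $N_j a_{p_j}=0$, and finish with a two-term B\'ezout identity for the coprime pair $(p_j^{v_j},N_j)$. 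Both are coprimality arguments of the same depth, but yours avoids re-assembling the full idempotent system just to kill off the other summands. Framing the statement via the canonical map $\sigma$ of Remark~\ref{rem-internal-coproduct} is also a clean way to make explicit what the paper leaves implicit in asserting that ``the sum is direct.''
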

\begin{proof}
 Suppose we have a torsion element $a\in A$, so $na=0$ for some
 $n>0$.  We can factor this as $\prod_{i=1}^rp_i^{v_i}$ and then
 choose integers $e_i$ as in Corollary~\ref{cor-chinese}.  Now
 $e_i=0\pmod{n/p_i^{v_i}}$ so $p_i^{v_i}e_i$ is divisible by $n$, so
 $p_i^{v_i}e_ia=0$, so $e_ia\in\tors_{p_i}(A)$.  We also have
 $\sum_ie_i=1$, so $a=\sum_ie_ia\in\sum_i\tors_{p_i}(A)$.  This shows
 that $\tors(A)=\sum_p\tors_p(A)$.

 To show that the sum is direct, suppose we have a finite list of
 distinct primes $p_1,\dotsc,p_r$, and elements $a_i\in\tors_{p_i}(A)$
 with $\sum_ia_i=0$; we must show that $a_i=0$ for all $i$.  As
 $a_i\in\tors_{p_i}(A)$ we have $p_i^{v_i}a_i=0$ for some
 $v_i\geq 0$.  We again choose numbers $e_i$ as in
 Corollary~\ref{cor-chinese}.  As $p_i^{v_i}a_i=0$ and
 $e_i=1\pmod{p_i^{v_i}}$ we have $e_ia_i=a_i$.  On the other hand, for
 $j\neq i$ we have $e_i=0\pmod{p_j^{v_j}}$ and so $e_ia_j=0$.  We can
 thus multiply the relation $\sum_ja_j=0$ by $e_i$ to get $a_i=0$ as
 required. 
\end{proof}

\begin{lemma}\label{lem-flat-quotient}
 The quotient group $A/\tors(A)$ is always torsion-free.
\end{lemma}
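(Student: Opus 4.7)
The plan is to unwind the definition of torsion directly. Suppose $\bar{a} \in A/\tors(A)$ is a torsion element, represented by some $a \in A$. I want to show $\bar{a}=0$, i.e.\ $a \in \tors(A)$.

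First I would translate the torsion condition for $\bar{a}$: there exists $n>0$ with $n\bar{a}=0$ in $A/\tors(A)$, which means $na \in \tors(A)$. Then applying the definition of $\tors(A)$ again, there is some $m>0$ with $m(na)=0$ in $A$.

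Rewriting this as $(mn)a=0$ with $mn>0$ shows that $a$ itself lies in $\tors(A)$, so $\bar{a}=0$. Therefore the only torsion element of $A/\tors(A)$ is zero, i.e., $A/\tors(A)$ is torsion-free.

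There is no real obstacle here; the whole content is the observation that a torsion element of a torsion group is still torsion when viewed in the ambient group, via the product $mn$ of the two witnessing integers.
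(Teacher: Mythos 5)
Your proof is correct and is essentially identical to the paper's argument: take a torsion element $\bar a$ of $A/\tors(A)$, lift to $a\in A$, note that $na\in\tors(A)$ for some $n>0$, hence $mna=0$ for some $m>0$, so $a\in\tors(A)$ and $\bar a=0$. Nothing further to add.
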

\begin{proof}
 Suppose we have a torsion element $a=x+\tors(A)$ in $A/\tors(A)$.
 This means that for some $n>0$ we have $na=0$ or equivalentlt
 $nx\in\tors(A)$.  This in turn means that for some $m>0$ we have
 $mnx=0$, which shows that $x$ itself is a torsion element in $A$.
 This means that the coset $a=x+\tors(A)$ is zero, as required.
\end{proof}

\section{Finitely generated abelian groups}
\label{sec-fin-gen}

Let $A$ be an abelian group, and let $a_1,\dotsc,a_r$ be elements of
$A$.  We then have a homomorphism $f\:\Z^r\to A$ given by
\[ f(n_1,\dotsc,n_r) = n_1a_1 + \dotsb + n_ra_r. \]
In particular, if $e_i$ is the $i$'th standard basis vector in $\Z^r$
then $f(e_i)=a_i$.  Conversely, if we start with a homomorphism
$f\:\Z^r\to A$ we can put $a_i=f(e_i)\in A$ and we find that 
\[ f(n_1,\dotsc,n_r) = f(\sum_in_ie_i) = 
    \sum_in_ia_i,
\]
so everything fits together as before.  The image of $f$ is the
smallest subgroup of $A$ containing all the elements $a_i$, or in
other words the subgroup generated by $\{a_1,\dotsc,a_r\}$.  This
justifies the following definition:

\begin{definition}\label{defn-fin-gen}
 We say that an abelian group $A$ is \emph{finitely generated} if
 there exists a surjective homomorphism $f\:\Z^r\to A$ for some $r$. 
\end{definition}

\begin{example}\label{eg-finite-fg}
 Suppose that $A$ is actually finite, so we can choose a list
 $a_1,\dotsc,a_r$ that contains all the elements of $A$.  The
 corresponding map $\Z^r\to A$ is certainly surjective, so $A$ is
 finitely generated.
\end{example}

Our main aim in this section is to prove the following classification
theorem: 
\begin{theorem}\label{thm-fin-gen}
 Let $A$ be a finitely generated abelian group.  Then $A$ can be
 decomposed the direct sum of a finite list of subgroups, each of
 which is isomorphic either to $\Z$, or to $\Z/p^v$ for some prime $p$
 and some $v>0$.  The number of subgroups of each type in the
 decomposition is uniquely determined, although the precise list of
 subgroups is not.
\end{theorem}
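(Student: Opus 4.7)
The approach is to realise $A$ as a quotient $\Z^r/K$ and diagonalise the inclusion $K\hookrightarrow\Z^r$ using Smith normal form. Since $A$ is finitely generated, fix a surjection $\pi\:\Z^r\era A$ and put $K=\ker(\pi)$, so $A\cong\Z^r/K$. The first preliminary I need is that every subgroup of $\Z^r$ is finitely generated, and in fact free of rank at most $r$. I would prove this by induction on $r$, starting from the classification of subgroups of $\Z$ as $n\Z$ via the division algorithm. For the inductive step, project $K\leq\Z^r$ onto the last coordinate by $\pi_r$; if $\pi_r(K)=0$ then $K\leq\ker(\pi_r)\cong\Z^{r-1}$ and induction applies, and otherwise $\pi_r(K)=n\Z$ for some $n>0$, in which case picking $v\in K$ with $\pi_r(v)=n$ one checks by subtraction that $K=\Z v\oplus(K\cap\ker(\pi_r))$, and combining with the inductive hypothesis on $K\cap\ker(\pi_r)$ produces a finite free basis.

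The central step is the Smith normal form. With $K\cong\Z^s$ and the inclusion described by an $r\tm s$ integer matrix $M$, I would show by induction on matrix size that there exist $\Z$-linear automorphisms $P$ of $\Z^r$ and $Q$ of $\Z^s$ such that $PMQ$ is diagonal with entries satisfying $d_1\mid d_2\mid\dotsb\mid d_s$. The argument picks the nonzero entry of smallest absolute value, moves it to the top-left by row and column swaps, and uses Euclidean division to clear its row and column; any nonzero remainder strictly decreases the minimum entry, so the process terminates. A further adjustment --- if some later entry fails to be divisible by the top-left $d_1$, add its column to the first column and repeat --- ensures that $d_1$ divides every remaining entry, and the same procedure inductively handles the complementary block. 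Since $P$ and $Q$ are $\Z$-automorphisms the quotient $\Z^r/K$ is unchanged up to isomorphism, giving
\[ A \;\cong\; \bigoplus_{i=1}^{s}\Z/d_i\Z \,\oplus\, \Z^{r-s}, \]
where terms with $d_i=1$ vanish and $d_i=0$ contributes an additional $\Z$ summand. Finally, Corollary~\ref{cor-chinese} splits each $\Z/d_i\Z$ with $d_i\geq 2$ into its $p$-primary cyclic pieces, yielding the required decomposition.

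For uniqueness I would extract the multiplicities as invariants of $A$. The number of $\Z$ summands equals the rank of $A/\tors(A)$ as a free abelian group (well-defined because $\Z^r/2\Z^r\cong(\Z/2)^r$ has $2^r$ elements). By Proposition~\ref{prop-tors-split}, $\tors_p(A)$ is an intrinsic subgroup of $A$ to which only the $\Z/p^v$ summands contribute. Writing $M[p]=\{x\in M\st px=0\}$, a short direct computation shows that for any decomposition $\tors_p(A)\cong\bigoplus_v(\Z/p^v)^{n_{p,v}}$, the multiplicity is recovered as
\[ n_{p,k}=\dim_{\Z/p}\bigl((p^{k-1}\tors_p(A))[p]\bigr)-\dim_{\Z/p}\bigl((p^k\tors_p(A))[p]\bigr), \]
so each $n_{p,k}$ depends only on $A$.

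The main obstacle will be the Smith normal form reduction: formulating the pivoting algorithm cleanly, proving termination via the minimum-entry invariant, and verifying that the final diagonal entries satisfy the divisibility chain $d_1\mid\dotsb\mid d_s$. The subgroup-is-free lemma is routine induction, and once the decomposition is in hand the uniqueness invariants are essentially formal.
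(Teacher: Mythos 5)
Your proof is correct, but it follows a genuinely different route from the paper's.  You realise $A$ as $\Z^r/K$, show $K$ is free of rank $s\leq r$, and then put the $r\times s$ presentation matrix into Smith normal form via row and column operations over $\Z$; this yields the invariant-factor decomposition $A\cong\bigoplus_i\Z/d_i\oplus\Z^{r-s}$ with $d_1\mid d_2\mid\dotsb$, which you then refine to primary cyclic pieces via the Chinese Remainder Theorem (Corollary~\ref{cor-chinese}).  The paper instead argues structurally: it first shows that any finitely generated torsion-free group is free (Proposition~\ref{prop-flat-free}), uses this to split $A\simeq\tors(A)\oplus\Z^s$ (Corollary~\ref{cor-free-summand}), applies the primary decomposition $\tors(A)=\bigoplus_p\tors_p(A)$ (Proposition~\ref{prop-tors-split}), and then decomposes each finite $p$-group by iteratively peeling off a cyclic summand of maximal order (Corollary~\ref{cor-split-once}).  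The paper thus never touches a presentation matrix and never produces the invariant-factor form; your Smith-normal-form argument is more algorithmic and gives an explicit procedure for computing the decomposition, at the cost of having to carry out the Euclidean pivoting analysis carefully.  For the uniqueness step the two arguments coincide: your quantity
\[
n_{p,k}=\dim_{\Z/p}\bigl((p^{k-1}\tors_p(A))[p]\bigr)-\dim_{\Z/p}\bigl((p^{k}\tors_p(A))[p]\bigr)
\]
is exactly the paper's $g_p^k(\tors(A))$ from Definition~\ref{defn-f-p-k} and Proposition~\ref{prop-f-p-k}, and the free rank is pinned down by $|A/2A|$ in both.  One small remark: since $K$ is free of rank $s$, the columns of $M$ are independent over $\Z$ and all diagonal entries $d_i$ in the Smith form are necessarily nonzero, so the $d_i=0$ clause you mention never actually arises.
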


\begin{remark}\label{rem-cyclic-splitting}
 Note that Proposition~\ref{prop-chinese} gives a decomposition of the
 stated type for the cyclic group $\Z/n$.
\end{remark}

The groups $\Z^r$ themselves are of course finitely generated.  It is
convenient to observe that no two of them are isomorphic:
\begin{lemma}\label{lem-rank}
 If $\Z^r$ is isomorphic to $\Z^s$, then $r=s$.
\end{lemma}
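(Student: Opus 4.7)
The plan is to apply the additive functor $G(A)=A/2A$ (introduced in the example after Definition~\ref{defn-functor}) to the alleged isomorphism $\Z^r\cong\Z^s$, and then use a simple counting argument.

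First I would use Proposition~\ref{prop-additive-functor} together with a straightforward induction on the number of summands to see that for any additive covariant functor $F$ we have $F(A_1\oplus\dotsb\oplus A_n)\cong F(A_1)\oplus\dotsb\oplus F(A_n)$. Applying this to $F=G$ and to $A_1=\dotsb=A_r=\Z$ gives a natural isomorphism $G(\Z^r)\cong G(\Z)^r=(\Z/2)^r$, and similarly $G(\Z^s)\cong(\Z/2)^s$.

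Next, any isomorphism $\phi\:\Z^r\to\Z^s$ induces an isomorphism $G(\phi)\:G(\Z^r)\to G(\Z^s)$, because $G$ applied to $\phi^{-1}$ gives a two-sided inverse. Combining with the previous step, we get an isomorphism $(\Z/2)^r\cong(\Z/2)^s$. These are finite groups of orders $2^r$ and $2^s$ respectively, so $2^r=2^s$ and hence $r=s$.

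There is no real obstacle here; the only minor point is to observe that functoriality alone (not additivity) is enough to send isomorphisms to isomorphisms, while additivity is what lets us identify $G(\Z^r)$ with $(\Z/2)^r$ via Proposition~\ref{prop-additive-functor}.
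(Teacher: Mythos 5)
Your proof is correct and takes essentially the same approach as the paper: reduce modulo $2$ to turn $\Z^r\cong\Z^s$ into $(\Z/2)^r\cong(\Z/2)^s$, then count. The paper just does the mod-$2$ reduction directly without invoking the additive-functor machinery of Proposition~\ref{prop-additive-functor}, but the underlying idea is identical.
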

\begin{proof}
 Any isomorphism $f\:A\to B$ induces an isomorphism $A/2A\to B/2B$, so
 in particular $|A/2A|=|B/2B|$.  We have $\Z^r/2\Z^r=(\Z/2)^r$, which
 has order $2^r$, and the claim follows easily.
\end{proof}

This means that the term 'rank' in the following definition is
well-defined: 
\begin{definition}\label{defn-rank}
 We say that an abelian group $A$  is \emph{free of rank $r$} if it is
 isomorphic to $\Z^r$.
\end{definition}

\begin{lemma}\label{lem-pid}
 Let $A$ be a subgroup of $\Z$.  Then either $A=0\simeq\Z^0$ or
 $A=d\Z\simeq\Z$ for some (unique) $d>0$. 
\end{lemma}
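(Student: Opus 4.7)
The plan is to dispose of the trivial case first and then, when $A \neq 0$, to produce the generator $d$ by a minimality argument and verify that it works via the division algorithm.

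First I would handle $A = 0$ trivially. Otherwise, I observe that $A$ contains some nonzero integer $a$, and since $A$ is closed under negation, it contains $|a| > 0$. Thus the set $A_+ = \{n \in A \st n > 0\}$ is a nonempty subset of $\N$, so by the well-ordering principle it has a least element, which I call $d$.

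Next I would show $A = d\Z$. The inclusion $d\Z \sse A$ is immediate from $d \in A$ and the fact that $A$ is a subgroup (closed under addition and negation, hence closed under multiplication by any integer). For the reverse inclusion, given any $a \in A$, I apply the division algorithm in $\Z$ to write $a = qd + r$ with $q \in \Z$ and $0 \leq r < d$. Then $r = a - qd \in A$ since both $a$ and $qd$ lie in $A$. If $r > 0$ this would contradict the minimality of $d$ in $A_+$, so $r = 0$, giving $a = qd \in d\Z$.

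Finally, for uniqueness: if $d\Z = d'\Z$ with $d, d' > 0$, then $d \in d'\Z$ and $d' \in d\Z$, so $d' \mid d$ and $d \mid d'$, forcing $d = d'$. The only substantive step is the use of the division algorithm together with well-ordering; there is no real obstacle, as this is the standard proof that every subgroup of $\Z$ is cyclic.
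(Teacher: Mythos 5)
Your proof is correct and takes essentially the same approach as the paper's: pick the least positive element $d$ of $A$ by well-ordering, then use the division algorithm to show any $a \in A$ has zero remainder on division by $d$. You additionally spell out the uniqueness of $d$, which the paper asserts but leaves implicit.
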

\begin{proof}
 The case where $A=0$ is trivial, so suppose that $A\neq 0$.  As
 $A=-A$ we see that $A$ must contain at least one strictly positive
 integer.  Let $d$ be the smallest strictly positive integer in $A$.
 It is than clear that $d\Z\sse A$.  Conversely, suppose that
 $n\in A$.  As $d>0$ we see that $n$ must lie between $id$ and
 $(i+1)d$ for some $i\in\Z$, say $n=id+j$ with $0\leq j<d$.  Now
 $j=n-id\in A$ and $0\leq j<d$, which contradicts the defining
 property of $d$ unless $j=0$.  We thus have $n=di$, showing that
 $A=d\Z$ as claimed.
\end{proof}

\begin{proposition}\label{prop-hereditary-finite}
 Let $A$ be a subgroup of $\Z^r$; then $A$ is free of rank at most
 $r$. 
\end{proposition}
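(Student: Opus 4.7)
The plan is to proceed by induction on $r$. The case $r=0$ is vacuous (only the zero subgroup), and the case $r=1$ is precisely Lemma~\ref{lem-pid}, which identifies every subgroup of $\Z$ as either $0$ or isomorphic to $\Z$.

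For the inductive step, assume the result for $r-1$ and let $A\leq\Z^r$. I would consider the projection $\pi\:\Z^r\to\Z$ onto the last coordinate, whose kernel I identify with $\Z^{r-1}$. Set $A'=A\cap\ker(\pi)$ and $A''=\pi(A)\leq\Z$. This gives a short exact sequence
\[ 0\to A'\to A\xra{\pi} A''\to 0. \]
By the inductive hypothesis, $A'$ is free of rank at most $r-1$. By Lemma~\ref{lem-pid}, $A''$ is either $0$ or isomorphic to $\Z$, hence free of rank at most $1$.

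The key step is to split the above short exact sequence. If $A''=0$ then $A=A'$ and we are done. Otherwise $A''=d\Z$ for some $d>0$; choose any $a\in A$ with $\pi(a)=d$ and define $s\:A''\to A$ by $s(kd)=ka$. Then $\pi s=1_{A''}$, so Proposition~\ref{prop-half-split}(b) upgrades this to a split short exact sequence, and Proposition~\ref{prop-split-sum} gives an isomorphism $A\cong A'\oplus A''$. Since a direct sum of free abelian groups of ranks $m$ and $n$ is free of rank $m+n$ (cf.~Remark~\ref{rem-exact-omni}(f)), $A$ is free of rank at most $(r-1)+1=r$, completing the induction.

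The only real subtlety is the existence of the section $s$, but this is immediate because $A''$ is cyclic (indeed free), so specifying $s$ on a generator automatically defines a homomorphism landing in $A$ whose composite with $\pi$ is the identity. No delicate computation is required beyond this.
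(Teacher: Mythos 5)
Your proof is correct, and it works. It differs from the paper's in packaging: you run a single induction on $r$, projecting off the last coordinate, applying Lemma~\ref{lem-pid} to the image and the inductive hypothesis to the kernel, and then splitting the resulting short exact sequence via Proposition~\ref{prop-half-split}. The paper instead fixes the full filtration by the coordinate subspaces $F_0\leq F_1\leq\dotsb\leq F_r$ at once, picks out the set $J$ of indices where the induced filtration of $A$ actually jumps, chooses one generator $a_p$ at each such index, and verifies directly (by an internal induction on $p$) that the resulting list is a basis. The underlying idea is the same, and both give the rank bound $s\leq r$. What the paper's more explicit construction buys is that it is precisely the version which generalizes to arbitrary free abelian groups: Definition~\ref{defn-hereditary-aux} and Lemma~\ref{lem-adapted-basis} later redo this argument over a well-ordered index set, where one needs the explicitly constructed ``adapted basis'' and a transfinite induction, since an ordinary induction on rank has nothing to induct on once the index set is infinite. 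Your version is a bit cleaner for the finitely generated case, but would need to be restructured along those lines to serve that later purpose.
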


\begin{proof}
 For $i\leq r$ we put 
 \[ F_i = \{x\in\Z^r \st x_{i+1} = \dotsb = x_r = 0\}\simeq\Z^i. \]
 We let $\pi_s\:\Z^r\to\Z$ be the projection map $x\mapsto x_s$, and
 put 
 \[ J = \{j\st\pi_j(A\cap F_j)\neq 0\}
      = \{j\st A\cap F_j > A\cap F_{j-1}\}.
 \]
 We can list the elements of this set as $j_1<\dotsb<j_s$ for some
 $s\leq r$ (possibly $s=0$).  We see from Lemma~\ref{lem-pid} that
 $\pi_{j_p}(A\cap F_{j_p})$ must have the form $d_p\Z$ for some
 $d_p>0$ say.  We can thus choose $a_p\in A\cap F_{j_p}$ such that
 $\pi_{j_p}(a_p)=d_p$ for all $p$.  We claim that the list
 $a_1,\dotsc,a_s$ is a basis for $A$ over $\Z$, so that $A$ is a free
 abelian group as claimed.  More precisely, we claim that
 $a_1,\dotsc,a_p$ is always a basis for $A\cap F_{j_p}$.  In the case
 $p=0$ we have the empty list and the zero group so the claim is
 clear.  When $p>0$ we can inductively assume the statement for
 $p-1$.  Consider an arbitrary element $u\in A\cap F_{j_p}$.  By the
 definition of $d_p$, we have $\pi_{j_p}(u)=m_pd_p$ for some
 $m_p\in\Z$.  The element $u'=u-m_pa_p$ then lies in $A\cap F_{j_p}$ 
 and satisfies $\pi_{j_p}(u')=0$ so in fact
 $u'\in A\cap F_{j_{p-1}}$.  By the induction hypothesis there are
 unique integers $m_1,\dotsc,m_{p-1}$ with
 $u'=m_1a_1+\dotsb+m_{p-1}a_{p-1}$, and it follows that 
 $u=u'+m_pa_p=m_1a_1+\dotsb+m_pa_p$.  This shows that $u$ can be
 expressed as an integer combination of $a_1,\dotsc,a_p$, and a
 similar argument shows that the expression is unique.  This completes
 the induction step, and after $s$ steps we see that $A$ itself has a
 basis as claimed.
\end{proof}
\begin{remark}\label{rem-canonical-basis}
 In the above proof, we can alter our choice of $a_p$ by subtracting
 off suitable multiples of $a_{p-1}$, $a_{p-2}$ and so on in turn to
 ensure that $0\leq \pi_{j_q}(a_p)<d_q$ for $1\leq q<p$.  One can then
 check that the resulting basis satisfying this auxiliary condition is
 in fact unique.
\end{remark}

\begin{corollary}\label{cor-subgroup-fg}
 If $A$ is finitely generated and $B$ is a subgroup of $A$ then $B$
 and $A/B$ are also finitely generated.
\end{corollary}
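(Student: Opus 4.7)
The plan is to reduce both statements to the surjection $f \: \Z^r \era A$ guaranteed by the assumption that $A$ is finitely generated, and then invoke Proposition~\ref{prop-hereditary-finite} for the subgroup case.

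For $A/B$, the argument is immediate: the composite $\Z^r \xra{f} A \era A/B$ is a surjection from a free group of finite rank, so $A/B$ is finitely generated directly from Definition~\ref{defn-fin-gen}. No further work is required here.

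For $B$, the idea is to pull $B$ back along $f$. Set $B' = f^{-1}(B) \leq \Z^r$. By Proposition~\ref{prop-hereditary-finite}, the subgroup $B'$ is free of rank $s$ for some $s \leq r$, so in particular it is finitely generated: we can choose an isomorphism $\Z^s \to B'$ and compose with the inclusion into $\Z^r$ to obtain a surjection $\Z^s \era B'$. Since $f$ is surjective, its restriction $f|_{B'} \: B' \to B$ is also surjective (any $b \in B$ lifts to some $x \in \Z^r$ with $f(x) = b$, and then $x \in B'$ by definition). Composing gives a surjection $\Z^s \era B$, so $B$ is finitely generated.

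There is no real obstacle here; the substantive content has already been placed in Proposition~\ref{prop-hereditary-finite}, and this corollary is just the bookkeeping that packages it together with the definition of finite generation. The only minor point to verify carefully is that $f$ restricts to a surjection onto $B$, which is immediate from the surjectivity of $f$ together with the definition of $B'$ as the preimage.
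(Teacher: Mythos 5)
Your proof is correct and matches the paper's argument essentially step for step: both handle $A/B$ by composing surjections, and both handle $B$ by taking the preimage $f^{-1}(B)\leq\Z^r$ (the paper calls it $F$), invoking Proposition~\ref{prop-hereditary-finite} to see it is free of rank at most $r$, and observing that $f$ restricts to a surjection onto $B$.
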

\begin{proof}
 Choose a surjective homomorphism $f\:\Z^r\to A$.  The composite
 $\Z^r\xra{f}A\xra{\pi}A/B$ is again surjective, so $A/B$ is finitely
 generated.  Now put $F=\{x\in\Z^r\st f(x)\in B\}$, and let
 $g\:F\to B$ be the restriction of $f$.  For $b\in B\leq A$ we can
 choose $x\in\Z^r$ with $f(x)=b$ (because $f$ is surjective).  Then
 $x\in F$ be the definition of $f$, and $g(x)=f(x)=b$; this proves
 that $g$ is surjective.  Moreover, $F$ is a subgroup of $\Z^r$, so it
 is isomorphic to $\Z^s$ for some $s\leq r$ by the proposition.  It
 now follows that $B$ is finitely generated.
\end{proof}

\begin{proposition}\label{prop-flat-free}
 Suppose that $F$ is finitely generated and torsion free; then $F$ is
 free. 
\end{proposition}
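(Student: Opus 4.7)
The plan is to embed $F$ as a subgroup of some $\Z^n$, after which Proposition~\ref{prop-hereditary-finite} immediately gives the conclusion.

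First, I would fix a finite generating set $x_1,\dotsc,x_r$ for $F$ and select from it a maximal $\Z$-linearly independent subset, say $a_1,\dotsc,a_n$ (so $n\leq r$). Let $A\leq F$ be the subgroup they generate; by independence $A\simeq\Z^n$. The key observation is that for each $x_j$, either $x_j$ already belongs to $\{a_1,\dotsc,a_n\}$, or maximality forces a nontrivial $\Z$-linear relation
\[ m\,x_j + c_1 a_1 + \dotsb + c_n a_n = 0. \]
If the integer $m$ were zero, the independence of the $a_i$ would force all the $c_i$ to vanish and the relation would be trivial; so $m\neq 0$, and after negating if necessary we get a positive integer $m_j$ with $m_j x_j\in A$.

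Next, I would set $N=m_1 m_2\dotsb m_r$ (or the lcm, whichever is more convenient). Then $N x_j\in A$ for every $j$, and since the $x_j$ generate $F$ this gives $NF\sse A$. Now I use torsion-freeness: the endomorphism $N\cdot 1_F\:F\to F$ has trivial kernel by Remark~\ref{rem-multiple-identity}, hence is injective and gives $F\simeq NF$.

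Finally, $NF$ is a subgroup of $A\simeq\Z^n$, so by Proposition~\ref{prop-hereditary-finite} it is free of rank at most $n$, and therefore so is $F$. The main subtlety to be careful about is the first step: one must verify that the coefficient of $x_j$ in the dependence relation really is forced to be nonzero, which is exactly where the maximal-independence choice pays off. Torsion-freeness is used only at the very end, but it is essential there, since without it multiplication by $N$ need not be injective.
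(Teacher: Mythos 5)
Your proof is correct, but it takes a genuinely different route from the paper's. You fix a maximal $\Z$-independent subset $a_1,\dotsc,a_n$ of a finite generating set, use maximality and torsion-freeness to produce a single integer $N>0$ with $NF\sse\langle a_1,\dotsc,a_n\rangle\simeq\Z^n$, and then invoke torsion-freeness again to conclude $F\simeq NF\leq\Z^n$, after which Proposition~\ref{prop-hereditary-finite} finishes the job. The paper instead chooses a surjection $f\:\Z^r\to F$ with $r$ minimal and argues by contradiction: if $\ker(f)\neq 0$, a primitive element of the kernel splits off a rank-one summand of $\Z^r$, and $f$ is already surjective on the complementary rank-$(r-1)$ summand, contradicting minimality. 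Both arguments lean on Proposition~\ref{prop-hereditary-finite}, but yours applies it to a subgroup you construct inside $F$, while the paper applies it as a black box behind the splitting of $\Z^r$. Your approach is the ``embed into a free group of the correct rank'' idea, which is arguably more transparent and closer to the classical module-over-a-PID picture (maximal independent sets corresponding to $\Q$-bases of $F\ot\Q$); the paper's approach has the advantage of producing the desired isomorphism $\Z^r\simeq F$ directly, by showing that the minimal surjection is automatically injective. One small point worth making explicit in your write-up: for indices $j$ with $x_j\in\{a_1,\dotsc,a_n\}$ you should set $m_j=1$, so that $N$ is well-defined and the claim $Nx_j\in A$ holds for every $j$.
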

\begin{proof}
 Choose a surjective homomorphism $f\:\Z^r\to F$ with $r$ as small as
 possible.  Put $A=\ker(f)$; it will suffice to show that $A=0$.  If
 not, choose some nonzero element $a=(a_1,\dotsc,a_r)\in\ker(f)$.  Let
 $d$ be the greatest common divisor of $a_1,\dotsc,a_r$ (or
 equivalently, the number $d>0$ such that $\sum_i a_i\Z=d\Z$, which
 exists by Lemma~\ref{lem-pid}).  We find that $a/d\in\Z^r$ and
 $d\,f(a/d)=f(a)=0$ in $F$, so $f(a/d)$ is a torsion element, but $F$
 is assumed torsion free, so $f(a/d)=0$.  We may thus replace $a$ by
 $a/d$ and assume that $d=1$, so $\sum_i a_i\Z=\Z$.  We can thus
 choose integers $b_1,\dotsc,b_r$ with $\sum_ia_ib_i=1$.  Now define
 $\bt\:\Z^r\to\Z$ by $\bt(x)=\sum_ix_ib_i$ and put $U=\ker(\bt)$.  As
 $\bt(a)=1$ we find that $x-\bt(x)a\in U$ for all $x$, and it follows
 that $\Z^r=U\oplus\Z a$.  Proposition~\ref{prop-hereditary-finite} tells
 us that $U$ is free, and using the splitting $\Z^r=U\oplus\Z a$ we see
 that $U$ has rank $r-1$.  As $f(a)=0$ we also see that
 $f(U)=f(U\oplus\Z a)=f(\Z^r)=F$, so $f$ restricts to give a surjective
 homomorphism $U\to F$.  This contradicts the assumed minimality of
 $r$, so we must have $A=0$ after all, so $f\:\Z^r\to F$ is an
 isomorphism. 
\end{proof}

\begin{corollary}\label{cor-free-summand}
 Let $A$ be a finitely generated abelian group.  Then $\tors(A)$ is
 finite, and there exists a finitely generated free subgroup $F\leq A$
 such that $A=\tors(A)\oplus F\simeq\tors(A)\oplus\Z^s$ for some $s$.
\end{corollary}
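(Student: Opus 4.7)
The plan is to combine the three tools just established: every finitely generated torsion-free group is free (Proposition~\ref{prop-flat-free}); subgroups and quotients of finitely generated groups are again finitely generated (Corollary~\ref{cor-subgroup-fg}); and $A/\tors(A)$ is always torsion-free (Lemma~\ref{lem-flat-quotient}). Putting these together gives that $A/\tors(A)$ is free of some finite rank~$s$; the work then consists of lifting a basis in order to produce the splitting.

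First I would run through the short exact sequence
\[ \tors(A) \mra A \era A/\tors(A). \]
By Corollary~\ref{cor-subgroup-fg}, both $\tors(A)$ and $A/\tors(A)$ are finitely generated. By Lemma~\ref{lem-flat-quotient}, $A/\tors(A)$ is torsion-free, so by Proposition~\ref{prop-flat-free} it is free, hence isomorphic to $\Z^s$ for some $s\geq 0$.

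Next I would construct a section of the quotient map. Fix an isomorphism $A/\tors(A)\simeq\Z^s$, and let $e_1,\dotsc,e_s$ be the standard basis on the right. Since $A\to A/\tors(A)$ is surjective, choose lifts $x_1,\dotsc,x_s\in A$. Because $\Z^s$ is free, the assignment $e_i\mapsto x_i$ extends uniquely to a homomorphism $\sg\:\Z^s\to A$, and by construction the composite $\Z^s\xra{\sg}A\era A/\tors(A)\simeq\Z^s$ is the identity on each $e_i$ and hence on all of $\Z^s$. Thus $\sg$ is a section of the quotient map. Applying Proposition~\ref{prop-half-split}(b) to this section produces a split short exact sequence, and Proposition~\ref{prop-split-sum} then yields an isomorphism $A\simeq\tors(A)\oplus\Z^s$; taking $F=\sg(\Z^s)\leq A$ gives the desired free subgroup with $A=\tors(A)\oplus F$ internally.

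It remains to show $\tors(A)$ is finite. Being a subgroup of the finitely generated group $A$, it is finitely generated by Corollary~\ref{cor-subgroup-fg}. So choose generators $t_1,\dotsc,t_k$ of $\tors(A)$ with $n_it_i=0$; every element of $\tors(A)$ can be written as $\sum m_it_i$ with $0\leq m_i<n_i$, giving at most $\prod_in_i$ elements. The only conceptually delicate step is the construction of the section $\sg$, which hinges on using the universal property of $\Z^s$ (that maps out of it are determined freely by the images of basis elements); everything else reduces to bookkeeping against results already proven.
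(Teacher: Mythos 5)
Your proposal is correct and follows essentially the same route as the paper: both prove $A/\tors(A)$ is free via Corollary~\ref{cor-subgroup-fg}, Lemma~\ref{lem-flat-quotient}, and Proposition~\ref{prop-flat-free}, construct a section $\Z^s\to A$ by lifting basis elements, and bound $\tors(A)$ using finite generation together with the torsion condition on generators. The only cosmetic difference is that you invoke Propositions~\ref{prop-half-split} and~\ref{prop-split-sum} to produce the splitting, whereas the paper defines the retraction $h\:A\to F$ by hand and verifies $A=\tors(A)+F$, $\tors(A)\cap F=0$ directly.
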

\begin{proof}
 Firstly, Corollary~\ref{cor-subgroup-fg} tells us that $\tors(A)$ is
 finitely generated, so we can choose a finite list of generators, say
 $a_1,\dotsc,a_r$.  These must be torsion elements, so we can choose
 $n_i>0$ with $n_ia_i=0$.  This means that the corresponding
 surjection $f\:\Z^r\to\tors(A)$ factors through the finite quotient
 group $\prod_{i=1}^r(\Z/n_i)$, so $\tors(A)$ is finite as claimed.
 Next, the quotient group $A/\tors(A)$ is finitely generated (by
 Corollary~\ref{cor-subgroup-fg}) and torsion-free (by
 Lemma~\ref{lem-flat-quotient}) so it is free of finite rank by
 Proposition~\ref{prop-flat-free}.  We can thus choose an isomorphism
 $\ov{g}\:\Z^s\to A/\tors(A)$.  Now choose an element $a_i\in A$
 representing the coset $g(e_i)$ (for $i=1,\dotsc,s$) and define
 $g\:\Z^s\to A$ by $g(x)=\sum_ix_ia_i$, and put $F=g(\Z^s)$.  If we
 let $q$ denote the quotient map $A\to A/\tors(A)$ we have
 $qg=\ov{g}$, which is an isomorphism.  It follows that
 $g\:\Z^s\to F$ is an isomorphism, so $F$ is free as claimed.  Next,
 let $h\:A\to F$ be the composite
 \[ A \xra{q} A/\tors(A) \xra{\ov{g}^{-1}} \Z^s \xra{g} F. \]
 We find that $qh=q$, so $q(a-h(a))=0$, so $a-h(a)\in\tors(A)$ for all
 $a$.  This implies that $a=(a-h(a))+h(a)\in\tors(A)+F$, so
 $A=\tors(A)+F$.  Moreover, the intersection $\tors(A)\cap F$ is both
 torsion and torsion-free, so it must be zero, so the sum is direct.
\end{proof}

This corollary allows us to focus on $\tors(A)$, which is a finite
group, of order $n$ say.  Proposition~\ref{prop-tors-split} gives a
splitting $\tors(A)=\bigoplus_p\tors_p(A)$, and it is clear that
$\tors_p(A)$ can only be nonzero if $p$ divides $n$.  In that case,
$\tors_p(A)$ will be a finite abelian group whose order is a power of
$p$.  

\begin{lemma}
 Let $A$ be an abelian group of order $p^v$.  Suppose we have an
 element $c$ of order $p^w$, and that every other element has order
 dividing $p^w$, and that the subgroup $C=\Z c$ has nontrivial
 intersection with every nontrivial subgroup.  Then $A=C$.
\end{lemma}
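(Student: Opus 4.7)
The plan is to argue by contradiction: suppose $A \neq C$ and choose $a \in A \setminus C$ whose order $p^k$ is minimal among elements outside $C$. Since $a \neq 0$ and every element of $A$ has order dividing $p^w$, I have $1 \leq k \leq w$.

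The pivotal preliminary observation is that any element $x \in A$ such that $\langle x \rangle$ has order $p$ must lie in $C$. Indeed, if $\langle x \rangle$ has prime order $p$, the essentiality hypothesis on $C$ forces $\langle x \rangle \cap C \neq 0$; but $\langle x \rangle$ has no nontrivial proper subgroups, so this intersection equals $\langle x \rangle$, giving $x \in C$. This immediately disposes of the case $k = 1$.

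For $k \geq 2$, the strategy is to subtract off a suitable multiple of $c$ from $a$ to produce an element of order exactly $p$ that still lies outside $C$, then invoke the previous paragraph. By minimality of $k$, the element $pa$ lies in $C$, so $pa = nc$ for some integer $n$. Since $pa$ has order $p^{k-1}$, comparing orders gives $\gcd(n, p^w) = p^{w-k+1}$, hence $n = p^{w-k+1}m$ with $\gcd(m, p) = 1$. Setting $b = a - p^{w-k}mc$, a direct computation gives $pb = pa - p^{w-k+1}mc = nc - nc = 0$, while $b \neq 0$ (otherwise $a = p^{w-k}mc \in C$). The preliminary observation then forces $b \in C$, and hence $a = b + p^{w-k}mc \in C$, contradicting the choice of $a$.

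The only genuine content is the algebraic manipulation in the second case: identifying the correct multiple of $c$ whose subtraction from $a$ leaves an element of order $p$. Everything else is a formal application of the hypothesis that $C$ meets every nontrivial subgroup nontrivially.
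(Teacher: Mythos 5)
Your proof is correct, and the central arithmetic step --- expressing $pa = nc$, reading off the $p$-adic valuation $n = p^{w-k+1}m$ with $m$ coprime to $p$, and subtracting $p^{w-k}mc$ from $a$ --- is the same calculation the paper performs, with the paper's $up^{j-i}c$ playing the role of your $p^{w-k}mc$. The organizational difference is that you argue by minimal counterexample, which reduces you to the case where $pa$ already lies in $C$; the paper instead handles an arbitrary nontrivial $a$ in one pass, allowing $p^i a \in C$ for a general $i$ (the order of $a+C$ in $A/C$), and concludes by showing the adjusted element $b$ generates a subgroup meeting $C$ trivially and is therefore zero. Your version keeps the exponent bookkeeping simpler (you only ever annihilate $b$ with a single factor of $p$, and you only need the easy corollary that order-$p$ elements lie in $C$), while the paper's version avoids the appeal to well-ordering and proves directly that every element lies in $C$ rather than arguing by contradiction. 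Both arguments lean essentially on the hypothesis that every element's order divides $p^w$: in your case this makes the exponent $w-k$ nonnegative, and in the paper's it yields the inequality $i \leq j$ with the same effect.
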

\begin{proof}
 Consider a nontrivial element $a\in A$.  The order of $a+C$ in $A/C$
 will then be $p^i$ for some $i\leq w$.  We then have $p^ia=mc$ for
 some $m\in\Z$, and the assumption $(\Z a)\cap C\neq 0$ means that
 $mc\neq 0$.  We can thus write $mc=up^jc$ for some $j<w$ and some $u$
 such that $u\neq 0\pmod{p}$.  It follows that the order of $mc$ in
 $A$ is $p^{w-j}$, and thus that the order of $a$ in $A$ is
 $p^{w-j+i}$.  By assumption, this is at most $p^w$, so $i\leq j$.  We
 can thus put $b=a-up^{j-i}c$, and observe that $p^ib=0$.  Now $b$ is
 congruent to $a$ mod $C$, so it again has order $p^i$ in $A/C$, but
 $p^ib$ is already zero in $A$, so $(\Z b)\cap C=0$.  As $C$ meets
 every nontrivial subgroup, we must have $\Z b=0$, so
 $a=up^{j-i}c\in C$.  This means that $A=C$ as claimed.
\end{proof}

\begin{corollary}\label{cor-split-once}
 Let $A$ be an abelian group of order $p^v$, and suppose that the
 largest order of any element of $A$ is $p^w$.  Then
 $A\simeq B\oplus(\Z/p^w)$ for some subgroup $B$ of order $p^{v-w}$.  
\end{corollary}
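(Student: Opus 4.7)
The plan is to pick $c\in A$ of maximal order $p^w$, set $C=\Z c\simeq\Z/p^w$, and look for a complementary summand $B$ by a maximality argument, then reduce the assertion $A=B\oplus C$ to the preceding lemma applied to $A/B$.

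First I would let $\mathcal{F}$ denote the collection of subgroups $B\leq A$ with $B\cap C=0$; since $A$ is finite, $\mathcal{F}$ has a member of maximal order (the zero subgroup being at worst a starting candidate), and I would fix such a maximal $B$. The main claim is that $A=B+C$, and since $B\cap C=0$ this will yield $A=B\oplus C\simeq B\oplus(\Z/p^w)$; the order count $|A|=|B|\,|C|$ then forces $|B|=p^{v-w}$ as required.

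To establish $A=B+C$, I would pass to the quotient $\ov{A}=A/B$ and write $\ov{c}$ for the image of $c$. The element $\ov{c}$ still has order $p^w$: any relation $p^i\ov{c}=0$ means $p^ic\in B\cap C=0$, forcing $p^w\mid p^i$. Every element of $\ov{A}$ has order dividing $p^w$ because this is already true in $A$ by hypothesis. And I would verify that $\Z\ov{c}$ meets every nontrivial subgroup $\ov{H}\leq\ov{A}$: lifting $\ov{H}$ to a subgroup $H\supsetneq B$ of $A$, if $\ov{H}\cap\Z\ov{c}=0$ then for any $h\in H\cap C$ we get $\ov{h}\in\ov{H}\cap\Z\ov{c}=0$, so $h\in B\cap C=0$, meaning $H\cap C=0$, which contradicts the maximality of $B$ since $H$ is strictly larger. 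The preceding lemma now applies to $\ov{A}$ (whose order is a power of $p$, dividing $p^v$) with the distinguished element $\ov{c}$, and concludes $\ov{A}=\Z\ov{c}$, i.e.\ $A=B+C$.

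The only real obstacle is the verification that $\Z\ov{c}$ meets every nontrivial subgroup of $A/B$, which is exactly where the maximality of $B$ gets used; everything else is routine bookkeeping. With the three hypotheses of the lemma checked, the corollary follows at once.
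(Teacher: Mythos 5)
Your proof is correct and takes essentially the same approach as the paper: choose a maximal complement $B$ to $C=\Z c$, pass to $\ov{A}=A/B$, verify the three hypotheses of the preceding lemma for $\ov{c}$, and conclude $\ov{A}=\Z\ov{c}$. The only cosmetic difference is that you check the nontrivial-intersection condition by contradiction at the level of the lifted subgroup $H$, whereas the paper argues directly at the level of a single element $\ov{a}$; both versions hinge on exactly the same use of the maximality of $B$.
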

\begin{proof}
 Choose an element $c$ of order $p^w$, and let $C$ be the subgroup
 that it generates, so $C\simeq\Z/p^w$.  Among the subgroups $B$ with
 $B\cap C=0$, choose one of maximal order.  Then put $\ov{A}=A/B$, and
 let $\ov{C}$ be the image of $C$ in $\ov{A}$, which is isomorphic to
 $C$ because $B\cap C=0$.  It will suffice to prove that $A=B+C$, or
 equivalently that $\ov{A}=\ov{C}$.  By the lemma, we need only check
 that $\ov{C}$ has nontrivial intersection with every nontrivial
 subgroup of $\ov{A}$.  Consider a nonzero element $\ov{a}\in\ov{A}$,
 and choose a representing element $a\in A\sm B$.  Then $\Z a+B$ is
 strictly larger than $B$ and must meet $C$ nontrivially, so there
 exists $k\in\Z$ and $b\in B$ with $ka+b\in C\sm\{0\}$.  If $ka+b$
 were in $B$ it would give a nontrivial element of $B\cap C$, contrary
 to assumption.  It follows that $k\ov{a}$ is nontrivial in $\ov{A}$
 and lies in $C$, as required.
\end{proof}

\begin{corollary}\label{cor-local-split}
 Let $A$ be an abelian group of order $p^v$.  Then $A$ is isomorphic
 to $\bigoplus_{i=1}^r\Z/p^{w_i}$ for some list $w_1,\dotsc,w_r$ of
 positive integers with $\sum_iw_i=v$.
\end{corollary}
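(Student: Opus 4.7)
The plan is to induct on $v$, with Corollary~\ref{cor-split-once} providing the essential splitting step at each stage.

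For the base case $v=0$, the group $A$ is trivial and we take the empty sum ($r=0$). For the inductive step, assume the result for all groups of order $p^u$ with $u<v$, and let $A$ have order $p^v>1$. Since $A$ is a nontrivial finite abelian group of $p$-power order, it contains a nonzero element, whose order is a positive power of $p$; let $p^w$ be the largest order of any element of $A$, so $w\geq 1$. Corollary~\ref{cor-split-once} then produces a decomposition $A\simeq B\oplus(\Z/p^w)$ with $|B|=p^{v-w}$.

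Since $w\geq 1$, we have $v-w<v$, so the induction hypothesis applies to $B$, giving $B\simeq\bigoplus_{i=1}^{r-1}\Z/p^{w_i}$ with $\sum_{i=1}^{r-1}w_i = v-w$. Setting $w_r=w$, we obtain
\[ A \simeq \bigoplus_{i=1}^{r-1}\Z/p^{w_i} \oplus \Z/p^{w_r} = \bigoplus_{i=1}^r\Z/p^{w_i}, \]
and $\sum_{i=1}^r w_i = (v-w) + w = v$, as required.

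There is no real obstacle here beyond bookkeeping: the two genuine ingredients (the splitting off of a maximal-order cyclic summand, and the fact that the complementary summand has strictly smaller order, which lets the induction terminate) are both already packaged into Corollary~\ref{cor-split-once}. The only minor care needed is to observe that $w\geq 1$ so that the inductive hypothesis is being applied to a strictly smaller group.
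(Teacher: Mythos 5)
Your proof is correct and is precisely the ``evident induction'' that the paper invokes, just spelled out in full: you split off a maximal-order cyclic summand via Corollary~\ref{cor-split-once} and apply the inductive hypothesis to the complement of strictly smaller order. Nothing differs from the paper's approach.
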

\begin{proof}
 This follows by an evident induction from
 Corollary~\ref{cor-split-once}. 
\end{proof}

\begin{definition}\label{defn-f-p-k}
 Let $A$ be a finite abelian group.  For any prime $p$ and positive
 integer $k$, we put 
 \[ F_p^k(A) = \{a\in p^{k-1}A\st pa=0\}. \]
 This is a finite abelian group of exponent $p$, so it has order $p^v$
 for some $v$.  We define $f_p^k(A)$ to be this $v$, and we also put
 $g_p^k(A)=f_p^k(A)-f_p^{k+1}(A)$.  
\end{definition}

\begin{proposition}\label{prop-f-p-k}
 Let $A$ be a finite abelian group.
 \begin{itemize}
  \item[(a)] If $A\simeq A'$, then $F_p^k(A)\simeq F_p^k(A')$ for all
   $p$ and $k$, so $f_p^k(A)=f_p^k(A')$ and $g_p^k(A)=g_p^k(A')$.
  \item[(b)] If $A=B\oplus C$ then $F_p^k(A)=F_p^k(B)\oplus F_p^k(C)$,
   so $f_p^k(A)=f_p^k(B)+f_p^k(C)$ and $g_p^k(A)=g_p^k(B)+g_p^k(C)$.
  \item[(c)] If $A$ has order not divisible by $p$, then $F_p^k(A)=0$
   and so $f_p^k(A)=g_p^k(A)=0$.
  \item[(d)] Suppose that $A$ has a decomposition as a sum of
   subgroups $\Z/p_i^{v_i}$ (with $v_i>0$).  Then $g_p^k(A)$ is the
   number of times that $\Z/p^k$ occurs in the decomposition.
 \end{itemize}
\end{proposition}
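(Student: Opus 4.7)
The plan is to handle~(a)--(c) directly from the definitions, and then to reduce~(d) to a single calculation on a cyclic $p$-group.

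For~(a), an isomorphism $\phi\:A\to A'$ commutes with multiplication by any integer, so $\phi(p^{k-1}A)=p^{k-1}A'$ and $\phi$ sends $p$-torsion to $p$-torsion; hence it restricts to an isomorphism $F_p^k(A)\to F_p^k(A')$. Taking orders gives $f_p^k(A)=f_p^k(A')$, and then $g_p^k(A)=g_p^k(A')$ follows. For~(b), observe that $p^{k-1}(B\oplus C)=p^{k-1}B\oplus p^{k-1}C$ because multiplication by $p^{k-1}$ acts componentwise; further, an element $(b,c)$ of this subgroup is killed by $p$ iff $pb=0$ and $pc=0$. Thus $F_p^k(B\oplus C)=F_p^k(B)\oplus F_p^k(C)$, the orders multiply, and so the exponents $f_p^k$ add, giving the identities for $f_p^k$ and $g_p^k$. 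For~(c), suppose $p\nmid|A|$. By Example~\ref{eg-finite-torsion} we have $|A|\cdot a=0$ for all $a\in A$; as $\gcd(p,|A|)=1$, choose $r,s\in\Z$ with $rp+s|A|=1$, so that any $a$ with $pa=0$ satisfies $a=rpa+s|A|a=0$. Hence $F_p^k(A)=0$.

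For~(d), write $A=\bigoplus_i\Z/p_i^{v_i}$ and apply~(b) inductively (together with~(a) to identify $A$ with this direct sum) to obtain $g_p^k(A)=\sum_ig_p^k(\Z/p_i^{v_i})$; by~(c) the summands with $p_i\neq p$ contribute zero, so it suffices to compute $g_p^k(\Z/p^v)$ for $v\geq 1$. Inside $\Z/p^v$ the subgroup $p^{k-1}(\Z/p^v)$ is cyclic, generated by the class of $p^{k-1}$, of order $p^{v-k+1}$ when $1\leq k\leq v$ and trivial when $k>v$. The $p$-torsion of a nontrivial cyclic $p$-group is cyclic of order $p$, so $F_p^k(\Z/p^v)$ has order $p$ when $1\leq k\leq v$ and is trivial when $k>v$; consequently $f_p^k(\Z/p^v)=1$ for $k\leq v$ and $f_p^k(\Z/p^v)=0$ for $k>v$, yielding $g_p^k(\Z/p^v)=\delta_{k,v}$. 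Summing, $g_p^k(A)$ counts exactly those indices $i$ with $p_i=p$ and $v_i=k$, i.e.\ the number of summands isomorphic to $\Z/p^k$.

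The only real bookkeeping issue is ensuring in~(b) that the direct-sum splitting of $F_p^k$ produces the additive law $f_p^k(B\oplus C)=f_p^k(B)+f_p^k(C)$ (rather than the multiplicative law on orders); once this logarithmic additivity is in place, part~(d) collapses to the elementary computation of the $p$-torsion inside $p^{k-1}\cdot\Z/p^v$ carried out above.
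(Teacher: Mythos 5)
Your proof is correct and follows essentially the same route as the paper: parts (a)--(c) are handled directly from the definitions, and part (d) is reduced via (a)--(c) to computing $f_p^k(\Z/p^v)$, which is $1$ for $k\leq v$ and $0$ for $k>v$, giving $g_p^k(\Z/p^v)=\delta_{k,v}$. The only difference is that you spell out the routine verifications of (a)--(c) that the paper leaves to the reader.
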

\begin{proof}
 Parts~(a) to~(c) are straightforward and are left to the reader.
 Given these, part~(d) reduces to the claim that $g_p^k(\Z/p^j)$ is
 one when $j=k$, and zero otherwise.  One can see from the definitions
 that $f_p^k(\Z/p^j)$ is one when $k\leq j$, and zero when $k>j$; the
 claim follows easily from this.
\end{proof}

\begin{proof}[Proof of Theorem~\ref{thm-fin-gen}]
 Let $A$ be a finitely generated abelian group.
 Corollary~\ref{cor-free-summand} and subsequent remarks show that
 $A\simeq\Z^s\oplus\bigoplus_{i=1}^m\tors_{p_i}(A)$ for some finite
 list of primes $p_i$.  After applying Corollary~\ref{cor-local-split}
 to each of the groups $\tors_{p_i}(A)$, we get the claimed splitting
 of $A$ as a sum of copies of $\Z$ and $\Z/p_j^{w_j}$.  The number $s$
 is the rank of the quotient group $A/\tors(A)$, which is well-defined
 by Lemma~\ref{lem-rank}.  We can also apply the last part of
 Proposition~\ref{prop-f-p-k} to $\tors(A)$ to see that the number of
 summands of each type is independent of the choice of splitting.
\end{proof}

\section{Free abelian groups and their subgroups}

In various places we have already used the free abelian group $\Z[I]$
generated by a set $I$.  We start with a more careful formulation of
this construction.  One approach is to define 
$\Z[I]=\bigoplus_{i\in I}\Z$ as in Definition~\ref{defn-coproduct}.
That is essentially what we will do, but we will spell out some
details. 

\begin{definition}\label{defn-MapIZ}
 Let $I$ be any set.  We write $\Map(I,\Z)$ for the set of all maps
 $u\:I\to\Z$.  These form an abelian group under pointwise addition.
 For any map $u\:I\to\Z$, the \emph{support} is the set
 \[ \supp(u) = \{i\in I \st u(i)\neq 0\} \sse I. \]
 We put 
 \[ \Map_0(I,\Z) = \{u\:I\to\Z\st \supp(u) \text{ is finite } \}. \]
 It is easy to see that $\supp(u\pm v)\sse\supp(u)\cup\supp(v)$, and
 thus that $\Map_0(I,\Z)$ is a subgroup of $\Map(I,\Z)$.

 Next, for any $i\in I$ we define $\dl_i\:I\to\Z$ by 
 \[ \dl_i(j) = \begin{cases} 1 & \text{ if } j = i \\
                             0 & \text{ if } j \neq i. \end{cases}
 \]
 Note that $\supp(\dl_i)=\{i\}$ so $\dl_i\in\Map_0(I,\Z)$.  
\end{definition}
\begin{remark}
 If $I$ is a finite set with $n$ elements then we see that
 $\Map_0(I,\Z)=\Map(I,\Z)\simeq\Z^n$.  The situation is a little more
 subtle when $I$ is infinite.
\end{remark}

The following lemma shows that $\Map_0(I,\Z)$ is generated freely, in
a certain sense, by the elements $\dl_i$. 
\begin{lemma}
 Let $A$ be an abelian group.  Then for any function $f\:I\to A$ there
 is a unique homomorphism $\ov{f}\:\Map_0(I,\Z)\to A$ such that
 $\ov{f}(\dl_i)=f(i)$ for all $i\in I$.
\end{lemma}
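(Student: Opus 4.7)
The plan is to reduce to Proposition~\ref{prop-categorical-coproduct}. First I would observe that $\Map_0(I,\Z)$ is literally the coproduct $\bigoplus_{i\in I}\Z$ from Definition~\ref{defn-coproduct}: an element $u\in\Map_0(I,\Z)$ is a family $(u(i))_{i\in I}$ of integers with finite support, and under this identification the inclusion $\iota_k\:\Z\to\Map_0(I,\Z)$ sends $1$ to $\dl_k$ (and more generally $n\in\Z$ to $n\dl_k$, where $n\dl_k$ means the function $j\mapsto n\dl_k(j)$).

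Next, given $f\:I\to A$, I would define homomorphisms $g_i\:\Z\to A$ by $g_i(n)=n\,f(i)$. Proposition~\ref{prop-categorical-coproduct} then yields a unique homomorphism $\ov{f}\:\Map_0(I,\Z)\to A$ with $\ov{f}\circ\iota_i=g_i$ for every $i\in I$. In particular $\ov{f}(\dl_i)=\ov{f}(\iota_i(1))=g_i(1)=f(i)$, so existence is immediate, and unwinding the construction gives the explicit formula
\[ \ov{f}(u) = \sum_{i\in\supp(u)} u(i)\,f(i). \]

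The only step that requires a moment's thought is uniqueness, because the given hypothesis on $\ov{f}$ is the pointwise condition $\ov{f}(\dl_i)=f(i)$ rather than the composite condition $\ov{f}\circ\iota_i=g_i$ used in Proposition~\ref{prop-categorical-coproduct}. To bridge this, I would note that any homomorphism $\Z\to A$ is determined by its value at $1$, so if $\ov{f}$ is a homomorphism with $\ov{f}(\dl_i)=f(i)$, then $\ov{f}\circ\iota_i$ is the unique homomorphism $\Z\to A$ sending $1$ to $f(i)$, namely $g_i$; thus the two formulations agree and the uniqueness clause of Proposition~\ref{prop-categorical-coproduct} applies. (Alternatively, one can argue directly: each $u$ can be written as the finite sum $u=\sum_{i\in\supp(u)}u(i)\dl_i$, so additivity forces $\ov{f}(u)=\sum_{i\in\supp(u)}u(i)\,\ov{f}(\dl_i)=\sum_{i\in\supp(u)}u(i)f(i)$.) There is no real obstacle here; the lemma is essentially a restatement of the universal property of the coproduct specialised to $A_i=\Z$.
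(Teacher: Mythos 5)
Your proof is correct, but it takes a different route from the paper's. You reduce the lemma to the universal property of coproducts (Proposition~\ref{prop-categorical-coproduct}), observing that $\Map_0(I,\Z)=\bigoplus_{i\in I}\Z$ with $\iota_i(1)=\dl_i$, and you correctly handle the mild mismatch between the pointwise condition $\ov{f}(\dl_i)=f(i)$ and the composite condition $\ov{f}\circ\iota_i=g_i$ by noting that a homomorphism out of $\Z$ is determined by its value at $1$. The paper instead gives a direct, self-contained verification: it writes down the formula $\ov{f}(u)=\sum_{i\in\supp(u)}u(i)\,f(i)$, checks additivity by enlarging the index set to $\supp(u)\cup\supp(v)$, and proves uniqueness by expanding $u$ as a finite $\Z$-linear combination of the $\dl_i$ — essentially the argument you sketch in your final parenthetical. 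This is a deliberate choice by the author, who remarks just before Definition~\ref{defn-MapIZ} that $\Z[I]=\bigoplus_{i\in I}\Z$ ``is essentially what we will do, but we will spell out some details.'' Your reduction buys concision and makes the conceptual content (that this is just the coproduct universal property specialised to $A_i=\Z$) explicit; the paper's direct verification is more self-contained and avoids asking the reader to re-derive the identification $\Map_0(I,\Z)\simeq\bigoplus_{i\in I}\Z$. Either is a complete proof.
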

\begin{proof}
 We put 
 \[ \ov{f}(u) = \sum_{i\in\supp(u)} u(i)\, f(i). \]
 The terms are meaningful, because each $u(i)$ is in $\Z$ and each
 $f(i)$ is in $A$ so we can multiply to get an element of $A$.  The
 sum is meaningful because $u\in\Map_0(I,\Z)$, so $\supp(u)$ is
 finite, so there are only finitely many terms to add.  More
 explicitly, if $\supp(u)=\{i_1,\dotsc,i_r\}$ and $u(i_t)=n_t\in\Z$
 for all $t$ then 
 \[ \ov{f}(u) = n_1 f(i_1) + \dotsb + n_r f(i_r) \in A. \]

 Note that it would be harmless to replace $\supp(u)$ by any finite
 set $J$ with $\supp(u)\sse J\sse I$; this would introduce some extra
 terms, but they would all be zero.  After taking
 $J=\supp(u)\cup\supp(v)$ we can check that
 $\ov{f}(u+v)=\ov{f}(u)+\ov{f}(v)$, so $\ov{f}$ is a homomorphism.
 From the definitions it is clear that $\ov{f}(\dl_i)=f(i)$.  Now let
 $\al\:\Map_0(I,\Z)\to A$ be another homomorphism with
 $\al(\dl_i)=f(i)$.  Put $\bt=\al-\ov{f}$, so $\bt(\dl_i)=0$ for all
 $i$.  It is not hard to see that a general element $u$ as above can
 be expressed in the form 
 \[ u = n_1 \dl_{i_1} + \dotsb + n_r \dl_{i_r}. \]
 It follows that 
 \[ \bt(u) = n_1 \bt(\dl_{i_1}) + \dotsb + n_r \bt(\dl_{i_r}) = 
     n_1 . 0 + \dotsb + n_r . 0 = 0.
 \]
 This shows that $\bt=0$, so $\al=\ov{f}$ as required.
\end{proof}

The notation used so far is convenient for giving the definition, and
the proof of the above freeness property, but not for the
applications.  We thus introduce the following alternative:
\begin{definition}\label{defn-ZI}
 We write $\Z[I]$ for $\Map_0(I,\Z)$, and $[i]$ for $\dl_i$.  We say
 that an abelian group $A$ is \emph{free} if it is isomorphic to
 $\Z[I]$ for some $I$.
\end{definition}

The freeness property now takes the following form:
\begin{lemma}\label{lem-free-property}
 Let $A$ be an abelian group.  Then for any function $f\:I\to A$ there
 is a unique homomorphism $\ov{f}\:\Z[I]\to A$ such that
 $\ov{f}([i])=f(i)$ for all $i\in I$. 
\end{lemma}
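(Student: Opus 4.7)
The plan is to observe that this lemma is nothing more than a notational restatement of the previous lemma. By Definition~\ref{defn-ZI}, the group $\Z[I]$ is \emph{defined} to be $\Map_0(I,\Z)$, and the symbol $[i]$ is \emph{defined} to be $\dl_i$. So I would simply invoke the previous lemma, substitute these definitions, and read off the conclusion.

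Concretely, the previous lemma supplies, for any function $f\:I\to A$, a unique homomorphism $\ov{f}\:\Map_0(I,\Z)\to A$ with $\ov{f}(\dl_i)=f(i)$ for all $i\in I$, given by the explicit formula
\[ \ov{f}(u) = \sum_{i\in\supp(u)} u(i)\,f(i). \]
Rewriting this under the identifications $\Map_0(I,\Z)=\Z[I]$ and $\dl_i=[i]$ gives a unique homomorphism $\ov{f}\:\Z[I]\to A$ satisfying $\ov{f}([i])=f(i)$, which is exactly the statement of Lemma~\ref{lem-free-property}.

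There is no real obstacle here; the lemma is being recorded precisely so that later sections can use the cleaner $\Z[I]$/$[i]$ notation in place of the more cumbersome $\Map_0(I,\Z)$/$\dl_i$ notation, and its proof consists of nothing more than unwinding Definition~\ref{defn-ZI}.
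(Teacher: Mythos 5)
Your proposal is correct and matches the paper's intent exactly: the paper gives no separate proof of Lemma~\ref{lem-free-property} precisely because it is, as you say, a verbatim restatement of the preceding lemma under the notational abbreviations $\Z[I]=\Map_0(I,\Z)$ and $[i]=\dl_i$ introduced in Definition~\ref{defn-ZI}.
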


One key result is as follows:
\begin{theorem}\label{thm-hereditary}
 If $A$ is a free abelian group, then every subgroup of $A$ is also
 free. 
\end{theorem}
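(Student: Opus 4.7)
The plan is to reduce to the case $A = \Z[I]$ and then carry out a transfinite induction on a well-ordering of $I$, imitating the filtration argument in Proposition~\ref{prop-hereditary-finite}. The finite-rank case is already handled there, so the real content is the passage to arbitrary index sets.

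First I would well-order $I$, so that we may identify it with an ordinal $\kappa$ and write $I=\{i_\al:\al<\kappa\}$. For each ordinal $\al\leq\kappa$ set $A_\al=\Z[\{i_\bt:\bt<\al\}]\leq A$, so $A_0=0$, $A_\kappa=A$, and $A_{\al+1}/A_\al\simeq\Z$. A key observation, used at limit stages, is that for any limit ordinal $\lm\leq\kappa$ we have $A_\lm=\bigcup_{\bt<\lm}A_\bt$, because every element of $\Z[I]$ has finite support and the maximum index appearing in such a support is below~$\lm$.

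Now let $B\leq A$ and put $B_\al=B\cap A_\al$. The composite $B_{\al+1}\hookrightarrow A_{\al+1}\era A_{\al+1}/A_\al\simeq\Z$ has kernel exactly $B_\al$, so $B_{\al+1}/B_\al$ embeds in $\Z$ and is therefore either $0$ or isomorphic to $\Z$ by Lemma~\ref{lem-pid}. Let $S=\{\al<\kappa:B_{\al+1}>B_\al\}$. For each $\al\in S$ choose $b_\al\in B_{\al+1}$ mapping to a generator of $B_{\al+1}/B_\al$; then the map $s\:\Z\to B_{\al+1}$ with $s(1)=b_\al$ splits the short exact sequence $B_\al\mra B_{\al+1}\era B_{\al+1}/B_\al$, and Proposition~\ref{prop-half-split}(b) combined with Proposition~\ref{prop-split-sum} gives $B_{\al+1}=B_\al\oplus\Z b_\al$.

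I would then prove by transfinite induction on $\al\leq\kappa$ that the family $\{b_\bt:\bt\in S,\ \bt<\al\}$ is a $\Z$-basis of $B_\al$. The base case $\al=0$ is vacuous; the successor case is immediate from the splitting above. Taking $\al=\kappa$ yields a basis of $B$, completing the proof. The main obstacle is the limit-ordinal step: one must check both that $B_\lm=\bigcup_{\bt<\lm}B_\bt$ (which follows from the corresponding identity for $A_\lm$) and that a nested union of free groups whose bases extend one another is itself free on the union of these bases. Spanning is straightforward since any $b\in B_\lm$ lies in some $B_\bt$ with $\bt<\lm$; linear independence reduces to the fact that any finite $\Z$-linear relation among the $b_\gm$ already takes place in a single $B_\bt$, where independence holds by induction.
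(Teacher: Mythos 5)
Your proof is correct and follows essentially the same strategy as the paper: well-order the index set, filter the subgroup by $B_\al = B\cap\Z[\{i_\bt : \bt<\al\}]$, split off a copy of $\Z$ at each successor stage where the filtration jumps, and use the finite-support property at limit stages. You streamline by omitting the paper's $T_k$/$g_k$ machinery, which the paper only needs for the finer normalisation result in Proposition~\ref{prop-adapted-normalised}, not for Theorem~\ref{thm-hereditary} itself.
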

This is a generalisation of Proposition~\ref{prop-hereditary-finite},
which covered the case where $A$ is finitely generated.  Below we will
state and prove some more refined statements.  To prove
Theorem~\ref{thm-hereditary} itself, we can use
Remark~\ref{rem-adapted-choice} and the case $k=\top$ of 
Lemma~\ref{lem-adapted-basis} (in the notation of
Definition~\ref{defn-hereditary-aux}).

To extend the proof of Proposition~\ref{prop-hereditary-finite} to
cover infinitely generated groups, we need two ingredients.  Firstly,
we need to modify the inductive argument so that it works for a
suitable class of infinite ordered sets.  Next, we need to show that
any set can be ordered in the required way.  The precise structure
that we need is as follows:

\begin{definition}\label{defn-well-order}
 A \emph{well-ordering} on a set $I$ is a relation on $I$ (denoted by
 $i\leq j$) such that 
 \begin{itemize}
  \item[(a)] For all $i\in I$ we have $i\leq i$.
  \item[(b)] For all $i,j\in I$ we have either $i\leq j$ or $j\leq i$,
   and if both hold then $i=j$.
  \item[(c)] For all $i,j,k\in I$, if $i\leq j$ and $j\leq k$ then
   $i\leq k$.
  \item[(d)] For any nonempty subset $J\sse I$ there is an element
   $j_0\in J$ such that $j_0\leq j$ for all $j\in J$.  (In other
   words, $j_0$ is smallest in $J$.)
 \end{itemize}
\end{definition}

\begin{remark}\label{rem-well-order}
 We have stated the axioms in a form that is conceptually natural but
 inefficient.  Axiom~(a) follows from~(d) by taking $J=\{i\}$, the
 first half of~(b) follows from~(d) by taking $J=\{i,j\}$, and with a
 little more argument one can deduce~(c) by taking $J=\{i,j,k\}$ and
 appealing to the second half of~(b).  Thus, we really only need~(d)
 together with the second half of~(b).
\end{remark}

\begin{example}\label{eg-well-order}
 The obvious ordering of $\N$ is a well-ordering, as is the obvious
 ordering on $\N\cup\{\infty\}$.  The obvious ordering on $\Z$ is not
 a well-ordering, because the whole set does not have a smallest
 element.  We can choose a bijection $f\:\N\to\Z$ (for example, by
 setting $f(2n)=n$ and $f(2n+1)=-n-1$) and use this to transfer the
 standard ordering of $\N$ to a nonstandard ordering of $\Z$ that is a
 well-ordering.  Alternatively, we can specify a well-ordering on $\Z$
 by the rules
 \[ 0 < 1 < 2 < 3 < 4 < \dotsb < -1 < -2 < -3 < \dotsb. \]
 By constructions such as these, one can give explicit well-orderings
 of most naturally occurring countable sets.
\end{example}

\begin{remark}\label{rem-successor}
 Let $I$ be a well-ordered set.  If $I$ is nonempty then it must have
 a smallest element, which we denote by $\bot_I$ or just $\bot$.  Now
 suppose that $i\in I$ and $i$ is not maximal, so the set
 $I_{>i}=\{j\in I\st j>i\}$ is nonempty.  Then $I_{>i}$ must have a
 smallest element.  We denote this by $s(i)$, and call it the
 \emph{successor} of $i$.  We say that an element $j\in I$ \emph{is a
  successor} if $j=s(i)$ for some $i$.  For example, in
 $\N\cup\{\infty\}$ the elements $0$ and $\infty$ are not successors,
 but all other elements are successors.
\end{remark}

\begin{theorem}\label{thm-well-order}
 Every set admits a well-ordering.
\end{theorem}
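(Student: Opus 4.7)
The plan is to deduce this from Zorn's Lemma (equivalently, from the Axiom of Choice) applied to the poset of partial well-orderings on subsets of the given set $X$.

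First I would set up the poset. Let $P$ be the collection of pairs $(A,\leq_A)$, where $A\sse X$ and $\leq_A$ is a well-ordering of $A$ in the sense of Definition~\ref{defn-well-order}. I partially order $P$ by declaring $(A,\leq_A)\preceq(B,\leq_B)$ when $A\sse B$, the restriction of $\leq_B$ to $A$ agrees with $\leq_A$, and $A$ is a \emph{downward-closed initial segment} of $B$, meaning that whenever $b\in B$ and $a\in A$ with $b\leq_B a$ we have $b\in A$. The downward-closedness condition is the key point that will make everything fit together.

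Next I would check that every chain in $P$ has an upper bound. Given a chain $\CC=\{(A_\lm,\leq_\lm)\}$, I form the union $U=\bigcup_\lm A_\lm$ and define $\leq_U$ by declaring $x\leq_U y$ iff $x\leq_\lm y$ for some (equivalently, any) $A_\lm$ containing both $x$ and $y$; the initial-segment condition guarantees that this is unambiguous. The axioms (a)--(c) of Definition~\ref{defn-well-order} transfer straightforwardly. For axiom~(d), given a nonempty $J\sse U$, I pick any $j\in J$, choose some $A_\lm$ containing $j$, and observe that $J\cap A_\lm$ is a nonempty subset of the well-ordered set $A_\lm$; its smallest element $j_0$ is in fact smallest in all of $J$, because for any other $j'\in J$ lying in some $A_\mu\supseteq A_\lm$, the downward-closedness of $A_\lm$ inside $A_\mu$ forces $j_0\leq_\mu j'$ (otherwise $j'$ would lie in $A_\lm$ and be smaller than $j_0$ there). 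Thus $(U,\leq_U)$ is a well-ordered upper bound.

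By Zorn's Lemma I then obtain a maximal element $(M,\leq_M)\in P$, and it remains to prove $M=X$. Suppose for contradiction that there exists $x\in X\sm M$. I extend $\leq_M$ to $M\cup\{x\}$ by declaring $m\leq x$ for every $m\in M$; it is immediate that this gives a well-ordering (any nonempty subset either meets $M$, where a least element exists, or equals $\{x\}$), and that $M$ is a downward-closed initial segment of $M\cup\{x\}$. This contradicts maximality, so $M=X$ and the given set is well-ordered.

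The main obstacle is the chain-upper-bound step: the verification that the union of the $A_\lm$, with the evident relation, is actually well-ordered requires genuine use of the initial-segment condition in $\preceq$, without which one could not coherently compare elements coming from different members of the chain nor locate minima across the whole union. Everything else is a short bookkeeping argument once Zorn's Lemma is invoked.
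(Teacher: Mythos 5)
Your proof is mathematically correct, but it takes a genuinely different route from the paper, and in the paper's own context there is an issue of logical dependencies worth flagging. You invoke Zorn's Lemma, whereas the paper deliberately avoids it: it states explicitly that ``in the literature it is more common to do this by proving Zorn's Lemma as an intermediate step, but here we have chosen to bypass that,'' and instead works directly from the Axiom of Choice in the form of a choice function (Definition~\ref{defn-choice-function}). The paper's argument defines an ``expander'' $e$ on $P(I)$ using the choice function, introduces the family of \emph{compulsory} subsets (those lying in every saturated family), shows this family is totally ordered and then well-ordered by inclusion, and finally transports this well-ordering to $I$ itself. That is essentially a Bourbaki--Witt style transfinite-recursion argument done by hand, and it is exactly the kind of argument one would need anyway to prove Zorn's Lemma from choice functions. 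So the paper's version is self-contained given only choice functions, while yours is shorter and cleaner but presupposes a result the paper has not established; to splice your proof into this document you would first need to supply a proof of Zorn's Lemma, which would reintroduce the machinery the paper chose to make explicit. Within your own argument, the details are right: the partial order on partial well-orderings must use the initial-segment (downward-closedness) condition, not mere extension, and your verification that a union over a chain is still well-ordered correctly leans on exactly that point. One small remark: when you extend a maximal $(M,\leq_M)$ by a new element $x$, you should also note that $M$ remains downward closed in $M\cup\{x\}$ because $x$ is placed strictly above every element of $M$; you assert this, and it is true, but it is the step that uses $x\notin M$.
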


The proof will be given after some preliminaries.

There is no known well-ordering of $\R$, and indeed it is probably not
possible to specify a well-ordering concretely, although the author
does not know of any precise theorems to that effect.  Similarly,
there is no known well-ordering of the set of subsets of $\N$, or of
most other naturally occurring uncountable sets.  The problem is that
one needs to make an infinite number of arbitrary choices, which
cannot be done explicitly.  However, we shall assume the Axiom of
Choice, a standard principle of Set Theory, which says that such
choices are nonetheless possible.  More precisely, we shall assume
that every set has a choice function, in the following sense:

\begin{definition}\label{defn-choice-function}
 Let $I$ be a set, and let $P'(I)$ denote the set of nonempty subsets
 of $I$.  A \emph{choice function} for $I$ is a function
 $c\:P'(I)\to I$ such that $c(J)\in J$ for all $J\in P'(I)$.  (In
 other words, $c(J)$ is a ``chosen'' element of $J$.)
\end{definition}

If $I$ is well-ordered, we can define a choice function by taking
$c(J)$ to be the smallest element of $J$.  Conversely, if we are given
a choice function then we can use it to construct a well-ordering, as
we now explain.  In the literature it is more common to do this by
proving Zorn's Lemma as an intermediate step, but here we have chosen
to bypass that.

\begin{proof}[Proof of Theorem~\ref{thm-well-order}]
 Let $I$ be a set, and let $c$ be a choice function for $I$.  Let
 $P(I)$ be the set of all subsets of $I$, and put
 $P^*(I)=P(I)\sm\{I\}$.  Define $d\:P^*(I)\to I$ by $d(J)=c(I\sm J)$,
 and then define $e\:P(I)\to P(I)$ by 
 \[ e(J) = \begin{cases} 
            J\cup\{d(J)\} & \text{ if } J \neq I \\
            I & \text{ if } J=I.
           \end{cases}
 \]
 We call this the \emph{expander} function.  Clearly we have
 $J\sse e(J)$ for all $J$, with equality iff $J=I$.  Now say that a
 subset $\CA\sse P(I)$ is \emph{saturated} if 
 \begin{itemize}
  \item[(a)] Whenever $J\in\CA$, we also have $e(J)\in\CA$.
  \item[(b)] For any family of sets in $\CA$, the union of that family
   is also in $\CA$.
 \end{itemize}
 We say that a set $J$ is \emph{compulsory} if it lies in every
 saturated family.  For example, by applying~(b) to the empty family
 we see that the set $J_0=\emptyset$ is compulsory.  It follows
 using~(a) that the sets $J_n=e^n(\emptyset)$ are compulsory for all
 $n$.  Axiom~(b) then tells us that the set
 $J_\om=\bigcup_{n=0}^\infty J_n$ is compulsory, as is the set
 $J_{\om+1}=e(J_\om)$.  If we had developed the theory of infinite
 ordinals, we could use transfinite recursion to define compulsory
 sets $J_\al$ for all ordinals $\al$.  As we have not discussed that
 theory, we will instead use an approach that avoids it.  We let $\CC$
 denote the family of all compulsory sets.  This is clearly itself a
 saturated family.  We say that a set $J\in\CC$ is \emph{comparable}
 if for all other $K\in\CC$ we have either $J\sse K$ or $K\sse J$.
 Let $\CD\sse\CC$ be the set of all comparable sets; we will show that
 this is saturated, and thus equal to $\CC$.  Consider a family of
 comparable sets $J_\al$, with union $J$ say, and another set
 $K\in\CC$.  For each $\al$ we have $J_\al\sse K$ or $K\sse J_\al$,
 because $J_\al$ is comparable.  If $J_\al\sse K$ for all $K$ then
 clearly $J\sse K$.  Otherwise we must have $K\sse J_\al$ for some
 $\al$ but $J_\al\sse J$ so $K\sse J$.  This shows that $J$ is
 comparable, so $\CD$ is closed under unions.  Now consider a
 comparable set $J$; we claim that $e(J)$ is also comparable.  To see
 this, put
 \[ \CE_J=\{K\st e(J)\sse K \text{ or }  K\sse J\}. \]
 By a similar argument to the previous paragraph, this is closed under
 unions.  Suppose that $K\in\CE_J$.
 \begin{itemize}
  \item[(a)] If $e(J)\sse K$ then clearly $e(J)\subset e(K)$, so
   $e(K)\in\CE_J$.
  \item[(b)] If $K=J$ then $e(J)=e(K)$, so $e(K)\in\CE_J$.
  \item[(c)] Suppose instead that $K\subset J$.  As $K\in\CC$ we also
   have $e(K)\in\CC$, and $J$ is comparable so either $e(K)\sse J$ or
   $J\subset e(K)$.  In the latter case we have
   $K\subset J\subset e(K)$, so $|e(K)\sm K|\geq 2$, but
   $|e(K)\sm K|\leq 1$ by construction, so this is impossible.  We
   therefore have $e(K)\sse J$, so again $e(K)\in\CJ$.
 \end{itemize}
 We now see that $\CE_J$ is a saturated subset of $\CC$, so it must be
 all of $\CC$.  It follows easily from this that $e(J)$ is
 comparable.  This means that $\CD$ is a saturated subset of $\CC$, so
 it must be all of $\CC$, so all compulsory sets are comparable, or in
 other words $\CC$ is totally ordered by inclusion.

 We now claim that $\CC$ is in fact well-ordered by inclusion.  To see
 this, consider a nonempty family of compulsory sets $K_\al$.  Let $J$
 be the union of all compulsory sets that are contained in
 $\bigcap_\al K_\al$.  As $\CC$ is closed under unions, we see that
 $J$ is actually the largest compulsory set that is contained in
 $\bigcap_\al K_\al$.  In particular, the larger set $e(J)$ cannot be
 contained in $\bigcap_\al J_\al$, so for some $\al$ we have
 $e(J)\not\sse K_\al$.  Now $K_\al\in\CC$ and we have seen that
 $\CC=\CE_J$ and using this we see that $K_\al\sse J$.  From this it
 follows easily that $K_\al$ is the smallest set in the family, as
 required. 

 Next, put $\CC^*=\CC\sm\{I\}$, which is again well-ordered by
 inclusion.  For $i\in I$ we let $p(i)$ be the union of all compulsory
 sets that do not contain $i$.  As $\CC$ is closed under unions this
 defines a map $p\:I\to\CC^*$.  We can also restrict $d$ to get a map
 $d\:\CC^*\to I$ in the opposite direction.  Note that
 $e(p(i))=p(i)\cup\{d(p(i))\}$ is a compulsory set not contained in
 $p(i)$, so we must have $i\in e(p(i))$, but $i\not\in p(i)$ by
 construction, so we must have $d(p(i))=i$.  In the opposite
 direction, suppose we start with a compulsory set $J\in\CC^*$, and
 put $i=d(J)$, so $e(J)=J\amalg\{i\}$.  Now $p(i)\in\CC$ and we have
 seen that $\CC=\CE_J$ so either $e(J)\sse p(i)$ or $p(i)\sse J$.  The
 first of these would imply that $i\in p(i)$, contradicting the
 definition of $p(i)$, so we must instead have $p(i)\sse J$.  On the
 other hand, $J$ is one of the sets in the union that defines $p(i)$,
 so $J\sse p(i)$, so $J=p(i)=p(d(J))$.  This proves that the maps
 $I\xra{p}\CC^*\xra{d}I$ are mutually inverse bijections.  We can thus
 introduce a well-ordering of $I$ by declaring that $i\leq j$ iff
 $p(i)\sse p(j)$.
\end{proof}

We can now start to prove as promised that subgroups of free abelian
groups are free.  It will be enough to prove that every subgroup of
$\Z[I]$ is free, and by Theorem~\ref{thm-well-order} we may assume
that $I$ is well-ordered.  We will need some auxiliary definitions.

\begin{definition}\label{defn-hereditary-aux}
 Let $I$ be a well-ordered set, and let $A$ be a subgroup of $\Z[I]$.
 We put $I_\top=I\amalg\{\top\}$, ordered so that $i\leq\top$ for all
 $i\in I$ (which is again a well-ordering).  Put
 $I_{<j}=\{i\in I\st i<j\}$ and $A_{<j}=A\cap\Z[I_{<j}]$, and
 similarly for $I_{\leq j}$ and $A_{\leq j}$.  Let $\pi_j\:\Z[I]\to\Z$
 be the $j$'th projection, which is characterised by the fact that
 $\pi_j([j])=1$ and $\pi_j([i])=0$ for all $i\neq j$.  Put 
 \[ J = \{j\in I\st \pi_j(A_{\leq j}) \neq 0\} = 
        \{j\in I\st A_{<j} < A_{\leq j}\},
 \]
 and $J_{<k}=\{j\in J\st j<k\}$, and similarly for $J_{\leq k}$.  For
 $j\in J$, we let $d_j$ be the positive integer such that
 $\pi_j(A_{\leq j})=d_j\Z$.  We put 
 \[ B_j = \{x\in A_{\leq j}\st x_j=d_j\} \sse A, \]
 which is nonempty by the definition of $d_j$.  For any $k\in I_\top$,
 we define an \emph{adapted basis for $A_{<k}$} to be a map
 $a\:J_{<k}\to A$ such that for all $j\in J_{<k}$ we have
 $a(j)\in B_j$.  We also put 
 \[ T_k = \{u\in \Z[I_{<k}] \st 
      0\leq u_j<d_j \text{ for all } j \in J_{<k}
    \}.
 \]
 Note that this contains zero but is not a subgroup (unless $d_j=1$
 for all $j\in J$).
\end{definition}
\begin{remark}\label{rem-adapted-choice}
 As each $B(j)$ is nonempty, we see that there exist adapted bases.
 Implicitly we are using the Axiom of Choice here: there exists a
 choice function $c$ for $A$, and then we can take $a(j)=c(B(j))$ for
 all $j$.  However, we will prove as
 Proposition~\ref{prop-adapted-normalised} that there is a
 \emph{unique} adapted basis satisfying a certain normalisation
 condition, which enables us to avoid this use of choice.
\end{remark}

\begin{remark}\label{rem-adapted-trunc}
 Let $a$ be an adapted basis for $A_{<k}$.  We then have a unique
 homomorphism 
 \[ f_k\:\Z[J_{<k}]\to A_{<k} \]
 such that $f_k([j])=a(j)$ for all $j\in J_{<k}$.  If $m<k$ then
 $a|_{J_{<m}}$ is easily seen to be an adapted basis for $A_{<m}$.  It
 therefore gives rise to a homomorphism $f_m\:\Z[J_{<m}]\to A_{<m}$,
 which is just the restriction of the map $f_k\:\Z[J_{<k}]\to A_{<k}$.

 We can also define functions $g_m\:\Z[J_{<m}]\tm T_m\to\Z[I_{<m}]$
 by $g_m(x,y)=f_m(x)+y$.  
\end{remark}

The terminology is justified by the following result:
\begin{lemma}\label{lem-adapted-basis}
 Let $a$ be an adapted basis for $A_{<k}$.  Then the corresponding map
 $f_k\:\Z[J_{<k}]\to A_{<k}$ is an isomorphism, and the map
 $g_k\:\Z[J_{<k}]\tm T_k\to\Z[I_{<k}]$ is a bijection.  In particular,
 the group $A_{<k}$ is free.
\end{lemma}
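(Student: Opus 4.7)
The plan is to proceed by transfinite induction on $k$ in the well-ordered set $I_\top$, proving simultaneously that $f_k$ is an isomorphism and that $g_k$ is a bijection; freeness of $A_{<k}$ is then immediate since $\Z[J_{<k}]$ is free. Remark~\ref{rem-adapted-trunc} ensures that the restriction of the given adapted basis supplies adapted bases at all smaller stages, so the inductive hypothesis is genuinely available. I would split the induction into the usual three cases: $k=\bot$ (trivial, since every group in sight is zero), $k$ a successor, and $k$ a limit.

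The limit case I would dispatch first, since it is the cleanest: if $k$ is neither $\bot$ nor a successor, then every element of $\Z[I_{<k}]$ has finite support and hence lies in $\Z[I_{<m}]$ for some $m<k$. This gives $\Z[I_{<k}]=\bigcup_{m<k}\Z[I_{<m}]$ and similarly $A_{<k}=\bigcup_{m<k}A_{<m}$, $J_{<k}=\bigcup_{m<k}J_{<m}$, and $T_k=\bigcup_{m<k}T_m$. Since $f_k$ and $g_k$ restrict to $f_m$ and $g_m$ on each stage, both the isomorphism and the bijectivity properties propagate from the inductive hypothesis by taking unions.

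For the successor case $k=s(m)$, the easy subcase is $m\notin J$: then $\pi_m$ vanishes on $A_{\leq m}$, which forces $A_{<k}=A_{\leq m}=A_{<m}$ and $J_{<k}=J_{<m}$, so $f_k=f_m$ is already an isomorphism. The group $\Z[I_{<k}]=\Z[I_{<m}]\oplus\Z[m]$ acquires a new free summand, and $T_k$ acquires a matching unconstrained $\Z$-coordinate at $m$ (because $m\notin J$), so bijectivity of $g_k$ drops straight out of bijectivity of $g_m$. The interesting subcase is $m\in J$: here $J_{<k}=J_{<m}\cup\{m\}$ and $a(m)\in B_m$ satisfies $\pi_m(a(m))=d_m$ with $a(m)-d_m[m]\in\Z[I_{<m}]$. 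Given $u\in A_{\leq m}=A_{<k}$, I would use that $\pi_m(u)\in d_m\Z$ by the very definition of $d_m$ to write $\pi_m(u)=nd_m$; then $u-n\,a(m)\in A_{<m}$ and the inductive hypothesis for $f_m$ finishes surjectivity of $f_k$. Injectivity comes from reading off the $m$-coordinate first (to pin down the $[m]$-component) before invoking injectivity of $f_m$ on the remainder. For $g_k$, I would apply division with remainder (Lemma~\ref{lem-pid}) to write $\pi_m(u)=nd_m+r$ with $0\leq r<d_m$, subtract $n\,a(m)$ to shift the $m$-coordinate into $[0,d_m)$, and then apply $g_m$ to the leftover element of $\Z[I_{<m}]$.

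The main obstacle is this final subcase, which carries essentially the same bookkeeping that powers Proposition~\ref{prop-hereditary-finite}: one has to combine division-with-remainder at the new index $m$ with the inductive decomposition on $\Z[I_{<m}]$, and track uniqueness carefully enough to get bijections rather than just surjections on both sides, using triangularity of the relation $a(m)-d_m[m]\in\Z[I_{<m}]$. Once that reduction-mod-$d_m$ step is organised cleanly at one step, the rest of the transfinite verification is routine.
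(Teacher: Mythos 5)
Your plan matches the paper's proof exactly: the paper isolates the induction step as Lemma~\ref{lem-adapted-basis-aux}, splitting into the same three cases (non-successor, $k=s(m)$ with $m\notin J$, and $k=s(m)$ with $m\in J$), handling them by unions, by splitting off a free $\Z.[m]$ summand, and by division by $d_m$ with remainder, respectively; Lemma~\ref{lem-adapted-basis} then runs the transfinite induction via a least-counterexample argument. The only cosmetic difference is that you separate $k=\bot$ from the limit case, while the paper treats both under ``$k$ is not a successor.''
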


We will deduce it from the following auxiliary result:
\begin{lemma}\label{lem-adapted-basis-aux}
 Let $a$ be an adapted basis for $A_{<k}$, and suppose that for all
 $m<k$ the maps $f_m$ and $g_m$ are bijective.  Then $f_k$ and $g_k$
 are also bijective.  
\end{lemma}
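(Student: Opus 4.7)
My plan is to argue by cases on whether $k$ has an immediate predecessor in $I_\top$. This mirrors the structure of the initial segment $I_{<k}$, which either has the form $I_{<m}\cup\{m\}$ or arises as the directed union of strictly smaller initial segments, and in each case reduces the claim about $f_k, g_k$ to the inductive hypothesis on the $f_m, g_m$.

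If $k$ is a \emph{limit}, i.e.\ every $m<k$ admits some $m'$ with $m<m'<k$ (including the degenerate case $k=\bot$), then $I_{<k}=\bigcup_{m<k}I_{<m}$. Since every element of $\Z[I_{<k}]$ has finite support, this yields $\Z[I_{<k}]=\bigcup_{m<k}\Z[I_{<m}]$, $\Z[J_{<k}]=\bigcup_{m<k}\Z[J_{<m}]$, $A_{<k}=\bigcup_{m<k}A_{<m}$, and $T_k=\bigcup_{m<k}T_m$ (the condition $0\leq u_j<d_j$ for $j\in J_{<k}\sm J_{<m}$ is vacuous once $\supp(u)\sse I_{<m}$). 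The maps $f_k$ and $g_k$ agree with $f_m$ and $g_m$ on the corresponding pieces, so bijectivity of each $f_m, g_m$ propagates directly to $f_k, g_k$.

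If instead $k=s(m)$ is a successor, then $I_{<k}=I_{<m}\cup\{m\}$. In the subcase $m\not\in J$, we have $A_{<k}=A_{\leq m}=A_{<m}$ and $J_{<k}=J_{<m}$, so $f_k=f_m$, and $T_k$ is in bijection with $T_m\tm\Z$ via $(u',n)\mapsto u'+n[m]$ (no constraint is imposed at the $m$-coordinate), making $g_k$ decompose as $g_m$ together with the identity on the extra $\Z\cdot[m]$ summand. In the subcase $m\in J$, one first verifies $A_{\leq m}=A_{<m}\oplus\Z\cdot a(m)$: any $y\in A_{\leq m}$ satisfies $\pi_m(y)=nd_m$ for a unique $n$, and then $y-na(m)\in A\cap\Z[I_{<m}]=A_{<m}$. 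Since $\Z[J_{<k}]=\Z[J_{<m}]\oplus\Z\cdot[m]$ and $f_k$ extends $f_m$ by sending $[m]\mapsto a(m)$, bijectivity of $f_k$ follows from that of $f_m$. For $g_k$, given $z=y+t[m]\in\Z[I_{<k}]$ with $y\in\Z[I_{<m}]$, apply Euclidean division $t=nd_m+r$ with $0\leq r<d_m$ (uniquely); then $z-na(m)-r[m]$ has zero $m$-coordinate, hence lies in $\Z[I_{<m}]$, and bijectivity of $g_m$ writes it uniquely as $f_m(x')+u'$ with $x'\in\Z[J_{<m}]$ and $u'\in T_m$, giving the unique preimage $(x'+n[m],\,u'+r[m])$.

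The main obstacle is this last subcase: one must cleanly separate the $a(m)$-contribution (which absorbs the ``quotient'' $n$) from the piece of $u\in T_k$ sitting at the $m$-coordinate (the ``remainder'' $r$). Euclidean division against $d_m$ does exactly this, and it is here that the definition of $T_k$ as a fundamental domain for multiples of $d_m$ in the $m$-coordinate is essential.
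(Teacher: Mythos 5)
Your proof is correct and follows essentially the same approach as the paper's: you split into the same three cases (limit ordinal, successor $s(m)$ with $m\notin J$, successor $s(m)$ with $m\in J$), and in the last case you isolate the $m$-coordinate by Euclidean division against $d_m$, just as the paper does. The only cosmetic difference is that you make the splitting $A_{\leq m}=A_{<m}\oplus\Z\cdot a(m)$ explicit before treating $g_k$, while the paper extracts the statement about $f_k$ from the $g_k$ argument by specialising to $u\in A_{<k}$ (which forces the remainder $s$ to vanish); these are the same computation presented in a different order.
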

\begin{proof}
 There are three cases to consider:
 \begin{itemize}
  \item[(a)] $k$ is not a successor.
  \item[(b)] $k=s(m)$ for some $m\not\in J$.
  \item[(c)] $k=s(m)$ for some $m\in J$.
 \end{itemize}
 Suppose that case~(a) holds.  Then for $m<k$ we have $s(m)<k$ and so
 $f_{s(m)}\:\Z[J_{<s(m)}]\to A_{<s(m)}$ is an isomorphism.  Now
 $J_{<k}$ is easily seen to be the union of these sets $J_{<s(m)}$,
 and $A_{<k}$ is the union of the groups $A_{<s(m)}$.  It therefore
 follows that $f_k$ is also an isomorphism $\Z[J_{<k}]\to A_{<k}$, as
 required.  Essentially the same argument proves that $g_k$ is a
 bijection. 

 Next, in case~(b) we see from the definition of $J$ that
 $A_{<k}=A_{\leq m}=A_{<m}$ and similarly $J_{<k}=J_{<m}$ so $f_k=f_m$
 and this is an isomorphism as required.  We also have
 $T_k=T_m\tm\Z.[m]$ and $\Z[I_{<k}]=\Z[I_{<m}]\tm\Z.[m]$ so the
 bijectivity of $g_k$ follows from that of $g_m$.

 Finally, in case~(c), we know that $f_m$ and $g_m$ are bijective by
 assumption.  Suppose that $u\in\Z[I_{<k}]$.  We then have
 $u_m=r\,d_m+s$ for some $s$ with $0\leq s<d_m$.  Put
 $u'=u-r\,a_m-s\,[m]$, so $u'_m=0$, so $u'\in\Z[I_{<m}]$.  As $g_m$ is
 a bijection we see that there is a unique pair
 $(x',y')\in\Z[J_{<m}]\tm T_m$ with $u'=f_m(x')+y'$.  If we put
 $x=x'+r\,[m]\in\Z[J_{<k}]$ and $y=y'+s\,[m]\in T_k$ we find that
 $(x,y)$ is the unique pair with $g_k(x,y)=u$.  It follows that $g_k$
 is a bijection.  In the case where $u\in A_{<k}$ we must have
 $u_m\in d_m\Z$ so $s=0$ and $u'=u-r\,a_m\in A_{<m}$ so $y'=0$; using
 this we see that $f_k$ is also an isomorphism.
\end{proof}

\begin{proof}[Proof of Lemma~\ref{lem-adapted-basis}]
 We actually claim that more generally, the restricted maps
 $f_n\:\Z[J_{<n}]\to A_{<n}$ are isomorphisms for all $n\leq k$.  If
 not, as $I_{\leq k}$ is well-ordered, there must be a smallest $n$
 for which $f_n$ is not an isomorphism.  This means that $f_m$ is an
 isomorphism for $m<n$, so we can apply
 Lemma~\ref{lem-adapted-basis-aux} to $a|_{J_{<n}}$ to see that $f_n$
 is an isomorphism, which is a contradiction.  The claim follows.
\end{proof}

\begin{remark}\label{rem-transfinite-induction}
 The method that we used to deduce Lemma~\ref{lem-adapted-basis}
 from~\ref{lem-adapted-basis-aux} is called \emph{transfinite
  induction}; it is evidently an extension of the usual kind of
 induction over the natural numbers.  We will use transfinite
 induction again below without spelling it out so explicitly.
\end{remark}

As we remarked previously, Theorem~\ref{thm-hereditary} follows from
Theorem~\ref{thm-well-order}, Lemma~\ref{lem-adapted-basis} and
Remark~\ref{rem-adapted-choice}.  Because we need
Theorem~\ref{thm-well-order}, the proof is unavoidably
nonconstructive.  Nonetheless, we can remove one set of arbitrary
choices by pinning down a specific adapted basis, as we now explain.

\begin{definition}\label{defn-adapted-normalised}
 Let $a$ be an adapted basis for $A_{<k}$.  We say that $a$ is
 \emph{normalised} if for all $i,j\in J_{<k}$ with $i<j$ we have
 $0\leq a(j)_i<d_i$.
\end{definition}

\begin{proposition}\label{prop-adapted-normalised}
 There is a unique normalised adapted basis for $A$.
\end{proposition}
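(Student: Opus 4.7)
The plan is to proceed by transfinite induction on $j\in J$, constructing $a(j)$ and verifying its uniqueness stage by stage, leaning at each stage on Lemma~\ref{lem-adapted-basis} applied with $k=j$.

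Fix $j\in J$, and assume inductively that the values $a(i)$ for $i\in J_{<j}$ are uniquely determined by the normalisation condition. This data determines the homomorphism $f_j\:\Z[J_{<j}]\to A_{<j}$, which by Lemma~\ref{lem-adapted-basis} is an isomorphism, as well as the map $g_j\:\Z[J_{<j}]\tm T_j\to\Z[I_{<j}]$, which is a bijection. For existence of $a(j)$, pick any $b\in B_j$ (nonempty by the definition of $d_j$), and decompose $b=d_j[j]+b'$ with $b'\in\Z[I_{<j}]$. Apply $g_j^{-1}$ to write $b'=f_j(x)+y$ with $x\in\Z[J_{<j}]$ and $y\in T_j$, then set $a(j)=b-f_j(x)=y+d_j[j]$. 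This element lies in $A_{\leq j}$ because $f_j(x)\in A_{<j}\sse A$ and $b\in A_{\leq j}$; it has $j$-th coordinate $d_j$, so it belongs to $B_j$; and for every $i\in J_{<j}$ the $i$-th coordinate $a(j)_i=y_i$ satisfies $0\leq y_i<d_i$ because $y\in T_j$. Hence $a(j)$ gives the required normalised extension.

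For uniqueness, suppose $a'(j)$ is any other admissible choice. Setting $y=a(j)-d_j[j]$ and $y'=a'(j)-d_j[j]$, both elements lie in $\Z[I_{<j}]$ (since both $a(j)$ and $a'(j)$ have $j$-th coordinate $d_j$) and both lie in $T_j$ by normalisation. Their difference $y-y'=a(j)-a'(j)$ lies in $A_{<j}=f_j(\Z[J_{<j}])$, so $y-y'=f_j(w)$ for some $w$. But then the element $y$ admits two decompositions under $g_j$, namely $g_j(0,y)=y$ and $g_j(w,y')=f_j(w)+y'=y$; injectivity of $g_j$ forces $w=0$ and $y=y'$, hence $a(j)=a'(j)$.

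The real content is the bijectivity of $g_j$, which simultaneously produces the correct remainder $y\in T_j$ and eliminates rival choices. The one thing to set up carefully is the transfinite induction itself: if the statement ever failed, well-ordering of $J$ would give a smallest $j$ of failure, at which point the argument above applied to the (already unique) restriction of the normalised basis to $J_{<j}$ yields a contradiction, exactly mirroring the transfinite-induction pattern used to deduce Lemma~\ref{lem-adapted-basis} from Lemma~\ref{lem-adapted-basis-aux}.
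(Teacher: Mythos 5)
Your proof is correct and uses the same key ingredient as the paper's (bijectivity of $g_j$ from Lemma~\ref{lem-adapted-basis}), with the same construction of $a(j)$ by subtracting $d_j[j]$ and applying $g_j^{-1}$; the differences are purely organisational. You run the transfinite induction directly over the well-ordered set $J$, which absorbs the limit-ordinal and non-$J$-successor cases that the paper handles separately in Lemma~\ref{lem-adapted-normalised}, and your uniqueness step invokes injectivity of $g_j$ outright rather than repeating the leading-nonzero-coordinate computation the paper gives — a computation that is in effect the proof of that injectivity.
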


This follows by transfinite induction from the following lemma:
\begin{lemma}\label{lem-adapted-normalised}
 Suppose that for all $m<\top$ there is a unique normalised basis for
 $A_{<m}$.  Then there is a unique normalised basis for $A$.
\end{lemma}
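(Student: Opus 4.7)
The plan is to carry out the transfinite induction step by mirroring the case analysis of Lemma~\ref{lem-adapted-basis-aux}: split on whether (a) $\top$ is not a successor, (b) $\top = s(m)$ for some $m \notin J$, or (c) $\top = s(m)$ for some $m \in J$. The induction hypothesis furnishes, for each $m < \top$, a unique normalised adapted basis $a_m$ for $A_{<m}$, and by Lemma~\ref{lem-adapted-basis} the associated maps $f_m$ and $g_m$ are bijective; the work in each case is to glue or extend these $a_m$ to a normalised basis for $A$.

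In case (a), every $j \in J$ satisfies $s(j) < \top$, and for $m < m'$ the restriction of $a_{m'}$ to $J_{<m}$ is a normalised adapted basis for $A_{<m}$, so by uniqueness it must equal $a_m$. Hence the $a_m$ are mutually compatible and glue to $a(j) := a_{s(j)}(j)$; both existence and uniqueness then follow immediately. Case (b) is essentially trivial, since $m \notin J$ forces $A_{<\top} = A_{\leq m} = A_{<m}$ and $J_{<\top} = J_{<m}$, so $a := a_m$ works.

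The substance lies in case (c), where $J_{<\top} = J_{<m} \cup \{m\}$ and I must extend $a_m$ by a value $a(m) \in B_m$ satisfying $0 \le a(m)_i < d_i$ for every $i \in J_{<m}$. I would pick an arbitrary $b \in B_m$ (nonempty by the definition of $d_m$), observe that $b - d_m [m] \in \Z[I_{<m}]$, and invoke the bijection $g_m\colon \Z[J_{<m}] \tm T_m \to \Z[I_{<m}]$ supplied by Lemma~\ref{lem-adapted-basis} to write $b - d_m[m] = f_m(x) + y$ uniquely with $y \in T_m$. Setting $a(m) := b - f_m(x) = d_m[m] + y$ preserves membership in $B_m$ (the correction lies in $A_{<m}$, hence fixes the $m$-th coordinate at $d_m$) and meets the normalisation condition by the definition of $T_m$.

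For uniqueness in case (c), any rival choice $a'(m)$ similarly has the form $d_m[m] + u'$ with $u' \in T_m$ (the normalisation requirement places the $J_{<m}$-coordinates of $a'(m) - d_m[m]$ into $[0, d_i)$), so that $u - u' = a(m) - a'(m) \in A_{<m}$ equals $f_m(z)$ for a unique $z$ by Lemma~\ref{lem-adapted-basis}; the identity $g_m(z, u') = g_m(0, u)$ then forces $z = 0$ and $u = u'$ by injectivity of $g_m$. The main obstacle is case (c), specifically packaging the normalisation condition into the statement that the truncation $a(m) - d_m[m]$ must lie in $T_m$, so that the bijection $g_m$ can be invoked to pin down the last basis vector modulo $A_{<m}$.
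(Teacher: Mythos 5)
Your proof is correct and follows the same overall strategy as the paper: the same three-way case split (the $\top$ not a successor / $\top=s(m)$ with $m\notin J$ / $\top=s(m)$ with $m\in J$), with case (c) as the substantive one resolved by invoking the bijection $g_m$ from Lemma~\ref{lem-adapted-basis} applied to the unique normalised adapted basis supplied by the induction hypothesis. The only small departure is in the uniqueness argument for case (c): the paper writes the difference $a'(m)-a(m)$ as $f_m(t)$ and derives a contradiction by inspecting the coordinate at the largest index $k$ with $t_k\neq 0$ using the constraint $0\le a(m)_k,a'(m)_k<d_k$, whereas you note both candidates have the form $d_m[m]+y$ with $y\in T_m$ and appeal directly to injectivity of $g_m$ via $g_m(z,y')=g_m(0,y)$; your version is a slightly cleaner packaging of the same content.
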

\begin{proof}
 We must again separate three cases:
 \begin{itemize}
  \item[(a)] $\top$ is not a successor.
  \item[(b)] $\top=s(m)$ for some $m\not\in J$.
  \item[(c)] $\top=s(m)$ for some $m\in J$.
 \end{itemize}

 We first consider case~(a).  For each $m<\top$ we see that
 $s(m)<\top$, so by the inductive assumption we have a unique
 normalised adapted basis $a_m\:J_{<s(m)}\to A_{<s(m)}$.  Now for
 $n<m$ we see that $a_m|_{J_{<s(n)}}$ is an adapted basis for
 $A_{<s(n)}$ so it must be the same as $a_n$.  It follows that there
 is a unique map $a\:J=J_{<\top}\to A$ such that $a|_{J_{<s(m)}}=a_m$
 for all $m$.  (Explicitly, it is given by $a(m)=a_m(m)$ for all
 $m<\top$.)  It is straightforward to check that this is a normalised
 adapted basis for $A$, and that it is the unique one.

 Now consider instead case~(b).  Here we have $A_{<\top}=A_{<m}$ and
 $J_{<\top}=J_{<m}$ so everything is trivial.

 Finally, consider case~(c).  Let $a\:J_{<m}\to A_{<m}$ be the unique 
 normalised adapted basis for $A_{<m}$.  By the definition of $d_m$,
 we can choose $b\in A$ with $b_m=d_m$.  Put
 $b'=b-d_m\,[m]\in\Z[I_{<m}]$.  As
 $g_m\:\Z[J_{<m}]\tm T_m\to\Z[I_{<m}]$ is a bijection, there is a
 unique pair $(x,y)$ with $f_k(x)+y=b'$.  We put
 $a(m)=b-f_k(x)=y+d_m\,[m]$.  The description $a(m)=b-f_k(x)$ shows
 that $a(m)\in A$, and the description $a(m)=y+d_m\,[m]$ shows that
 $a(m)$ satisfies the conditions for a normalised adapted basis.  Now
 suppose we have another normalised adapted basis for $A$, say $a'$.
 Then $a|_{J_{<m}}$ and $a'|_{J_{<m}}$ are both normalised adapted
 bases for $A_{<m}$, so they are the same by the induction
 hypothesis, so $a(j)=a'(j)$ for all $j<m$.  We also have
 $a(m)_m=d_m=a'(m)_m$, so the element $u=a'(m)-a(m)$ lies in
 $A_{<m}$, so $u=f_m(t)$ for some $t\in\Z[J_{<m}]$.  If $t$ is
 nonzero, then there are only finitely many indices $j$ with
 $t_j\neq 0$, so we can let $k$ be the largest one.  We then find that
 $u_k=a'(m)_k-a(m)_k=t_kd_k$, so $a'(m)_k=a(m)_k\pmod{d_k}$.  On the
 other hand, the normalisation condition means that
 $0\leq a'(m)_k,a(m)_k<d_k$, and this can only be consistent if
 $a'(m)_k=a(m)_k$, so $t_k=0$, contradicting the choice of $k$.  Thus
 $t$ must actually be zero, showing that $a=a'$ as required.
\end{proof}

\section{Tensor and torsion products}

\begin{definition}\label{defn-IA}
 Let $A$ be an abelian group.  We make the free abelian group $\Z[A]$
 into a commutative ring by the rule 
 \[ (\sum_i n_i[a_i]) . (\sum_j m_j[b_j]) = \sum_{i,j}n_im_j[a_i+b_j]
 \]
 (so in particular $[a][b]=[a+b]$).  We define a ring homomorphism
 $\ep\:\Z[A]\to\Z$ by $\ep(\sum_in_i[a_i])=\sum_in_i$, and we define
 $I_A$ to be the kernel of $\ep$.  We write $I^2_A$ for the ideal
 generated by all products $xy$ with $x,y\in I_A$.  We also define a
 group homomorphism $q\:\Z[A]\to A$ by
 $q(\sum_in_i[a_i])=\sum_in_ia_i$.
\end{definition}

\begin{proposition}\label{prop-IA}
 \begin{itemize}
  \item[(a)] The abelian groups $I_A$ and $I_A^2$ are both free.
  \item[(b)] More specifically, the elements $\ip{a}=[a]-[0]$ for
   $a\in A\sm 0$ form a basis for $I_A$.
  \item[(c)] Put 
   \[ \ip{a,b} = \ip{a}\ip{b} = [a+b] - [a] - [b] + [0]
       = \ip{a+b} - \ip{a} - \ip{b}.
   \] 
   Then the set of all elements of this form generates $I^2_A$ as an
   abelian group. 
  \item[(d)] There is a natural short exact sequence
   $I^2_A\xra{j}I_A\xra{q}A$ (where $j$ is just the inclusion).
 \end{itemize}
\end{proposition}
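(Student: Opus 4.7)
My plan is to prove the four parts in the order (b), (a), (c), (d), since each subsequent part leans on the previous ones.

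For (b), the key observation is a change of basis in $\Z[A]$. The set $\{[a]\st a\in A\}$ is the standard basis; replacing each $[a]$ with $a\neq 0$ by $\ip{a}=[a]-[0]$ while retaining $[0]$ amounts to applying a unitriangular transformation, so $\{[0]\}\cup\{\ip{a}\st a\in A\sm 0\}$ is again a $\Z$-basis for $\Z[A]$. Under this basis $\ep$ sends $[0]\mapsto 1$ and $\ip{a}\mapsto 0$, so $I_A=\ker(\ep)$ is freely generated by the $\ip{a}$'s. Part~(a) for $I_A$ is then immediate; part~(a) for $I_A^2$ follows because $I_A^2\leq I_A$ is a subgroup of a free abelian group, which is free by Theorem~\ref{thm-hereditary}.

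For (c), I would first verify the two displayed equalities by direct computation using $[a][b]=[a+b]$. To show that the $\ip{a,b}$ generate $I_A^2$ as an abelian group, I use that $\Z[A]$ is commutative and $I_A$ is an ideal, so $I_A^2$ as an additive subgroup is generated by products $xy$ with $x,y\in I_A$ (any ring coefficient can be absorbed into one factor, which stays in $I_A$). By (b), each such $x$ and $y$ is a $\Z$-linear combination of $\ip{a}$'s, and bilinearity of multiplication then expresses $xy$ as a $\Z$-linear combination of elements $\ip{a}\ip{b}=\ip{a,b}$.

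For (d), the inclusion $j$ is injective by definition, and $q|_{I_A}$ is surjective because $q(\ip{a})=a$ for all $a\in A$. The composite $q\circ j$ vanishes on the generators of $I_A^2$ produced in (c), since $q(\ip{a,b})=(a+b)-a-b+0=0$. To see that $\ker(q|_{I_A})=I_A^2$, I would construct the inverse of the induced map $\ov{q}\:I_A/I_A^2\to A$: define $\phi\:A\to I_A/I_A^2$ by $\phi(a)=\ip{a}+I_A^2$. The identity $\ip{a+b}-\ip{a}-\ip{b}=\ip{a,b}\in I_A^2$ established in~(c) is exactly what makes $\phi$ a homomorphism, and checking on the basis elements $\ip{a}$ shows $\phi\circ\ov{q}$ and $\ov{q}\circ\phi$ are both identities.

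The substantive step is really (b); once the basis is pinned down, everything else is bookkeeping. The only place any care is needed is the verification in~(d) that $\phi$ is a well-defined homomorphism, and that reduces precisely to the formula $\ip{a+b}=\ip{a}+\ip{b}+\ip{a,b}$ from~(c), so (c) must be proved before (d) is attempted.
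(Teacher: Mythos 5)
Your proof is correct, and for parts (a), (b), and (d) it takes essentially the same route as the paper. Part (b) is phrased as a unitriangular change of basis on $\Z[A]$, whereas the paper directly rewrites an arbitrary element $\sum_i n_i[a_i]$ with $\sum_i n_i=0$ as $\sum_i n_i([a_i]-[0])$; these are the same observation viewed from slightly different angles. Part (d) is essentially the paper's argument: the paper's $s$ is your $\phi$, and the paper's separate injectivity/surjectivity checks are equivalent to your verification that $\phi\circ\ov{q}$ and $\ov{q}\circ\phi$ agree with the identity on generating sets.

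Part (c) is where you genuinely diverge from the paper. You observe that, because $\Z[A]$ is commutative and $I_A$ is an ideal, any element $\sum_i r_i x_i y_i$ of $I_A^2$ (with $r_i\in\Z[A]$, $x_i,y_i\in I_A$) can be rewritten as $\sum_i (r_ix_i)y_i$ with $r_ix_i\in I_A$, so $I_A^2$ is already generated \emph{as an abelian group} by products of pairs from $I_A$; expanding each factor in the basis from (b) then gives a $\Z$-combination of the $\ip{a,b}$. The paper instead lets $M$ be the subgroup generated by the $\ip{a,b}$, notes that $M$ generates $I_A^2$ as an ideal, and then proves that $M$ is itself an ideal via the identity $[x]\ip{a,b}=\ip{a,b+x}-\ip{a,x}$. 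Your approach avoids verifying that identity and is arguably more direct; the paper's has the side benefit of isolating the identity $[x]\ip{a,b}=\ip{a,b+x}-\ip{a,x}$, which is a reusable fact about how the group-ring multiplication interacts with the $\ip{-,-}$ operation. Both are correct and about equally short.
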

\begin{proof}
 \begin{itemize}
  \item[(a)] Both $I_A$ and $I^2_A$ are subgroups of $\Z[A]$, so they
   are free by Theorem~\ref{thm-hereditary}.
  \item[(b)] In the case of $I_A$ it is easy to be more concrete.
   Suppose we have an element $x=\sum_in_i[a_i]\in I_A$.  Then
   $\sum_in_i=0$, so $x$ can also be written as
   $\sum_in_i([a_i]-[0])$, and it is clearly harmless to omit any
   terms where $a_i=0$, so we see that $x$ is in the subgroup
   generated by the elements $[a]-[0]$ with $a\neq 0$.  It is easy to
   see that all such elements lie in $I_A$ and that they are
   independent over $\Z$, so they form a basis for $I_A$ as claimed.
  \item[(c)] Let $M$ be the subgroup of $I_A$ generated by all
   elements of the form $\ip{a,b}$.  As the elements $[a]-[0]$
   generate $I_A$ as an ideal, it is clear from the description
   $\ip{a,b}=\ip{a}\ip{b}$ that $M$ generates $I^2_A$ as an
   ideal, so it will be enough to check that $M$ itself is already an
   ideal.  This follows easily from the identity
   $[x]\ip{a,b}=\ip{a,b+x}-\ip{a,x}$, which can be verified directly
   from the definitions.
  \item[(d)] It is clear from the definitions that
   $q(\ip{a,b})=a+b-a-b+0=0$, so $qj=0$ by part~(c), so we have an
   induced map $\ov{q}\:I_A/I^2_A\to A$.  In the opposite direction,
   we can define $s\:A\to I_A/I_A^2$ by $s(a)=[a]-[0]+I_A^2$.  We have 
   \[ s(a+b)-s(a)-s(b) = [a+b]-[a]-[b]+[0]+I_A^2 =
        \ip{a,b} + I_A^2 = I_A^2,
   \]
   which means that $s$ is a homomorphism.  It is visible that
   $\ov{q}s=1_A$, so $s$ is injective and $\ov{q}$ is surjective.  It
   is also clear from~(b) that $s(A)$ generates $I_A/I_A^2$ but $s$ is
   a homomorphism so $s(A)$ is already a subgroup of $I_A/I_A^2$, so
   $s$ is surjective.  This means that $s$ is an isomorphism, with
   inverse $\ov{q}$.  As $\ov{q}$ is an isomorphism we see that
   $I^2_A\xra{j}I_A\xra{q}A$ is exact.
 \end{itemize}
\end{proof}

\begin{definition}\label{defn-tensor}
 Let $A$ and $B$ be abelian groups.  We regard $A$ and $B$ as
 subgroups of $A\tm B$ in the obvious way.  In $\Z[A\tm B]$ we let $J$
 be the ideal generated by all elements $\ip{a}$ with $a\in A$, and we
 let $K$ be the ideal generated by all elements $\ip{b}$ with
 $b\in B$.  We then put $A\ot B=JK/(J^2K+JK^2)$, and write $a\ot b$
 for the coset $\ip{a,b}+J^2K+JK^2\in A\ot B$.  We also write
 $\Tor(A,B)=(J^2K\cap JK^2)/(J^2K^2)$. 
\end{definition}

\begin{remark}\label{rem-tensor}
 Note that $A\ot B$ is generated by elements of the form $a\ot b$.
 Moreover, because
 \begin{align*}
  \ip{a+a',b} - \ip{a,b} - \ip{a',b} &=
   \ip{a}\ip{a'}\ip{b} \in J^2K \\
  \ip{a,b+b'} - \ip{a,b} - \ip{a,b'} &=
   \ip{a}\ip{b}\ip{b'} \in JK^2
 \end{align*}
 we see that these satisfy
 \begin{align*}
  a \ot (b+b') &= (a\ot b) + (a\ot b') \\
  (a+a') \ot b &= (a\ot b) + (a'\ot b).
 \end{align*}
 It follows easily that $a\ot 0=0$ for all $a\in A$, and $0\ot b=0$
 for all $b\in B$, and $(na)\ot(mb)=nm(a\ot b)$ for all $n,m\in\Z$.
 In fact, $A\ot B$ can be defined more loosely as the abelian group
 generated by symbols $a\ot b$ subject only to the relations 
 $a\ot(b+b')=(a\ot b)+(a\ot b')$ and
 $(a+a')\ot b=(a\ot b)+(a'\ot b)$.  
\end{remark}

\begin{remark}\label{rem-tensor-functor}
 Suppose we have homomorphisms $f\:A\to A'$ and $g\:B\to B'$.  These
 give a homomorphism $f\tm g\:A\tm B\to A'\tm B'$, which induces a
 ring map $(f\tm g)_\bullet\:\Z[A\tm B]\to\Z[A'\tm B']$.  This sends
 the ideals $J$ and $K$ to the coresponding ideals in $\Z[A'\tm B']$
 and so induces a homomorphism $A\ot B\to A'\ot B'$, which we denote
 by $f\ot g$.  By construction we have $(f\ot g)(a\ot b)=f(a)\ot
 g(b)$.  It is not hard to see that this construction is functorial,
 in the sense that $1_A\ot 1_B=1_{A\ot B}$ and that $(f'\ot g')(f\ot
 g)=(f'f)\ot(g'g)$ for all $f'\:A'\to A''$ and $g'\:B'\to B''$.  It is
 also bilinear in the following sense: if $f_0,f_1\:A\to A'$ and
 $g_0,g_1\:B\to B'$ then
 \[ (f_0+f_1)\ot(g_0+g_1) = 
     (f_0\ot g_0)+(f_0\ot g_1)+(f_1\ot g_0)+(f_1\ot g_1)
 \]
 as homomorphisms from $A\ot B$ to $A'\ot B'$.
\end{remark}

\begin{remark}\label{rem-JK-generators}
 As in Proposition~\ref{prop-IA}, one can check that
 \begin{itemize}
  \item[(a)] $J$ is freely generated as an abelian group by the
   elements $\ip{a}[b]=[a+b]-[b]$ with $a\in A\sm\{0\}$ and $b\in B$.
  \item[(b)] $K$ is freely generated as an abelian group by the
   elements $[a]\ip{b}=[a+b]-[a]$ with $a\in A$ and $b\in B\sm 0$.
  \item[(c)] $JK$ is freely generated as an abelian group by the
   elements $\ip{a,b}$ with $a\in A\sm 0$ and $b\in B\sm 0$.
  \item[(d)] $J^2K$ is generated as an abelian group by the elements
   $\ip{a}\ip{a'}\ip{b}$ with $a,a'\in A$ and $b\in B$.
  \item[(e)] $JK^2$ is generated as an abelian group by the elements
   $\ip{a}\ip{b}\ip{b'}$ with $a\in A$ and $b,b'\in B$.
 \end{itemize}
\end{remark}

We can generalise the identities in Remark~\ref{rem-tensor} as follows:
\begin{definition}\label{defn-bilinear}
 Let $A$, $B$ and $V$ be abelian groups.  We say that a function
 $f\:A\tm B\to V$ is \emph{bilinear} if for all $a,a'\in A$ and all
 $b,b'\in B$ we have
 \begin{align*}
  f(a,b+b') &= f(a,b) + f(a,b') \\
  f(a+a',b) &= f(a,b) + f(a',b).
 \end{align*}
 More generally, we say that a map $g\:A_1\tm\dotsb\tm A_n\to V$ is
 \emph{multilinear} (or more specifically \emph{$n$-linear}) if for
 each $k$ and each $a_1,\dotsc,a_{k-1},a_{k+1},\dotsc,a_n$, the map 
 \[ x\mapsto g(a_1,\dotsc,a_{k-1},x,a_{k+1},\dotsc,a_n) \]
 is a homomorphism from $A_k$ to $V$.
\end{definition}

\begin{example}\label{eg-bilinear}\nl
 \begin{itemize}
  \item[(a)] Matrix multiplication defines a bilinear map
   $\mu\:M_n(\Z)\tm M_n(\Z)\to M_n(\Z)$ by $\mu(M,N)=MN$.
  \item[(b)] The dot product defines a bilinear map
   $\R^3\tm\R^3\to\R$, the cross product defines a bilinear map
   $\R^3\tm\R^3\to\R^3$, and the triple product
   $(u,v,w)\mapsto u.(v\tm w)$ defines a trilinear map
   $\R^3\tm\R^3\tm\R^3\to\R$. 
  \item[(c)] By construction, we have a bilinear map
   $\om\:A\tm B\to A\ot B$ defined by $\om(a,b)=a\ot b$.
 \end{itemize}
\end{example}

Example~(c) above is in a sense the universal example, as explained by
the following result:
\begin{proposition}\label{prop-tensor-bilinear}
 Let $f\:A\tm B\to V$ be a bilinear map.  Then there is a unique
 homomorphism $\ov{f}\:A\ot B\to V$ such that $\ov{f}(a\ot b)=f(a,b)$
 for all $a\in A$ and $b\in B$ (or equivalently $\ov{f}\circ\om=f$).
\end{proposition}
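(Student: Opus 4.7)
The plan is to use the universal property of free abelian groups to extend $f$ first to a homomorphism on $\Z[A\tm B]$, then show it descends through successive quotients to a map out of $A\ot B$. Uniqueness will come for free from the fact that the elements $a\ot b$ generate $A\ot B$.

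First I would treat uniqueness: by Remark~\ref{rem-tensor} the elements $a\ot b$ generate $A\ot B$, so any homomorphism out of $A\ot B$ is determined by its values on them, and the formula $\ov{f}(a\ot b)=f(a,b)$ pins it down. For existence, regard $f$ as a function $A\tm B\to V$ on the underlying set, and apply Lemma~\ref{lem-free-property} to obtain a unique homomorphism $\tilde f\:\Z[A\tm B]\to V$ with $\tilde f([(a,b)])=f(a,b)$. Note the preliminary observation that bilinearity forces $f(a,0)=0$ and $f(0,b)=0$, since $f(a,0)=f(a,0+0)=2f(a,0)$.

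Next I would show that $\tilde f$ kills the subgroup $J^2K+JK^2$. By Remark~\ref{rem-JK-generators}(d), $J^2K$ is generated by $\ip{a}\ip{a'}\ip{b}$, and expanding this in $\Z[A\tm B]$ gives a sum of eight terms of the form $\pm[(x,y)]$. Applying $\tilde f$ and using the vanishing identities above collapses six of the eight summands to zero, leaving $f(a+a',b)-f(a,b)-f(a',b)$, which is zero by bilinearity in the first slot. A symmetric computation, invoking Remark~\ref{rem-JK-generators}(e) and bilinearity in the second slot, handles $JK^2$. Thus $\tilde f$ vanishes on $J^2K+JK^2$, so in particular its restriction to $JK$ descends to a homomorphism $\ov f\:JK/(J^2K+JK^2)=A\ot B\to V$.

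Finally I would verify the formula $\ov f(a\ot b)=f(a,b)$ by expanding $\ip{a,b}=[(a,b)]-[(a,0)]-[(0,b)]+[(0,0)]$, applying $\tilde f$, and using $f(a,0)=f(0,b)=f(0,0)=0$; the three trivial terms drop out leaving $f(a,b)$. The only mildly delicate step is the computation that $\tilde f$ kills the generators of $J^2K$ and $JK^2$, but once one writes out the eight-term expansion this is just routine bookkeeping combined with bilinearity, so there is no genuine obstacle.
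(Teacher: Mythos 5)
Your proposal is correct and follows essentially the same route as the paper: uniqueness from the generators $a\ot b$, existence by extending $f$ to $\Z[A\tm B]$ via Lemma~\ref{lem-free-property}, observing that bilinearity forces $f(a,0)=f(0,b)=0$, and then checking that the extension annihilates the generators of $J^2K$ and $JK^2$ from Remark~\ref{rem-JK-generators} so that it descends to $A\ot B$. The paper compresses your eight-term bookkeeping by first establishing $f_0(\ip{a,b})=f(a,b)$ and then reading off $f_0(\ip{a}\ip{a'}\ip{b})=f(a+a',b)-f(a,b)-f(a',b)=0$ from the identity $\ip{a}\ip{a'}\ip{b}=\ip{a+a',b}-\ip{a,b}-\ip{a',b}$, but the underlying argument is identical.
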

\begin{proof}
 As $A\ot B$ is generated by the elements $a\ot b$, it is clear that
 $\ov{f}$ will be unique if it exists.

 By Lemma~\ref{lem-free-property}, there is a unique homomorphism
 $f_0\:\Z[A\tm B]\to V$ such that $f_0([a,b])=f(a,b)$ for all $a$ and
 $b$.  Note that for $a\in A$ we have $f_0([a])=f_0([a,0])=f(a,0)=0$,
 and similarly $f_0([b])=0$ for $b\in B$, so 
 \[ f_0(\ip{a,b})=f_0([a,b])-f_0([a,0])-f_0([0,b])+f_0([0,0]) = f(a,b). \]
 Note also that 
 \[ f_0(\ip{a}\ip{a'}\ip{b}) = 
     f_0(\ip{a+a',b}) - f_0(\ip{a,b}) - f_0(\ip{a',b}) = 
     f(a+a',b) - f(a,b) - f(a',b) = 0,
 \]
 so (using Remark~\ref{rem-JK-generators}(d)) we see that
 $f_0(J^2K)=0$.  Similarly, we have $f_0(JK^2)=0$, so $f_0$ induces a
 homomorphism 
 \[ \ov{f} \: A\ot B = \frac{JK}{J^2K+JK^2} \to V \]
 with 
 \[ \ov{f}(a\ot b) = \ov{f}(\ip{a,b}+J^2K+JK^2) = 
     f_0(\ip{a,b}) = f(a,b)
 \]
 as required.
\end{proof}

\begin{remark}\label{rem-tensor-bilinear}
 For example, we have a bilinear map 
 $\mu\:M_n(\Z)\tm M_n(\Z)\to M_n(\Z)$ given by $\mu(M,N)=MN$, so there
 is a unique homomorphism $\ov{\mu}\:M_n(\Z)\ot M_n(\Z)\to M_n(\Z)$
 such that $\ov{\mu}(M\ot N)=MN$.  Rather than spelling this out
 explicitly, we will usually just say that
 $\ov{\mu}\:M_n(\Z)\ot M_n(\Z)\to M_n(\Z)$ is defined by
 $\ov{\mu}(M\ot N)=MN$.
\end{remark}

It will be convenient to reformulate
Proposition~\ref{prop-tensor-bilinear} in a different way.
We write $\Hom(A,B)$ for the set of homomorphisms from $A$ to $B$,
considered as a group under pointwise addition.  Similarly, we write
$\Bilin(A,B;V)$ for the group of bilinear maps from $A\tm B$ to $V$.
\begin{proposition}\label{prop-tensor-hom}
 For any abelian groups $A$, $B$ and $V$, there are natural
 isomorphisms 
 \[ \Hom(A\ot B,V) \simeq
     \Bilin(A,B;V) \simeq
     \Hom(A,\Hom(B,V)) \simeq
     \Hom(B,\Hom(A,V)).
 \]
 More specifically, if we have elements 
 \[ f_0\in\Hom(A\ot B,V) \hspace{2em}
    f_1\in\Bilin(A,B;V) \hspace{2em}
    f_2\in\Hom(A,\Hom(B,V)) \hspace{2em}
    f_3\in\Hom(B,\Hom(A,V))
 \]
 then they are related by the above isomorphisms if and only if for
 all $a\in A$ and $b\in B$ we have
 \[ f_0(a\ot b) = f_1(a,b) = f_2(a)(b) = f_3(b)(a). \]
\end{proposition}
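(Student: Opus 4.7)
The strategy is to build the three bijections directly from the given formulas and verify that each is a group homomorphism with an obvious inverse, leaning heavily on Proposition~\ref{prop-tensor-bilinear} for the first one. Naturality in $A$, $B$, $V$ will be automatic from the formulas because each map is defined by precomposition or by evaluation.

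First I would handle $\Hom(A\ot B,V)\simeq\Bilin(A,B;V)$. Define $\Phi_1(f_0)=f_0\circ\om$, where $\om(a,b)=a\ot b$ is the bilinear map from Example~\ref{eg-bilinear}(c); since composing a homomorphism with a bilinear map yields a bilinear map, this lands in $\Bilin(A,B;V)$, and it is a homomorphism in $f_0$ because addition of homomorphisms is pointwise. Proposition~\ref{prop-tensor-bilinear} supplies a two-sided inverse $f_1\mapsto\ov{f_1}$: that proposition gives existence and uniqueness of $\ov{f_1}$ with $\ov{f_1}\circ\om=f_1$, so $\Phi_1(\ov{f_1})=f_1$; in the other direction, $\ov{f_0\circ\om}=f_0$ again by uniqueness, since both satisfy the defining equation and the elements $a\ot b$ generate $A\ot B$.

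Next I would handle $\Bilin(A,B;V)\simeq\Hom(A,\Hom(B,V))$. Given $f_1\in\Bilin(A,B;V)$, define $f_2(a)\in\Hom(B,V)$ by $f_2(a)(b)=f_1(a,b)$; linearity of $f_2(a)$ as a map $B\to V$ is exactly bilinearity of $f_1$ in its second slot, and linearity of $a\mapsto f_2(a)$ as a map $A\to\Hom(B,V)$ (with pointwise addition in $\Hom(B,V)$) is exactly bilinearity of $f_1$ in its first slot. Conversely, given $f_2\in\Hom(A,\Hom(B,V))$, set $f_1(a,b)=f_2(a)(b)$; the two bilinearity identities are precisely the two linearity conditions on $f_2$. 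These assignments are inverse to each other and respect pointwise addition on both sides. The isomorphism $\Bilin(A,B;V)\simeq\Hom(B,\Hom(A,V))$ is entirely symmetric, with $f_3(b)(a)=f_1(a,b)$, and requires no separate argument.

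Finally I would note that all four descriptions of the correspondence $f_0\leftrightarrow f_1\leftrightarrow f_2\leftrightarrow f_3$ reduce to the single identity $f_0(a\ot b)=f_1(a,b)=f_2(a)(b)=f_3(b)(a)$, so the three isomorphisms compose consistently and the final ``more specifically'' clause of the proposition follows immediately. There is no serious obstacle here: the content is entirely bookkeeping, and the only nontrivial ingredient (the passage from a bilinear map on $A\tm B$ to a homomorphism out of $A\ot B$) is already supplied by Proposition~\ref{prop-tensor-bilinear}.
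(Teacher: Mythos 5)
Your proof is correct and follows essentially the same route as the paper: translate between $\Bilin(A,B;V)$ and the iterated $\Hom$ groups by currying, and use Proposition~\ref{prop-tensor-bilinear} for the bijection with $\Hom(A\ot B,V)$. The only superficial difference is the order in which you treat the three isomorphisms and that you spell out the additivity of the bijections slightly more explicitly; the substance is the same.
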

\begin{proof}
 This is mostly trivial.  For any map $f_1\:A\tm B\to V$, we can
 define a map $f_2\:A\to\Map(B,V)$ by $f_2(a)(b)=f_1(a,b)$.  If $f_1$
 satisfies the right-linearity condition $f_1(a,b+b')$ we see that
 $f_2(a)(b+b')=f_2(a)(b)+f_2(a)(b')$, so $f_2(a)$ is a homomorphism
 from $B$ to $V$, or in other words $f_2$ is a map from $A$ to
 $\Hom(B,V)$.  If $f_1$ also satisfies the left linearity condition
 $f_1(a+a',b)=f_1(a,b)+f_1(a',b)$ then we see that $f_2(a+a')$ is the
 sum of the homomorphisms $f_2(a)$ and $f_2(a')$, so $f_2$ itself is a
 homomorphism, or in other words $f_2\in\Hom(A,\Hom(B,V))$.  All of
 this is reversible, so we have an isomorphism
 $\Bilin(A,B;V)\simeq\Hom(A,\Hom(B,V))$.  We can define
 $f_3(b)(a)=f_1(a,b)$ to obtain a similar isomorphism 
 $\Bilin(A,B;V)\simeq\Hom(B,\Hom(A,V))$.  Finally,
 Proposition~\ref{prop-tensor-bilinear} gives an isomorphism
 $\Bilin(A,B;V)\simeq\Hom(A\ot B,V)$.
\end{proof}

\begin{proposition}\label{prop-tensor-symmon}
 There are natural isomorphisms as follows:
 \begin{align*}
  \eta_A\: \Z\ot A & \to A & \eta_A(n\ot a) &= na \\
  \tau_{AB}\: A\ot B & \to B\ot A & \tau_{AB}(a\ot b) & =b\ot a \\
  \al_{ABC}\: A\ot(B\ot C) & \to (A\ot B)\ot C & 
   \al_{ABC}(a\ot(b\ot c)) &= (a\ot b)\ot c. 
 \end{align*}
 In other words, the operation $(-)\ot(-)$ is commutative, associative
 and unital up to natural isomorphism.
\end{proposition}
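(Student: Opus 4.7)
The plan is to construct each of the three maps (and its inverse) via the universal property of Proposition~\ref{prop-tensor-bilinear}, then verify that the two composites are the identity by checking on generators of the form $a\ot b$ (or $n\ot a$, or $a\ot(b\ot c)$), since these generate the relevant tensor products.

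For $\eta_A$, I would first check that the map $\Z\tm A\to A$ given by $(n,a)\mapsto na$ is bilinear (this is immediate from distributivity), so by Proposition~\ref{prop-tensor-bilinear} it induces a homomorphism $\eta_A\:\Z\ot A\to A$ with $\eta_A(n\ot a)=na$. In the other direction, $a\mapsto 1\ot a$ is a homomorphism $A\to\Z\ot A$ (using the identity $1\ot(a+a')=(1\ot a)+(1\ot a')$ from Remark~\ref{rem-tensor}). One composite sends $a\mapsto 1\ot a\mapsto a$, which is clearly the identity; the other sends the generator $n\ot a$ to $1\ot(na)=n\ot a$ (using $n\ot a=(n\cdot 1)\ot a=n(1\ot a)=1\ot(na)$), so it is the identity on generators and hence on all of $\Z\ot A$.

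For $\tau_{AB}$, the map $(a,b)\mapsto b\ot a$ from $A\tm B$ to $B\ot A$ is bilinear, so Proposition~\ref{prop-tensor-bilinear} produces a homomorphism $\tau_{AB}\:A\ot B\to B\ot A$ with the stated formula; swapping the roles of $A$ and $B$ gives $\tau_{BA}\:B\ot A\to A\ot B$, and the composites fix every generator $a\ot b$ or $b\ot a$, so they are the identity.

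The associator is the main obstacle, since bilinearity only handles two variables at a time and we need trilinearity. My plan is to use Proposition~\ref{prop-tensor-bilinear} twice. First, for each fixed $a\in A$, the map $B\tm C\to(A\ot B)\ot C$ sending $(b,c)\mapsto(a\ot b)\ot c$ is bilinear, so it induces a homomorphism $\phi_a\:B\ot C\to(A\ot B)\ot C$ with $\phi_a(b\ot c)=(a\ot b)\ot c$. The bilinearity of $\ot$ in the first slot (Remark~\ref{rem-tensor}) implies that $a\mapsto\phi_a$ is a homomorphism $A\to\Hom(B\ot C,(A\ot B)\ot C)$, which via Proposition~\ref{prop-tensor-hom} corresponds to a bilinear map $A\tm(B\ot C)\to(A\ot B)\ot C$ sending $(a,b\ot c)\mapsto(a\ot b)\ot c$. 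Applying Proposition~\ref{prop-tensor-bilinear} once more yields $\al_{ABC}\:A\ot(B\ot C)\to(A\ot B)\ot C$ with $\al_{ABC}(a\ot(b\ot c))=(a\ot b)\ot c$. The same construction run symmetrically produces a candidate inverse $\beta_{ABC}\:(A\ot B)\ot C\to A\ot(B\ot C)$ with $\beta_{ABC}((a\ot b)\ot c)=a\ot(b\ot c)$. Finally, $\beta_{ABC}\al_{ABC}$ fixes every element of the form $a\ot(b\ot c)$, and such elements generate $A\ot(B\ot C)$ (since $A\ot(B\ot C)$ is generated by $a\ot y$ with $y\in B\ot C$, and $B\ot C$ is generated by $b\ot c$, together with bilinearity of $\ot$); similarly for the other composite. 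Hence $\al_{ABC}$ is an isomorphism. Naturality in all three cases follows from uniqueness in Proposition~\ref{prop-tensor-bilinear} applied to both sides of the relevant square.
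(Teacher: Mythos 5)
Your proposal is correct and follows essentially the same route as the paper: construct each map via the universal property of Proposition~\ref{prop-tensor-bilinear}, build the associator by applying the universal property twice through the adjunction of Proposition~\ref{prop-tensor-hom}, and verify that composites are identities by checking on generators. The only divergence is cosmetic: for $\eta_A$ you check $\zeta_A\eta_A=1$ directly on generators $n\ot a$ via $1\ot(na)=n\ot a$, whereas the paper transposes $1-\zeta_A\eta_A$ to a homomorphism $\Z\to\Hom(A,\Z\ot A)$ and evaluates at $1$; and for $\tau_{AB}$ you invoke the universal property rather than descending to the map on $\Z[A\times B]$ as the paper does.
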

\begin{proof}
 First, we certainly have a bilinear map $\eta'_A\:\Z\tm A\to A$ given
 by $\eta'_A(n,a)=na$, and by Proposition~\ref{prop-tensor-bilinear}
 this gives a homomorphism $\eta_A\:\Z\ot A\to A$ as indicated.  We
 can also define a homomorphism $\zt_A\:A\to\Z\ot A$ by
 $\zt_A(a)=1\ot a$, and it is clear that $\eta_A\zt_A=1_A$.  In the
 opposite direction, we must show that the map 
 \[ \xi=1_{\Z\ot A}-\zt_A\eta_A\:\Z\ot A\to\Z\ot A \]
 is zero.  By Proposition~\ref{prop-tensor-hom}, it will suffice
 to show that the corresponding homomorphism
 $\xi'\:\Z\to\Hom(A,\Z\ot A)$ is zero.  This is given by 
 \[ \xi'(n)(a) = (n\ot a)-(1\ot na) \]
 so visibly $\xi'(1)=0$ but $\xi'$ is a homomorphism so
 $\xi'(n)=n.\xi'(1)=0$ for all $n$ as required.

 Next, Lemma~\ref{lem-free-property} tells us that there is a unique
 homomorphism $\tau'_{AB}\:\Z[A\tm B]\to\Z[B\tm A]$ with
 $\tau'_{AB}[a,b]=[b,a]$.  It is visible that this is an isomorphism
 (with inverse $\tau'_{BA}$) and that $\tau'_{AB}(R_{AB})=R_{BA}$ so
 there is an induced isomorphism $\tau_{AB}\:A\ot B\to B\ot A$, with
 inverse $\tau_{BA}$. 

 Now fix $a\in A$, and define $\al''(a)\:B\tm C\to (A\ot B)\ot C$ by
 $\al''(a)(b,c)=(a\ot b)\ot c$.  This is bilinear, and moreover
 $\al''(a+a')=\al''(a)+\al''(a')$, so we have a homomorphism 
 \[ \al'' \: A \to \Bilin(B,C;(A\ot B)\ot C). \]
 We also have an isomorphism 
 \[ \Bilin(B,C;(A\ot B)\ot C) \simeq \Hom(B\ot C,(A\ot B)\ot C) \]
 and using that we obtain a homomorphism 
 \[ \al' \: A \to \Hom(B\ot C;(A\ot B)\ot C). \]
 characterised by $\al'(a)(b\ot c)=(a\ot b)\ot c$.  As in
 Proposition~\ref{prop-tensor-hom} this corresponds to a 
 homomorphism $\al\:A\ot(B\ot C)\to (A\ot B)\ot C$, given by
 \[ \al(a\ot(b\ot c)) = \al'(a)(b\ot c) = (a\ot b)\ot c. \]
 In the same way, we can construct
 $\bt\:(A\ot B)\ot C\to A\ot(B\ot C)$ with
 $\bt((a\ot b)\ot c)=a\ot(b\ot c)$.  This means that $\bt\al(x)=x$
 whenever $x$ has the form $a\ot(b\ot c)$, but elements of that form
 generate $A\ot(B\ot C)$, so $\bt\al=1$.  A similar argument shows
 that $\al\bt=1$, so $\al$ is an isomorphism as required.
\end{proof}

\begin{proposition}\label{prop-tensor-distrib}
 For any families of abelian groups $(A_i)_{i\in I}$ and
 $(B_j)_{j\in J}$ there is a natural isomorphism
 \[ \left(\bigoplus_{i\in I}A_i\right) \ot
     \left(\bigoplus_{j\in J}B_j\right) 
     \simeq \bigoplus_{(i,j)\in I\tm J} (A_i\ot B_j).
 \]
 In particular, for any $A$, $B$ and $C$ we have
 $A\ot(B\oplus C)\simeq (A\ot B)\oplus(A\ot C)$.
\end{proposition}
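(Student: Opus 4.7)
The plan is to construct explicit mutually inverse homomorphisms between the two sides, using the universal properties of coproducts (Proposition~\ref{prop-categorical-coproduct}) and tensor products (Proposition~\ref{prop-tensor-bilinear}), together with the distributive-looking identities for $\ot$ over $+$ already encoded in Remark~\ref{rem-tensor}. First I fix notation: write $A=\bigoplus_iA_i$ and $B=\bigoplus_jB_j$ with coproduct inclusions $\iota_i\:A_i\to A$ and $\iota_j\:B_j\to B$, and write $\kappa_{ij}\:A_i\ot B_j\to\bigoplus_{(i,j)\in I\tm J}(A_i\ot B_j)$ for the inclusions into the right-hand side.

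For the forward map, Remark~\ref{rem-tensor-functor} gives for each $(i,j)$ a homomorphism $\iota_i\ot\iota_j\:A_i\ot B_j\to A\ot B$, and Proposition~\ref{prop-categorical-coproduct} then assembles these into a unique homomorphism
\[ \phi\:\bigoplus_{(i,j)\in I\tm J}(A_i\ot B_j)\longrightarrow A\ot B \]
characterised by $\phi(\kappa_{ij}(x))=(\iota_i\ot\iota_j)(x)$ for $x\in A_i\ot B_j$.

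For the inverse I apply Proposition~\ref{prop-tensor-bilinear} to a bilinear map built from the coproduct structure. Given $a=(a_i)_{i\in I}\in A$ and $b=(b_j)_{j\in J}\in B$, both supports are finite, so
\[ \psi(a,b) = \sum_{i\in\supp(a),\,j\in\supp(b)} \kappa_{ij}(a_i\ot b_j) \]
is a well-defined finite sum in $\bigoplus_{(i,j)}(A_i\ot B_j)$. Bilinearity is verified by enlarging $\supp(a)$ and $\supp(a')$ to a common finite index set (as in the proof of Proposition~\ref{prop-categorical-coproduct}) and then using bilinearity of each $A_i\ot B_j$. Proposition~\ref{prop-tensor-bilinear} yields a homomorphism $\ov{\psi}\:A\ot B\to\bigoplus_{(i,j)}(A_i\ot B_j)$ with $\ov{\psi}(a\ot b)=\psi(a,b)$.

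Finally I check $\phi$ and $\ov{\psi}$ are mutually inverse by evaluating on generators. On a generator $\kappa_{ij}(a_i\ot b_j)$, the composite $\ov{\psi}\phi$ sends it to $\ov{\psi}(\iota_i(a_i)\ot\iota_j(b_j))=\psi(\iota_i(a_i),\iota_j(b_j))$; the supports of $\iota_i(a_i)$ and $\iota_j(b_j)$ are contained in $\{i\}$ and $\{j\}$ respectively, leaving only the single term $\kappa_{ij}(a_i\ot b_j)$. In the other direction, for $a\ot b\in A\ot B$ we expand $a=\sum_i\iota_i(a_i)$ and $b=\sum_j\iota_j(b_j)$ (finite sums) and use bilinearity of $\ot$ to obtain $a\ot b=\sum_{i,j}\iota_i(a_i)\ot\iota_j(b_j)$; applying $\phi\circ\ov{\psi}$ produces the same sum, so the composite is the identity on generators and hence on all of $A\ot B$. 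Naturality follows from the functoriality of $(-)\ot(-)$ in Remark~\ref{rem-tensor-functor}. The main obstacle is simply the bookkeeping: one must be careful that the finite-support condition makes every sum meaningful and that bilinearity of $\psi$ is genuinely verified rather than assumed; no deeper issue arises.
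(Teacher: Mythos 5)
Your construction matches the paper's almost verbatim: the forward map is assembled from $\iota_i\ot\iota_j$ via the coproduct universal property, and the inverse comes from the same bilinear map $(a,b)\mapsto\sum_{i,j}\kappa_{ij}(a_i\ot b_j)$; the paper defines $g_{ij}$ via the bilinear map $(a,b)\mapsto\iota_i(a)\ot\iota_j(b)$, which is exactly $\iota_i\ot\iota_j$. The only difference is that you spell out the inverse check that the paper leaves to the reader; the approach is the same.
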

\begin{proof}
 For brevity, let $L$ and $R$ be the left and right hand sides of the
 claimed isomorphism.  For each $i$ and $j$ we can define a bilinear
 map $g'_{ij}\:A_i\tm B_j\to L$ by
 $g'_{ij}(a,b)=\iota_i(a)\ot\iota_j(b)$.  This gives a homomorphism
 $g_{ij}\:A_i\ot B_j\to L$, and
 Proposition~\ref{prop-categorical-coproduct} tells us that there is a
 unique $g\:R\to L$ with $g\circ\iota_{ij}=g_{ij}$ for all $i$ and
 $j$.  In the opposite direction, we can define a bilinear map 
 $f'\:(\bigoplus A_i)\tm(\bigoplus B_j)\to R$ by 
 \[ f'(a,b) = \sum_{i\in\supp(a)} \sum_{j\in\supp(b)}
               \iota_{ij}(a_i\ot b_j).
 \]
 This corresponds in the usual way to a homomorphism $f\:L\to R$.  We
 leave it to the reader to check that $fg=1_R$ and $gf=1_L$.
\end{proof}
\begin{corollary}\label{cor-tensor-distrib}
 Given sets $I$ and $J$ and an abelian group $B$, we have natural
 isomorphisms $\Z[I]\ot B\simeq\bigoplus_{i\in I}B$ and
 $\Z[I]\ot\Z[J]\simeq\Z[I\tm J]$.  In particular, this gives
 $\Z^r\ot B\simeq B^r$ and $\Z^r\ot\Z^s\simeq\Z^{rs}$.
\end{corollary}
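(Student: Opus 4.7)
The plan is to assemble this corollary directly from Proposition~\ref{prop-tensor-distrib} together with the unital isomorphism $\eta_A\:\Z\ot A\to A$ from Proposition~\ref{prop-tensor-symmon}, using the identification $\Z[I]=\bigoplus_{i\in I}\Z$ coming from Definition~\ref{defn-ZI} (via $[i]\leftrightarrow\iota_i(1)$).

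For the first isomorphism, I would apply Proposition~\ref{prop-tensor-distrib} with $A_i=\Z$ for $i\in I$ and with the second family consisting of a single group $B$ indexed by a one-point set. This yields
\[ \Z[I]\ot B \;=\; \Bigl(\bigoplus_{i\in I}\Z\Bigr)\ot B \;\simeq\; \bigoplus_{i\in I}(\Z\ot B), \]
and then I would apply $\eta_B\:\Z\ot B\to B$ summand by summand to obtain $\bigoplus_{i\in I}B$. Tracing the formulas, the composite sends $[i]\ot b$ to the element of $\bigoplus_{i\in I}B$ whose $i$-th coordinate is $b$ and whose other coordinates vanish.

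For the second isomorphism, I would apply Proposition~\ref{prop-tensor-distrib} with both families consisting of copies of $\Z$, giving
\[ \Z[I]\ot\Z[J] \;\simeq\; \bigoplus_{(i,j)\in I\tm J}(\Z\ot\Z) \;\simeq\; \bigoplus_{(i,j)\in I\tm J}\Z \;=\; \Z[I\tm J], \]
where the middle step again uses $\eta_\Z$. Under this map, $[i]\ot[j]$ corresponds to $[(i,j)]$.

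Finally, the finite-rank statements are just the specialisations to $I=\{1,\dotsc,r\}$ and $J=\{1,\dotsc,s\}$, using $\Z^r=\Z[\{1,\dotsc,r\}]$ and $\bigoplus_{i=1}^r B=B^r$. There is no real obstacle in this proof: everything is immediate from the cited propositions, so the only thing to be careful about is unwinding the composite isomorphisms explicitly enough to record the formulas on generators.
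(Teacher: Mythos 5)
Your proof is correct and follows essentially the same route as the paper: apply Proposition~\ref{prop-tensor-distrib} with the family $\{\Z\}_{i\in I}$ on one side (and either the singleton family $\{B\}$ or the family $\{\Z\}_{j\in J}$ on the other), then collapse each $\Z\ot(-)$ summand via $\eta$. The paper leaves the $\eta$ step implicit, whereas you spell it out and record the action on generators, which is a modest but worthwhile improvement in explicitness.
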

\begin{proof}
 By applying the Proposition to the family $\{\Z\}_{i\in I}$ and the
  family consisting of the single group $B$, we obtain
 $\Z[I]\ot B\simeq\bigoplus_{i\in I}B$.  If instead we use the family
 $\{\Z\}_{j\in J}$ on the right hand side, we obtain
 $\Z[I]\ot\Z[J]\simeq\Z[I\tm J]$.  
\end{proof}

Another straightforward example is as follows:
\begin{proposition}\label{prop-cyclic-tensor}
 For any integer $n$ and any abelian group $A$ there is a natural
 isomorphism $(\Z/n)\ot A\simeq A/nA$.  In particular, we have
 $(\Z/n)\ot(\Z/m)=\Z/(n,m)$, where $(n,m)$ denotes the greatest common
 divisor of $n$ and $m$.
\end{proposition}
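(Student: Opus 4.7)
The plan is to construct mutually inverse homomorphisms $\phi\:(\Z/n)\ot A\to A/nA$ and $\psi\:A/nA\to(\Z/n)\ot A$, then deduce the second statement by specialising to $A=\Z/m$.

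First I would define a map $\phi_0\:(\Z/n)\tm A\to A/nA$ by $\phi_0(k+n\Z,a)=ka+nA$. Well-definedness in the first argument is the only nontrivial check: if $k\equiv k'\pmod n$ then $k-k'=nl$ for some $l$, so $(k-k')a=n(la)\in nA$.  Bilinearity is then immediate from the distributive laws in $A$ and $\Z$.  By Proposition~\ref{prop-tensor-bilinear}, $\phi_0$ extends uniquely to a homomorphism $\phi\:(\Z/n)\ot A\to A/nA$ satisfying $\phi((k+n\Z)\ot a)=ka+nA$.

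For the inverse, consider the map $a\mapsto(1+n\Z)\ot a$ from $A$ to $(\Z/n)\ot A$, which is a homomorphism by bilinearity of $\ot$ (Remark~\ref{rem-tensor}).  For any $a\in A$ we have
\[
 (1+n\Z)\ot na = n\bigl((1+n\Z)\ot a\bigr) = (n+n\Z)\ot a = 0\ot a = 0,
\]
so this homomorphism vanishes on $nA$ and descends to $\psi\:A/nA\to(\Z/n)\ot A$ with $\psi(a+nA)=(1+n\Z)\ot a$.  The composite $\phi\psi$ sends $a+nA$ to $1\cdot a+nA=a+nA$, so $\phi\psi=1$.  In the other direction, on a generator of $(\Z/n)\ot A$ we have
\[
 \psi\phi\bigl((k+n\Z)\ot a\bigr)=\psi(ka+nA)=(1+n\Z)\ot ka=k\bigl((1+n\Z)\ot a\bigr)=(k+n\Z)\ot a,
\]
and since such elements generate $(\Z/n)\ot A$ we conclude $\psi\phi=1$.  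Naturality in $A$ is clear from the formulas for $\phi$ and $\psi$.

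Finally, specialising to $A=\Z/m$ gives $(\Z/n)\ot(\Z/m)\simeq(\Z/m)/n(\Z/m)$.  The subgroup $n(\Z/m)$ equals the image of multiplication by $n$ on $\Z/m$, which is the cyclic subgroup generated by $n+m\Z$; by the description of subgroups of $\Z$ in Lemma~\ref{lem-pid} applied to $n\Z+m\Z=(n,m)\Z$, this subgroup is $(n,m)\Z/m\Z$.  Hence $(\Z/m)/n(\Z/m)\simeq\Z/(n,m)$, as claimed.  The only step requiring any real care is the well-definedness check for $\phi_0$, and this is essentially trivial; I expect no serious obstacle.
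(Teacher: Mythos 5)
Your proof is correct and follows essentially the same approach as the paper: define a bilinear map $(\Z/n)\tm A\to A/nA$ by $(k+n\Z,a)\mapsto ka+nA$, extend to the tensor product, construct the inverse $a+nA\mapsto(1+n\Z)\ot a$, and specialise to $A=\Z/m$. You have merely spelled out the well-definedness and composite-identity checks that the paper leaves to the reader, and your identification of $n(\Z/m)$ with $(n,m)\Z/m\Z$ is a correct (if slightly more laboured) route to the same endpoint $(\Z/m)/(n\Z/m)\simeq\Z/(n\Z+m\Z)=\Z/(n,m)$.
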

\begin{proof}
 We can define a bilinear map $f'\:(\Z/n)\ot A\to A/nA$ by
 $f'(k+n\Z,a)=ka+nA$, and this induces a homomorphism
 $f\:(\Z/n)\ot A\to A/nA$.  In the opposite direction, we can define
 $g\:A/nA\to(\Z/n)\ot A$ by $g(a+nA)=(1+n\Z)\ot a$.  We leave it to
 the reader to check that these are well-defined and that $fg$ and
 $gf$ are identity maps.  In particular, this gives
 $(\Z/n)\ot(\Z/m)=\Z/(n\Z+m\Z)$, but it is standard that
 $n\Z+m\Z=(n,m)\Z$. 
\end{proof}

We saw in Section~\ref{sec-fin-gen} that every finitely generated
abelian group is a direct sum of groups of the form $\Z$ or $\Z/p^v$.
We can thus use Proposition~\ref{prop-tensor-distrib},
Corollary~\ref{cor-tensor-distrib} and
Proposition~\ref{prop-cyclic-tensor} to understand the tensor product
of any two finitely generated abelian groups.

We next consider the interaction between tensor products and
exactness. 
\begin{proposition}\label{prop-tensor-exact}
 Let $A$, $B$, $C$ and $U$ be abelian groups.
 \begin{itemize}
  \item[(a)] If we have an exact sequence
   \[ A\xra{j}B\xra{q}C\to 0, \] 
   then the resulting sequence
   \[ U\ot A \xra{1\ot j} U\ot B\xra{1\ot q} U\ot C \to 0 \]
   is also exact.  (In other words, tensoring is \emph{right exact}.)
  \item[(b)] If we have an injective map $j\:A\to B$ and $U$ is
   torsion-free, then $1\ot j\:U\ot A\to U\ot B$ is also injective.
  \item[(c)] If we have a short exact sequence
   \[ 0\to A\xra{j}B\xra{q}C\to 0, \] 
   and $U$ is torsion-free, then the resulting sequence
   \[ 0\to U\ot A \xra{1\ot j} U\ot B\xra{1\ot q} U\ot C \to 0 \]
   is also short exact.
 \end{itemize}
\end{proposition}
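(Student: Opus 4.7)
The plan is to prove parts (a) and (b) independently and then combine them to obtain (c).

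For (a), I will verify the three conditions for exactness directly. Surjectivity of $1\ot q$ is immediate on generators: given $u\ot c\in U\ot C$, lift $c$ to some $b\in B$ by surjectivity of $q$, and observe $(1\ot q)(u\ot b)=u\ot c$. The inclusion $\image(1\ot j)\sse\ker(1\ot q)$ follows from $qj=0$ and functoriality of $\ot$ (Remark~\ref{rem-tensor-functor}). The main content is the reverse inclusion. Here the plan is to construct an inverse to the canonical surjection $(U\ot B)/\image(1\ot j)\to U\ot C$ induced by $1\ot q$. Define $\phi\:U\tm C\to(U\ot B)/\image(1\ot j)$ by $\phi(u,c)=[u\ot b]$ for any $b\in B$ with $q(b)=c$; the choice does not matter because two such lifts differ by an element of $\image(j)$, and hence their images differ by something in $\image(1\ot j)$. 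A short calculation shows $\phi$ is bilinear, so by Proposition~\ref{prop-tensor-bilinear} it factors through a homomorphism $U\ot C\to(U\ot B)/\image(1\ot j)$, and one checks on generators that this inverts the map induced by $1\ot q$.

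For (b), the strategy is to reduce to the case where $U$ is finitely generated and hence free, by exploiting the finite-support nature of the tensor product construction. Suppose $x\in U\ot A$ with $(1\ot j)(x)=0$, and write $x=\sum_{i=1}^n u_i\ot a_i$. The relation $\sum u_i\ot j(a_i)=0$ in $U\ot B$ translates via Definition~\ref{defn-tensor} into the assertion that $\sum\ip{u_i,j(a_i)}$ lies in $J^2K+JK^2\le\Z[U\tm B]$; this membership is witnessed by a finite expression in the generators of $J^2K+JK^2$ from Remark~\ref{rem-JK-generators}(d,e), which involves only finitely many elements of $U$. Let $U_0\le U$ be the subgroup generated by $u_1,\dotsc,u_n$ together with all those witnessing elements, so $U_0$ is finitely generated and the relation already holds in $U_0\ot B$. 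As a subgroup of the torsion-free group $U$, $U_0$ is torsion-free, hence by Proposition~\ref{prop-flat-free} it is free of some finite rank $r$. Corollary~\ref{cor-tensor-distrib} then identifies $U_0\ot A\simeq A^r$ and $U_0\ot B\simeq B^r$, and under this identification $1\ot j$ becomes the $r$-fold direct sum of $j$ with itself, which is injective. Thus $x_0=\sum u_i\ot a_i\in U_0\ot A$ vanishes, and consequently so does its image $x\in U\ot A$.

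For (c), combine the two previous parts: (a) gives surjectivity of $1\ot q$ together with exactness at $U\ot B$, while (b) gives injectivity of $1\ot j$, so the resulting sequence is short exact. The main obstacle is the reduction step in (b): one must justify carefully that a witness of $\sum\ip{u_i,j(a_i)}\in J^2K+JK^2$ is a finite expression, so that only finitely many elements of $U$ are involved, and that enlarging $U$ to any subgroup $U_0$ containing all those elements preserves the relation when interpreted inside $U_0\ot B$. This is conceptually the fact that $\ot$ commutes with filtered colimits, but here it must be established by direct inspection of the construction in Definition~\ref{defn-tensor}.
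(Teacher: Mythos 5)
Your proofs of parts (b) and (c) coincide with the paper's: the reduction to finitely generated torsion-free subgroups of $U$, appeal to Proposition~\ref{prop-flat-free} and Corollary~\ref{cor-tensor-distrib}, and then assembly of (a) and (b) into (c), are essentially verbatim the paper's argument, including the careful remark about a relation in $U\ot B$ being witnessed by a finite expression that survives restriction to a subgroup $U_0\leq U$.

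Part (a) is where you diverge. The paper proves right exactness via the adjunction $\Hom(U\ot B,V)\simeq\Hom(B,\Hom(U,V))$: it establishes Lemma~\ref{lem-right-test} (the sequence $A\to B\to C\to 0$ is exact iff $0\to\Hom(C,V)\to\Hom(B,V)\to\Hom(A,V)$ is exact for every $V$), then applies it with $V$ replaced by $\Hom(U,V)$ and rewrites via Proposition~\ref{prop-tensor-hom} to transfer exactness across the adjunction. You instead construct an explicit two-sided inverse to the induced surjection $(U\ot B)/\image(1\ot j)\to U\ot C$: a bilinear map $U\tm C\to(U\ot B)/\image(1\ot j)$ defined by lifting $c$ along $q$, well-defined because two lifts differ by an element of $\image(j)=\ker(q)$, and then passing through Proposition~\ref{prop-tensor-bilinear}. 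Both arguments are standard and correct. Your version is more elementary and self-contained (it only needs the universal property, not the $\Hom$-tensor adjunction or Lemma~\ref{lem-right-test}); the paper's version is slicker once the duality lemma is in hand and illustrates a reusable technique (dualize, exploit adjunctions, dualize back) that the paper employs again elsewhere, e.g.\ for Ext groups. One small point worth making explicit in your write-up: you need $\image(j)=\ker(q)$, not merely $\image(j)\sse\ker(q)$, to justify that two lifts of $c$ differ by something in $\image(j)$ — this is exactly the exactness at $B$ in the hypothesis, so it is available, but it is worth flagging since $j$ is not assumed injective in part (a).
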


For the first part, we will use the following criterion:
\begin{lemma}\label{lem-right-test}
 A sequence $A\xra{j}B\xra{q}C\to 0$ is exact iff for every abelian
 group $V$, the resulting sequence
 $0\to\Hom(C,V)\xra{q^*}\Hom(B,V)\xra{j^*}\Hom(A,V)$ is exact.
\end{lemma}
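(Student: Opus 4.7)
The plan is to prove the two directions separately, with the forward direction following by direct diagram chasing and the backward direction by cleverly choosing test groups $V$ to extract the geometric conclusions.

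For the forward direction, I would assume $A\xra{j}B\xra{q}C\to 0$ is exact. First, to see that $q^*$ is injective, I would take $f\in\Hom(C,V)$ with $fq=0$ and use surjectivity of $q$: for any $c\in C$ choose $b$ with $qb=c$, then $f(c)=fq(b)=0$. Next, $j^*q^*=(qj)^*=0$ since $qj=0$ (as $\image(j)=\ker(q)$), so $\image(q^*)\sse\ker(j^*)$. For the reverse inclusion, given $g\in\Hom(B,V)$ with $gj=0$, I observe $g$ vanishes on $\image(j)=\ker(q)$, and since $q$ is surjective, $g$ factors uniquely through $q$ to give $h\in\Hom(C,V)$ with $hq=g$, i.e.\ $q^*(h)=g$.

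For the backward direction, assume the dual sequence is exact for every $V$. I need to establish that $q$ is surjective, that $qj=0$, and that $\ker(q)\sse\image(j)$. To show $q$ is surjective, I take $V=C/\image(q)=\cok(q)$ and let $\pi\:C\to V$ be the quotient; then $q^*(\pi)=\pi q=0$ by construction, and injectivity of $q^*$ forces $\pi=0$, so $\image(q)=C$. To show $qj=0$, I take $V=C$ and consider $1_C\in\Hom(C,C)$; then $q=q^*(1_C)\in\image(q^*)\sse\ker(j^*)$, so $j^*(q)=qj=0$, giving $\image(j)\sse\ker(q)$. Finally, to show $\ker(q)\sse\image(j)$, I take $V=B/\image(j)$ with quotient $\pi\:B\to V$; then $j^*(\pi)=\pi j=0$, so by exactness at $\Hom(B,V)$ there exists $h\:C\to V$ with $\pi=q^*(h)=hq$. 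Then any $b\in\ker(q)$ satisfies $\pi(b)=h(q(b))=h(0)=0$, so $b\in\image(j)$.

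The forward direction is essentially the universal property of the cokernel. The only real content in the backward direction is selecting the test groups $V=\cok(q)$ and $V=B/\image(j)$ together with their canonical quotient maps; once these are chosen, the conclusions fall out immediately from the hypothesised exactness applied to those specific $V$'s. I do not anticipate any serious obstacle, since no hypothesis on torsion or flatness is required and no snake-lemma-style argument is needed.
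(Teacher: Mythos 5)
Your proof is correct and takes essentially the same route as the paper: a direct diagram chase for the forward direction, and for the converse the same three choices of test group ($V=\cok(q)$ with the quotient map to get surjectivity of $q$, $V=C$ with $1_C$ to get $qj=0$, and $V=B/\image(j)$ with the quotient map to get $\ker(q)\subseteq\image(j)$). The only cosmetic difference is that you explicitly note $j^*q^*=(qj)^*=0$ in the forward direction, which the paper leaves implicit.
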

\begin{remark}
 The evident analogous statement with short exact sequences is not
 valid.  We will investigate this in more detail later.
\end{remark}
\begin{proof}
 Let $\CS$ denote the first sequence, and write $\Hom(\CS,V)$ for the
 second one.

 Suppose that $\CS$ is exact, so $q$ is surjective and
 $\ker(q)=\image(j)$.  Suppose that $f\in\ker(q^*)$, so $f\:C\to V$ and
 $fq=0\:B\to V$.  This means that $f(q(b))=0$ for all $b\in B$, but
 $q$ is surjective, so $f(c)=0$ for all $c\in C$, so $f=0$.  This
 proves that $\ker(q^*)=0$, so $q^*$ is injective.  Now suppose that
 $g\in\ker(j^*)$, so $g\:B\to V$ and $gj=0$, or equivalently
 $g(\image(j))=0$, or equivalently $g(\ker(q))=0$.  We thus have a
 well-defined map $f\:C\to V$ given by $f(c)=g(b)$ for any $b$ with
 $q(b)=c$.  Now $f\in\Hom(C,V)$ and $q^*(f)=fq=g$, so
 $g\in\image(q^*)$.  This proves that $\ker(j^*)=\image(q^*)$, so
 $\Hom(\CS,V)$ is exact.

 Conversely, suppose that $\Hom(\CS,V)$ is exact for all $V$.  Take
 $V=\cok(q)=C/q(B)$ and let $f\:C\to V$ be the evident projection
 (which is surjective).  By construction we have $q^*(f)=0$ but $q^*$
 is assumed to be injective so $f$ is zero as well as being
 surjective.  This implies that $C/q(B)=0$ so $q$ is surjective.

 Now instead take $V=C$.  As $\Hom(\CS,C)$ is exact, we certainly have
 $j^*q^*=0\:\Hom(C,C)\to\Hom(A,C)$.  In particular, we see that
 $j^*q^*(1)=0$ in $\Hom(A,C)$, or in other words that $qj=0\:A\to C$.
 This implies that $\image(j)\leq\ker(q)$.

 Finally, take $V=\cok(j)=B/j(A)$, and let $g\:B\to V$ be the
 projection.  Then $j^*(g)=gj=0$, so $g\in\ker(j^*)=\image(q^*)$, so
 there exists $f\:C\to B/j(A)$ with $fq=g\:B\to B/j(A)$.  Now if
 $q(b)=0$ then $b+\image(j)=g(b)=f(q(b))=0$, so $b\in\image(j)$.  This
 proves that $\ker(q)=\image(j)$, so $\CS$ is exact as claimed.
\end{proof}

\begin{proof}[Proof of Proposition~\ref{prop-tensor-exact}]\nl
 \begin{itemize}
  \item[(a)] By Lemma~\ref{lem-right-test}, the sequence
   \[ 0 \to \Hom(C,V) \xra{q^*} \Hom(B,V) \xra{j^*} \Hom(A,V) \]
   is exact for all $V$.  As $V$ is arbitrary we can replace it by
   $\Hom(U,V)$, where $U$ and $V$ are both arbitrary.  This gives an
   exact sequence 
   \[ 0 \to \Hom(C,\Hom(U,V)) \xra{q^*}
            \Hom(B,\Hom(U,V)) \xra{j^*}
            \Hom(A,\Hom(U,V)),
   \]
   and we can use Proposition~\ref{prop-tensor-hom} to rewrite it as
   \[ 0 \to \Hom(U\ot C,V) \xra{(1\ot q)^*}
            \Hom(U\ot B,V) \xra{(1\ot j)^*}
            \Hom(U\ot A,V).
   \]
   Finally we apply Lemma~\ref{lem-right-test} in the opposite
   direction to see that the sequence
   $U\ot A\to U\ot B\to U\ot C\to 0$ is exact.
  \item[(b)] Now suppose instead that we have an injective map
   $j\:A\to B$, and that $U$ is torsion-free.  We must show that
   $(1\ot j)\:U\ot A\to U\ot B$ is injective.  If $U$ is actually free
   then we may assume that $U=\Z[I]$ for some set $I$.  In this case
   Corollary~\ref{cor-tensor-distrib} tells us that $1\ot j$ is just
   a direct sum of copies of $j$ and the claim is clear.  In
   particular, this holds whenever $U$ is finitely generated and
   torsion-free, as we see from Proposition~\ref{prop-flat-free}.  The
   real issue is to deduce the infinitely generated case from the finitely
   generated case.  Suppose we have an element $x\in U\ot A$ with
   $(1\ot j)(x)=0$.  We can write $x$ in the form
   $x=\sum_{i=1}^n u_i\ot a_i$ say.  We then have
   $\sum_{i=1}^nu_i\ot j(a_i)=0$.  Going back to
   Definition~\ref{defn-tensor}, we deduce that $\sum_i[u_i,j(a_i)]$
   can be expressed in $\Z[U\tm B]$ as a finite $\Z$-linear
   combination of terms of the form $[u+u',b]-[u,b]-[u',b]$ or
   $[u,b+b']-[u,b]-[u,b']$.  Choose such an expression, and let $S$ be
   the (finite) set of elements of $U$ that are involved in that
   expression, together with the elements $u_1,\dotsc,u_n$ occuring in
   our expression for $x$.  Let $U_0$ be the subgroup of $U$ generated
   by $S$, which is finitely generated and torsion-free.  We now have
   an element $x_0=\sum_iu_i\ot a_i$ in $U_0\ot A$ and we find that
   $(1\ot j)(x_0)=0$ in $U_0\ot B$.  By the finitely generated case we
   see that $x_0=0$, but $x$ is the image of $x_0$ under the evident
   homomorphism $A\ot U_0\to A\ot U$, so $x=0$ as required.

  \item[(c)] This is just a straightforward combination of~(a)
   and~(b). 
 \end{itemize}
\end{proof}

Note that in part~(b) of the Proposition, it is definitely necessary
to assume that $U$ is torsion-free.  Indeed, we can take $j\:A\to B$
to be $n.1_\Z\:\Z\to\Z$, and then $1\ot j$ is just $n.1_U$, and these
maps are injective for all $n>0$ if and only if $U$ is torsion free.

For various purposes it is important to understand the kernel of 
$1\ot j$ in more detail.  We will first discuss the case $U=\Z/n$,
which is quite straightforward.

\begin{definition}\label{defn-ann-n-A}
 We write $A[n]$ for $\{a\in A\st na=0\}$, which can also be
 identified with $\Hom(\Z/n,A)$.  (A homomorphism $u\:\Z/n\to A$
 corresponds to the element $u(1+n\Z)\in A[n]$.)
\end{definition}

\begin{proposition}\label{prop-Zn-six}
 Fix an integer $n>0$.  For any short exact sequence
 $A\xra{j}B\xra{q}C$, there is a unique homomorphism
 $\dl\:C[n]\to A/nA=(\Z/n)\ot A$ such that $\dl(q(b))=a+nA$ whenever
 $nb=j(a)$.  Moreover, this fits into an exact sequence
 \[ 0 \to A[n] \xra{j} B[n] \xra{q} C[n] \xra{\dl}
       A/n \xra{j} B/n \xra{q} C/n \to 0.
 \]
\end{proposition}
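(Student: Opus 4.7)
The plan is to deduce this as a direct application of the snake lemma (Proposition~\ref{prop-snake-lemma}) to the diagram
\begin{center}
 \begin{tikzcd}
  A \arrow[rightarrowtail,r,"j"]    \arrow[d,"n"'] &
  B \arrow[twoheadrightarrow,r,"q"] \arrow[d,"n"'] &
  C                                 \arrow[d,"n"'] \\
  A \arrow[rightarrowtail,r,"j"']  &
  B \arrow[twoheadrightarrow,r,"q"'] &
  C
 \end{tikzcd}
\end{center}
in which both rows are the given short exact sequence and each vertical map is multiplication by $n$. The diagram commutes because $j$ and $q$ are group homomorphisms, and both rows are short exact by hypothesis, so the snake lemma applies.

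The next step is to identify the kernel and cokernel of each vertical map. By Definition~\ref{defn-ann-n-A}, the kernel of $n\: A\to A$ is precisely $A[n]$, and similarly for $B$ and $C$. The cokernel of $n\: A\to A$ is $A/nA$, which by Proposition~\ref{prop-cyclic-tensor} is naturally isomorphic to $(\Z/n)\ot A$, and likewise for $B$ and $C$. The snake lemma then immediately yields a connecting map $\dl\: C[n]\to A/nA$ and the exact sequence
\[ 0 \to A[n] \xra{j} B[n] \xra{q} C[n] \xra{\dl}
    A/nA \xra{j} B/nB \xra{q} C/nC \to 0,
\]
which is what we want.

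It remains to check the stated formula for $\dl$. This is just the snake-lemma description specialised to our diagram: given $c\in C[n]$, choose $b\in B$ with $q(b)=c$; then the middle vertical map sends $b$ to $nb\in B$, and since $q(nb)=nc=0$ we can find $a\in A$ with $j(a)=nb$; then $\dl(c)=a+nA$. This matches the formula in the statement (with $b$ playing its usual role and the condition $nb=j(a)$ replacing the snake condition $gb=j'a'$), and uniqueness of $\dl$ is part of Proposition~\ref{prop-snake-lemma}.

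The main ``obstacle'' is essentially bookkeeping: one just needs to verify commutativity of the diagram (trivial, since $nj(a)=j(na)$ and $nq(b)=q(nb)$) and to unwind the identifications $\ker(n)=A[n]$ and $\cok(n)=A/nA\simeq(\Z/n)\ot A$ so that the generic snake-lemma output reads as stated. No further argument is required.
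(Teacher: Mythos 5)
Your proof is correct and takes exactly the same approach as the paper: apply the Snake Lemma (Proposition~\ref{prop-snake-lemma}) to the commutative diagram whose rows are the given short exact sequence and whose vertical maps are multiplication by $n$, then identify the kernels and cokernels of $n.1_A$, $n.1_B$, $n.1_C$ as $A[n]$, $B[n]$, $C[n]$, $A/nA$, $B/nB$, $C/nC$. The paper's proof is just a one-line citation of the Snake Lemma applied to this same diagram, so your additional verification of commutativity and the formula for $\dl$ is welcome bookkeeping but not a different argument.
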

\begin{proof}
 This is just the Snake Lemma (Proposition~\ref{prop-snake-lemma})
 applied to the diagram 
 \begin{center}
  \begin{tikzcd}
     A \arrow[rightarrowtail,r,"j"] \arrow[d,"n.1_A"'] & 
     B \arrow[twoheadrightarrow,r,"q"]  \arrow[d,"n.1_B"'] &
     C                 \arrow[d,"n.1_C"'] \\
     A \arrow[rightarrowtail,r,"j"'] &
     B \arrow[twoheadrightarrow,r,"q"']  &
     C
    \end{tikzcd}
   \end{center}
   
\end{proof}

It turns out that there are similar six-term exact sequences in much
greater generality, involving the groups $\Tor(A,B)$ introduced in
Definition~\ref{defn-tensor}.  We start by recording an obvious fact:
\begin{lemma}\label{lem-tor-twist}
 There is an isomorphism $\tau_{AB}\:\Z[A\tm B]\to\Z[B\tm A]$ given by
 $\tau_{AB}([a,b])=[b,a]$, and this induces an isomorphism
 $\tau_{AB}\:\Tor(A,B)\to\Tor(B,A)$ with inverse $\tau_{BA}$. \qed
\end{lemma}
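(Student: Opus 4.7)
The plan is to build $\tau_{AB}$ by the universal property of free abelian groups and then track how it interacts with the ideals $J,K$ used to define $\Tor$. Applying Lemma~\ref{lem-free-property} to the function $A\tm B\to\Z[B\tm A]$ sending $(a,b)\mapsto[b,a]$ produces a unique group homomorphism $\tau_{AB}\:\Z[A\tm B]\to\Z[B\tm A]$ with $\tau_{AB}([a,b])=[b,a]$. Lemma~\ref{lem-free-property} applied in the opposite direction gives $\tau_{BA}$, and because $\tau_{BA}\tau_{AB}$ agrees with the identity on each basis element $[a,b]$ the uniqueness clause forces $\tau_{BA}\tau_{AB}=1$, and symmetrically $\tau_{AB}\tau_{BA}=1$, so $\tau_{AB}$ is already an isomorphism of abelian groups.

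Next I would upgrade this to a ring isomorphism. Since $\tau_{AB}([a,b][a',b'])=\tau_{AB}([a+a',b+b'])=[b+b',a+a']=[b,a][b',a']$, the map respects the multiplication on basis elements, and hence on all of $\Z[A\tm B]$. Writing $\tilde J$ and $\tilde K$ for the ideals in $\Z[B\tm A]$ analogous to $J$ and $K$ (so $\tilde J$ is generated by $\ip{b}$ for $b\in B\sse B\tm A$, and $\tilde K$ by $\ip{a}$ for $a\in A\sse B\tm A$), one observes that the generator $\ip{a}=[a,0]-[0,0]$ of $J$ is sent by $\tau_{AB}$ to $[0,a]-[0,0]$, which is precisely the generator $\ip{a}$ of $\tilde K$; similarly $\tau_{AB}$ carries the generators of $K$ to the generators of $\tilde J$. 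Because $\tau_{AB}$ is a ring isomorphism it carries ideals to ideals, and combined with the analogous statement for $\tau_{BA}$ we conclude $\tau_{AB}(J)=\tilde K$ and $\tau_{AB}(K)=\tilde J$.

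From here the conclusion is purely formal bookkeeping. Applying the ring isomorphism to products of ideals gives $\tau_{AB}(J^2K)=\tilde K^2\tilde J=\tilde J\tilde K^2$, $\tau_{AB}(JK^2)=\tilde K\tilde J^2=\tilde J^2\tilde K$, and $\tau_{AB}(J^2K^2)=\tilde J^2\tilde K^2$. Intersecting, $\tau_{AB}(J^2K\cap JK^2)=\tilde J^2\tilde K\cap\tilde J\tilde K^2$, so $\tau_{AB}$ descends to an isomorphism
\[
 \tau_{AB}\:\Tor(A,B)=\frac{J^2K\cap JK^2}{J^2K^2}\;\xra{\simeq}\;\frac{\tilde J^2\tilde K\cap\tilde J\tilde K^2}{\tilde J^2\tilde K^2}=\Tor(B,A),
\]
and the relation $\tau_{BA}\tau_{AB}=1$ on $\Z[A\tm B]$ descends to the same relation on $\Tor(A,B)$, giving the asserted inverse. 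There is really no obstacle here; the only substantive point is recognising that the obvious swap map on the free groups is a ring map and that it interchanges the two ideals $J$ and $K$ up to the $A\leftrightarrow B$ relabelling, after which the symmetric definition of $\Tor$ does the rest.
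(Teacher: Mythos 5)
Your proof is correct and fills in the details that the paper treats as immediate (the paper simply asserts the lemma with a \qed and no argument). The route you take — building $\tau_{AB}$ via the free property, checking it is a ring isomorphism that swaps $J\leftrightarrow\tilde K$ and $K\leftrightarrow\tilde J$, and then pushing through products and intersections of ideals — is exactly the intended (unwritten) justification.
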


\begin{lemma}\label{lem-tor-four}
 Let $A$ and $B$ be abelian groups, and let $J$ and $K$ be ideals in
 $\Z[A\tm B]$ as in Definition~\ref{defn-tensor}.  Then there are
 natural exact sequences
 \[ 0 \to \Tor(A,B) \to A\ot I_B^2
        \xra{1\ot j_B} A\ot I_B \xra{1\ot q_B} A\ot B \to 0 
 \]
 and 
 \[ 0 \to \Tor(A,B) \to I^2_A\ot B
        \xra{j_A\ot 1} I_A\ot B \xra{q_A\ot 1} A\ot B \to 0.
 \]
\end{lemma}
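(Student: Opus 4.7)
The plan is to identify each term in the first exact sequence with an explicit subquotient of $\Z[A\tm B]$, making the map $1\ot j_B$ transparent. The key observation is that the natural map $\Z[A]\ot\Z[B]\to\Z[A\tm B]$ sending $[a]\ot[b]\mapsto[(a,b)]$ is a ring isomorphism: it is an isomorphism of abelian groups by Corollary~\ref{cor-tensor-distrib}, and compatibility with multiplication follows from $A\tm B=A\oplus B$. Under this identification one checks directly (using that $[(a,0)]\cdot[(0,b)]=[(a,b)]$) that $J$ corresponds to the ideal $I_A\ot\Z[B]$ and $K$ to $\Z[A]\ot I_B$, and consequently $JK=I_A\ot I_B$, $J^2K=I_A^2\ot I_B$, $JK^2=I_A\ot I_B^2$, and $J^2K^2=I_A^2\ot I_B^2$.

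Next, recall from Proposition~\ref{prop-IA} that $I_A^2$ and $I_A$ are free (hence torsion-free) and that $I_A^2\xra{j_A}I_A\xra{q_A}A$ is short exact. Tensoring this sequence with the torsion-free groups $I_B$ and $I_B^2$, Proposition~\ref{prop-tensor-exact}(c) (combined with commutativity of $\ot$) gives short exact sequences that yield natural isomorphisms
\[
 A\ot I_B \;\cong\; \frac{I_A\ot I_B}{I_A^2\ot I_B} \;=\; \frac{JK}{J^2K},
 \qquad
 A\ot I_B^2 \;\cong\; \frac{I_A\ot I_B^2}{I_A^2\ot I_B^2} \;=\; \frac{JK^2}{J^2K^2}.
\]
Naturality of this construction in the map $j_B\:I_B^2\to I_B$ identifies $1\ot j_B$ with the map $JK^2/J^2K^2\to JK/J^2K$ induced by the inclusion $JK^2\sse JK$. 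Likewise $1\ot q_B$ corresponds to the further quotient $JK/J^2K\to JK/(J^2K+JK^2)=A\ot B$.

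Computing the kernel of the inclusion-induced map is now immediate: a class $x+J^2K^2$ with $x\in JK^2$ lies in the kernel iff $x\in J^2K$, so the kernel is $(J^2K\cap JK^2)/J^2K^2=\Tor(A,B)$ by Definition~\ref{defn-tensor}. The cokernel of the same map is $JK/(JK^2+J^2K)=A\ot B$, matching the surjectivity of $1\ot q_B$ already guaranteed by the right exactness in Proposition~\ref{prop-tensor-exact}(a); assembling these pieces yields the first exact sequence.

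For the second exact sequence, I would either repeat the argument after swapping the roles of $A$ and $B$ (tensoring $0\to I_B^2\to I_B\to B\to 0$ with the torsion-free groups $I_A$ and $I_A^2$), or combine the first sequence for the pair $(B,A)$ with the twist isomorphism $\tau_{AB}\:\Tor(A,B)\to\Tor(B,A)$ of Lemma~\ref{lem-tor-twist} and the commutativity isomorphism of Proposition~\ref{prop-tensor-symmon}. The only delicate step in the whole argument is the bookkeeping in the first paragraph that identifies $J$, $K$, and their products with the corresponding tensor ideals; once that is in place, everything else reduces to a routine quotient computation and the definition of $\Tor$.
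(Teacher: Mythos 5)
Your argument is correct and follows essentially the same route as the paper's proof: identify $\Z[A\tm B]$ with $\Z[A]\ot\Z[B]$ via Corollary~\ref{cor-tensor-distrib}, use freeness of the ideal powers together with Proposition~\ref{prop-tensor-exact} to identify $J^pK^q$ with $I_A^p\ot I_B^q$, and then recognize the exact sequence as the kernel--cokernel sequence of the inclusion $JK^2/J^2K^2\to JK/J^2K$. The only cosmetic difference is that the paper starts from the inclusion map on ideal quotients and then translates to tensor products, while you start from the tensor identifications and land on the same inclusion; the content is identical.
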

\begin{proof}
 Let $J$ and $K$ be as in Definition~\ref{defn-tensor}.  As
 $JK^2\leq JK$ and $J^2K^2\leq J^2K$ we have a map
 $f\:(JK^2)/(J^2K^2)\to(JK)/(J^2K)$ given by $f(x+J^2K^2)=x+J^2K$.
 The cokernel is $JK/(J^2K+JK^2)$, which is $A\ot B$ by definition.
 The kernel is $(JK^2\cap J^2K)/(J^2K^2)$, which is $\Tor(A,B)$ by
 definition.  In other words, we have an exact sequence
 \[ 0 \to \Tor(A,B) \to \frac{JK^2}{J^2K^2} \xra{f} 
       \frac{JK}{J^2K} \to A\ot B \to 0.
 \]
 Next, we have $\Z[A\tm B]=\Z[A]\ot\Z[B]$ by
 Corollary~\ref{cor-tensor-distrib}.  For $p,q\geq 0$ we have ideals
 $I_A^p\leq\Z[A]$ and $I_B^q\leq\Z[B]$.  These are free abelian groups
 by Theorem~\ref{thm-hereditary}, so using
 Proposition~\ref{prop-tensor-exact}(b) we see that the evident
 homomorphisms  
 \begin{center}
  \begin{tikzcd}
      I_A^p\ot I_B^q \arrow[r] \arrow[d] & I_A^p\ot\Z[B] \arrow[d] \\
      \Z[A]\ot I_B^q \arrow[r] & \Z[A]\ot\Z[B]
     \end{tikzcd}
    \end{center}
    
 are all injective.  This means that $I_A^p\ot I_B^q$ can be
 identified with its image in $\Z[A\tm B]$, which is just $J^pK^q$.
 Our exact sequence now takes the form 
 \[ 0 \to \Tor(A,B) \to
       \frac{I_A\ot I_B^2}{I_A^2\ot I_B^2} \xra{f} 
       \frac{I_A\ot I_B}{I_A^2\ot I_B} \to A\ot B \to 0.
 \]
 Next, as tensoring is right exact
 (Proposition~\ref{prop-tensor-exact}(a)) we can identify
 $(I_A\ot I_B^2)/(I_A^2\ot I_B^2)$ with $(I_A/I^2_A)\ot I_B^2$, which
 is $A\ot I_B^2$ by Proposition~\ref{prop-IA}.  Similarly, we can
 identify $(I_A\ot I_B^2)/(I_A^2\ot I_B^2)$ with $A\ot I_B$, so our
 exact sequence becomes
 \[ 0 \to \Tor(A,B) \to A\ot I_B^2
        \xra{} A\ot I_B \xra{} A\ot B \to 0 
 \]
 as claimed.  The other sequence is obtained symmetrically, or by
 appealing to Lemma~\ref{lem-tor-twist}.
\end{proof}

\begin{corollary}\label{cor-flat-tor}
 If $A$ or $B$ is torsion-free then $\Tor(A,B)=0$.  In particular,
 this holds if $A$ or $B$ is free.
\end{corollary}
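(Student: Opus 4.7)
The plan is to read the result directly off the two exact sequences furnished by Lemma~\ref{lem-tor-four} together with the flatness statement in Proposition~\ref{prop-tensor-exact}(b).

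Recall that Lemma~\ref{lem-tor-four} gives exact sequences
\[ 0 \to \Tor(A,B) \to A\ot I_B^2 \xra{1\ot j_B} A\ot I_B \xra{1\ot q_B} A\ot B \to 0 \]
and
\[ 0 \to \Tor(A,B) \to I_A^2\ot B \xra{j_A\ot 1} I_A\ot B \xra{q_A\ot 1} A\ot B \to 0, \]
so by exactness at the second term, $\Tor(A,B)$ is identified with $\ker(1\ot j_B)$ in the first sequence, and with $\ker(j_A\ot 1)$ in the second.

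Now suppose $A$ is torsion-free. The map $j_B\:I_B^2\to I_B$ is just the inclusion of a subgroup, hence injective. By Proposition~\ref{prop-tensor-exact}(b), tensoring an injection by a torsion-free group preserves injectivity, so $1\ot j_B\:A\ot I_B^2\to A\ot I_B$ is injective. Therefore $\Tor(A,B)=\ker(1\ot j_B)=0$. The case where $B$ is torsion-free is symmetric: apply the same argument to $j_A\ot 1$ in the second exact sequence (or, equivalently, use Lemma~\ref{lem-tor-twist} to reduce to the previous case).

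For the final sentence, Example~\ref{eg-torsion-free} records that every free abelian group is torsion-free, so the free case is subsumed by the torsion-free case. There is no real obstacle here: the work has already been done in Lemma~\ref{lem-tor-four} (which identifies $\Tor$ as a kernel of a tensored-up inclusion) and in Proposition~\ref{prop-tensor-exact}(b) (the flatness of torsion-free groups); the corollary is simply the combination of these two facts.
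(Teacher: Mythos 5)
Your proof is correct and follows exactly the paper's route: identify $\Tor(A,B)$ with the kernel of the map $A\ot I_B^2\to A\ot I_B$ via Lemma~\ref{lem-tor-four}, then apply Proposition~\ref{prop-tensor-exact}(b) to see this kernel vanishes when $A$ is torsion-free, with the $B$ case by symmetry. The only cosmetic difference is that you spell out the symmetric case and the ``free implies torsion-free'' observation a bit more explicitly than the paper does.
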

\begin{proof}
 If $A$ is torsion-free then the homomorphism 
 \[ A\ot I_B^2\to A\ot I_B \]
 is injective by Proposition~\ref{prop-tensor-exact}(b), but
 $\Tor(A,B)$ is the kernel so $\Tor(A,B)=0$.  The other case follows
 symmetrically. 
\end{proof}

We next discuss the functorial properties of Tor groups.  Suppose we
have homomorphisms $f\:A\to A'$ and $g\:B\to B'$.  As discussed in
Remark~\ref{rem-tensor-functor}, these give a homomorphism 
$f\tm g\:A\tm B\to A'\tm B'$, which induces a ring map 
$(f\tm g)_\bullet\:\Z[A\tm B]\to\Z[A'\tm B']$, sending 
$J$ and $K$ to the coresponding ideals in $\Z[A'\tm B']$.  It
therefore induces a homomorphism $\Tor(A,B)\to\Tor(A',B')$, which we
denote by $\Tor(f,g)$.  This makes $\Tor(A,B)$ a functor of the pair
$(A,B)$. 

Now suppose we have two homomorphisms $f_0,f_1\:A\to A'$.  We then
have ring maps $(f_0)_\bullet$, $(f_1)_\bullet$ and
$(f_0+f_1)_\bullet$ from $\Z[A]$ to $\Z[A']$, and it is not true that
$(f_0+f_1)_\bullet=(f_0)_\bullet+(f_1)_\bullet$.  Because of this, it
is not obvious that $\Tor(f_0+f_1,g)=\Tor(f_0,g)+\Tor(f_1,g)$.
However, this does turn out to be true, as we now prove.

\begin{proposition}\label{prop-tor-bilinear}\nl
 \begin{itemize}
  \item[(a)] Given homomorphisms $f_0,f_1\:A\to A'$ and $g\:B\to B'$
   we have $\Tor(f_0+f_1,g)=\Tor(f_0,g)+\Tor(f_1,g)$.
  \item[(b)] Given homomorphisms $f\:A\to A'$ and $g_0,g_1\:B\to B'$
   we have $\Tor(f,g_0+g_1)=\Tor(f,g_0)+\Tor(f,g_1)$.
  \item[(c)] Given families of groups $\{A_i\}_{i\in I}$ and
   $\{B_j\}_{j\in J}$ there is a natural isomorphism
   \[ \bigoplus_{i,j} \Tor(A_i,B_j) \to
       \Tor\left(\bigoplus_i A_i,\bigoplus_jB_j\right).
   \]
 \end{itemize}
\end{proposition}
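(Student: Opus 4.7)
The plan is to leverage the exact sequence from Lemma~\ref{lem-tor-four},
\[ 0 \to \Tor(A,B) \to A\ot I_B^2 \xra{1\ot j_B} A\ot I_B, \]
which realises $\Tor(A,B)$ as the kernel of a tensor product map, thereby reducing questions about $\Tor$ to the well-behaved functoriality of $\otimes$.

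For part~(a), I first handle the special case $g=1_B$ with $B$ fixed.  Unwinding the definition of $\Tor(f,1_B)$ via the ring homomorphism $(f\tm 1)_\bullet\:\Z[A\tm B]\to\Z[A'\tm B]$, and tracing through the identifications $\Z[A\tm B]=\Z[A]\ot\Z[B]$ from Corollary~\ref{cor-tensor-distrib} together with $J^pK^q=I_A^p\ot I_B^q$ (which injects by Proposition~\ref{prop-tensor-exact}(b), since $I_A^p$ and $I_B^q$ are free by Theorem~\ref{thm-hereditary}), one checks that $\Tor(f,1_B)$ is the restriction to the kernel of $f\ot 1_{I_B^2}\:A\ot I_B^2\to A'\ot I_B^2$.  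Bilinearity of $\otimes$ then gives $\Tor(f_0+f_1,1_B)=\Tor(f_0,1_B)+\Tor(f_1,1_B)$.  For general $g$, use functoriality to write $\Tor(f,g)=\Tor(1_{A'},g)\circ\Tor(f,1_B)$; post-composition with the fixed map $\Tor(1_{A'},g)$ is additive in its argument, and applying it to the identity just established yields~(a).  Part~(b) is entirely symmetric, using the other exact sequence in Lemma~\ref{lem-tor-four}, or alternatively obtained by conjugating with the twist isomorphism of Lemma~\ref{lem-tor-twist}.

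For part~(c), the same exact sequence shows that $\Tor(-,B)$ preserves arbitrary direct sums: tensor product distributes over arbitrary direct sums by Proposition~\ref{prop-tensor-distrib}, and kernels commute with direct sums in abelian groups by a direct element-wise check, so
\[ \Tor\bigl(\bigoplus_i A_i,B\bigr) =
   \ker\bigl(\bigoplus_i(A_i\ot I_B^2)\to\bigoplus_i(A_i\ot I_B)\bigr) =
   \bigoplus_i\Tor(A_i,B). \]
The symmetric statement for the second variable follows from the other exact sequence, and iterating these two natural isomorphisms gives the stated natural isomorphism in~(c), which agrees with the map assembled from the inclusions $\Tor(\iota_i,\iota_j)$ by the universal property of the coproduct.

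The main obstacle is the bookkeeping in the special case of~(a): one must verify carefully that under the chain of identifications used in the proof of Lemma~\ref{lem-tor-four}, the map induced on $\Tor$ by $(f\tm 1)_\bullet$ really is $f\ot 1_{I_B^2}$ restricted to the kernel.  The crucial point is that $f_\bullet\:I_A\to I_{A'}$ descends along the isomorphism $I_A/I_A^2\xra{\ov{q}}A$ of Proposition~\ref{prop-IA}(d) to the map $f$ itself, so that $f_\bullet\ot 1_\bullet$ on $I_A\ot I_B^2$ descends modulo $I_A^2\ot I_B^2$ to exactly $f\ot 1_{I_B^2}$.  Everything else is then formal consequence of the bilinearity of $\otimes$.
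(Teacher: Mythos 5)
Your proof follows the same route as the paper: realise $\Tor(A,B)$ as the kernel of $1\ot j_B\:A\ot I_B^2\to A\ot I_B$ via Lemma~\ref{lem-tor-four}, deduce additivity in the first slot from bilinearity of $\ot$ and functoriality, get~(b) by symmetry, and prove~(c) one variable at a time using that $\ot$ and kernels commute with direct sums. Your careful verification that $\Tor(f,1_B)$ corresponds to $f\ot 1_{I_B^2}$ under the identifications is exactly the detail the paper compresses into ``claim~(a) follows easily from this'', and it is correct.
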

\begin{proof}
 Lemma~\ref{lem-tor-four} allows us to describe $\Tor(A,B)$ as the
 kernel of the map $1\ot j\:A\ot I_B^2\to A\ot I_B$, and claim~(a)
 follows easily from this.  Claim~(b) is proved similarly.

 For~(c), let us put $A^*=\bigoplus_iA_i$ and $B^*=\bigoplus_jB_j$ and
 $T=\bigoplus_{i,j}\Tor(A_i,A_j)$.
 We have inclusions $\iota_i\:A_i\to A^*$ and $\iota_j\:B_j\to B^*$,
 which give homomorphisms
 $\Tor(\iota_i,\iota_j)\:\Tor(A_i,B_j)\to\Tor(A^*,B^*)$.  We also have
 inclusions $\iota_{ij}\:\Tor(A_i,B_j)\to T$.  By the
 universal property of coproducts
 (Proposition~\ref{prop-categorical-coproduct}) there is a unique
 homomorphism $\phi\:T\to\Tor(A^*,B^*)$ with
 $\phi\circ\iota_{ij}=\Tor(\iota_i,\iota_j)$ for all $i$ and $j$.  It
 is this map that we claim is an isomorphism.  

 For fixed $j$, we can identify $\Tor(A^*,B_j)$ with the kernel of the
 evident map $A^*\ot I^2_{B_j}\to A^*\ot I_{B_j}$.  From this it
 follows easily that $\Tor(A^*,B_j)=\bigoplus_i\Tor(A_i,B_j)$.  A
 similar argument shows that $\Tor(A^*,B^*)=\bigoplus_j\Tor(A^*,B_j)$,
 and by putting these together we obtain $\Tor(A^*,B^*)\simeq T$ as
 claimed.  We leave it to the reader to check that the isomorphism
 arising from this argument is the same as $\phi$. 
\end{proof}

\begin{proposition}\label{prop-tor-six}
 For any abelian group $U$ and any short exact sequence $A\to B\to C$
 there is a natural exact sequence 
 \[ 0 \to \Tor(U,A) \to \Tor(U,B) \to \Tor(U,C) \to 
      U\ot A \to U\ot B \to U \ot C \to 0.
 \]
\end{proposition}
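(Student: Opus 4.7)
The plan is to introduce an auxiliary free resolution of $U$ and apply the Snake Lemma (Proposition~\ref{prop-snake-lemma}) twice.  First, I would choose a surjection from a free abelian group $P\era U$ and put $K=\ker(P\era U)$, which is itself free by Theorem~\ref{thm-hereditary}.  This yields a short exact sequence $0\to K\xra{i}P\xra{\pi}U\to 0$ in which both $K$ and $P$ are torsion-free.

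The key preparatory step will be to identify $\Tor(U,X)$ naturally with $\ker(i\ot 1_X\colon K\ot X\to P\ot X)$ for each abelian group $X$.  To do this, I would tensor the sequence $0\to I_X^2\to I_X\to X\to 0$ from Proposition~\ref{prop-IA}(d) with the map $i\colon K\to P$ to obtain a two-row commutative diagram.  Both rows are short exact because $K$ and $P$ are torsion-free (Proposition~\ref{prop-tensor-exact}(c)); the two leftmost vertical maps are injective because $I_X^2$ and $I_X$ are free (Proposition~\ref{prop-tensor-exact}(b)); and the three cokernels are $U\ot I_X^2$, $U\ot I_X$ and $U\ot X$ by right exactness of tensoring.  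The Snake Lemma therefore produces an exact sequence
\[
 0 \to \ker(i\ot 1_X) \to U\ot I_X^2 \to U\ot I_X \to U\ot X \to 0,
\]
and comparison with Lemma~\ref{lem-tor-four} yields a natural isomorphism $\ker(i\ot 1_X)\cong\Tor(U,X)$.

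With this identification in hand, the desired sequence will drop out by one further application of the Snake Lemma, this time to the two-row diagram obtained by tensoring the given sequence $0\to A\to B\to C\to 0$ with $i\colon K\to P$.  Both rows are short exact by Proposition~\ref{prop-tensor-exact}(c) applied to the torsion-free groups $K$ and $P$; the three kernels are $\Tor(U,A)$, $\Tor(U,B)$, $\Tor(U,C)$ by the previous paragraph; and the three cokernels are $U\ot A$, $U\ot B$, $U\ot C$ by right exactness.  Naturality in the short exact sequence $A\to B\to C$ is inherited from the naturality of the Snake Lemma and of the identification established above.

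The main obstacle is the middle step: one must bridge the paper's definition of $\Tor(U,X)$ as a subquotient of $\Z[U\tm X]$ with the more flexible description $\ker(K\ot X\to P\ot X)$ coming from an auxiliary free resolution of $U$.  Once Lemma~\ref{lem-tor-four} and the Snake Lemma are put to work to build this bridge, the rest of the argument is a routine diagram chase.
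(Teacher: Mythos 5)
Your proof is correct, but it takes a longer route than the paper. You introduce an auxiliary free resolution $K\rightarrowtail P\twoheadrightarrow U$ and spend the whole middle step establishing that $\Tor(U,X)\cong\ker(K\otimes X\to P\otimes X)$; only then do you apply the Snake Lemma to the diagram obtained by tensoring $A\rightarrowtail B\twoheadrightarrow C$ with $K\to P$. But the paper already has a specific free resolution of $U$ in hand, namely $I^2_U\rightarrowtail I_U\twoheadrightarrow U$ (both $I^2_U$ and $I_U$ are free by Theorem~\ref{thm-hereditary}), and Lemma~\ref{lem-tor-four} already identifies $\Tor(U,X)$ as the kernel of $j_U\otimes 1\colon I^2_U\otimes X\to I_U\otimes X$ and $U\otimes X$ as its cokernel. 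So the paper simply tensors the given short exact sequence with $j_U\colon I^2_U\to I_U$ and applies the Snake Lemma \emph{once} to the resulting diagram; the rows are short exact because $I^2_U$ and $I_U$ are free, and Lemma~\ref{lem-tor-four} reads off the kernels and cokernels of the columns immediately. Your extra step is essentially a clean re-derivation of the independence of $\Tor$ from the choice of free resolution of the first variable, which is of independent interest (cf.~Remark~\ref{rem-traditional-tor}, where the analogous fact for the second variable is deduced as a \emph{consequence} of this proposition), but it is not needed here. In short: your argument uses the Snake Lemma twice where one application suffices, and the saving comes from using the canonical resolution that the framework already provides rather than an arbitrary one.
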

\begin{proof}
 Apply the Snake Lemma (Proposition~\ref{prop-snake-lemma}) to the
 diagram 
 \begin{center}
  \begin{tikzcd}
     I^2_U \ot A \arrow[r] \arrow[d] & 
     I^2_U \ot B \arrow[r] \arrow[d] & 
     I^2_U \ot C        \arrow[d] \\
     I_U \ot A \arrow[r] & 
     I_U \ot B \arrow[r] & 
     I_U \ot C,
    \end{tikzcd}
   \end{center}
   
 noting that the rows are short exact because $I^2_U$ and $I_U$ are
 free.  
\end{proof}

\begin{remark}\label{rem-traditional-tor}
 Let $A$ and $B$ be abelian groups.  We can always choose a free
 abelian group $F$ and a surjective homomorphism $p\:F\to B$.  Indeed,
 one possibility is to use the natural map $q\:I_B\to B$ from
 Proposition~\ref{prop-IA}, but we can also make a less natural
 choice that is typically much smaller.  We then let $F'$ denote the
 kernel of $p$, and note that this is again free by
 Theorem~\ref{thm-hereditary}.  Now the Proposition gives an
 exact sequence 
 \[ 0 \to \Tor(A,F') \to \Tor(A,F) \to \Tor(A,B) \to 
     A\ot F' \xra{1\ot i} A\ot F \to A\ot B \to 0.
 \]
 As $F'$ and $F$ are free we have $\Tor(A,F')=\Tor(A,F)=0$, so we
 conclude that $\Tor(A,B)$ is isomorphic to the kernel of $1\ot i$.  

 A more traditional approach to $\Tor$ groups is to \emph{define}
 $\Tor(A,B)$ as the kernel of $1\ot i$.  In this approach one has to
 work to prove that the resulting group is well-defined up to
 canonical isomorphism, and that $\Tor(A,B)\simeq\Tor(B,A)$ and so
 on.  However, one makes more contact with the general techniques of
 homological algebra, which are important for other reasons.
\end{remark}

\begin{remark}\label{rem-tor-mono}
 Suppose we have subgroups $A'\leq A$ and $B'\leq B$.  Using
 Proposition~\ref{prop-tor-six} we see that the maps
 \begin{center}
  \begin{tikzcd}
     \Tor(A',B') \arrow[r] \arrow[d] & \Tor(A',B) \arrow[d] \\
     \Tor(A,B') \arrow[r] & \Tor(A,B)
    \end{tikzcd}
   \end{center}
   
 are all injective, so we can regard $\Tor(A',B')$ as a subgroup of
 $\Tor(A,B)$.  
\end{remark}

\begin{proposition}\label{prop-tor-finitely-determined}
 $\Tor(A,B)$ is the union of the subgroups $\Tor(A',B')$ for finite
 subgroups $A'\leq A$ and $B'\leq B$.
\end{proposition}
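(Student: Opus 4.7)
The plan is to first show that every element of $\Tor(A,B)$ lies in $\Tor(A_0,B_0)$ for some \emph{finitely generated} subgroups $A_0\leq A$ and $B_0\leq B$, and then invoke the structure theorem for finitely generated abelian groups to shrink these further to finite subgroups.

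For the first reduction, I would use Lemma~\ref{lem-tor-four}, which identifies $\Tor(A,B)$ with the kernel of $1\ot j_B\:A\ot I_B^2\to A\ot I_B$. Given $x\in\Tor(A,B)$, write $x=\sum_{i=1}^n a_i\ot y_i$ with $a_i\in A$ and $y_i\in I_B^2$. Each $y_i$ is a finite $\Z$-linear combination of products $\ip{b}\ip{b'}$, and so involves only finitely many elements of $B$. Let $A_0\leq A$ be the subgroup generated by $a_1,\dotsc,a_n$, and let $B_0\leq B$ be the subgroup generated by all the $b,b'$ appearing in the $y_i$. Each $y_i$ now lies in $I_{B_0}^2$, so the same formula defines an element $x_0=\sum a_i\ot y_i\in A_0\ot I_{B_0}^2$ mapping to $x$ under the natural map $A_0\ot I_{B_0}^2\to A\ot I_B^2$.

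To check that $x_0$ actually lies in $\Tor(A_0,B_0)=\ker(A_0\ot I_{B_0}^2\to A_0\ot I_{B_0})$, I need to show that its image in $A_0\ot I_{B_0}$ is zero. Since this image maps to zero in $A\ot I_B$ by hypothesis, it suffices to show that $A_0\ot I_{B_0}\to A\ot I_B$ is injective. I factor this as $A_0\ot I_{B_0}\to A\ot I_{B_0}\to A\ot I_B$. The first arrow is injective by Proposition~\ref{prop-tensor-exact}(b), since $I_{B_0}$ is a subgroup of the free group $\Z[B_0]$ and hence torsion-free (in fact free). For the second, note that by Proposition~\ref{prop-IA}(b), $I_B$ is freely generated as an abelian group by $\{\ip{b}\st b\in B\sm 0\}$ and $I_{B_0}$ by the subset $\{\ip{b}\st b\in B_0\sm 0\}$, so the short exact sequence $0\to I_{B_0}\to I_B\to I_B/I_{B_0}\to 0$ splits (the quotient being free on the complementary basis elements). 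Tensoring a split short exact sequence with $A$ preserves injectivity.

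Finally, to reduce to finite subgroups, I would use Theorem~\ref{thm-fin-gen} to decompose $A_0\simeq\Z^r\oplus\tors(A_0)$ with $\tors(A_0)$ finite, and similarly $B_0\simeq\Z^s\oplus\tors(B_0)$ with $\tors(B_0)$ finite. Applying Proposition~\ref{prop-tor-bilinear}(c) to these decompositions and using Corollary~\ref{cor-flat-tor} to kill the three summands that involve a free factor, the map $\Tor(\tors(A_0),\tors(B_0))\to\Tor(A_0,B_0)$ induced by the inclusions becomes an isomorphism. Hence $x_0$ lifts to an element of $\Tor(\tors(A_0),\tors(B_0))$, and Remark~\ref{rem-tor-mono} identifies this with a subgroup of $\Tor(A,B)$ containing $x$. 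Setting $A'=\tors(A_0)$ and $B'=\tors(B_0)$ completes the proof. I expect the main obstacle to be the injectivity step in the middle paragraph: it hinges on the fact that the inclusion $I_{B_0}\hookrightarrow I_B$ is a split monomorphism, which in turn depends on the very explicit free basis provided by Proposition~\ref{prop-IA}(b).
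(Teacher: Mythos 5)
Your proof is correct and follows the same overall strategy as the paper's: reduce to finitely generated subgroups $A_0\leq A$ and $B_0\leq B$, then split off the free parts via the structure theorem and use Corollary~\ref{cor-flat-tor} to conclude that $\Tor(A_0,B_0)=\Tor(\tors(A_0),\tors(B_0))$. Where you diverge is in the first reduction. The paper works directly with the description $\Tor(A,B)=(J^2K\cap JK^2)/J^2K^2$ inside $\Z[A\times B]$, expands a representative of $x$ in the explicit generators of $J^2K$ and of $JK^2$, lets $A_0$ and $B_0$ be the subgroups generated by the finitely many group elements appearing, and then asserts that $x\in\Tor(A_0,B_0)$ is ``clear.'' You instead pass through Lemma~\ref{lem-tor-four}, realizing $\Tor(A,B)$ as $\ker\bigl(A\ot I_B^2\to A\ot I_B\bigr)$, and then you actually \emph{prove} the membership by showing $A_0\ot I_{B_0}\to A\ot I_B$ is injective, via the factorization through $A\ot I_{B_0}$, torsion-freeness of $I_{B_0}$, and the observation that $I_{B_0}\hookrightarrow I_B$ is a split monomorphism of free groups with free cokernel. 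The payoff of your route is that it makes rigorous the one step the paper elides; the cost is a little more setup in tracking the comparison map $A_0\ot I_{B_0}^2\to A\ot I_B^2$. Both arguments are sound, and the reduction from finitely generated to finite subgroups at the end is identical in the two.
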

\begin{proof}
 Any element of $x\in\Tor(A,B)$ has the form $x=y+z+J^2K^2$ for some
 $y\in J^2K$ and $z\in JK^2$.  We can write $y$ as
 $\sum_{i=1}^rn_i\ip{a_i}\ip{a'_i}\ip{b_i}$ for some $n_i\in\Z$ and
 $a_i,a'_i\in A$ and $b_i\in B$.  Similarly, we can write $z$ as
 $\sum_{j=1}^sm_j\ip{c_j}\ip{d_j}\ip{d'_j}$ for some $m_j\in\Z$ and
 $c_j\in A$ and $d_j,d'_j\in B$.  Let $A_0$ be the subgroup of $A$
 generated by the elements $a_i,a'_i$ and $c_j$, and let $B_0$ be the
 subgroup of $B$ generated by the elements $b_i,d_j$ and $d'_j$.  It
 is then clear that $x\in\Tor(A_0,B_0)\leq\Tor(A,B)$.  Moreover, $A_0$
 and $B_0$ are finitely generated, so the subgroups $A'=\tors(A_0)$
 and $B'=\tors(B_0)$ are finite.  It follows from
 Theorem~\ref{thm-fin-gen} that $A_0=A'\oplus P$ and
 $B_0=B'\oplus Q$, where $P$ and $Q$ are free.  This means that
 $\Tor(P,B')=\Tor(P,Q)=\Tor(A',Q)=0$ by Corollary~\ref{cor-flat-tor},
 so $\Tor(A_0,B_0)=\Tor(A',B')$.  The claim follows.
\end{proof}

\begin{corollary}\label{cor-tor-torsion}
 $\Tor(A,B)$ is always a torsion group.
\end{corollary}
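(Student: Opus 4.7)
The plan is to combine Proposition~\ref{prop-tor-finitely-determined} with the bilinearity of $\Tor$ established in Proposition~\ref{prop-tor-bilinear}. By Proposition~\ref{prop-tor-finitely-determined}, every element $x \in \Tor(A,B)$ lies in some subgroup $\Tor(A',B')$ with $A'$ and $B'$ finite. It therefore suffices to show that $\Tor(A',B')$ is a torsion group whenever $A'$ and $B'$ are finite, and in fact that each such $\Tor(A',B')$ is annihilated by a positive integer.

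First I would fix finite subgroups $A' \leq A$ and $B' \leq B$, and set $n = |A'|$. As in Example~\ref{eg-finite-torsion}, Lagrange's theorem gives $na = 0$ for every $a \in A'$, so the homomorphism $n \cdot 1_{A'} \: A' \to A'$ is the zero map. Applying $\Tor(-, 1_{B'})$ and using part~(a) of Proposition~\ref{prop-tor-bilinear}, I get
\[
  0 = \Tor(n \cdot 1_{A'}, 1_{B'}) = n \cdot \Tor(1_{A'}, 1_{B'}) = n \cdot 1_{\Tor(A',B')},
\]
so multiplication by $n$ is the zero endomorphism of $\Tor(A',B')$. In particular every element of $\Tor(A',B')$ is torsion.

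To conclude, an arbitrary element $x \in \Tor(A,B)$ lies in some such $\Tor(A',B')$ by Proposition~\ref{prop-tor-finitely-determined}, and so it is killed by $|A'|$; hence $x$ is a torsion element of $\Tor(A,B)$. There is no real obstacle here: the work has already been done in Proposition~\ref{prop-tor-finitely-determined} (which reduces to the finite case) and in Proposition~\ref{prop-tor-bilinear}(a) (which is what converts the relation $n \cdot 1_{A'} = 0$ into a relation on $\Tor$).
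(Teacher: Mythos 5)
Your proof is correct and follows essentially the same route as the paper: reduce to the finite case via Proposition~\ref{prop-tor-finitely-determined}, then use the bilinearity from Proposition~\ref{prop-tor-bilinear}(a) to turn the relation $n\cdot 1_{A'}=0$ into $n\cdot 1_{\Tor(A',B')}=0$. The only cosmetic difference is that the paper only insists that the first variable be finite, whereas you take both finite; this changes nothing.
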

\begin{proof}
 In view of the proposition, it will suffice to prove this when $A$
 is finite.  In that case there exists $n$ such that $n.1_A=0$, and
 so $\Tor(n.1_A,1_B)=0\:\Tor(A,B)\to\Tor(A,B)$.  However, using
 Proposition~\ref{prop-tor-bilinear} we see that
 $\Tor(n.1_A,1_B)=n.\Tor(1_A,1_B)=n.1_{\Tor(A,B)}$.  This means that
 $nx=0$ for all $x\in\Tor(A,B)$, so $\Tor(A,B)$ is a torsion group.
\end{proof}

We next show how to construct elements of $\Tor(A,B)$ more
explicitly.  

\begin{proposition}\label{prop-en}\nl
 \begin{itemize}
  \item[(a)] For $n>0$ and $a\in A[n]$ and $b\in B[n]$ there is an
   element $e_n(a,b)\in\Tor(A,B)$ given by 
   \[ e_n(a,b) = n\ip{a}\ip{b} + J^2K^2 \in 
       \frac{J^2K\cap JK^2}{J^2K^2} = \Tor(A,B).
   \]
  \item[(b)] The map $e_n\:A[n]\tm B[n]\to\Tor(A,B)$ is bilinear, so
   it induces a homomorphism $A[n]\ot B[n]\to\Tor(A,B)$.
  \item[(c)] Suppose we have elements $a\in A[n]$ and $b\in B[m]$.
   Let $d$ be any common divisor of $n$ and $m$.  Then
   $(n/d)a\in A[m]$ and $(m/d)b\in A[n]$ and we have
   \[ e_n(a,(m/d)b) = e_{nm/d}(a,b) = e_m((n/d)a,b). \]
  \item[(d)] Suppose we have a short exact sequence 
   \[ 0 \to A \xra{j} B \xra{q} C \to 0 \]
   giving rise to an exact sequence 
   \[ 0 \to \Tor(U,A) \to \Tor(U,B) \to \Tor(U,C) \xra{\dl}
        U\ot A \to U\ot B \to U \ot C \to 0
   \]
   as in Proposition~\ref{prop-tor-six}.  Suppose that $u\in U[n]$ and
   $c\in C[n]$.  Then $\dl(e_n(u,c))$ can be described as follows: we
   choose $b\in B$ with $q(b)=c$, then there is a unique $a\in A$ with
   $j(a)=nb$, and $\dl(e_n(u,c))=u\ot a$.
 \end{itemize}
\end{proposition}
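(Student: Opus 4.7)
The plan is to work inside the ring $\Z[A\tm B]=\Z[A]\ot\Z[B]$, so that $J=I_A\ot\Z[B]$, $K=\Z[A]\ot I_B$, and $J^pK^q=I_A^p\ot I_B^q$. The single algebraic input driving parts~(a)--(c) is the observation that, if $na=0$ in $A$, then $n\ip{a}\in I_A^2$; this follows from Proposition~\ref{prop-IA}(d) because the projection $I_A\to I_A/I_A^2\simeq A$ sends $n\ip{a}$ to $na=0$. Applied inside $\Z[A\tm B]$, this gives $n\ip{a}\in J^2$ for $a\in A[n]$ and symmetrically $n\ip{b}\in K^2$ for $b\in B[n]$. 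For~(a) this immediately yields $n\ip{a}\ip{b}=(n\ip{a})\ip{b}\in J^2K$ and also $n\ip{a}\ip{b}=\ip{a}(n\ip{b})\in JK^2$, so the element represents a class in $\Tor(A,B)$. For~(b), linearity in the first slot reduces to
\[
e_n(a+a',b)-e_n(a,b)-e_n(a',b)
 = n\ip{a}\ip{a'}\ip{b}+J^2K^2
 = \ip{a}\ip{a'}(n\ip{b})+J^2K^2\in J^2K^2,
\]
where the first equality uses $\ip{a+a'}-\ip{a}-\ip{a'}=\ip{a}\ip{a'}$ from Proposition~\ref{prop-IA}(c) and the membership uses $n\ip{b}\in K^2$; the computation in the second slot is analogous, and Proposition~\ref{prop-tensor-bilinear} then delivers the induced map $A[n]\ot B[n]\to\Tor(A,B)$.

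For~(c), the conditions $d\mid n$ and $d\mid m$ ensure $(m/d)b\in B[d]\sse B[n]$ and $(n/d)a\in A[d]\sse A[m]$, and both $a$ and $b$ are annihilated by $nm/d$, so all three expressions are defined. Since the projection $I_B\to B$ sends $\ip{(m/d)b}$ and $(m/d)\ip{b}$ to the same element, their difference lies in $I_B^2\sse K^2$, so
\[
n\ip{a}\ip{(m/d)b}
 = (nm/d)\ip{a}\ip{b} + (n\ip{a})\bigl(\ip{(m/d)b}-(m/d)\ip{b}\bigr),
\]
and the error term lies in $J^2K^2$ because $n\ip{a}\in J^2$. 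This gives $e_n(a,(m/d)b)=e_{nm/d}(a,b)$, and the equality with $e_m((n/d)a,b)$ is obtained by the symmetric manipulation.

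Part~(d) is the main calculation and the principal obstacle. First I identify how $e_n(u,c)=n\ip{u}\ip{c}+J^2K^2$ appears as an element of $I_U^2\ot C$ under the embedding $\Tor(U,C)\hookrightarrow I_U^2\ot C$ supplied by Lemma~\ref{lem-tor-four}: since $J^2K/J^2K^2=(I_U^2\ot I_C)/(I_U^2\ot I_C^2)=I_U^2\ot C$ and $n\ip{u}\in I_U^2$, the representative is $n\ip{u}\ot c$. I then run the Snake Lemma recipe from Proposition~\ref{prop-tor-six}: choose $b\in B$ with $q(b)=c$, so that $n\ip{u}\ot b\in I_U^2\ot B$ lifts $n\ip{u}\ot c$; the vertical $j_U\ot 1$ sends it to $n\ip{u}\ot b=\ip{u}\ot nb\in I_U\ot B$; since $nb\in\ker(q)=j(A)$ and $j$ is injective, there is a unique $a\in A$ with $j(a)=nb$, and then $(1\ot j)(\ip{u}\ot a)=\ip{u}\ot nb$, so $\ip{u}\ot a\in I_U\ot A$ is the required preimage; projecting to the cokernel $U\ot A=(I_U\ot A)/(I_U^2\ot A)$ yields $u\ot a$. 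The delicate point is keeping the two descriptions of $\Tor(U,C)$—as the subquotient $(J^2K\cap JK^2)/J^2K^2$ of $\Z[U\tm C]$ and as the kernel of $I_U^2\ot C\to I_U\ot C$—correctly aligned throughout.
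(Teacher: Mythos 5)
Your argument is correct and follows essentially the same route as the paper's own proof: parts (a)--(c) hinge on the single observation that $na=0$ forces $n\ip{a}\in I_A^2$ (hence $J^2$), together with the identity $\ip{a+a'}-\ip{a}-\ip{a'}=\ip{a}\ip{a'}$, and part (d) runs the snake-lemma connecting-map recipe from Proposition~\ref{prop-tor-six} after embedding $\Tor(U,C)$ into $I_U^2\ot C$ via Lemma~\ref{lem-tor-four} with representative $n\ip{u}\ot c$. The only difference is cosmetic; your write-up also avoids some small typos ($q(b)=a$ for $q(b)=c$, etc.) in the paper's proof of (d).
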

\begin{proof}\nl
 \begin{itemize}
  \item[(a)] First, as $na=0$ in $A\simeq I_A/I_A^2$ we see that
   $n\ip{a}\in I_A^2$ and so $n\ip{a}\ip{b}\in I_A^2\ot I_B=J^2K$.
   Similarly we have $n\ip{b}\in I_B^2$ so
   $n\ip{a}\ip{b}=\ip{a}.(n\ip{b})\in I_A\ot I_B^2=JK^2$.  Thus, we
   have $n\ip{a}\ip{b}\in J^2K\cap JK^2$ and the definition of $e_n$
   is meaningful.
  \item[(b)] Next, recall that
   $\ip{a+a'}-\ip{a}-\ip{a'}=\ip{a}\ip{a'}\in I_A^2$, so 
   \[ n\ip{a+a'}\ip{b} - n\ip{a}\ip{b} - n\ip{a'}\ip{b} = 
       (\ip{a}\ip{a'}).(n\ip{b}) \in I_A^2\ot I_B^2=J^2K^2,
   \]
   so $e_n(a+a',b)=e_n(a,b)+e_n(a',b)$.  By a symmetrical argument we
   see that $e_n(a,b)$ is also linear in $b$, as required.
  \item[(c)] Suppose we have elements $a\in A[n]$ and $b\in B[m]$.
   Let $d$ be any common divisor of $n$ and $m$, so $n=pd$ and $m=qd$
   for some $p,q>0$.  Now $m.(n/d)a=pqda=q.na=0$, so
   $(n/d)a=pa\in A[m]$, and similarly $(m/d)b=qb\in B[m]$.  Next note
   that 
   \[ pd\ip{a}\ip{qb}-pqd\ip{a}\ip{b} = (n\ip{a})(\ip{qb}-q\ip{b}) \in 
       I_A^2\ot I_B^2 = J^2K^2,
   \]
   so $e_{pd}(a,qb)=e_{pqd}(a,b)$, or $e_n(a,(m/d)b)=e_{nm/d}(a,b)$.
   By a symmetrical argument, we also have
   $e_{nm/d}(a,b)=e_m((n/d)a,b)$. 
  \item[(d)] For $u\in U[n]$ and $c\in C[n]$ we note that
   $n\ip{u}\in I_U^2$ so we can put
   $e'_n(u,c)=(n\ip{u})\ot c\in I_U^2\ot C$.  Note that it is not
   legitimate to rewrite this as $\ip{u}\ot nc$ because
   $\ip{u}\not\in I_U^2$.  However this rewriting becomes legitimate
   if we work in $I_U\ot C$, and the result is zero because $nc=0$.
   In other words, $e'_n(u,c)$ lies in the kernel of the map
   $I_U^2\ot A\to I_U\ot A$, which was identified with $\Tor(U,A)$ in
   Lemma~\ref{lem-tor-four}. It is not hard to check that $e'_n(u,c)$
   corresponds to $e_n(u,c)$ under this identification.  Now consider
   the diagram 
   \begin{center}
    \begin{tikzcd}
       I^2_U \ot A \arrow[r,"1\ot j"] \arrow[d,"i\ot 1"'] & 
       I^2_U \ot B \arrow[r,"1\ot q"] \arrow[d,"i\ot 1"'] & 
       I^2_U \ot C        \arrow[d,"i\ot 1"] \\
       I_U \ot A \arrow[r,"1\ot j"'] & 
       I_U \ot B \arrow[r,"1\ot q"'] & 
       I_U \ot C.
      \end{tikzcd}
     \end{center}
     
   By inspecting the proof of Proposition~\ref{prop-snake-lemma} we
   find the following prescription for the connecting map
   $\dl\:\Tor(U,C)\to A\ot C$.  Given $z\in I^2_U\ot C$ with
   $(i\ot 1)(z)=0$ we choose $y\in I^2_U\ot B$ with $(1\ot q)(y)=z$,
   then we check that $(i\ot 1)(y)\in\ker(1\ot q)=\image(1\ot j)$ so
   there exists $x\in I_U\ot A$ with $(1\ot j)(x)=(i\ot 1)(y)$, and
   the image of $x$ in $U\ot A=(I_U/I_U^2)\ot A$ is by definition
   $\dl(z)$.  Now suppose again that we are given $u\in U[n]$ and
   $c\in C[n]$ and take $z=e'_n(u,c)$.  As $q$ is surjective we can
   choose $b\in B$ with $q(b)=a$.  We then have $q(nb)=na=0$ so
   $nb\in\ker(q)=\image(j)$, so there exists $a\in A$ with $j(a)=nb$.
   We can thus put $y=(n\ip{u})\ot b\in I^2_U\ot B$ and
   $x=\ip{u}\ot a\in I_U\ot A$, and we find that $(1\ot q)(y)=z$ and
   $(i\ot 1)(y)=\ip{u}\ot nb=(1\ot j)(\ip{u}\ot a)$.  This means that
   $\dl(e_n(u,a))$ is the image of $\ip{u}\ot a$ in $U\ot A$, which is
   just $u\ot a$ as claimed.
 \end{itemize}
\end{proof}

\begin{corollary}\label{cor-tor-Zn}
 For any abelian group $U$ and $n>0$, the map $u\mapsto e_n(u,1+n\Z)$
 gives an isomorphism $U[n]\mapsto\Tor(U,\Z/n)$.
\end{corollary}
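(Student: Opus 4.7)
The plan is to extract the claim from the six-term exact sequence of Proposition~\ref{prop-tor-six} applied to the standard short exact sequence $0\to\Z\xra{n}\Z\to\Z/n\to 0$ in the second variable, and then identify the connecting homomorphism using Proposition~\ref{prop-en}(d).

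First I would apply Proposition~\ref{prop-tor-six} to the short exact sequence $\Z\xra{n}\Z\xra{\pi}\Z/n$ to obtain the exact sequence
\[ \Tor(U,\Z)\to\Tor(U,\Z)\to\Tor(U,\Z/n)\xra{\dl}
   U\ot\Z\xra{1\ot n}U\ot\Z\xra{1\ot\pi}U\ot(\Z/n)\to 0. \]
Since $\Z$ is torsion-free, Corollary~\ref{cor-flat-tor} gives $\Tor(U,\Z)=0$. Using the natural isomorphism $\eta_U\:U\ot\Z\xra{\simeq}U$ of Proposition~\ref{prop-tensor-symmon} (under which $1\ot n$ corresponds to $n\cdot 1_U$), the sequence collapses to
\[ 0\to\Tor(U,\Z/n)\xra{\dl}U\xra{n\cdot 1_U}U\to U/nU\to 0. \]
Exactness at the first copy of $U$ means that $\dl$ is injective with image $\ker(n\cdot 1_U)=U[n]$, so $\dl$ restricts to an isomorphism $\Tor(U,\Z/n)\xra{\simeq}U[n]$.

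It remains to identify $\dl^{-1}$ with the stated formula $u\mapsto e_n(u,1+n\Z)$. This is exactly what Proposition~\ref{prop-en}(d) computes: for $u\in U[n]$ and $c=1+n\Z\in(\Z/n)[n]$, we first lift $c$ to $b=1\in\Z$, then solve $n\cdot a=nb=n$ to get $a=1$, and conclude that $\dl(e_n(u,1+n\Z))=u\ot 1\in U\ot\Z$, which maps to $u$ under $\eta_U$. Therefore the composite $U[n]\to\Tor(U,\Z/n)\xra{\dl}U[n]$ sending $u\mapsto e_n(u,1+n\Z)\mapsto u$ is the identity, and since $\dl$ is already known to be an isomorphism, the map $u\mapsto e_n(u,1+n\Z)$ is its inverse and hence an isomorphism as claimed.

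The only point that requires any care is the identification of the connecting map's effect on $e_n(u,1+n\Z)$, but Proposition~\ref{prop-en}(d) was set up precisely for this purpose, so this step is almost immediate; the rest is bookkeeping with the six-term sequence.
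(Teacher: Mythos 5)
Your proof is correct and follows essentially the same route as the paper: apply Proposition~\ref{prop-tor-six} to the short exact sequence $\Z\xra{n}\Z\to\Z/n$, observe that $\Tor(U,\Z)=0$ and $U\ot\Z\simeq U$ to collapse the six-term sequence and see that $\dl\:\Tor(U,\Z/n)\to U[n]$ is an isomorphism, then invoke Proposition~\ref{prop-en}(d) to identify $\dl(e_n(u,1+n\Z))=u$. The extra bookkeeping you supply (citing Corollary~\ref{cor-flat-tor} and Proposition~\ref{prop-tensor-symmon} explicitly) is a faithful expansion of steps the paper leaves implicit.
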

\begin{proof}
 Consider the short exact sequence $\Z\xra{n}\Z\to\Z/n$.  As
 $\Tor(U,\Z)=0$ and $U\ot\Z=\Z$ the six-term exact sequence reduces to
 \[ 0 \to \Tor(U,\Z) \xra{\dl} U \xra{n} U \xra{} U/n \xra{} 0, \]
 which means that $\dl$ gives an isomorphism
 $\dl\:\Tor(U,\Z)\to U[n]$.  Part~(d) of the proposition tells us
 that $\dl(e_n(u,1+n\Z))=u$ for all $u\in U[n]$, and the claim follows
 from this. 
\end{proof}

\begin{corollary}\label{cor-en-generate}
 For any abelian groups $A$ and $B$, the group $\Tor(A,B)$ is
 generated by all the elements $e_n(a,b)$ for $n>0$ and $a\in A[n]$
 and $b\in B[n]$.
\end{corollary}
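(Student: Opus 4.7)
My plan is to reduce from arbitrary $A$ and $B$ to the cyclic case, then invoke Corollary~\ref{cor-tor-Zn}. The key fact I will use repeatedly is that $e_n$ is natural: given homomorphisms $f\:A\to A'$ and $g\:B\to B'$ with $a\in A[n]$ and $b\in B[n]$, the induced ring map $(f\tm g)_\bullet\:\Z[A\tm B]\to\Z[A'\tm B']$ sends $\ip{a}\mapsto\ip{f(a)}$ and $\ip{b}\mapsto\ip{g(b)}$, so passing to the quotient gives $\Tor(f,g)(e_n(a,b))=e_n(f(a),g(b))$. This ensures the form of the generators is preserved under any functorial reduction.

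First I would apply Proposition~\ref{prop-tor-finitely-determined} together with Remark~\ref{rem-tor-mono} to reduce to the case where $A$ and $B$ are finite: every element of $\Tor(A,B)$ lies in the image of some $\Tor(A',B')$ with $A'\leq A$ and $B'\leq B$ finite, and by naturality any generator $e_n(a,b)\in\Tor(A',B')$ maps to the element $e_n(a,b)\in\Tor(A,B)$, which is still of the required form.

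Next I would apply Theorem~\ref{thm-fin-gen} to decompose the finite groups as $A=\bigoplus_iA_i$ and $B=\bigoplus_jB_j$ with each summand cyclic of prime-power order, and then use Proposition~\ref{prop-tor-bilinear}(c) to identify $\Tor(A,B)$ with $\bigoplus_{i,j}\Tor(A_i,B_j)$ via the inclusions $\iota_i\:A_i\hookrightarrow A$ and $\iota_j\:B_j\hookrightarrow B$. Naturality once again sends a generator $e_n(a,b)\in\Tor(A_i,B_j)$ to $e_n(\iota_i(a),\iota_j(b))\in\Tor(A,B)$, which is of the required form. Hence it suffices to handle the case $A=\Z/m$, $B=\Z/n$.

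Finally, for $A=\Z/m$ and $B=\Z/n$, Corollary~\ref{cor-tor-Zn} with $U=\Z/m$ shows that every element of $\Tor(\Z/m,\Z/n)$ has the form $e_n(u,1+n\Z)$ for some $u\in(\Z/m)[n]$, which is already in the required form. No step here is technically hard; the only thing to check with any care is the naturality of $e_n$, which follows immediately from the defining formula $e_n(a,b)=n\ip{a}\ip{b}+J^2K^2$ together with the effect of $(f\tm g)_\bullet$ on the generators $\ip{a}$ and $\ip{b}$ of the ideals $J$ and $K$.
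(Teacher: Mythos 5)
Your proposal is correct and follows essentially the same route as the paper's proof: reduce to the finite case via Proposition~\ref{prop-tor-finitely-determined}, split into cyclic summands via Proposition~\ref{prop-tor-bilinear}(c), and conclude by Corollary~\ref{cor-tor-Zn}. The extra care you take to spell out the naturality $\Tor(f,g)(e_n(a,b))=e_n(f(a),g(b))$, which the paper leaves implicit, is a reasonable point but not a different method.
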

\begin{proof}
 In view of Proposition~\ref{prop-tor-finitely-determined}, it is
 enough to prove this when $A$ and $B$ are finite.  Now $A$ and $B$
 can be written as direct sums of finite cyclic groups, and using
 Proposition~\ref{prop-tor-bilinear}(c) we reduce to the case where
 $A$ and $B$ are themselves cyclic.  That case is immediate from
 Corollary~\ref{cor-tor-Zn}.  
\end{proof}

\begin{corollary}\label{cor-tor-QZ}
 For any abelian group $B$, there is a natural isomorphism
 $\Tor(\QZ,B)\simeq\tors(B)$. 
\end{corollary}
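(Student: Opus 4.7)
The plan is to apply Proposition~\ref{prop-tor-six} to the short exact sequence $0\to\Z\to\Q\to\QZ\to 0$. First I would use Lemma~\ref{lem-tor-twist} to identify $\Tor(\QZ,B)$ with $\Tor(B,\QZ)$, and then apply Proposition~\ref{prop-tor-six} with $U=B$ to the above short exact sequence, obtaining
\[ 0\to\Tor(B,\Z)\to\Tor(B,\Q)\to\Tor(B,\QZ)\xra{\dl}B\ot\Z\to B\ot\Q\to B\ot\QZ\to 0. \]
Both $\Z$ and $\Q$ are torsion-free, so Corollary~\ref{cor-flat-tor} gives $\Tor(B,\Z)=\Tor(B,\Q)=0$, which makes $\dl$ injective. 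After identifying $B\ot\Z$ with $B$ via Proposition~\ref{prop-tensor-symmon}, it remains to show that the image of $\dl$ is exactly $\tors(B)\leq B$.

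Next I would analyse the image of $\dl$ using the explicit generators of $\Tor$ from Corollary~\ref{cor-en-generate}, which describes $\Tor(B,\QZ)$ as generated by elements $e_n(b,c)$ for $n>0$, $b\in B[n]$ and $c\in(\QZ)[n]$. Since $(\QZ)[n]$ is cyclic on $1/n+\Z$, bilinearity of $e_n$ from Proposition~\ref{prop-en}(b) lets me reduce to the generators $e_n(b,1/n+\Z)$. Applying Proposition~\ref{prop-en}(d) to the short exact sequence $\Z\hookrightarrow\Q\twoheadrightarrow\QZ$ with the lift $1/n\in\Q$ of $1/n+\Z$, the element of $\Z$ satisfying $n\cdot(1/n)=1$ is just $1$, so I get $\dl(e_n(b,1/n+\Z))=b\ot 1$, which under the identification $B\ot\Z\simeq B$ is simply $b$ itself.

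It follows that the image of $\dl$ is the subgroup $\bigcup_{n>0}B[n]=\tors(B)$: every $b\in B[n]$ arises as $\dl(e_n(b,1/n+\Z))$, and conversely every generator of $\Tor(B,\QZ)$ maps to a torsion element of $B$. Hence $\dl$ corestricts to an isomorphism $\Tor(B,\QZ)\to\tors(B)$, and combining this with the symmetry isomorphism of Lemma~\ref{lem-tor-twist} yields the claimed natural isomorphism $\Tor(\QZ,B)\simeq\tors(B)$. Naturality in $B$ is automatic, since every map in the six-term exact sequence of Proposition~\ref{prop-tor-six} and the symmetry isomorphism of Lemma~\ref{lem-tor-twist} are natural in their arguments.

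The main obstacle will be bookkeeping rather than any genuine difficulty: one has to juggle the symmetry $\tau_{\QZ,B}$, the unit isomorphism $B\ot\Z\simeq B$, and the prescription for $\dl$ from Proposition~\ref{prop-en}(d), which requires choosing a rational lift of an $n$-torsion element of $\QZ$ and multiplying by $n$. Once these identifications are correctly assembled, the description of the image of $\dl$ is essentially immediate from the definition of $\tors(B)$.
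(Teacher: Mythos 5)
Your proof is correct, but it takes a genuinely different route from the paper. The paper's proof is direct and constructive: it defines maps $\ep_m\colon B[m]\to\Tor(A_m,B)$ (where $A_m$ is the cyclic subgroup of $\QZ$ of order $m$) using Corollary~\ref{cor-tor-Zn}, then uses the compatibility relation from Proposition~\ref{prop-en}(c) to glue these into a single map $\ep\colon\tors(B)\to\Tor(\QZ,B)$, and finally invokes Proposition~\ref{prop-tor-finitely-determined} to see that $\ep$ is an isomorphism. Your approach instead applies the six-term exact sequence of Proposition~\ref{prop-tor-six} to $\Z\mra\Q\era\QZ$, uses vanishing of $\Tor$ against the torsion-free groups $\Z$ and $\Q$ to make the connecting map $\dl$ injective, and then identifies its image with $\tors(B)$ by computing $\dl$ on the generators $e_n(b,1/n+\Z)$ via Proposition~\ref{prop-en}(d). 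This is cleaner homological algebra, and has the pleasant bonus that naturality in $B$ is immediate from the naturality of the six-term sequence, whereas the paper's construction requires a separate (and unstated) check. On the other hand, the paper's approach yields an explicit description of the inverse map (via the fractional-part function $\lm$), which your proof does not provide. One small remark: you implicitly use that the image of $\dl$ lands inside $\tors(B)$; this follows from your generator computation, but you could also note it more quickly from Corollary~\ref{cor-tor-torsion}, which says $\Tor(B,\QZ)$ is torsion, together with injectivity of $\dl$.
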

\begin{proof}
 Let $A_m$ be the subgroup of $\QZ$ generated by $1/m+\Z$, and
 define $\ep_m\:B[m]\to\Tor(A_m,B)\leq\Tor(\QZ,B)$ by
 $\ep_m(b)=e_n(1/m+\Z,b)$.  Note that $A_m$ is cyclic of order $m$, so
 the previous result tells us that $\ep_m$ is an isomorphism.  Now
 suppose that $m$ divides $n$, so $B[m]\leq B[n]$.  We can take $d=m$
 in Proposition~\ref{prop-en}(c) to see that $e_n(a,b)=e_m((n/m)a,b)$
 whenever $na=0$ and $mb=0$.  Taking $a=1/n+\Z$ we deduce that
 $\ep_n(b)=\ep_m(b)$ whenever $mb=0$, so $\ep_n|_{B[m]}=\ep_m$.  It
 follows that there is a unique homomorphism 
 \[ \ep \: \tors(B) = \bigcup_{n>0} B[n] \to \Tor(\QZ,B). \]
 Every finite subgroup of $\QZ$ has the form $A_n$ for some $n$, and
 it follows from this using
 Proposition~\ref{prop-tor-finitely-determined} that $\ep$ is an
 isomorphism.  

 There is an explicit construction of the inverse which is quite
 instructive.  An element $x\in\QZ$ is a coset of $\Z$ in $\Q$, and
 any such coset intersects the interval $[0,1)$ in a single point,
 which we call $\lm(x)$.  The function $\lm\:\QZ\to\Q$ is not a
 homomorphism, but $\lm(x+x')$ and $\lm(x)+\lm(x')$ are both
 representatives of the same coset $x+x'$, so we at least have
 \[ \lm(x+x') - \lm(x) - \lm(x') \in \Z. \] We can extend $\lm$ to
 give a group homomorphism $\Z[\QZ]\to\Q$ by
 $\lm(\sum_in_i[x_i])=\sum_in_i\lm(x_i)$.  We then find that 
 \[ \lm(\ip{x}\ip{x'}) = \lm([x+x']-[x]-[x']+[0]) =
     \lm(x+x')-\lm(x)-\lm(x') \in \Z,
 \]
 and thus that $\lm(I_{\QZ}^2)\leq\Z$.  We therefore have a
 homomorphism $\lm\ot 1\:I_{\QZ}^2\ot B\to \Z\ot B=B$.
 Lemma~\ref{lem-tor-four} allows us to regard $\Tor(\QZ,B)$ as a
 subgroup of $I^2_{\QZ}\ot B$, so we can restrict to this subgroup
 to get a homomorphism $\mu\:\Tor(\QZ,B)\to B$.  We leave it to the
 reader to check that the image of $\mu$ is $\tors(B)$, and that
 $\mu\:\Tor(\QZ,B)\to\tors(B)$ is inverse to $\ep$.
\end{proof}

\begin{proposition}\label{prop-tor-coeq}
 Define maps
 \begin{center}
  \begin{tikzcd}
   \bigoplus_{n,m,d}(A[nd]\ot B[md])
    \arrow[r,shift left=0.1cm,"\lm"]
    \arrow[r,shift right=0.1cm,"\rho"'] &
    \bigoplus_p(A[p]\ot B[p]) 
     \arrow[r,"\ep"] &
    \Tor(A,B)
   \end{tikzcd}
  \end{center}
  
 by 
 \begin{align*}
   \lm(i_{n,m,d}(a\ot b)) &= i_{nd}(a\ot mb) \\
  \rho(i_{n,m,d}(a\ot b)) &= i_{md}(na\ot b) \\
  \ep(i_p(a\ot b)) &= e_p(a,b).
 \end{align*}
 Then the sequence
 \begin{center}
  \begin{tikzcd}
    \bigoplus_{n,m,d}(A[nd]\ot B[md]) 
     \arrow[r,"\lm-\rho"] &
    \bigoplus_p(A[p]\ot B[p]) 
     \arrow[r,"\ep"] &
    \Tor(A,B) \arrow[r] & 
    0
   \end{tikzcd}
  \end{center}
  
 is exact, so $\Tor(A,B)$ is the cokernel of $\lm-\rho$.
\end{proposition}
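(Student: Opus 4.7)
The strategy is to verify the easy relations directly, then reduce the hard direction (injectivity of the map induced from the cokernel) to the cyclic case.

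\textbf{Easy part.}  Bilinearity of $e_p$ (Proposition~\ref{prop-en}(b)) makes $\ep$ a well-defined homomorphism, surjective by Corollary~\ref{cor-en-generate}.  To check $\ep\lm=\ep\rho$, apply Proposition~\ref{prop-en}(c) with $n'=nd$, $m'=md$, and common divisor $d'=d$ to $a\in A[nd]$, $b\in B[md]$: the conclusion
\[
e_{nd}(a,mb)=e_{nmd}(a,b)=e_{md}(na,b)
\]
is exactly the required equality on a generator $i_{n,m,d}(a\ot b)$.  Thus $\ep$ descends to a surjection $\ov{\ep}\:Q(A,B)\era\Tor(A,B)$, where $Q(A,B)=\cok(\lm-\rho)$; it remains to prove $\ov{\ep}$ is injective.

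\textbf{Functorial reductions.}  Both $Q$ and $\Tor$ are functors of $(A,B)$, and $\ov{\ep}$ is a natural transformation.  Each preserves filtered colimits in each variable: for $\Tor$ this is Proposition~\ref{prop-tor-finitely-determined}; for $Q$ it follows since the constituent operations $(-)[p]$, $\ot$, $\bigoplus$ and $\cok$ all do.  Each also converts direct sums in each variable to direct sums (for $\Tor$: Proposition~\ref{prop-tor-bilinear}(c); for $Q$: the constituent operations distribute).  By Theorem~\ref{thm-fin-gen}, and noting that free summands contribute nothing on either side ($\Z[p]=0$ and $\Tor(\Z,-)=0$ by Corollary~\ref{cor-flat-tor}), the problem reduces to $A=\Z/a$, $B=\Z/b$.

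\textbf{Cyclic case.}  Set $g=\gcd(a,b)$, so $\Tor(\Z/a,\Z/b)\simeq\Z/g$ by Corollary~\ref{cor-tor-Zn}.  The $p=g$ summand $(\Z/a)[g]\ot(\Z/b)[g]$ is $\Z/g$, and $\ov{\ep}$ carries its generator to a generator of $\Tor$; so the image of this summand in $Q$ already has order $g$, and it suffices to show that every generator $x_p\in A[p]\ot B[p]$ lies in this image.  This I accomplish by three successive applications of the relation $\lm=\rho$:  (1) move from the $p$-summand into the $\gcd(a,p)$-summand via $nd=\gcd(a,p)$, $md=p$, $d=\gcd(a,p)$; (2) move into the $p'$-summand, with $p'=\gcd(a,b,p)$, via $nd=\gcd(a,p)$, $md=p'$, $d=p'$ — legitimate because $(p/\gcd(a,p))\cdot(b/\gcd(b,p))$ lies in $B[p']$, a consequence of the identity $\operatorname{lcm}(\gcd(a,p),\gcd(b,p))\,|\,p$; (3) since $p'\,|\,g$, lift into the $g$-summand by applying the relation with $nd=g$, $md=p'$, $d=p'$, using that multiplication by $g/p'$ surjects $A[g]=\Z/g$ onto $A[p']=\Z/p'$.

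The main obstacle is the cyclic-case computation: one must carefully track which elements lie in which $A[q]$ and $B[q]$ at each step, and verify the divisibility identities underpinning each relation application.  Once these are in place, the rest of the argument assembles formally.
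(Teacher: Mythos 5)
Your plan is correct, and the cyclic‐case argument is genuinely different from the paper's. Both proofs share the same skeleton: the easy part (bilinearity of $e_p$, Proposition~\ref{prop-en}(c) giving $\ep\lm=\ep\rho$, surjectivity via Corollary~\ref{cor-en-generate}), a reduction to $A=\Z/r$, $B=\Z/s$ by direct sums and colimits, and then the observation that $Q(A,B)$ is generated by the image of a single rank-one summand $\Z/g$ (with $g=\gcd(r,s)$), so $|Q|\leq g$; since $\ov{\ep}$ surjects onto $\Tor\simeq\Z/g$, it must be an isomorphism. The place where you diverge is in how the generation statement is proved, and which summand you use. The paper works with the $t$-summand for $t=\operatorname{lcm}(r,s)$, so $A[t]\ot B[t]=\Z/r\ot\Z/s$, writes a generic generator $v=e'_p(a+r\Z,b+s\Z)$ with $pab=tc$, and constructs one explicit element $x\in\bigoplus_{n,m,d}(A[nd]\ot B[md])$ for which $(\lm-\rho)(x)=i_p(\cdots)-c\,i_t(1\ot 1)$, proving $v=cu$ in a single move. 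You instead work with the $g$-summand $A[g]\ot B[g]$ and move an arbitrary $i_p(\alpha\ot\beta)$ there by three successive applications of the coequalizer relation, passing through the intermediate summands indexed by $\gcd(r,p)$ and then $p'=\gcd(r,s,p)$. I have checked the divisibility facts you flag as the main obstacle: $\alpha\in A[p]=A[\gcd(r,p)]$ for step~1; $\tfrac{p}{\gcd(r,p)}\beta\in B[\gcd(r,p)]=B[p']$ for step~2 (since $\gcd\bigl(s,\gcd(r,p)\bigr)=p'$, or equivalently your $\operatorname{lcm}(\gcd(r,p),\gcd(s,p))\mid p$); and the surjectivity of multiplication by $g/p'\colon A[g]\to A[p']$ for step~3 — all hold. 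What you gain is a more uniform, relation-chasing argument that avoids guessing a clever element; what the paper's single-move computation gains is brevity and no need to track intermediate summands. One small remark: the ``generator to generator'' claim (that $\ov{\ep}$ carries the generator of the $g$-summand to a generator of $\Tor$) is actually unnecessary — once you know the $g$-summand's image generates $Q$, you have $|Q|\leq g$, and surjectivity of $\ov{\ep}$ onto $\Z/g$ already forces it to be an isomorphism. Also, your functorial reduction compresses into one abstract step what the paper treats as two separate arguments (direct-sum splittings for the finitely generated case, then a diagram chase with Remark~\ref{rem-tor-mono} for the infinitely generated case); the colimit-preservation argument is valid since $(-)[q]$, $\ot$, $\bigoplus$ and cokernel all commute with filtered colimits, but a fully written-out version would need to make that explicit.
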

\begin{proof}
 Let $T(A,B)$ be the cokernel of $\lm-\rho$, and let $e'_p(a,b)$ be
 the image of $i_p(a\ot b)$ in $T(A,B)$, so by construction we have
 $e'_{nd}(a,mb)=e'_{md}(na,b)$ whenever $nda=0$ and $mdb=0$.  Part~(c)
 of Proposition~\ref{prop-en} shows that $\ep\lm=\ep\rho$, so
 $\img(\lm-\rho)\leq\ker(\ep)$, so there is a unique homomorphism
 $\ov{\ep}\:T(A,B)\to\Tor(A,B)$ such that
 $\ov{\ep}(e'_p(a,b))=e_p(a,b)$ for all $p\geq 1$ and $a\in A[p]$ and
 $b\in B[p]$.  This is surjective by Corollary~\ref{cor-en-generate}.
 We must show that this is also injective.

 Consider the special case where $A$ and $B$ are finitely generated.
 It is easy to see that there are natural splittings
 \begin{align*}
  T(A\oplus A',B\oplus B') &\simeq 
   T(A,B)\oplus T(A,B')\oplus T(A',B)\oplus T(A',B') \\
  \Tor(A\oplus A',B\oplus B') &\simeq 
   \Tor(A,B)\oplus \Tor(A,B')\oplus \Tor(A',B)\oplus \Tor(A',B')
 \end{align*}
 so we can reduce to the case where $A$ and $B$ are cyclic.  If $A=\Z$
 or $B=\Z$ it is clear that $T(A,B)=0=\Tor(A,B)$, so we may assume
 that $A=\Z/r$ and $B=\Z/s$ say.  Let $t$ be the least common multiple
 of $r$ and $s$, so we have an element $u=e'_t(1+r\Z,1+s\Z)\in T(A,B)$.
 Note that $T(A,B)$ is generated by elements of the form
 $v=e_p(a+r\Z,b+s\Z)$ with $pa\in r\Z$ and $pb\in s\Z$, which implies
 that $pab=tc$ for some $c\in\Z$.  We thus have an element 
 \[ x = i_{c,1,t}(1+r\Z,1+s\Z) + i_{1,b,ap}(1+r\Z,1+s\Z)
        - i_{a,1,p}(1+r\Z,b+s\Z) \in
     \bigoplus_{n,m,d}(A[nd]\ot B[md]),
 \]
 and we find that 
 \[ (\lm-\rho)(x) = i_p((a+r\Z)\ot(b+s\Z)) - i_t(c+r\Z,1+s\Z) = 
      i_p((a+r\Z)\ot(b+s\Z)) - c\,i_t(1+r\Z,1+s\Z)
 \]
 so $v=cu$.  This proves that $u$ generates $T(A,B)$.  It is also
 clear that $ru=su=0$, so if we put $d=(r,s)$ we have $du=0$.  This
 means that $T(A,B)$ is cyclic of order dividing $d$ but the map
 $T(A,B)\to\Tor(A,B)\simeq\Z/d$ is surjective so it must be an
 isomorphism.  

 We now revert to the general case, where $A$ and $B$ may be
 infinitely generated.  Consider an element
 $w\in\ker(\ov{\ep})\leq T(A,B)$.  We can write $w$ as
 $\sum_{i=1}^Ne'_{p_i}(a_i,b_i)$ say, and then let $A'$ be the
 subgroup of $A$ generated by $a_1,\dotsc,a_N$, and let $B'$ be the
 subgroup of $B$ generated by $b_1,\dotsc,b_N$.  There is then an
 evident element $w'\in T(A',B')$ that maps to $w$ in $T(A,B)$.  Now
 consider the diagram
 \begin{center}
  \begin{tikzcd}
   T(A',B')
    \arrow[r,"\ov{\ep}","\simeq"']
    \arrow[d] &
   \Tor(A',B') \arrow[rightarrowtail,d] \\
   T(A,B) \arrow[twoheadrightarrow,r,"\ov{\ep}"'] &
   \Tor(A,B).
  \end{tikzcd}
 \end{center}
 The top map is an isomorphism by the special case considered above,
 and the right hand map is injective by Remark~\ref{rem-tor-mono}.  By
 chasing $w'$ around the diagram we see that $w=0$.  Thus, the map
 $\ov{\ep}\:T(A,B)\to\Tor(A,B)$ is injective as required.
\end{proof}

\section{Ext groups}
\label{sec-ext}

\begin{definition}\label{defn-ext}
 Let $A$ and $B$ be abelian groups, and let $j$ be the inclusion
 $I^2_A\to I_A$.  We define $\Ext(A,B)$ to be the kernel of the map
 $j^*\:\Hom(I_A,B)\to\Hom(I^2_A,B)$.  Now suppose we have
 homomorphisms $f\:A'\to A$ and $g\:B\to B'$.  We then have compatible
 homomorphisms $f_\bullet\:I_{A'}\to I_A$ and
 $f_\bullet\:I^2_{A'}\to I^2_A$, and we can use these in an evident
 way to construct a commutative square of maps
 \begin{center}
  \begin{tikzcd}
  \Ext(A,B) \arrow[r,"g_*"] \arrow[d,"f^*"'] & \Ext(A,B') \arrow[d,"f^*"] \\
  \Ext(A',B) \arrow[r,"g_*"'] & \Ext(A',B')
 \end{tikzcd}
\end{center}

\end{definition}
\begin{remark}\label{rem-ext-four}
 Let $q$ be the usual map $I_A\to A$, with kernel $I_A^2$.  Consider
 the sequence
 \[ 0 \to \Hom(A,B) \xra{q^*} \Hom(I_A,B) \xra{j^*}
       \Hom(I^2_A,B) \xra{} \Ext(A,B) \to 0.
 \]
 By combining Lemma~\ref{lem-right-test} with
 Definition~\ref{defn-ext}, we see that this is exact.
\end{remark}
\begin{remark}\label{rem-ext-additive}
 If we have two homomorphisms $g_0,g_1\:B\to B'$, it is clear that
 \[ (g_0+g_1)_* = g_{0*} + g_{1*} \: \Ext(A,B) \to \Ext(A,B'). \]
 If we have two homomorphisms $f_0,f_1\:A'\to A$, it is not obvious
 from the definitions that $(f_0+f_1)^*=f_0^*+f_1^*$, but later we
 will give a different description of the Ext groups that makes this
 fact clear.
\end{remark}

\begin{lemma}\label{lem-free-projective}
 Let $F$ be a free abelian group.  Then for every surjective
 homomorphism $q\:B\to C$, the resulting map
 $q_*\:\Hom(F,B)\to\Hom(F,C)$ is also surjective.  Equivalently, for
 every pair of homomorphisms $f$ and $q$ as shown with $q$ surjective,
 there exists $g$ with $qg=f$.
 \begin{center}
  \begin{tikzcd}
   & F \arrow[dotted,dl,"g"'] \arrow[d,"f"] \\
   B \arrow[twoheadrightarrow,r,"q"'] & C.
  \end{tikzcd}
 \end{center}
 Conversely, every group $F$ with this property is free.
\end{lemma}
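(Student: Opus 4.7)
The plan is to treat the two implications separately. Both will follow quickly from the universal property of $\Z[I]$ (Lemma~\ref{lem-free-property}), with the converse also invoking Theorem~\ref{thm-hereditary}.

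For the forward direction, I would first write $F=\Z[I]$ for some set $I$ using Definition~\ref{defn-ZI}. Given $f\:F\to C$ and a surjection $q\:B\to C$, use the Axiom of Choice to pick, for each $i\in I$, an element $b_i\in B$ with $q(b_i)=f([i])$. Lemma~\ref{lem-free-property} then produces a unique homomorphism $g\:F\to B$ with $g([i])=b_i$ for all $i\in I$. Both $qg$ and $f$ are homomorphisms $F\to C$ agreeing on the generators $[i]$, so the uniqueness clause of Lemma~\ref{lem-free-property} forces $qg=f$. This gives surjectivity of $q_*$, and the equivalent diagram formulation is immediate.

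For the converse, suppose that $F$ has the stated lifting property. I would apply it to the canonical surjection $q\:\Z[F]\to F$ from Definition~\ref{defn-IA}, given by $\sum_i n_i[a_i]\mapsto\sum_i n_ia_i$; this is surjective because $q([x])=x$ for every $x\in F$. Taking $f=1_F\:F\to F$, the lifting property produces $g\:F\to\Z[F]$ with $qg=1_F$. In particular $g$ is injective, so it identifies $F$ with the subgroup $g(F)\leq\Z[F]$. By Theorem~\ref{thm-hereditary}, every subgroup of the free abelian group $\Z[F]$ is free, so $g(F)$ is free and hence so is $F$.

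I do not anticipate any real obstacle: the forward direction is essentially the universal property of the free group on a set (together with Choice, which has already been assumed in the proof of Theorem~\ref{thm-well-order}), and the converse is a one-line consequence of Theorem~\ref{thm-hereditary} once one observes that the lifting property exhibits $F$ as a retract of the free group $\Z[F]$.
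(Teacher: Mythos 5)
Your proof is correct and takes essentially the same route as the paper's: lift generators of $\Z[I]$ for the forward direction, and for the converse exhibit $F$ as a subgroup of a free group via a splitting of a canonical surjection, then invoke Theorem~\ref{thm-hereditary}. The only cosmetic difference is that you split the surjection $\Z[F]\to F$ whereas the paper splits $I_F\to F$; both sources are free, so the argument is the same.
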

\begin{proof}
 Firstly, there exists a map $g$ as shown if and only if the element
 $f\in\Hom(F,C)$ lies in the image of $q_*\:\Hom(F,B)\to\Hom(F,C)$; so
 the two versions of the statement are indeed equivalent.  To prove
 them, we may assume that $F=\Z[I]$ for some $I$.  We then have
 elements $f([i])\in C$ and $q\:B\to C$ is surjective so we can choose
 $b_i\in B$ with $q(b_i)=f([i])$.  Now there is a unique homomorphism
 $g\:\Z[I]\to B$ with $g([i])=b_i$ for all $i$, and $qg=f$ as
 required.  

 Conversely, let $F$ be any abelian group that has the property under
 consideration.  Take $C=F$ and $B=I_F$, let $q\:B\to C$ be the usual
 surjection $I_F\to F$, and let $f\:F\to C$ be the identity.  Then
 there must exist $g\:F\to I_F$ with $qg=1_F$.  This means that $g$ is
 injective, so $F$ is isomorphic to a subgroup of the free group
 $I_F$, so $F$ is free by Theorem~\ref{thm-hereditary}.
\end{proof}
\begin{remark}
 In the more general context of modules over an arbitrary ring, the
 property in the lemma is called \emph{projectivity}.  Thus, we have
 shown that an abelian group is projective if and only if it is free.
 The same argument shows that an $R$-module is projective if and only
 if it is a direct summand in a free $R$-module.  The analogue of
 Theorem~\ref{thm-hereditary} is not valid for modules over a general
 ring, so projective modules need not be free.
\end{remark}

\begin{proposition}\label{prop-ext-six}
 Let $U$ be an abelian group, and let $A\xra{j}B\xra{q}C$ be a short
 exact sequence of abelian groups.  Then there is a natural exact
 sequence 
 \[ 0 \to \Hom(U,A) \xra{j_*} \Hom(U,B) \xra{q_*} \Hom(U,C) 
    \xra{\dl} \Ext(U,A) \xra{j_*} \Ext(U,B) \xra{q_*} \Ext(U,C) \to 0.
 \] 
\end{proposition}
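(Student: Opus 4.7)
The natural approach is to invoke the Snake Lemma on a commutative diagram obtained by applying the functors $\Hom(I_U,-)$ and $\Hom(I_U^2,-)$ to the short exact sequence $A\xra{j}B\xra{q}C$. The key ingredients are Proposition~\ref{prop-IA}, which tells us that $I_U$ and $I_U^2$ are free, and Lemma~\ref{lem-free-projective}, which says $\Hom(F,-)$ preserves surjections when $F$ is free.

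First, applying $\Hom(I_U,-)$ to $0\to A\xra{j}B\xra{q}C\to 0$ yields a short exact sequence
\[ 0 \to \Hom(I_U,A) \xra{j_*} \Hom(I_U,B) \xra{q_*} \Hom(I_U,C) \to 0; \]
left exactness is automatic, and surjectivity on the right is Lemma~\ref{lem-free-projective}. The same holds with $I_U^2$ in place of $I_U$. The inclusion $\iota\:I_U^2\hookrightarrow I_U$ then induces, by naturality, a commutative diagram with short exact rows:
\begin{center}
\begin{tikzcd}
0 \arrow[r] & \Hom(I_U,A) \arrow[r,"j_*"] \arrow[d,"\iota^*"'] & \Hom(I_U,B) \arrow[r,"q_*"] \arrow[d,"\iota^*"'] & \Hom(I_U,C) \arrow[r] \arrow[d,"\iota^*"'] & 0 \\
0 \arrow[r] & \Hom(I_U^2,A) \arrow[r,"j_*"'] & \Hom(I_U^2,B) \arrow[r,"q_*"'] & \Hom(I_U^2,C) \arrow[r] & 0
\end{tikzcd}
\end{center}

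Next, I apply the Snake Lemma (Proposition~\ref{prop-snake-lemma}) to this diagram to obtain a six-term exact sequence relating the kernels and cokernels of the three vertical maps. By Remark~\ref{rem-ext-four}, for each $X\in\{A,B,C\}$ there are canonical identifications $\ker(\iota^*\:\Hom(I_U,X)\to\Hom(I_U^2,X))\simeq\Hom(U,X)$ and $\cok(\iota^*)\simeq\Ext(U,X)$. Substituting these into the snake sequence produces exactly the claimed exact sequence, with $\dl\:\Hom(U,C)\to\Ext(U,A)$ being the snake connecting homomorphism.

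The main subtlety will be confirming that the horizontal maps on kernels and cokernels obtained from the Snake Lemma coincide with the functorial maps $j_*$ and $q_*$ on $\Hom(U,-)$ and $\Ext(U,-)$ as defined in Definition~\ref{defn-ext}. This reduces to a naturality check: both the embedding $\Hom(U,X)\hookrightarrow\Hom(I_U,X)$ (pullback along the canonical surjection $I_U\to U$) and the projection $\Hom(I_U^2,X)\twoheadrightarrow\Ext(U,X)$ depend only on the fixed presentation $I_U^2\to I_U\to U$ and are natural in $X$, hence commute with post-composition by $j$ and $q$. Naturality of the whole six-term sequence in the short exact sequence $A\to B\to C$ then follows from naturality of the Snake Lemma construction itself.
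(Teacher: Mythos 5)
Your proof is correct and follows essentially the same route as the paper: apply $\Hom(I_U,-)$ and $\Hom(I_U^2,-)$ to the short exact sequence to get short exact rows (using Lemma~\ref{lem-free-projective} for surjectivity on the right), run the Snake Lemma on the resulting diagram, and identify the kernels and cokernels of $\iota^*$ with $\Hom(U,-)$ and $\Ext(U,-)$ via Remark~\ref{rem-ext-four}. The only divergence is minor: the paper explicitly verifies the left-exactness of $\Hom(U,-)$ (which you call ``automatic'') before substituting $I_U$ and $I_U^2$ for $U$, while you are somewhat more explicit than the paper about the naturality check needed to match the Snake Lemma's induced maps with the functorial $j_*$ and $q_*$.
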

\begin{proof}
 We first claim that the sequence 
 \[ 0 \to \Hom(U,A) \xra{j_*} \Hom(U,B) \xra{q_*} \Hom(U,C) \]
 is  exact.  Indeed, if $j_*(\al)=0$ then $j(\al(u))=0$ for all $u$,
 but $j$ is injective so $\al(u)=0$ for all $u$, so $\al=0$; this
 proves that $j_*$ is injective.  Next, suppose that $q_*(\bt)=0$, so
 $q(\bt(u))=0$ for all $u\in U$, so $\bt(u)\in\ker(q)=\image(j)$, so
 there exists $\al(u)\in A$ with $\bt(u)=j(\al(u))$.  This element
 $\al(u)$ is in fact unique, because $j$ is injective.  We also have 
 \[ j(\al(u+u')-\al(u)-\al(u')) = 
    j(\al(u+u'))-j(\al(u))-j(\al(u')) = 
    \bt(u+u')-\bt(u)-\bt(u') = 0,
 \]
 but $j$ is injective so $\al(u+u')-\al(u)-\al(u')=0$, so the map
 $\al\:U\to A$ is a homomorphism.  Clearly $j_*(\al)=\bt$, so we see
 that $\ker(q_*)=\image(j_*)$ as required.

 Now consider the sequence
 \[ 0 \to \Hom(I_U,A) \xra{j_*} \Hom(I_U,B)
        \xra{q_*} \Hom(I_U,C) \to 0.
 \]
 As $U$ was arbitrary we can replace it by $I_U$ to see that $j_*$ is
 injective and $\image(j_*)=\ker(q_*)$.  As $I_U$ is free we also see
 from Lemma~\ref{lem-free-projective} that $q_*$ is surjective, so the
 sequence is short exact.  The same argument applies with $I_U$
 replaced by $I_U^2$. 

 Now consider the diagram 
 \begin{center}
  \begin{tikzcd}
   \Hom(I_U,A) \arrow[d] \arrow[rightarrowtail,r,"j_*"] &
   \Hom(I_U,B) \arrow[d] \arrow[twoheadrightarrow,r,"q_*"] &
   \Hom(I_U,C) \arrow[d] \\
   \Hom(I^2_U,A) \arrow[rightarrowtail,r,"j_*"'] &
   \Hom(I^2_U,B) \arrow[twoheadrightarrow,r,"q_*"'] &
   \Hom(I^2_U,C). 
  \end{tikzcd}
 \end{center}  
 We have just seen that the rows are short exact, so we can apply the
 Snake Lemma (Proposition~\ref{prop-snake-lemma}) to get a six-term
 exact sequence involving the kernels and cokernels of the vertical
 maps.  Remark~\ref{rem-ext-four} identifies these kernels and
 cokernels as $\Hom$ and $\Ext$ groups, as required.
\end{proof}

\begin{corollary}\label{cor-free-ext}
 An abelian group $F$ is free if and only if $\Ext(F,A)=0$ for all
 $A$. 
\end{corollary}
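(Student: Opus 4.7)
The statement is a biconditional, so I will handle the two implications in turn. The main tools are the natural short exact sequence $I_F^2\xra{j}I_F\xra{q}F$ of Proposition~\ref{prop-IA}(d), Lemma~\ref{lem-free-projective} (which equates freeness with projectivity), and the long exact $\Ext$ sequence of Proposition~\ref{prop-ext-six}.

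For the forward direction, assume $F$ is free. By Remark~\ref{rem-ext-four}, the condition $\Ext(F,A)=0$ is equivalent to surjectivity of $j^*\:\Hom(I_F,A)\to\Hom(I_F^2,A)$, so it is enough to produce, for each $\phi\:I_F^2\to A$, an extension $\psi\:I_F\to A$ with $\psi j=\phi$. Applying Lemma~\ref{lem-free-projective} to the identity $1_F\:F\to F$ and the surjection $q\:I_F\to F$ gives a section $s\:F\to I_F$ with $qs=1_F$; Proposition~\ref{prop-half-split}(b) then upgrades this to a retraction $r\:I_F\to I_F^2$ with $rj=1_{I_F^2}$. The composite $\psi=\phi r$ then satisfies $\psi j=\phi$, as required.

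For the reverse direction, assume $\Ext(F,A)=0$ for every $A$. By Lemma~\ref{lem-free-projective} it suffices to show $F$ is projective, so I fix a surjection $q\:B\to C$ and a map $f\:F\to C$ and look for a lift $g\:F\to B$ with $qg=f$. Setting $A=\ker(q)$ produces a short exact sequence $A\to B\to C$, and the six-term sequence of Proposition~\ref{prop-ext-six} with $U=F$ contains the segment
\[ \Hom(F,B) \xra{q_*} \Hom(F,C) \to \Ext(F,A). \]
The hypothesis $\Ext(F,A)=0$ forces $q_*$ to be surjective, so $f$ lifts to the desired $g$.

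There is no genuine obstacle; the argument is a short translation between Ext-vanishing and the lifting property, with Lemma~\ref{lem-free-projective} doing the heavy lifting on both sides and the natural resolution $I_F^2\to I_F\to F$ providing the bridge.
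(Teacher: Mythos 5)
Your proof is correct and follows essentially the same route as the paper: both directions lean on Lemma~\ref{lem-free-projective} and the resolution $I_F^2\to I_F\to F$, with the forward direction producing a retraction $r$ of $j$ to split $j^*$ and the reverse direction reading off surjectivity of $q_*$ from the six-term sequence of Proposition~\ref{prop-ext-six}. The only cosmetic difference is that you invoke Proposition~\ref{prop-half-split}(b) by name to obtain $r$ where the paper says ``this gives a splitting in the usual way,'' which is a nice bit of explicitness.
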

\begin{proof}
 First suppose that $F$ is free.  By applying
 Lemma~\ref{lem-free-projective} to the diagram 
 \begin{center}
  \begin{tikzcd}
   & F \arrow[dotted,dl,"s"'] \arrow[d,"1"] \\
   I_F \arrow[twoheadrightarrow,r,"q"'] & F
  \end{tikzcd}
 \end{center}  
 we obtain a homomorphism $s\:F\to I_F$ with $qs=1_F$.  This gives a
 splitting in the usual way, so there is a unique map
 $r\:I_F\to\ker(q)=I_F^2$ with $jr=1-sq$ and we have $rj=1_{I^2_F}$.
 Now for any $A$ we have homomorphisms 
 \[ \Hom(I^2_F,A) \xra{r^*} \Hom(I_F,A) \xra{j^*} \Hom(I^2_F,A) \]
 with $j^*r^*=(rj)^*=1^*=1$.  This implies that $j^*$ is surjective,
 so the cokernel is zero.  But $\Ext(F,A)$ is defined to be
 $\cok(j^*)$, so $\Ext(F,A)=0$ as claimed.

 Conversely, suppose that $\Ext(F,A)=0$ for all $A$.  Let $q\:B\to C$
 be a surjective homomorphism.  If we put $A=\ker(q)$, then
 Proposition~\ref{prop-ext-six} gives an exact sequence
 \[ 0 \to \Hom(U,A) \xra{j_*} \Hom(U,B) \xra{q_*} \Hom(U,C) 
    \xra{\dl} 0 \xra{j_*} 0 \xra{q_*} 0 \to 0.
 \] 
 From this we deduce that $q_*$ is surjective.  By the last part of
 Lemma~\ref{lem-free-projective}, this means that $F$ is free.
\end{proof}

To complete our study of $\Ext$ groups, we will need to understand
groups $D$ with the ``dual'' property that $\Ext(A,D)=0$ for all $A$.
These will turn out to be the divisible groups, as defined below.

\begin{definition}\label{defn-divisible}
 Let $V$ be an abelian group.  We say that $V$ is \emph{divisible} if
 for all integers $n>0$ and all $v\in V$ there exists $u\in V$ with
 $nu=v$.  Equivalently, $V$ is divisible iff all the maps
 $n.1_V\:V\to V$ (for $n>0$) are surjective.
\end{definition}
\begin{example}\label{eg-divisible}
 The groups $\Q$, $\R$ and $\QZ$ are all divisible.  The only
 divisible finite group is the trivial group.
\end{example}
\begin{remark}\label{rem-divisible-quotient}
 It is clear that any quotient of a divisible group is divisible.
\end{remark}
\begin{remark}\label{rem-division-system}
 If $A$ is divisible, then (using the Axiom of Choice) we can choose
 functions $d_n\:A\to A$ for $n>0$ such that $n.d_n(a)=a$ for all $n$
 and $a$ (so in particular $d_1(a)=a$), and we can also ensure that
 $d_n(0)=0$ for all $n$.  Of course, $d_n$ need not be a homomorphism.
 We will call such a system of maps a \emph{division system} for $A$.
 In many cases one can make an explicit choice for $d_n$.  For $\Q$ or
 $\R$ we just have $d_n(a)=a/n$.  For $A=\Q/\Z$, every element has a
 unique representation as $a=x+\Z$ with $0\leq x<1$, and we put 
 $d_n(a)=x/n+\Z$.
\end{remark}

\begin{proposition}\label{prop-divisible-injective}
 Let $D$ be a divisible group.  Then for any injective homomorphism
 $j\:A\to B$, the resulting homomorphism $j^*\:\Hom(B,D)\to\Hom(A,D)$
 is surjective.  Equivalently, given homomorphisms $j$ and $f$ as
 shown below, there exists $g\:B\to D$ with $gj=f$.
 \begin{center}
  \begin{tikzcd}
   A \arrow[rightarrowtail,r,"j"] \arrow[d,"f"'] &
   B \arrow[dotted,dl,"g"] \\
   D.
  \end{tikzcd}
 \end{center}
 Conversely, any group $D$ that has this property is divisible.
\end{proposition}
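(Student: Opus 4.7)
The plan is to treat the two directions separately. The converse is a direct application of the extension property: given $v \in D$ and $n > 0$, I would apply the property to the injection $n\cdot 1_\Z \colon \Z \to \Z$ together with the homomorphism $\Z \to D$ sending $1 \mapsto v$. Any resulting extension $g \colon \Z \to D$ then satisfies $g(n) = v$, so $u := g(1)$ solves $nu = v$, showing $D$ is divisible.

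For the forward direction, the strategy is to extend $f \colon A \to D$ one cyclic piece at a time, using Zorn's Lemma (available via Theorem~\ref{thm-well-order}) to collect the partial extensions. I would consider the poset $\mathcal{P}$ of pairs $(A', f')$ with $A \leq A' \leq B$ and $f' \colon A' \to D$ extending $f$, ordered by $(A', f') \leq (A'', f'')$ iff $A' \leq A''$ and $f''|_{A'} = f'$. Chains have upper bounds given by the obvious union construction, so Zorn's Lemma yields a maximal element $(A^*, f^*)$, and the task reduces to showing $A^* = B$.

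Assuming for contradiction that some $b \in B \sm A^*$ exists, the crux is to extend $f^*$ to $A^* + \Z b$, which will contradict maximality. I would analyse the subgroup $I = \{m \in \Z \st mb \in A^*\} \leq \Z$, which by Lemma~\ref{lem-pid} equals $n\Z$ for some $n \geq 0$. If $n = 0$, then $A^* \cap \Z b = 0$, the sum is direct, and I extend trivially by $b \mapsto 0$. If $n > 0$, divisibility supplies $d \in D$ with $nd = f^*(nb)$, and I define $f^{**}(a + kb) = f^*(a) + kd$. The main obstacle here is verifying well-definedness of $f^{**}$: if $a + kb = a' + k'b$ in $A^* + \Z b$, then $(k-k')b = a'-a \in A^*$ forces $k-k' = mn$ for some $m \in \Z$, hence $a'-a = mnb$ and $f^*(a'-a) = m\,f^*(nb) = mnd = (k-k')d$, which rearranges to the desired identity. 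In either case the resulting $f^{**}$ strictly dominates $f^*$, contradicting maximality and completing the proof.
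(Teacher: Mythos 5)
Your proof is correct, and the converse direction is identical to the paper's argument. For the forward direction, however, you take a genuinely different route. You run the standard Zorn's Lemma argument: partially order the extensions of $f$ to subgroups $A'$ with $A \leq A' \leq B$, take a maximal one $(A^*,f^*)$, and show that if $A^* \neq B$ one can adjoin a single element $b$, extending $f^*$ to $A^*+\Z b$ either trivially (if $\Z b \cap A^* = 0$) or by choosing $d \in D$ with $nd = f^*(nb)$ where $n > 0$ generates $\{m \in \Z \st mb \in A^*\}$. Your well-definedness check is sound.

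The paper instead makes a deliberate point of \emph{not} going through Zorn's Lemma: after proving the well-ordering theorem, it remarks ``in the literature it is more common to do this by proving Zorn's Lemma as an intermediate step, but here we have chosen to bypass that.'' The paper's proof of this proposition follows that philosophy. It fixes a well-ordering of $B$ and a division system $(d_n)$ for $D$, defines for each $b$ the subgroup $B_{<b}$ generated by $A$ together with the predecessors of $b$, and then builds the extension by transfinite recursion, at each step extending to $B_{\leq b}$ by a formula that is \emph{forced} by the chosen division system (the notion of ``admissible'' homomorphism). The local extension step is essentially the same as yours; the difference is in how one glues the local steps together. Your version is shorter and more familiar, and requires no explicit choice of a division system; the paper's version fits its self-imposed goal of using only the well-ordering theorem directly, and as a bonus produces a \emph{specific} extension once the well-ordering and division system are fixed, rather than merely asserting one exists. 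One small caveat on your appeal to Zorn: you say it is ``available via Theorem~\ref{thm-well-order},'' which is true in the sense that they are equivalent over ZF, but the paper has not actually carried out that derivation, so a reader following the paper strictly would need to supply it.
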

\begin{proof}
 Firstly, there exists a map $g$ as shown if and only if the element
 $f\in\Hom(A,D)$ lies in the image of $j^*\:\Hom(B,D)\to\Hom(A,D)$; so
 the two versions of the statement are indeed equivalent.  

 Next, we claim that if $D$ has the above extension property then it
 is divisible.  This is immediate from the special case of the
 extension property where $A=B=\Z$ and $j=n.1_{\Z}$ for some $n>0$.
 
 For the main part of the proof, suppose that $D$ is divisible, and
 that we are given $j$ and $f$ as shown.  It will be harmless to
 replace $A$ by the isomorphic group $j(A)\leq B$ and so assume that
 $A\leq B$ and that $j$ is just the inclusion.  We then need to find
 $g\:B\to D$ with $g|_A=f$.  For this we choose a division system
 $(d_n)_{n>0}$ for $D$ and a well-ordering of $B$.  For $b\in B$ we
 let $B_{<b}$ denote the subgroup generated by
 $A\cup\{x\in B\st x<b\}$, and similarly for $B_{\leq b}$.  We then
 have $\{k\in\Z\st kb\in B_{<b}\}=\Z.n_b$ for some $n_b\geq 0$.  Note
 that if $n_b=0$ then $B_{\leq b}=B_{<b}\oplus\Z$, with the $\Z$
 summand generated by $b$.  We say that a homomorphism
 $g\:B_{\leq b}\to D$ is \emph{admissible} if
 \begin{itemize}
  \item $g|_A=f$
  \item Whenever $x\leq b$ with $n_x=0$ we have $g(x)=0$
  \item Whenever $x\leq b$ with $n_x>0$ we have
   $g(x)=d_{n_x}(g(n_xx))$. 
 \end{itemize}
 We claim that for all $b$ there is a unique admissible homomorphism
 $g_b\:B_{\leq b}\to D$.  If not, let $b$ be the least element for
 which this is false (which is meaningful because $B$ is
 well-ordered).  For all $x<b$, we have a unique admissible map
 $g_x\:B_{\leq x}\to D$.  By uniqueness, we see that $g_x$ agrees with
 $g_y$ on $B_{\leq y}$ whenever $y\leq x<b$.  It follows that the maps
 $g_x$ can be combined to give a map $g'\:B_{<b}\to D$.  If $n_b=0$,
 we find that there is a unique extension $g_b\:B_{\leq b}\to D$
 satisfying $g_b(u+kb)=g'(u)$ for all $u\in B_{<b}$ and $k\in\Z$.  If
 $n_b>0$, then we have an element $z=d_{n_b}(g'(n_bb))\in D$ with
 $nz=g'(n_bb)$, and we see that there is a unique extension
 $g_b\:B_{\leq b}\to D$ satisfying $g_b(u+kb)=g'(u)+kz$ for all $u\in
 B_{<b}$ and $k\in\Z$.  Either way, we find that $g_b$ is the unique
 admissible map on $B_{\leq b}$, contrary to assumption.  Thus, we
 have $g_b$ for all $b$, and we again see that $g_b$ agrees with $g_c$
 on $B_{\leq c}$ whenever $c\leq b$, so the maps $g_b$ fit together to
 give a homomorphism $g\:B\to D$.  This clearly satisfies $g|_A=f$ as
 required. 
\end{proof}
\begin{corollary}\label{cor-divisible-injective}
 If $D$ is divisible, then $\Ext(A,D)=0$ for all $A$.
\end{corollary}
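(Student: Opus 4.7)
The plan is to read off the result directly from the definition of $\Ext$ combined with Proposition~\ref{prop-divisible-injective}. Recall from Definition~\ref{defn-ext} (or equivalently from Remark~\ref{rem-ext-four}) that $\Ext(A,D)$ is the cokernel of the map
\[ j^* \: \Hom(I_A,D) \to \Hom(I^2_A,D) \]
induced by the inclusion $j\:I^2_A \hookrightarrow I_A$. Therefore, to conclude $\Ext(A,D)=0$, it suffices to show that $j^*$ is surjective.

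The key observation is that $j$ is an injective homomorphism of abelian groups. Since $D$ is assumed divisible, Proposition~\ref{prop-divisible-injective} applies and tells us precisely that $j^*\:\Hom(I_A,D)\to\Hom(I^2_A,D)$ is surjective. Equivalently, every homomorphism $f\:I^2_A \to D$ admits an extension $g\:I_A \to D$ with $g \circ j = f$, so $f \in \image(j^*)$.

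Combining these two observations, the cokernel of $j^*$ is zero, which is the definition of $\Ext(A,D)=0$. There is no real obstacle here: the corollary is essentially a repackaging of the extension property proved in Proposition~\ref{prop-divisible-injective}, phrased in the language of $\Ext$. The only thing worth being careful about is to cite the correct piece of Remark~\ref{rem-ext-four} identifying $\Ext(A,D)$ with $\cok(j^*)$, rather than attempting to construct extensions of maps $A \to D$ directly.
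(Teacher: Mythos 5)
Your argument is exactly the paper's proof: identify $\Ext(A,D)$ as $\cok(j^*)$ where $j\:I_A^2\hookrightarrow I_A$ is the inclusion, and invoke Proposition~\ref{prop-divisible-injective} to see that $j^*$ is surjective when $D$ is divisible. Nothing to add.
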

\begin{remark}
 The converse statement is also true, but it will be more convenient
 to prove that later.
\end{remark}
\begin{proof}
 $\Ext(A,D)$ is by definition the cokernel of
 $j^*\:\Hom(I_A,D)\to\Hom(I^2_A,D)$, but $j^*$ is surjective by the
 proposition. 
\end{proof}

We next want to show that every abelian group can be embedded in a
divisible group.  This will be proved after some preliminaries.

\begin{proposition}\label{prop-QZ-cogen}
 Let $A$ be an abelian group, and let $a$ be a nontrivial element of
 $A$.  Then there is a homomorphism $f\:A\to\QZ$ with $f(a)\neq 0$. 
\end{proposition}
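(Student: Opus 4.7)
The plan is to first define the desired homomorphism on the cyclic subgroup $\Z a\leq A$ generated by $a$, and then extend it to all of $A$ using the injectivity property of $\QZ$ established in Proposition~\ref{prop-divisible-injective}.

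First I define $f_0\:\Z a\to\QZ$ by cases on the order of $a$. If $a$ has infinite order, then $\Z a$ is free on $a$, so I can set $f_0(ka)=k/2+\Z$; in particular $f_0(a)=1/2+\Z\neq 0$. If $a$ has finite order, this order must be some $n>1$ since $a\neq 0$, and then $\Z a\simeq\Z/n$. I set $f_0(ka)=k/n+\Z$. This is well-defined: if $ka=0$ in $A$ then $n\st k$, so $k/n\in\Z$ and $k/n+\Z=0$ in $\QZ$. Again $f_0(a)=1/n+\Z\neq 0$.

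Next I invoke Proposition~\ref{prop-divisible-injective}. Example~\ref{eg-divisible} states that $\QZ$ is divisible, so by that proposition the inclusion $j\:\Z a\to A$ (which is injective) and the map $f_0\:\Z a\to\QZ$ fit into a triangle that can be completed: there exists $f\:A\to\QZ$ with $fj=f_0$. In particular $f(a)=f_0(a)\neq 0$, as required.

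There is no real obstacle here; the substance of the argument has already been done inside Proposition~\ref{prop-divisible-injective} (where the extension is constructed by transfinite induction using a division system for $\QZ$ and a well-ordering of $A$). All that is needed on top of that is to verify that $\QZ$ does contain at least one nonzero element of each relevant ``order type'', which is why the two-case definition of $f_0$ works: $\QZ$ has elements of every finite order and also elements of infinite order, so we can always detect $a$.
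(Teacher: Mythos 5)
Your proof is correct and follows essentially the same route as the paper: define $f_0$ on the cyclic subgroup $\Z a$ by the two cases on the order of $a$, then extend to all of $A$ using the divisibility of $\QZ$ via Proposition~\ref{prop-divisible-injective}. The only addition is your (harmless and correct) remark that a nontrivial torsion element has order $n>1$.
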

\begin{proof}
 Put $A_0=\Z a\leq A$.  If $a$ has infinite order then $A_0\simeq\Z$
 and we can define $f_0\:A_0\to\QZ$ by $f_0(ka)=k/2+\Z$.  If $a$ has
 finite order $n$ we can instead define $f_0\:A_0\to\QZ$ by
 $f_0(ka)=k/n+\Z$.  Either way we have $f_0(a)\neq 0$.  Next, as
 $\QZ$ is divisible, Proposition~\ref{prop-divisible-injective}
 tells us that the restriction map $\Hom(A,\QZ)\to\Hom(A_0,\QZ)$ is
 surjective, so we can choose $f\:A\to\QZ$ with $f|_{A_0}=f_0$.  In
 particular, this means that $f(a)\neq 0$, as required.
\end{proof}

\begin{lemma}\label{lem-MapIQZ}
 For any set $I$, the group $\Map(I,\QZ)$ is divisible.
\end{lemma}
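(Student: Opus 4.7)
The plan is to reduce this immediately to the fact that $\QZ$ itself is divisible, which was noted in Example~\ref{eg-divisible}. Concretely, I will observe that $\Map(I,\QZ)$ is just the product group $\prod_{i\in I}\QZ$ from Definition~\ref{defn-cartesian-product} (see also Example~\ref{eg-diagonal-product}), and show more generally that any product of divisible groups is divisible.

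First I would fix $n>0$ and $f\in\Map(I,\QZ)$. For each $i\in I$, since $\QZ$ is divisible there exists some element $x\in\QZ$ with $nx=f(i)$. To define $g\:I\to\QZ$ with $g(i)$ being such an $x$ for each $i$ one could invoke the Axiom of Choice, but here it is cleaner to use the explicit division system $(d_n)_{n>0}$ for $\QZ$ described in Remark~\ref{rem-division-system} (sending $k/m+\Z$ with $0\leq k<m$ to $k/(mn)+\Z$). Then the function $g\:I\to\QZ$ defined by $g(i)=d_n(f(i))$ lies in $\Map(I,\QZ)$ and satisfies $(n\,g)(i)=n\,d_n(f(i))=f(i)$ for all $i\in I$, so $n\,g=f$ as required.

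There really is no obstacle to overcome: divisibility is a pointwise condition, and $\Map(I,\QZ)$ has pointwise addition, so divisibility of the target $\QZ$ transfers immediately to divisibility of $\Map(I,\QZ)$. The only minor subtlety worth mentioning is the distinction between using an unrestricted choice of divisors and using the explicit division system, and I would mention the latter to keep the argument free of an unnecessary appeal to choice.
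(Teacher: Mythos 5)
Your argument is correct and matches the paper's proof: both take the explicit division system $(d_n)$ for $\QZ$ from Remark~\ref{rem-division-system} and observe that $f\mapsto d_n\circ f$ gives a division system for $\Map(I,\QZ)$. You spell out the verification $n(d_n\circ f)=f$ and note the general principle that products of divisible groups are divisible, but the essential idea is identical.
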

\begin{proof}
 Let $(d_n)_{n>0}$ be the standard division system for $\QZ$ as in
 Remark~\ref{rem-division-system}.   The maps $u\mapsto d_n\circ u$
 then give a division system for $\Map(I,\QZ)$.
\end{proof}

Now suppose we have a family of homomorphisms $f_i\:A\to\QZ$ for $i\in
I$.  We can combine them to give a single homomorphism
$j\:A\to\Map(I,\QZ)$ by the rule $j(a)(i)=f_i(a)$.  Note that the
kernel of $j$ is the intersection of the kernels of all the
homomorphisms $f_i$.  Thus, if the family is large enough we can hope
that $j$ will be injective.

The most canonical thing to do is to consider the family of \emph{all}
homomorphisms $f\:A\to\QZ$, and thus to take $I=\Hom(A,\QZ)$.  This
leads us to the following definitions.

\begin{definition}\label{defn-EA}
 We put $E_A=\Map(\Hom(A,\QZ),\QZ)$, and define $j\:A\to E_A$ by the
 rather tautological rule $j(a)(f)=f(a)$.  We write $E^2_A$ for the
 cokernel of $j$, and $q$ for the quotient map $E_A\to E^2_A$.
\end{definition}

\begin{proposition}\label{prop-EA}
 The groups $E_A$ and $E^2_A$ are divisible, and the sequence 
 \[ A \xra{j} E_A \xra{q} E^2_A \]
 is short exact.
\end{proposition}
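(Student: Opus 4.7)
The plan is to verify the three claims in sequence, each of which follows almost immediately from results already established in the excerpt.

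First, to see that $E_A$ is divisible, I would apply Lemma~\ref{lem-MapIQZ} with $I=\Hom(A,\QZ)$, which gives divisibility of $\Map(\Hom(A,\QZ),\QZ)=E_A$ directly. Then $E^2_A$ is divisible by Remark~\ref{rem-divisible-quotient}, since it is defined as a quotient of $E_A$.

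Second, for exactness of $A\xra{j}E_A\xra{q}E^2_A$, I would check the three conditions for a short exact sequence separately. The map $q$ is surjective by construction, since it is the quotient map onto the cokernel of $j$. The identity $\image(j)=\ker(q)$ is likewise immediate from the definition of $q$ as the quotient by $\image(j)$. The only non-formal step is injectivity of $j$: given a nonzero $a\in A$, Proposition~\ref{prop-QZ-cogen} furnishes a homomorphism $f\:A\to\QZ$ with $f(a)\neq 0$, and then $j(a)(f)=f(a)\neq 0$, so $j(a)\neq 0$.

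The main (and only) conceptual input is the use of Proposition~\ref{prop-QZ-cogen} to ensure that $\Hom(A,\QZ)$ separates points of $A$; everything else is a matter of unwinding definitions. There is no serious obstacle, because the hard work has been done in proving Proposition~\ref{prop-QZ-cogen} (which in turn rested on the divisibility of $\QZ$ combined with Proposition~\ref{prop-divisible-injective}).
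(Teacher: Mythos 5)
Your proof is correct and follows essentially the same route as the paper's: divisibility of $E_A$ via Lemma~\ref{lem-MapIQZ}, divisibility of $E^2_A$ as a quotient, exactness at $E_A$ and $E^2_A$ by construction, and injectivity of $j$ via Proposition~\ref{prop-QZ-cogen}.
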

\begin{proof}
 The group $E_A$ is divisible by Lemma~\ref{lem-MapIQZ}, and $E^2_A$
 is a quotient of $E_A$ so it is also divisible.  It is clear by
 construction that $q$ is surjective with $\image(j)=\ker(q)$.
 Finally, if $a\in A$ is nonzero then Proposition~\ref{prop-QZ-cogen}
 gives us a homomorphism $f\in\Hom(A,\QZ)$ with $f(a)\neq 0$ or
 equivalently $j(a)(f)\neq 0$, so $j(a)\neq 0$.  This shows that $j$
 is injective, so the sequence is short exact as claimed.
\end{proof}

\begin{remark}\label{rem-double-dual}
 Note that for $a\in A$ and $f,g\in\Hom(A,\QZ)$ we have 
 \[ j(a)(f+g) = (f+g)(a) = f(a) + g(a) = j(a)(f) + j(a)(g), \]
 so the map $j(a)\:\Hom(A,\QZ)\to\QZ$ is a homomorphism.  In other
 words, $j$ can be regarded as an injective homomorphism
 \[ j\: A \to \Hom(\Hom(A,\QZ),\QZ). \]
 If we use the briefer notation $A^*$ for $\Hom(A,\QZ)$, then
 $j\:A\to A^{**}\leq E_A$.  The group $A^{**}$ is usually not
 divisible, so $E_A$ is more useful for our immediate applications to
 $\Ext$ groups.  However, the group $A^{**}$ will reappear later in
 other contexts.
\end{remark}

\begin{corollary}\label{cor-ext-four-b}
 For all $A$ and $B$, there is a natural exact sequence
 \[ 0 \to \Hom(A,B) \xra{j_*} \Hom(A,E_B) \xra{q_*}
       \Hom(A,E^2_B) \to \Ext(A,B) \to 0.
 \]
\end{corollary}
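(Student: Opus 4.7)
The plan is to obtain this exact sequence by applying the six-term Ext sequence of Proposition~\ref{prop-ext-six} to the short exact sequence
\[ B \xra{j} E_B \xra{q} E^2_B \]
furnished by Proposition~\ref{prop-EA}, with the role of $U$ there played by our given $A$.

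Applying that proposition directly produces a natural six-term exact sequence
\[ 0 \to \Hom(A,B) \xra{j_*} \Hom(A,E_B) \xra{q_*} \Hom(A,E^2_B) \xra{\dl} \Ext(A,B) \xra{j_*} \Ext(A,E_B) \xra{q_*} \Ext(A,E^2_B) \to 0. \]
To reduce this to the desired four-term sequence, I would invoke Proposition~\ref{prop-EA} once more, which states that both $E_B$ and $E^2_B$ are divisible. Then Corollary~\ref{cor-divisible-injective} applies to each of these groups and yields $\Ext(A,E_B)=0$ and $\Ext(A,E^2_B)=0$.

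Substituting these vanishings into the six-term sequence truncates the last two terms, collapsing it into
\[ 0 \to \Hom(A,B) \xra{j_*} \Hom(A,E_B) \xra{q_*} \Hom(A,E^2_B) \xra{\dl} \Ext(A,B) \to 0, \]
which is exactly the claimed sequence (with the connecting map $\dl$ playing the role of the unlabelled arrow). Naturality in $A$ and $B$ is inherited from the naturality in Proposition~\ref{prop-ext-six} together with the naturality of the embedding $j\:B\to E_B$.

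There is no real obstacle here; the entire argument is a one-line deduction once the two input results are in hand. The only thing worth remarking on is that the construction of $E_B$ is designed precisely so that its divisibility forces the Ext terms on the right of the long exact sequence to vanish, and that this embedding-in-a-divisible-group technique is the standard way to compute $\Ext(A,B)$ via an ``injective resolution'' of length one.
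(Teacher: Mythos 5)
Your proof is correct and matches the paper's own argument exactly: both apply Proposition~\ref{prop-ext-six} to the short exact sequence $B\to E_B\to E^2_B$ from Proposition~\ref{prop-EA} and then kill the two rightmost terms via Corollary~\ref{cor-divisible-injective}.
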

\begin{proof}
 Apply Proposition~\ref{prop-ext-six} to the sequence
 $B\to E_B\to E^2_B$, noting that $\Ext(A,E_B)=\Ext(A,E^2_B)=0$ by
 Corollary~\ref{cor-divisible-injective}. 
\end{proof}
\begin{corollary}\label{cor-ext-coadditive}
 For any $f_0,f_1\:A'\to A$ we have 
 $(f_0+f_1)^*=f_0^*+f_1^*\:\Ext(A,B)\to\Ext(A',B)$.
\end{corollary}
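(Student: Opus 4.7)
The plan is to reduce additivity in the first variable of $\Ext$ to the manifest additivity of precomposition on ordinary Hom groups, by appealing to Corollary~\ref{cor-ext-four-b}. This sidesteps the problem that the ring map $f_\bullet\:\Z[A']\to\Z[A]$ is not additive in $f$, which is precisely what prevented us from reading off the result directly from Definition~\ref{defn-ext}.

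First I would apply Corollary~\ref{cor-ext-four-b} to present $\Ext(A,B)$ and $\Ext(A',B)$ as cokernels, yielding surjections
\[ \dl_A\:\Hom(A,E^2_B)\era\Ext(A,B), \qquad
   \dl_{A'}\:\Hom(A',E^2_B)\era\Ext(A',B). \]

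Next I would invoke the naturality of these presentations in the first variable: for any homomorphism $f\:A'\to A$, the square
\begin{center}
 \begin{tikzcd}
  \Hom(A,E^2_B) \arrow[r,"\dl_A"] \arrow[d,"f^*"'] &
   \Ext(A,B) \arrow[d,"f^*"] \\
  \Hom(A',E^2_B) \arrow[r,"\dl_{A'}"'] &
   \Ext(A',B)
 \end{tikzcd}
\end{center}
should commute, where the left vertical map is precomposition $\phi\mapsto\phi\circ f$ and the right vertical map is the $f^*$ from Definition~\ref{defn-ext}. This naturality follows in principle from the construction of $\dl$ via the Snake Lemma (Proposition~\ref{prop-snake-lemma}) applied to the diagram in the proof of Proposition~\ref{prop-ext-six} with the short exact sequence $B\to E_B\to E^2_B$. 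I expect this verification to be the main obstacle, since it requires a careful diagram chase to check that the identification $\Ext(A,B)\simeq\cok(q_*)$ arising from the Snake Lemma really intertwines the two a priori different notions of $f^*$ (the one from Definition~\ref{defn-ext}, built from $f_\bullet$, versus the one obtained by precomposition on Hom into $E^2_B$).

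Granting the naturality square, the conclusion is immediate. For $x\in\Ext(A,B)$, choose $\phi\in\Hom(A,E^2_B)$ with $\dl_A(\phi)=x$. Since $\phi$ is a homomorphism, $\phi\circ(f_0+f_1)=\phi\circ f_0+\phi\circ f_1$, and so
\begin{align*}
 (f_0+f_1)^*(x)
  &= \dl_{A'}(\phi\circ(f_0+f_1)) \\
  &= \dl_{A'}(\phi\circ f_0)+\dl_{A'}(\phi\circ f_1) \\
  &= f_0^*(x)+f_1^*(x).
\end{align*}
As $\dl_A$ is surjective, this gives $(f_0+f_1)^*=f_0^*+f_1^*$ as required.
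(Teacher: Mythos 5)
Your proposal is correct and is essentially the same argument the paper gives: Corollary~\ref{cor-ext-four-b} exhibits $\Ext(A,B)$ naturally as a quotient of $\Hom(A,E^2_B)$, and additivity of $f^*$ on the latter descends to the former. The naturality point you flag is exactly what the paper is asserting (somewhat tersely) with the phrase ``in a natural way.''
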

\begin{proof}
 The corresponding statement is clearly true for the induced maps on
 $\Hom(A,E^2_B)$, and Corollary~\ref{cor-ext-four-b} identifies
 $\Ext(A,B)$ in a natural way as a quotient group of $\Hom(A,E^2_B)$.
\end{proof}
\begin{corollary}\label{cor-ext-distrib}
 The natural maps
 \begin{align*}
  \Ext(\bigoplus_iA_i,B) &\to \prod_i\Ext(A_i,B) \\
  \Ext(A,\prod_jB_j) &\to \prod_j\Ext(A,B_j).
 \end{align*}
 are isomorphisms.
\end{corollary}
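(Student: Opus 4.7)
The plan is to reduce both claims to the behaviour of $\Hom$ with respect to coproducts and products, using Corollary~\ref{cor-ext-four-b} to express $\Ext$ as a cokernel involving $\Hom$ groups. The three inputs I will lean on throughout are: the natural isomorphism $\Hom(\bigoplus_iA_i,X)\simeq\prod_i\Hom(A_i,X)$, which is a restatement of the universal property in Proposition~\ref{prop-categorical-coproduct}; the dual isomorphism $\Hom(X,\prod_jB_j)\simeq\prod_j\Hom(X,B_j)$, immediate from the universal property of products; and the standard fact that in the category of abelian groups arbitrary products are exact, so that a product of exact sequences is exact and, in particular, products commute with the formation of cokernels.

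For the first isomorphism, I apply Corollary~\ref{cor-ext-four-b} with first argument $\bigoplus_iA_i$ and rewrite each of the three $\Hom$ terms using the first input to obtain the exact sequence
\[
0\to\prod_i\Hom(A_i,B)\to\prod_i\Hom(A_i,E_B)\to\prod_i\Hom(A_i,E_B^2)\to\Ext\Bigl(\bigoplus_iA_i,B\Bigr)\to 0.
\]
On the other hand, applying Corollary~\ref{cor-ext-four-b} to each $A_i$ individually and taking the product of the resulting four-term exact sequences gives, by the third input, an exact sequence with identical first three terms and final term $\prod_i\Ext(A_i,B)$. Comparing, the final terms are canonically identified, giving the claimed isomorphism.

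For the second isomorphism, I work directly from Definition~\ref{defn-ext}: one has $\Ext(A,B)=\cok\bigl(j^*\colon\Hom(I_A,B)\to\Hom(I_A^2,B)\bigr)$. Substituting $B=\prod_jB_j$ and using the second input rewrites this as $\cok\bigl(\prod_j\Hom(I_A,B_j)\to\prod_j\Hom(I_A^2,B_j)\bigr)$, and the third input lets me pull the product outside the cokernel to get $\prod_j\cok\bigl(\Hom(I_A,B_j)\to\Hom(I_A^2,B_j)\bigr)=\prod_j\Ext(A,B_j)$.

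The only point requiring care is verifying that the isomorphisms produced by this diagram chase coincide with the specific natural maps in the statement, namely those whose components are induced by the inclusions $A_i\hookrightarrow\bigoplus_iA_i$ and the projections $\prod_jB_j\to B_j$ via the functoriality clauses of Definition~\ref{defn-ext}. This is a routine compatibility check that follows from the naturality of each of the three inputs above, so there is no serious obstacle.
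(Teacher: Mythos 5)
Your argument matches the paper's proof: both parts rely on $\Hom(-,U)$ converting coproducts to products (resp.\ $\Hom(T,-)$ preserving products) together with the fact that products commute with cokernels, applied to Corollary~\ref{cor-ext-four-b} for the first statement and to Definition~\ref{defn-ext} for the second. You have simply written out the intermediate exact sequences that the paper leaves implicit.
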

\begin{proof}
 The functors $\Hom(-,U)$ (for $U\in\{B,E_B,E^2_B\}$) convert
 coproducts to products, and the cokernel of a product is the product
 of the cokernels, so the first statement follows from
 Corollary~\ref{cor-ext-four-b}.  Similarly, the functors $\Hom(T,-)$
 (for $T\in\{A,I_A,I^2_A\}$) preserve products, so the second
 statement follows from our original definition of $\Ext$.
\end{proof}

\begin{proposition}\label{prop-ext-six-b}
 Let $A\xra{i}B\xra{p}C$ be a short exact sequence of abelian groups,
 and let $V$ be an abelian group.  Then there is a natural exact
 sequence 
 \[ 0 \to \Hom(C,V) \xra{p^*} \Hom(B,V) \xra{i^*} \Hom(A,V) 
     \xra{\dl} \Ext(C,V) \xra{p^*} \Ext(B,V) \xra{i^*} \Ext(A,V) 
      \to 0.
 \]
\end{proposition}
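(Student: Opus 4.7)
The plan is to mirror the proof of Proposition~\ref{prop-ext-six}, but dualized: instead of resolving the first argument by the free groups $I_U$ and $I_U^2$ and applying the snake lemma to the covariant functor $\Hom(-,A)$, I will resolve the second argument by the divisible groups $E_V$ and $E_V^2$ and apply the snake lemma to the contravariant functor $\Hom(-,V)$. The key input is Proposition~\ref{prop-EA}, which provides the short exact sequence $V\xra{j}E_V\xra{q}E^2_V$ in which both $E_V$ and $E^2_V$ are divisible, together with Proposition~\ref{prop-divisible-injective}, which plays the role of Lemma~\ref{lem-free-projective} in the dual argument.

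First I would observe that for any divisible group $D$, applying $\Hom(-,D)$ to $A\xra{i}B\xra{p}C$ yields a short exact sequence
\[ 0 \to \Hom(C,D) \xra{p^*} \Hom(B,D) \xra{i^*} \Hom(A,D) \to 0. \]
Left exactness here is a straightforward direct check (essentially the first paragraph of the proof of Proposition~\ref{prop-ext-six}), and surjectivity of $i^*$ is exactly the extension property from Proposition~\ref{prop-divisible-injective}. Applying this with $D=E_V$ and $D=E^2_V$ gives two short exact rows that fit into a commutative diagram
\begin{center}
\begin{tikzcd}
0 \ar[r] & \Hom(C,E_V) \ar[r,"p^*"] \ar[d,"q_*"'] & \Hom(B,E_V) \ar[r,"i^*"] \ar[d,"q_*"'] & \Hom(A,E_V) \ar[r] \ar[d,"q_*"] & 0 \\
0 \ar[r] & \Hom(C,E^2_V) \ar[r,"p^*"'] & \Hom(B,E^2_V) \ar[r,"i^*"'] & \Hom(A,E^2_V) \ar[r] & 0
\end{tikzcd}
\end{center}
where the vertical maps are induced by $q\:E_V\to E^2_V$.

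Next I would apply the Snake Lemma (Proposition~\ref{prop-snake-lemma}) to this diagram to obtain a six-term exact sequence relating the kernels and cokernels of the three vertical maps. By Corollary~\ref{cor-ext-four-b} applied with $B$ replaced by $V$, for any abelian group $X$ the kernel of $q_*\:\Hom(X,E_V)\to\Hom(X,E^2_V)$ is $\Hom(X,V)$ and the cokernel is $\Ext(X,V)$. Substituting $X\in\{A,B,C\}$ and unwinding the induced maps (which are $p^*$, $i^*$ on kernels and on cokernels, since the construction is natural in $X$) yields exactly the claimed six-term sequence
\[ 0 \to \Hom(C,V) \xra{p^*} \Hom(B,V) \xra{i^*} \Hom(A,V)
     \xra{\dl} \Ext(C,V) \xra{p^*} \Ext(B,V) \xra{i^*} \Ext(A,V) \to 0. \]

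There is no serious obstacle here; the proof is structurally identical to that of Proposition~\ref{prop-ext-six}, with the pair $(I_U,I_U^2)$ (whose role relied on freeness, via Lemma~\ref{lem-free-projective}) replaced by the pair $(E_V,E_V^2)$ (whose role relies on divisibility, via Proposition~\ref{prop-divisible-injective}). The only point requiring a little care is verifying that the connecting and induced maps coming out of the snake lemma are indeed the natural ones denoted $p^*$, $i^*$, and $\dl$; this is immediate from naturality of the exact sequence in Corollary~\ref{cor-ext-four-b}.
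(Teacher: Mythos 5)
Your proof is correct and takes essentially the same route as the paper: you build the commutative diagram with rows $\Hom(-,E_V)\to\Hom(-,E^2_V)$ applied to $C\to B\to A$, observe that the rows are short exact (left exactness of $\Hom(-,D)$ plus surjectivity from Proposition~\ref{prop-divisible-injective} for divisible $D$), apply the Snake Lemma, and identify the kernels and cokernels via Corollary~\ref{cor-ext-four-b}. The only cosmetic difference is that the paper cites Lemma~\ref{lem-right-test} for the left-exactness of the rows, whereas you invoke a direct check (your reference to the first paragraph of Proposition~\ref{prop-ext-six} is a slight misdirection since that proves the covariant case, but the contravariant fact you need is equally elementary and is exactly Lemma~\ref{lem-right-test}).
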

\begin{proof}
 Consider the diagram 
 \begin{center}
  \begin{tikzcd}
     \Hom(C,E_V) \arrow[d,"q_*"'] \arrow[rightarrowtail,r,"p^*"] &
     \Hom(B,E_V) \arrow[d,"q_*"'] \arrow[twoheadrightarrow,r,"i^*"]  &
     \Hom(A,E_V) \arrow[d,"q_*"] \\
     \Hom(C,E^2_V) \arrow[rightarrowtail,r,"p^*"'] &
     \Hom(B,E^2_V) \arrow[twoheadrightarrow,r,"i^*"']  &
     \Hom(A,E^2_V) 
    \end{tikzcd}
   \end{center}
   
 The rows are short exact by Lemma~\ref{lem-right-test} together with
 Proposition~\ref{prop-divisible-injective}.  The Snake Lemma
 therefore gives us a six-term exact sequence involving the kernels
 and cokernels of the vertical maps $q_*$.
 Corollary~\ref{cor-ext-four-b} identifies these kernels and cokernels
 with $\Hom$ and $\Ext$ groups as required.
\end{proof}
\begin{corollary}\label{cor-ext-epi}
 For any groups $B$ and $V$, and any subgroup $A\leq B$, the
 restriction $\Ext(B,V)\to\Ext(A,V)$ is surjective. \qed
\end{corollary}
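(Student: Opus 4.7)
The plan is to apply Proposition~\ref{prop-ext-six-b} directly. Since $A$ is a subgroup of $B$, the inclusion $i\:A\to B$ fits into a short exact sequence
\[ A \xra{i} B \xra{p} B/A, \]
where $p$ is the quotient map. Feeding this short exact sequence into Proposition~\ref{prop-ext-six-b} yields a six-term exact sequence ending in
\[ \cdots \to \Ext(B/A,V) \xra{p^*} \Ext(B,V) \xra{i^*} \Ext(A,V) \to 0. \]
Exactness at the final $\Ext(A,V)$ (i.e.\ the fact that the sequence terminates with a $\to 0$) is precisely the statement that $i^*$ is surjective, which is what we want. No calculation is required, and there is no real obstacle: the result is essentially a restatement of the last nontrivial piece of the long exact sequence supplied by the preceding proposition.
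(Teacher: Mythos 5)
Your proof is correct and matches the paper's intended argument exactly: the paper marks this corollary with an immediate \qed precisely because it is the terminal piece of the six-term exact sequence from Proposition~\ref{prop-ext-six-b} applied to $A\to B\to B/A$. Nothing further is needed.
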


\begin{corollary}\label{cor-cyclic-ext}
 There are natural isomorphisms $\Ext(\Z/n,B)\simeq B/nB$ for $n>0$,
 and $\Ext(\Z,B)=0$.
\end{corollary}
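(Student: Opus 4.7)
The plan is to combine two earlier results: Corollary~\ref{cor-free-ext} (telling us $\Ext(F,B)=0$ whenever $F$ is free) and the six-term exact sequence of Proposition~\ref{prop-ext-six-b} in the first variable, applied to the short exact sequence
\[ \Z \xra{n.1_\Z} \Z \xra{\pi} \Z/n. \]

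First I would dispose of the second claim: since $\Z$ is free, Corollary~\ref{cor-free-ext} immediately gives $\Ext(\Z,B)=0$ for every $B$. Then, applying Proposition~\ref{prop-ext-six-b} to the displayed short exact sequence (with $V=B$) yields
\[ 0 \to \Hom(\Z/n,B) \xra{\pi^*} \Hom(\Z,B) \xra{(n.1_\Z)^*} \Hom(\Z,B) \to \Ext(\Z/n,B) \to \Ext(\Z,B) \to \dotsb, \]
and the rightmost Ext term vanishes by the first part.

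Next I would use the standard natural isomorphism $\Hom(\Z,B)\simeq B$, $\phi\mapsto\phi(1)$, to rewrite the middle of the sequence. Under this identification the induced map $(n.1_\Z)^*$ sends $\phi$ to $\phi\circ(n.1_\Z)$, i.e.\ sends $b=\phi(1)$ to $\phi(n)=nb$; so it is just the multiplication-by-$n$ endomorphism of $B$. Thus $\Ext(\Z/n,B)$ is identified with the cokernel of $n.1_B\:B\to B$, which is $B/nB$ by definition.

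There is really no serious obstacle here; the only thing requiring any care is to keep track of the natural transformations so as to verify that the resulting isomorphism $\Ext(\Z/n,B)\simeq B/nB$ is natural in $B$, which is immediate from the naturality of the six-term sequence of Proposition~\ref{prop-ext-six-b} and of the evaluation isomorphism $\Hom(\Z,B)\simeq B$.
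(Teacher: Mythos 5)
Your proof is correct and follows the same route as the paper: establish $\Ext(\Z,B)=0$ from freeness of $\Z$, then apply the six-term sequence of Proposition~\ref{prop-ext-six-b} to $\Z\xra{n}\Z\to\Z/n$ and identify $\Hom(\Z,B)\simeq B$ so that the connecting map exhibits $\Ext(\Z/n,B)$ as $\cok(n.1_B)=B/nB$. Nothing further to add.
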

Note that in conjunction with Corollary~\ref{cor-ext-distrib} this
allows us to calculate $\Ext(A,B)$ whenever $A$ is finitely
generated.  
\begin{proof}
 Corollary~\ref{cor-free-ext} tells us that $\Ext(\Z,B)=0$ for all
 $B$.  Now consider the short exact sequence 
 \[ \Z \xra{n.1_{\Z}} \Z \xra{} \Z/n. \]
 Using Proposition~\ref{prop-ext-six-b} we obtain an exact sequence 
 \[ 0 \to \Hom(\Z/n,B) \to \Hom(\Z,B) \to \Hom(\Z,B) \to 
     \Ext(\Z/n,B) \to \Ext(\Z,B) \to \Ext(\Z,B) \to 0.
 \]
 Now $\Ext(\Z,B)=0$, while $\Hom(\Z,B)$ is easily identified with $B$,
 and $\Hom(\Z/n,B)$ with $B[n]=\{b\in B\st nb=0\}$.  We thus have an
 exact sequence 
 \[ 0 \to B[n] \to B \xra{n.1_B} B \xra{\dl} \Ext(\Z/n,B) \to 0. \]
 From this it is clear that $\Ext(\Z/n,B)=\cok(n.1_B)=B/nB$.
\end{proof}

\begin{proposition}\label{prop-ext-AZ}
 Let $A$ be a torsion group.  Then there is a canonical isomorphism
 \[ \Ext(A,\Z) = \Hom(A,\QZ) = \prod_p\Hom(\tors_p(A),\QZ), \]
 and this maps surjectively to $\prod_p\Hom(A[p],\Z/p)$.  In
 particular, if $\Ext(A,\Z)=0$ then $A=0$.
\end{proposition}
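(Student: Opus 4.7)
The plan is to compare $\Ext(A,\Z)$ to $\Hom(A,\QZ)$ using the short exact sequence $\Z \mra \Q \era \QZ$ and the six-term exact sequence of Proposition~\ref{prop-ext-six}. Applied with $U = A$, this gives
\[
0 \to \Hom(A,\Z) \to \Hom(A,\Q) \to \Hom(A,\QZ) \xra{\dl} \Ext(A,\Z) \to \Ext(A,\Q) \to \Ext(A,\QZ) \to 0.
\]
Since $A$ is a torsion group and $\Z$, $\Q$ are torsion-free, any homomorphism from $A$ into either is zero, so $\Hom(A,\Z)=\Hom(A,\Q)=0$. Since $\Q$ is divisible, Corollary~\ref{cor-divisible-injective} gives $\Ext(A,\Q)=0$. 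Exactness then forces $\dl$ to be an isomorphism $\Hom(A,\QZ)\xra{\sim}\Ext(A,\Z)$.

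For the product decomposition, I would invoke Proposition~\ref{prop-tors-split}, which gives $A=\tors(A)=\bigoplus_p\tors_p(A)$, and then use the standard fact that $\Hom(-,\QZ)$ sends coproducts to products to obtain $\Hom(A,\QZ)=\prod_p\Hom(\tors_p(A),\QZ)$.

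Next, for the surjection onto $\prod_p\Hom(A[p],\Z/p)$: since $\QZ$ is divisible, Proposition~\ref{prop-divisible-injective} applied to the inclusion $A[p]\hookrightarrow\tors_p(A)$ shows that the restriction $\Hom(\tors_p(A),\QZ)\era\Hom(A[p],\QZ)$ is surjective. Any homomorphism out of $A[p]$ lands in $(\QZ)[p]$, which is the cyclic subgroup of order $p$ generated by $1/p+\Z$, so $\Hom(A[p],\QZ)=\Hom(A[p],\Z/p)$. Taking the product over $p$ of these surjections (using choice componentwise) yields the claimed surjection.

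Finally, for the vanishing consequence, suppose $\Ext(A,\Z)=0$, so $\prod_p\Hom(A[p],\Z/p)=0$ and hence $\Hom(A[p],\Z/p)=0$ for every prime $p$. If $A\neq 0$, then by the splitting above some $\tors_p(A)\neq 0$; any nonzero element has some minimal $k\geq 1$ with $p^k$-annihilator, and then $p^{k-1}a$ witnesses $A[p]\neq 0$. Proposition~\ref{prop-QZ-cogen} then supplies a nonzero map $A[p]\to\QZ$ which necessarily lands in $\Z/p$, contradicting $\Hom(A[p],\Z/p)=0$. Thus $A=0$. No step looks genuinely hard; the only minor care-point is identifying $\Hom(A[p],\QZ)$ with $\Hom(A[p],\Z/p)$ and being explicit about surjectivity passing through the infinite product.
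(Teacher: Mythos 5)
Your proof is correct and follows the paper's own argument almost step for step: the six-term sequence for $\Z \mra \Q \era \QZ$ together with $\Hom(A,\Z)=\Hom(A,\Q)=0$ and $\Ext(A,\Q)=0$ gives the isomorphism $\dl$, the $\tors_p$ decomposition gives the product formula, and divisibility of $\QZ$ plus the identification $\Hom(A[p],\QZ)=\Hom(A[p],\Z/p)$ gives the surjection. The only small deviation is the final vanishing argument, where the paper applies Proposition~\ref{prop-QZ-cogen} directly to $\Hom(A,\QZ)=0$ rather than routing through a nonzero element of $A[p]$; both are correct, but the paper's route is a touch more economical.
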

\begin{proof}
 As $A$ is torsion, and both $\Z$ and $\Q$ are torsion free, we see
 that $\Hom(A,\Z)=\Hom(A,\Q)=0$.  As $\Q$ and $\QZ$ are divisible, we
 see from Proposition~\ref{prop-divisible-injective} that
 $\Ext(A,\Q)=\Ext(A,\QZ)=0$.  Thus, if we apply
 Proposition~\ref{prop-ext-six} to the short exact sequence
 $\Z\to\Q\to\QZ$ we just get an isomorphism
 $\dl\:\Ext(A,\Z)\to\Hom(A,\QZ)$.  Next,
 Proposition~\ref{prop-tors-split} gives $A=\bigoplus_p\tors_p(A)$,
 so $\Hom(A,\QZ)=\prod_p\Hom(\tors_p(A),\QZ)$.  Now $A[p]$ is a
 subgroup of $\tors_p(A)$ and $\QZ$ is divisible so the restriction
 \[ \Hom(\tors_p(A),\QZ)\to\Hom(A[p],\QZ) \]
 is surjective.  Moreover, any homomorphism from $A[p]\to\QZ$
 necessarily lands in $(\QZ)[p]$, which is a copy of $\Z/p$ generated
 by the element $(1/p)+\Z$.  By taking the product over all $p$, we
 get a surjection
 \[ \prod_p\Hom(\tors_p(A),\QZ) \to \prod_p\Hom(A[p],\Z/p) \]
 as claimed.  The last statement follows from the isomorphism
 $\Ext(A,\Z)=\Hom(A,\QZ)$ together with
 Proposition~\ref{prop-QZ-cogen}. 
\end{proof}
\begin{example}\label{eg-ext-QZ}
 We can take $A=\QZ$, and we find that $\Ext(\QZ,\Z)=\End(\QZ)$.  It
 is easy to see that $(\QZ)[n]$ is a copy of $\Z/n$, generated by
 $(1/n)+\Z$.  It follows that $\Hom((\QZ)[p],\Z/p)\simeq\Z/p$, and
 thus that $\prod_p\Hom((\QZ)[p],\Z/p)$ is uncountable, and thus that
 $\End(\QZ)$ is uncountable.  Next, we can apply $\Hom(-,\Z)$ and
 $\Ext(-,\Z)$ to the sequence $\Z\to\Q\to\QZ$ to get a six term exact
 sequence.  As $\Q$ and $\QZ$ are divisible, we find that
 $\Hom(\Q,\Z)=\Hom(\QZ,\Z)=0$.  We also have $\Hom(\Z,\Z)=\Z$ and
 $\Ext(\Z,\Z)=0$.  The six term sequence therefore reduces to a short
 exact sequence $\Z\to\End(\QZ)\to\Ext(\Q,\Z)$.  As $\Z$ is countable
 and $\End(\QZ)$ is uncountable, we deduce that $\Ext(\Q,\Z)$ is
 uncountable.  In particular, it is nonzero.
\end{example}

\begin{definition}\label{defn-extension}
 An \emph{extension of $C$ by $A$} is a short exact sequence
 $A\xra{i}B\xra{p}C$.  We say that two extensions
 $(A\xra{i_0}B_0\xra{p_0}C)$ and $(A\xra{i_1}B_1\xra{p_1}C)$ are
 \emph{equivalent} if there exists a map $f\:B_0\to B_1$ with
 $fi_0=i_1$ and $p_1f=p_0$.  (Any such $f$ is an isomorphism by a
 straightforward diagram chase, and using this we see that this notion
 of equivalence is reflexive, symmetric and transitive.)
\end{definition}

\begin{proposition}\label{prop-ext-pullback}
 Let $E=(A\xra{i}B\xra{p}C)$ be an extension of $C$ by $A$.  
 \begin{itemize}
  \item[(a)] For any homomorphism $h\:C'\to C$, we have an extension
   $h^*E=(A\xra{i'}B'\xra{p'}C')$ given by 
   \begin{align*}
    B' &= \{(b,c')\in B\oplus C'\st p(b)=h(c')\} \\
    i'(a) &= (i(a),0) \\
    p'(b,c') &= c'.
   \end{align*}
   (We call this the \emph{pullback of $E$ along $h$.})
  \item[(b)] Suppose we have another extension
   $E^*=(A\xra{i^*}B^*\xra{p^*}C')$ and a commutative diagram 
   \begin{center}
    \begin{tikzcd}
        A \arrow[rightarrowtail,r,"i^*"] \arrow[equal,d] &
        B^* \arrow[twoheadrightarrow,r,"p^*"] \arrow[d,"g"] &
        C' \arrow[d,"h"] \\
        A \arrow[rightarrowtail,r,"i"'] &
        B \arrow[twoheadrightarrow,r,"p"'] &
        C
       \end{tikzcd}
      \end{center}
      
   Then $E^*$ is equivalent to $h^*E$.
  \item[(c)] For any map $m\:C'\to B$, the extension $(h+pm)^*E$ is
   equivalent to $h^*E$.
  \item[(d)] If $E_0$ and $E_1$ are equivalent extensions of $C$ by
   $A$, then $h^*E_0$ and $h^*E_1$ are also equivalent.
 \end{itemize}
\end{proposition}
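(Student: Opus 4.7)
The plan is to verify each of the four assertions by direct computation with the explicit pullback $B'=\{(b,c')\in B\oplus C'\st p(b)=h(c')\}$, with the essential content lying in part~(b); parts~(a), (c) and~(d) are formal once the pullback is in hand.

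For part~(a), I would check the three conditions for short exactness in order. The map $i'$ is injective because $i$ is. Given $c'\in C'$, surjectivity of $p$ gives some $b$ with $p(b)=h(c')$, and then $(b,c')\in B'$ satisfies $p'(b,c')=c'$, so $p'$ is surjective. Finally, $\ker(p')$ consists of pairs $(b,0)$ with $p(b)=0$; since $\ker(p)=\image(i)$ with $i$ injective, these are exactly the elements $(i(a),0)=i'(a)$, giving $\ker(p')=\image(i')$.

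For part~(b), the idea is that the commutativity of the right-hand square provides precisely the data required to map into the pullback. Define $\phi\:B^*\to B'$ by $\phi(b^*)=(g(b^*),p^*(b^*))$; the identity $p(g(b^*))=h(p^*(b^*))$ places the image in $B'$. One then checks directly that $\phi i^*=i'$ (using $gi^*=i$ and $p^*i^*=0$) and $p'\phi=p^*$, so $\phi$ is an equivalence of extensions; it is automatically an isomorphism by the five lemma, as remarked after Definition~\ref{defn-extension}. This is the only part of the proposition with genuine content, since it pins down the pullback as the universal choice of $B^*$.

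For~(c), let $B''$ denote the pullback along $h+pm$, and define $\psi\:B'\to B''$ by $\psi(b,c')=(b+m(c'),c')$; the identity $p(b+m(c'))=h(c')+p(m(c'))$ shows that $\psi$ lands in $B''$, and $(b,c')\mapsto(b-m(c'),c')$ is an inverse. Compatibility with $i'$ follows from $m(0)=0$, and compatibility with $p'$ is immediate. For~(d), an equivalence $f\:B_0\to B_1$ between $E_0$ and $E_1$ induces the map $(b_0,c')\mapsto(f(b_0),c')$ between the respective pullbacks; it is again an isomorphism respecting $i'$ and $p'$, hence an equivalence. I do not anticipate any real obstacle, only the usual bookkeeping to keep the two pullback groups carefully distinguished.
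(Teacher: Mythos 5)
Your proof is correct and follows essentially the same route as the paper: direct verification of short-exactness for part~(a), the map $b^*\mapsto(g(b^*),p^*(b^*))$ for part~(b), and the same maps (explicitly spelled out rather than deduced by invoking part~(b)) for parts~(c) and~(d). The explicit $\psi(b,c')=(b+m(c'),c')$ in part~(c) is exactly what the paper's appeal to part~(b) unwinds to, so the two treatments are computationally identical.
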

\begin{proof}\nl
 \begin{itemize}
  \item[(a)] We can certainly define a group $B'$ and homomorphisms
   $i'$ and $p'$ by the given formulae.  As $i$ is injective, it is
   clear that $i'$ is injective.  Now suppose that $c'\in C'$.  We
   then have $f(c')\in C$ and $p\:B\to C$ is surjective by assumption,
   so we can choose $b\in B$ with $p(b)=f(c')$.  This gives a point
   $b'=(b,c')\in B'$ with $p'(b')=c'$, so we see that $p'$ is
   surjective.  It is immediate that $p'i'=0$, so
   $\image(i')\leq\ker(p')$.  A general element of $\ker(p')$ has the
   form $b'=(b,0)$ with $p(b)=f(0)=0$, so $b\in\ker(p)=\image(i)$, so
   $b=i(a)$ for some $a\in A$.  This means that
   $b'=i'(a)\in\image(i')$.  We conclude that the sequence $h^*E$ is
   indeed short exact, so it gives an extension of $C'$ by $A$.
  \item[(b)] Now suppose we have a commutative diagram as indicated.
   As $hp^*=pg$ we can define $g'\:B^*\to B'$ by
   $g'(b^*)=(g(b^*),p^*(b^*))$.  We then have $p'g'(b^*)=p^*(b^*)$ and
   \[ g'(i^*(a))=(g(i^*(a)),p^*(i^*(a)))=(i(a),0)=i'(a), \]
   so $p'g'=p^*$ and $g'i^*=i'$.  Thus, $g'$ gives an equivalence
   between $E^*$ and $h^*E$.
  \item[(c)] By the construction in part~(a), we have a commutative
   diagram 
   \begin{center}
    \begin{tikzcd}
     A \arrow[rightarrowtail,r,"i'"] \arrow[equal,d] &
     B' \arrow[twoheadrightarrow,r,"p'"] \arrow[d,"g"] &
     C' \arrow[d,"h"] \\
     A \arrow[rightarrowtail,r,"i"'] &
     B \arrow[twoheadrightarrow,r,"p"'] &
     C        
    \end{tikzcd}
   \end{center}
   (where $g(b,c')=b$).  It follows easily that there is also a
   commutative diagram 
   \begin{center}
    \begin{tikzcd}
     A \arrow[rightarrowtail,r,"i'"] \arrow[equal,d] &
     B' \arrow[twoheadrightarrow,r,"p'"] \arrow[d,"g+mp'"] &
     C' \arrow[d,"h+pm"] \\
     A \arrow[rightarrowtail,r,"i"'] &
     B \arrow[twoheadrightarrow,r,"p"'] &
     C.
    \end{tikzcd}
   \end{center}   
   Now part~(b) tells us that the top row is equivalent to $(h+pm)^*$
   of the bottom row, or in other words $h^*E\simeq (h+pm)^*E$ as
   claimed. 
  \item[(d)] Suppose we have equivalent extensions
   $E_k=(A\xra{i_k}B_k\xra{p_k}C)$ for $k=0,1$, so there is an
   isomorphism $s\:B_0\to B_1$ with $si_0=i_1$ and $p_1s=p_0$.  We can
   then define $s'\:B'_0\to B'_1$ by $s'(b_0,c')=(s(b_0),c')$, and we
   find that $s'i'_0=i'_1$ and $p'_1s'=p'_0$; this shows that $h^*E_0$
   and $h^*E_1$ are equivalent as claimed.
 \end{itemize}
\end{proof}

\begin{proposition}\label{prop-ext-pushout}
 Let $E=(A\xra{i}B\xra{p}C)$ be an extension of $C$ by $A$.  
 \begin{itemize}
  \item[(a)] For any homomorphism $f\:A\to A'$, we have an extension
   $f_*E=(A'\xra{i'}B'\xra{p'}C)$ given by 
   \begin{align*}
    R &= \{(f(a),-i(a))\st a\in A\} \leq A'\oplus B \\
    B' &= (A'\oplus B)/R \\
    i'(a') &=  (a',0)+R  \\
    p'((a',b)+R) &= p(b).
   \end{align*}
   (We call this the \emph{pushout of $E$ along $f$.})
  \item[(b)] Suppose we have another extension
   $E^*=(A\xra{i^*}B^*\xra{p^*}C')$ and a commutative diagram 
   \begin{center}
    \begin{tikzcd}
     A \arrow[rightarrowtail,r,"i"] \arrow[d,"f"'] q &
     B \arrow[twoheadrightarrow,r,"p"] \arrow[d,"g"'] &
     C \arrow[equal,d] \\
     A^* \arrow[rightarrowtail,r,"i^*"'] &
     B^* \arrow[twoheadrightarrow,r,"p^*"'] &
     C
    \end{tikzcd}
   \end{center}
   Then $E^*$ is equivalent to $f_*E$.
  \item[(c)] For any map $n\:B\to A'$, the extension $(f+ni)_*E$ is
   equivalent to $f_*E$.
  \item[(d)] If $E_0$ and $E_1$ are equivalent extensions of $C$ by
   $A$, then $f_*E_0$ and $f_*E_1$ are also equivalent.
 \end{itemize}
\end{proposition}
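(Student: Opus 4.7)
The plan is to proceed in exact analogy with Proposition~\ref{prop-ext-pullback}, handling (a)--(d) in order. The key structural idea is that the quotient $B' = (A' \oplus B)/R$ plays the role that the fibre product played before; correspondingly, the main book-keeping task throughout will be to verify that maps defined out of the direct sum $A' \oplus B$ descend to $B'$, which each time reduces to checking a one-line identity on the generators of $R$.

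For (a), I first note that $R$ is the image of the homomorphism $A \to A' \oplus B$ sending $a \mapsto (f(a), -i(a))$, hence is a subgroup, so $B'$ is a group. The map $i'$ is manifestly a homomorphism, and $p'$ is well-defined because $p(-i(a)) = 0$ shows $p'$ vanishes on $R$. Injectivity of $i'$: if $(a', 0) \in R$, then $(a', 0) = (f(a), -i(a))$ for some $a$, and injectivity of $i$ forces $a = 0$, hence $a' = 0$. Surjectivity of $p'$ comes from surjectivity of $p$. For exactness at $B'$, I have $p'i' = 0$ by inspection; conversely, any element of $\ker(p')$ is represented by $(a', b) + R$ with $p(b) = 0$, so $b = i(a)$ for some $a \in A$, and then $(a', i(a)) + R = (a' + f(a), 0) + R = i'(a' + f(a))$.

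For (b), I would define $\phi \: B' \to B^*$ by $\phi((a', b) + R) = i^*(a') + g(b)$. Well-definedness uses the commutativity $i^* f = g i$ from the left square: on a generator of $R$, $i^*(f(a)) + g(-i(a)) = i^* f(a) - g i(a) = 0$. The identities $\phi i' = i^*$ and $p^* \phi = p'$ then hold by direct inspection (the latter using $p^* i^* = 0$ and $p^* g = p$), so $\phi$ is an equivalence $f_* E \to E^*$.

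Parts (c) and (d) are then short. For (c), I apply (b) to the diagram with $f + ni$ on the left, the identity on $C$, and middle vertical map $\psi \: B \to B'$ defined by $\psi(b) = (n(b), b) + R$; the right square commutes since $p'\psi(b) = p(b)$, and the left square reduces to the identity $(n(i(a)), i(a)) - ((f + ni)(a), 0) = -(f(a), -i(a)) \in R$, yielding $f_* E \simeq (f + ni)_* E$. For (d), given an equivalence $s \: B_0 \to B_1$ between $E_0$ and $E_1$, I define $s' \: B'_0 \to B'_1$ by $s'((a', b) + R_0) = (a', s(b)) + R_1$; well-definedness follows from $s i_0 = i_1$, which sends a generator $(f(a), -i_0(a))$ of $R_0$ to $(f(a), -i_1(a)) \in R_1$, and the remaining equivalence conditions $s' i'_0 = i'_1$ and $p'_1 s' = p'_0$ are immediate. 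The only mildly delicate point in the whole proof is the well-definedness check in (b) (and its analogue in (d)), and even there the verification is a single application of the relevant commutativity hypothesis.
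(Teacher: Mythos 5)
Your proof is correct and follows essentially the same route as the paper's: the same quotient construction in (a), the same descended map $\phi((a',b)+R)=i^*(a')+g(b)$ in (b), reduction of (c) to (b) via a commuting diagram (your $\psi(b)=(n(b),b)+R$ coincides with the paper's $g+i'n$), and the same map $(a',b)+R_0\mapsto(a',s(b))+R_1$ in (d).
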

\begin{proof}\nl
 \begin{itemize}
  \item[(a)] We can certainly define groups $R$ and $B'$, and a
   homomorphism $i'$, by the given formulae.  If $(a',b)\in R$ then
   there exists $a\in A$ with $a'=f(a)$ and $b=-i(a)$, so
   $p(b)=-p(i(a))=0$.  Given this, we see that the formula
   $p'((a',b)+R)=p(b)$ also gives a well-defined map $B'\to C$.  

   If $i'(a')=0$ we must have $(a',0)\in R$, so there exists $a\in A$
   with $f(a)=a'$ and $i(a)=0$.  As $i$ is injective this gives $a=0$
   and then $a'=f(0)=0$.  This proves that $i'$ is injective.  As $p$
   is surjective, it is immediate that $p'$ is also surjective.  Next,
   we have $p'i'(a')=p'((a',0)+R)=p(0)=0$, so
   $\image(i')\leq\ker(p')$.  Conversely, suppose we have an element
   $b'=(a',b)+R\in B'$ with $p'(b')=0$.  This means that $p(b)=0$, so
   $b=i(a)$ for some $a\in A$.  We then find that $(f(a),-i(a))\in R$
   \[ b' = (a',b)+R = (a',b)+(f(a),-i(a))+R = (a'+f(a),0) + R =
       i'(a'+f(a)) \in \image(i').
   \] 
   We conclude that the sequence $f_*E$ is indeed short exact, so it
   gives an extension of $C$ by $A'$.
  \item[(b)] Now suppose we have a commutative diagram as indicated.
   Define $g''\:A'\oplus B\to B^*$ by $g''(a',b)=i^*(a')+g(b)$.  We
   then have $g''(f(a),-i(a))=(i^*f-gi)(a)=0$, so $g''(R)=0$, so there
   is a unique homomorphism $g'\:B'\to B^*$ given by
   $g'(x+R)=g''(x)$.  This means that
   $g'i'(a)=g'((a',0)+R)=g''(a',0)=i^*(a')$, so $g'i'=i^*$.  We also
   have 
   \[ p^*g'((a',b)+R)=p^*i^*(a')+p^*g(b)=0+p(b) = p(b), \]
   so $p^*g'=p$.  Thus, $g'$ gives the required equivalence from
   $f_*E$ to $E^*$.
  \item[(c)] By the construction in part~(a), we have a commutative
   diagram 
   \begin{center}
    \begin{tikzcd}
     A \arrow[rightarrowtail,r,"i"] \arrow[d,"f"'] &
     B \arrow[twoheadrightarrow,r,"p"] \arrow[d,"g"'] &
     C \arrow[equal,d] \\
     A' \arrow[rightarrowtail,r,"i'"'] &
     B' \arrow[twoheadrightarrow,r,"p'"'] &
     C        
    \end{tikzcd}
   \end{center}
   (where $g(b)=(0,b)+R$).  It follows easily that there is also a
   commutative diagram 
   \begin{center}
    \begin{tikzcd}
     A \arrow[rightarrowtail,r,"i"] \arrow[d,"f+ni"'] &
     B \arrow[twoheadrightarrow,r,"p"] \arrow[d,"g+i'n"'] &
     C \arrow[equal,d] \\
     A' \arrow[rightarrowtail,r,"i'"'] &
     B' \arrow[twoheadrightarrow,r,"p'"'] &
     C        
    \end{tikzcd}
   \end{center}
   Now part~(b) tells us that the bottom row is equivalent to $(f+ni)_*$
   of the top row, or in other words $f_*E\simeq (f+ni)_*E$ as
   claimed. 
  \item[(d)] Suppose we have equivalent extensions
   $E_k=(A\xra{i_k}B_k\xra{p_k}C)$ for $k=0,1$, so there is an
   isomorphism $s\:B_0\to B_1$ with $si_0=i_1$ and $p_1s=p_0$.  We can
   then define $s'\:B'_0\to B'_1$ by 
   \[ s'((a',b_0)+R_0) = (a',s(b_0)) + R_1. \]
   We find that $s'i'_0=i'_1$ and $p'_1s'=p'_0$; this shows that
   $h^*E_0$ and $h^*E_1$ are equivalent as claimed.
 \end{itemize}
\end{proof}

\begin{proposition}\label{prop-ext-classification}
 Let $A$ and $C$ be abelian groups, and let $\Ext'(C,A)$ denote the
 set of equivalence classes of extensions of $C$ by $A$.  Let $Q$
 denote the extension $(I_C^2\xra{j}I_C\xra{q}C)$.  Then there is a
 well-defined bijection 
 \[ \zt\:\Ext(C,A) = \frac{\Hom(I_C^2,A)}{j^*(\Hom(I_C,A))} \to
           \Ext'(C,A)
 \]
 given by $\zt(\al+\image(j^*))=[\al_*(Q)]$.
\end{proposition}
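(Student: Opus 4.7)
The plan is to establish, in order, well-definedness, surjectivity, and injectivity of $\zt$.

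First I would verify that $\zt$ is well-defined on cosets modulo $\image(j^*)$. If $\alpha'=\alpha+j^*(\bt)=\alpha+\bt\circ j$ for some $\bt\:I_C\to A$, then Proposition~\ref{prop-ext-pushout}(c), applied to the extension $Q=(I_C^2\xra{j}I_C\xra{q}C)$ with $n=\bt$, gives $\alpha'_*(Q)=(\alpha+\bt j)_*Q\simeq\alpha_*Q$. Combined with part~(d) of that proposition (equivalent extensions have equivalent pushouts), this shows the equivalence class $[\alpha_*Q]$ depends only on $\alpha+\image(j^*)$.

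Next I would prove surjectivity. Given any extension $E=(A\xra{i}B\xra{p}C)$, the group $I_C$ is free, so Lemma~\ref{lem-free-projective} applied to the surjection $p\:B\era C$ and the map $q\:I_C\to C$ produces $g\:I_C\to B$ with $pg=q$. Then $pgj=qj=0$, so $gj$ takes values in $\ker(p)=\image(i)$; since $i$ is injective there is a unique $\alpha\:I_C^2\to A$ with $i\alpha=gj$. This gives a commutative diagram with rows $Q$ and $E$, vertical maps $(\alpha,g,1_C)$, and identity on $C$. Proposition~\ref{prop-ext-pushout}(b) then yields $E\simeq\alpha_*Q$, so $[E]=\zt(\alpha+\image(j^*))$.

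The main work is injectivity. Suppose $\alpha,\bt\:I_C^2\to A$ with $\alpha_*Q\simeq\bt_*Q$ via an isomorphism $\phi\:B_\alpha\to B_\bt$ compatible with $i',p'$. Let $g_\alpha\:I_C\to B_\alpha$ be the ``insertion'' map $y\mapsto(0,y)+R_\alpha$, and similarly $g_\bt$. Both $\phi g_\alpha$ and $g_\bt$ are lifts of $q\:I_C\to C$ to $B_\bt$ (since $p'((0,y)+R)=q(y)$ and $\phi$ covers the identity on $C$), so their difference factors through $\ker(p')=\image(i'_\bt)$; because $i'_\bt$ is injective we obtain a unique $\gm\:I_C\to A$ with $i'_\bt\gm=\phi g_\alpha-g_\bt$. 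The key computation is that $g_\alpha\circ j=i'_\alpha\circ\alpha$: indeed $g_\alpha(j(x))=(0,j(x))+R_\alpha$ and $i'_\alpha(\alpha(x))=(\alpha(x),0)+R_\alpha$ differ by $(-\alpha(x),j(x))\in R_\alpha$ (take the relation element corresponding to $-x\in I_C^2$). Similarly $g_\bt j=i'_\bt\bt$, and $\phi i'_\alpha=i'_\bt$ by hypothesis. Composing $i'_\bt\gm=\phi g_\alpha-g_\bt$ on the right with $j$ and using these identities gives $i'_\bt(\gm j)=i'_\bt(\alpha-\bt)$, and injectivity of $i'_\bt$ yields $\gm j=\alpha-\bt$, i.e.\ $\alpha-\bt\in j^*(\Hom(I_C,A))$.

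The subtle point, and the step I would double-check most carefully, is the pushout calculation $g_\alpha\circ j=i'_\alpha\circ\alpha$ above; everything else is formal bookkeeping once the correct lifts $g_\alpha,g_\bt$ are chosen and the freeness of $I_C$ is used to produce lifts both in the surjectivity argument and (implicitly, via pushout structure) in the injectivity argument.
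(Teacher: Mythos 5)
Your well-definedness and surjectivity arguments match the paper's exactly (pushout functoriality via Proposition~\ref{prop-ext-pushout}(c),(d), then lifting $q$ through $p$ via Lemma~\ref{lem-free-projective}). Your injectivity argument is correct but takes a slightly different route. The paper starts from an arbitrary representative $E$ of the common equivalence class and builds two commutative diagrams from $Q$ to $E$ (each requiring a lift $\bt$, $\bt'$ of $q$ through $p$, again via freeness of $I_C$); it then subtracts the middle maps and factors the difference through $i$. You instead work intrinsically with the pushout groups $B_\al$, $B_\bt$ and the given equivalence $\phi$, using the canonical insertions $g_\al,g_\bt$ (the pushout structure maps themselves, so no appeal to freeness is needed at this stage). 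Your ``key computation'' $g_\al\circ j = i'_\al\circ\al$ is indeed correct, and is exactly the commutativity of the pushout square that the paper gets by construction via Proposition~\ref{prop-ext-pushout}(b): $(0,j(x))-(\al(x),0)=(\al(-x),-j(-x))\in R_\al$. What your version buys is self-containedness --- you never need to re-run the surjectivity construction inside the injectivity argument --- and it is a little more honest about where the diagram with vertical map $\al'$ comes from, since you carry the isomorphism $\phi$ explicitly. The paper's version is a little shorter because it works at the level of the abstract characterisation of pushouts rather than the explicit quotient model.
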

\begin{proof}
 Using Proposition~\ref{prop-ext-pushout} (especially part~(c)) we see
 that there is a well-defined map $\zt$ as described.  We must show
 that it is a bijection.  Consider an arbitrary extension
 $E=(A\xra{i}B\xra{p}C)$.  As $I_C$ is free and $p$ is surjective,
 Lemma~\ref{lem-free-projective} gives us a homomorphism
 $\bt\:I_C\to B$ with $p\bt=q$.  This means that $p\bt j=qj=0$, so
 $\image(\bt j)\leq\ker(p)=\image(i)$, so there is a unique map
 $\al\:I^2_C\to A$ with $i\al=\bt j$.  We now have a commutative
 diagram 
 \begin{center}
  \begin{tikzcd}
   I^2_C \arrow[rightarrowtail,r,"j"] \arrow[d,"\al"'] &
   I_C \arrow[twoheadrightarrow,r,"q"] \arrow[d,"\bt"'] &
   C \arrow[equal,d] \\
   A \arrow[rightarrowtail,r,"i"'] &
   B \arrow[twoheadrightarrow,r,"p"'] &
   C, 
  \end{tikzcd}
 \end{center}     
 so Proposition~\ref{prop-ext-pushout}(b) tells us that
 $E\simeq\al_*Q$, or in other words $[E]=\zt(\al+\image(j^*))$.  This
 proves that $\zt$ is surjective.  Suppose we also have
 $[E]=\zt(\al'+\image(j^*))$.  There is then another commutative
 diagram 
 \begin{center}
  \begin{tikzcd}
   I^2_C \arrow[rightarrowtail,r,"j"] \arrow[d,"\al'"'] &
   I_C \arrow[twoheadrightarrow,r,"q"] \arrow[d,"\bt'"'] &
   C \arrow[equal,d] \\
   A \arrow[rightarrowtail,r,"i"'] &
   B \arrow[twoheadrightarrow,r,"p"'] &
   C.
  \end{tikzcd}
 \end{center}     
 In particular we have $p\bt'=q=p\bt$ so $p(\bt'-\bt)=0$, so
 $\bt'-\bt$ factors through $\ker(p)=\image(i)$, so there is a unique
 homomorphism $\gm\:I_C\to A$ with $\bt'=\bt+i\gm$.  Now $\bt'j=i\al'$
 and $\bt j=i\al$ so the equation $\bt'=\bt+i\gm$ yields
 $i\al'=i\al+i\gm j$, or $i(\al'-\al-\gm j)=0$.  As $i$ is injective
 we conclude that $\al'=\al+j^*(\gm)$, so $\al'$ and $\al$ have the
 same image in $\cok(j^*)=\Ext(C,A)$.  This proves that $\zt$ is also
 injective. 
\end{proof}

The above proposition gives a bijection from the group $\Ext(C,A)$ to
the set $\Ext'(C,A)$.  There is thus a unique group structure on
$\Ext'(C,A)$ for which this bijection is a homomorphism.  We would
like to understand this more intrinsically.

\begin{definition}\label{defn-baer-sum}
 Suppose we have two extensions $E_k=(A\xra{i_k}B_k\xra{p_k}C)$ for
 $k=0,1$.  The \emph{Baer sum} of $E_0$ and $E_1$ is the sequence 
 $E_2=(A\xra{i_2}B_2\xra{p_2}C)$ where 
 \begin{align*}
  U &= \{(b_0,b_1)\in B_0\oplus B_1\st p_0(b_0)=p_1(b_1)\} \\
  V &= \{(i_0(a),-i_1(a))\st a\in A\} \\
  B_2 &= U/V \\
  i_2(a) &= (i_0(a),0) + V = (0,i_1(a)) + V \\
  p_2((b_0,b_1)+V) &= p_0(b_0) = p_1(b_1).
 \end{align*}
\end{definition}

\begin{proposition}\label{prop-baer-sum}
 In the above context, the sequence $E_2$ is an extension of $C$ by
 $A$, with $[E_2]=[E_0]+[E_1]$ in $\Ext'(C,A)$.  Moreover, the zero
 element in $\Ext'(C,A)$ is the equivalence class consisting of all
 split extensions.
\end{proposition}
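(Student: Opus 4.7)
The plan is to leverage the bijection $\zt\:\Ext(C,A)\to\Ext'(C,A)$ of Proposition~\ref{prop-ext-classification}, which transports the group structure on $\Ext(C,A)=\Hom(I_C^2,A)/j^*(\Hom(I_C,A))$ to $\Ext'(C,A)$. Because addition in $\Ext(C,A)$ is just pointwise addition of homomorphisms, the content of the main claim reduces to showing that if $E_0$ and $E_1$ are represented by $\al_0,\al_1\:I_C^2\to A$ respectively (in the sense of the proof of Proposition~\ref{prop-ext-classification}), then the Baer sum $E_2$ is represented by $\al_0+\al_1$, i.e.\ $E_2\simeq(\al_0+\al_1)_*Q$ where $Q=(I_C^2\xra{j}I_C\xra{q}C)$.

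First I would verify that $E_2$ really is short exact. The group $U$ is a pullback inside $B_0\oplus B_1$ and $V\leq U$ because $p_ki_k=0$; the map $i_2$ is well-defined since its two formulas differ by $(i_0(a),-i_1(a))\in V$, and $p_2$ is well-defined because $V$ lies in its putative kernel. Injectivity of $i_2$ (using injectivity of $i_1$), surjectivity of $p_2$ (using surjectivity of each $p_k$), and exactness at $B_2$ (given $(b_0,b_1)+V\in\ker(p_2)$, write $b_k=i_k(a_k)$ and use $(i_0(a_1),-i_1(a_1))\in V$ to rewrite the coset as $i_2(a_0+a_1)$) are all routine.

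For the main identification, following the recipe in Proposition~\ref{prop-ext-classification}, I would choose lifts $\bt_k\:I_C\to B_k$ with $p_k\bt_k=q$, producing $\al_k\:I_C^2\to A$ with $i_k\al_k=\bt_kj$. I then define $\bt\:I_C\to B_2$ by $\bt(x)=(\bt_0(x),\bt_1(x))+V$, which is well-defined because $p_0\bt_0=q=p_1\bt_1$. One checks immediately that $p_2\bt=q$, and computes
\[
\bt j(y) \;=\; (i_0\al_0(y),i_1\al_1(y))+V \;=\; i_2(\al_0(y)+\al_1(y)),
\]
the second equality using that $(i_0\al_1(y),-i_1\al_1(y))\in V$. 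Proposition~\ref{prop-ext-pushout}(b) applied to the resulting commutative diagram with vertical maps $\al_0+\al_1$, $\bt$, $1_C$ then yields $E_2\simeq(\al_0+\al_1)_*Q$, as required.

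For the zero element, I would compute $0_*Q$ directly: the relevant subgroup $R$ equals $\{0\}\oplus j(I_C^2)=\{0\}\oplus\ker(q)$, so $(A\oplus I_C)/R\simeq A\oplus C$, and $0_*Q$ becomes the standard split extension of the example preceding Proposition~\ref{prop-split-sum}. Hence $[E]=0$ in $\Ext'(C,A)$ iff $E$ is equivalent to the standard split extension, which by Propositions~\ref{prop-half-split} and~\ref{prop-split-sum} is precisely the condition that $E$ admit a splitting. The main obstacle is the middle step, where making the pushout quotient $V$ absorb exactly the discrepancy between $(i_0\al_0(y),i_1\al_1(y))$ and $(i_0(\al_0(y)+\al_1(y)),0)$ is the crucial bookkeeping that forces $\bt j$ to agree with $i_2\circ(\al_0+\al_1)$ and thereby converts the Baer sum into the sum of representing homomorphisms.
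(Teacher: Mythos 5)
Your proof is correct and follows essentially the same route as the paper: verify exactness of $E_2$ directly, choose lifts $\bt_k\:I_C\to B_k$ giving representing maps $\al_k\:I^2_C\to A$, observe that $\bt(x)=(\bt_0(x),\bt_1(x))+V$ provides a lift for $E_2$ with $\bt j=i_2\circ(\al_0+\al_1)$, and compute $0_*Q$ to identify the zero class. The only cosmetic difference is that you explicitly cite Proposition~\ref{prop-ext-pushout}(b) and spell out why the $V$-coset absorbs the cross term, and you add the (correct, mildly implicit in the paper) remark that Propositions~\ref{prop-half-split} and~\ref{prop-split-sum} are what make the class of $0_*Q$ coincide with the class of \emph{all} split extensions.
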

\begin{proof}
 First, if $i_2(a)=0$ we must have $(i_0(a),0)\in V$, so
 $(i_0(a),0)=(i_0(a'),-i_1(a'))$ for some $a'\in A$.  As $i_1$ is
 injective and $i_1(a')=0$ we have $a'=0$, so the equation
 $i_0(a)=i_0(a')$ gives $i_0(a)=0$ and then $a=0$.  This shows that
 $i_2$ is injective.  Next, suppose we have $c\in C$.  As both $p_0$
 and $p_1$ are surjective we can choose $b_0\in B_0$ and $b_1\in B_1$
 with $p_0(b_0)=p_1(b_1)=c$.  The element $b_2=(b_0,b_1)+V\in B_2$
 then satisfies $p_2(b_2)=c$, so $p_2$ is surjective.  Next, as
 $p_0i_0=0=p_1i_1$ we see from the definitions that $p_2i_2=0$, so
 $\image(i_2)\leq\ker(p_2)$.  Now suppose we have an element
 $b_2=(b_0,b_1)+V\in B_2$ with $p_2(b_2)=0$.  This means that
 $p_0(b_0)=0=p_1(b_1)$, so there is a unique element $a_0\in A$ with
 $b_0=i_0(a_0)$, and also a unique element $a_1\in A$ with
 $b_1=i_1(a_1)$.  Put $a_2=a_0+a_1$ and note that
 \[ i_2(a_2) = i_2(a_0) + i_2(a_1) =
     (i_0(a_0),0) + (0,i_1(a_1)) + V = (b_0,b_1)+V = b_2.
 \]
 This proves that $\ker(p_2)=\image(i_2)$, so we have an extension as
 claimed.  Now suppose that $[E_k]=\zt(\al_k+\image(j^*))$ for
 $k=0,1$, so there are commutative diagrams 
 \begin{center}
  \begin{tikzcd}
        I^2_C \arrow[rightarrowtail,r,"j"] \arrow[d,"\al_k"'] &
        I_C \arrow[twoheadrightarrow,r,"q"] \arrow[d,"\bt_k"'] &
        C \arrow[equal,d] \\
        A \arrow[rightarrowtail,r,"i_k"'] &
        B_k \arrow[twoheadrightarrow,r,"p_k"'] &
        C
       \end{tikzcd}
      \end{center}
      
 for $k=0,1$.  We define $\al_2\:I^2_C\to A$ by
 $\al_2(y)=\al_0(y)+\al_1(y)$, and we define $\bt_2\:I_C\to B_2$ by
 $\bt_2(x)=(\bt_0(x),\bt_1(x))+V$.  It is straightforward to check
 that this gives a commutative diagrams as above, showing that 
 \[ [E_2] = \zt(\al_2+\image(j^*)) = 
     \zt(\al_0+\image(j^*)) + \zt(\al_1+\image(j^*)) = 
      [E_0]+[E_1].
 \]
 Thus, the sum in $\Ext'(C,A)$ is the Baer sum, as claimed.  The zero
 element is $\zt(0)$, which is the pushout of the extension
 $Q=(I_C^2\xra{j}I_C\xra{q}C)$ along the map $0\:I^2_C\to A$.  If we
 use the notation of Proposition~\ref{prop-ext-pushout} in this
 context we have 
 \begin{align*}
  R &= \{(0,-j(y))\st y\in I_C^2\} = 0\oplus I_C^2 \leq A\oplus I_C \\
  B' &= \frac{A\oplus I_C}{R} = \frac{A\oplus I_C}{0\oplus I_C^2} = 
        A \oplus (I_C/I^2_C) \simeq A\oplus C \\
  i'(a) &=  (a,0) \\
  p'(a,c) &= c.
 \end{align*}
 Thus, $0_*Q$ is just the obvious split extension of $C$ by $A$.
\end{proof}

We now present a result that will help us relate homology groups to
cohomology groups.  There are very standard theorems that deduce
information about cohomology from information about homology.  To go
in the opposite direction we need the following proposition, which is
less well-known.

\begin{proposition}\label{prop-hom-ext-fg}
 Suppose that $\Hom(A,\Z)$ and $\Ext(A,\Z)$ are finitely generated.
 Then $A$ is finitely generated.
\end{proposition}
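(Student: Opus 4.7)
The plan is to split $A$ along its torsion subgroup and handle the two pieces separately. Set $T = \tors(A)$ and $F = A/T$; the six-term sequence of Proposition~\ref{prop-ext-six} applied to $0 \to T \to A \to F \to 0$, together with $\Hom(T,\Z) = 0$, yields $\Hom(F,\Z) \cong \Hom(A,\Z)$ and a short exact sequence $0 \to \Ext(F,\Z) \to \Ext(A,\Z) \to \Ext(T,\Z) \to 0$. Hence $\Hom(F,\Z)$, $\Ext(F,\Z)$, and $\Ext(T,\Z)$ are all finitely generated. Since $A$ is finitely generated iff $T$ is finite and $F$ is finitely generated (hence free, as it is torsion-free), it suffices to prove these two claims.

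For the torsion part, Proposition~\ref{prop-ext-AZ} identifies $\Ext(T,\Z)$ with $\Hom(T,\QZ) = \prod_p \Hom(\tors_p(T), \Z/p^\infty)$, and finite generation of this product forces all but finitely many factors to vanish and each survivor to be finitely generated. Fix a prime $p$, set $B = \tors_p(T)$, and write $M = \Hom(B,\Z/p^\infty)$. Any $b \in B$ of order $p^k$ extends by divisibility of $\Z/p^\infty$ (Proposition~\ref{prop-divisible-injective}) to an element of $M$ of order at least $p^k$, so finite generation of $M$ bounds orders in $B$, giving $B$ exponent $p^N$ for some $N$. Induct on $N$: the $p$-torsion $M[p] = \Hom(B/pB,\Z/p)$ is finitely generated, forcing $B/pB$ finite; the subgroup $pB$ has exponent $p^{N-1}$ and $(pB)/p(pB)$ is a quotient of $B/pB$ via $b+pB \mapsto pb+p^2B$, hence finite; by induction $pB$ is finite, so $|B| = |pB|\cdot|B/pB|$ is finite.

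For the torsion-free part, fix a basis $f_1,\dotsc,f_r$ of $\Hom(F,\Z) \cong \Z^r$ and form $\mu\colon F \to \Z^r$ by $\mu(x) = (f_1(x),\dotsc,f_r(x))$; then $F/\ker\mu$ embeds in $\Z^r$ as a finitely generated free group, so it suffices to show $K := \ker\mu$ is zero. The six-term sequence for $0 \to K \to F \to F/K \to 0$, combined with the observation that every $g \in \Hom(F,\Z)$ vanishes on $K$ (giving $\Hom(F/K,\Z) \cong \Hom(F,\Z)$) and $\Ext(F/K,\Z) = 0$, yields $\Hom(K,\Z) = 0$ and shows $\Ext(K,\Z)$ is a quotient of $\Ext(F,\Z)$, hence finitely generated. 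A second six-term sequence, for $0 \to F \xra{n} F \to F/nF \to 0$, gives $0 \to \Hom(F,\Z)/n \to \Ext(F/nF,\Z) \to \Ext(F,\Z)[n] \to 0$. Let $e$ be the exponent of the (finite) torsion subgroup of $\Ext(F,\Z)$; for $n$ coprime to $e$ the rightmost term vanishes, so $|\Ext(F/nF,\Z)| = n^r$. Since $|F/nF| = |\Ext(F/nF,\Z)|$ by self-duality of finite abelian groups via $\QZ$, and $|(F/K)/n(F/K)| = n^r$, the short exact sequence $0 \to K/(K\cap nF) \to F/nF \to (F/K)/n(F/K) \to 0$ forces $K \subseteq nF$, and torsion-freeness then gives $K = nK$ for every $n$ coprime to $e$.

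Suppose for contradiction $K \neq 0$. Pick a nonzero $\Q$-linear functional on $K \otimes \Q$ and compose with the canonical injection $K \hookrightarrow K \otimes \Q$ to obtain a surjection $K \twoheadrightarrow K'$ onto a nonzero rank-one torsion-free $K' \subseteq \Q$; set $L = \ker(K \to K')$. Both $K'$ and $L$ inherit $n$-divisibility for $n$ coprime to $e$ (preimages of $0$ under $n$-multiplication are unique in torsion-free groups), so identifying any nonzero element of $K'$ with $1 \in \Q$ shows $K' \supseteq R := \Z[1/n : (n,e) = 1]$. For any $\phi \in \Hom(L,\Z)$ and any $x \in L$, writing $x = ny$ for $y \in L$ (when $(n,e)=1$) forces $\phi(x) \in n\Z$, and $\bigcap_{(n,e)=1} n\Z = 0$ (take a prime not dividing $e\phi(x)$), so $\Hom(L,\Z) = 0$; the same argument yields $\Hom(K',\Z) = 0$. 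The six-term sequence for $0 \to L \to K \to K' \to 0$ then reduces to $0 \to \Ext(K',\Z) \to \Ext(K,\Z) \to \Ext(L,\Z) \to 0$. Corollary~\ref{cor-ext-epi} makes $\Ext(R,\Z)$ a quotient of $\Ext(K',\Z)$, and applying Proposition~\ref{prop-ext-six-b} to $\Z \hookrightarrow R \twoheadrightarrow R/\Z = \bigoplus_{q \nmid e}\Z/q^\infty$ (using $\Hom(R,\Z) = \Hom(R/\Z,\Z) = 0$ and $\Ext(\Z,\Z) = 0$) identifies $\Ext(R,\Z)$ with $(\prod_{q\nmid e}\Zp[q])/\Z$, which is uncountable. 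So $\Ext(K,\Z)$ contains an uncountable subgroup, contradicting finite generation; hence $K = 0$ and $F$ is finitely generated. The main obstacle throughout is pinning down the structure of $K$: the near-divisibility obtained from the $|F/nF|$ count is essential, and once it is in place the chain of $\Hom$-vanishings propagates the uncountable $\Ext(K',\Z)$ injectively into $\Ext(K,\Z)$ for the final contradiction.
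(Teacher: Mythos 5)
Your route is genuinely different from the paper's, which first strips off a free summand using Lemma~\ref{lem-free-summand} and then, in the case $\Hom(A,\Z)=0$, shows that $A$ is annihilated by some $n$ and finishes by duality; you instead split off the whole torsion subgroup and treat $T$ and $F=A/T$ separately. Your torsion-free half -- pinning down $K=\ker\mu$ via the order count $|F/nF|=n^r$, the resulting near-divisibility $K=nK$, and the uncountability of $\Ext(R,\Z)$ -- is essentially correct, though it quietly assumes both that $F/nF$ is finite and that $|G|=|\Hom(G,\QZ)|$ for finite $G$; these are easy to extract from Corollary~\ref{cor-cyclic-ext}, Corollary~\ref{cor-ext-distrib} and Remark~\ref{rem-double-dual}, but are not stated in the paper and should be justified.

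The torsion half, however, has a genuine gap at the bounded-exponent step. You extend $b\in B=\tors_p(T)$ of order $p^k$ to $\phi\in M=\Hom(B,\Z/p^\infty)$ with $p^{k-1}\phi\neq 0$, and infer that finite generation of $M$ bounds orders in $B$. This does not follow: a finitely generated group can have elements of infinite additive order, and the extension $\phi$ you construct may well have infinite order, in which case the statement "$\phi$ has order at least $p^k$" is vacuous. Concretely, if $B=\Z/p^\infty$ then \emph{every} extension of $b\mapsto p^{-k}+\Z$ is surjective, hence of infinite order, and in that case $M=\End(\Z/p^\infty)=\Zp$ is torsion-free; so a bound on the exponent of $\tors(M)$ says nothing about $B$ until you know $M$ itself is torsion. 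What is missing is exactly that observation. One way to supply it: $\Z/p^\infty$ is a $p$-torsion group, hence $p$-local by Proposition~\ref{prop-local-category}(d), and therefore $M=\Hom(B,\Z/p^\infty)$ is $p$-local as well (multiplication by $s$ coprime to $p$ on $M$ is post-composition with an automorphism of $\Z/p^\infty$); a finitely generated $p$-local group admits no $\Z$ summand and no $q$-torsion for $q\neq p$, so $M$ is a finite $p$-group. With that in place your extension argument really does bound the exponent of $B$, and the induction on the exponent that follows is fine. Without it, the claim "$B$ has exponent $p^N$" is unsupported.
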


The proof will follow after some lemmas.

\begin{lemma}\label{lem-free-summand}
 Suppose that $\Hom(A,\Z)$ is finitely generated.  Then $A=B\oplus F$
 for some subgroups $B$ and $F$ such that $F$ is free and finitely
 generated, and $\Hom(B,\Z)=0$.
\end{lemma}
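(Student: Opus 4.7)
The plan is to let the finitely generated dual $\Hom(A,\Z)$ dictate the free summand. First, observe that $\Hom(A,\Z)$ is torsion-free: if $n\phi=0$ for some $n>0$, then $n\phi(a)=0$ in $\Z$ for all $a$, so $\phi(a)=0$. Combined with the hypothesis of finite generation and Proposition~\ref{prop-flat-free}, this gives $\Hom(A,\Z)\simeq\Z^r$ for some $r\geq 0$. Choose a $\Z$-basis $\phi_1,\dotsc,\phi_r$ of $\Hom(A,\Z)$, and assemble it into a single homomorphism
\[ \Phi\:A\to\Z^r, \qquad \Phi(a)=(\phi_1(a),\dotsc,\phi_r(a)). \]

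Next I would analyse $\Phi$. Its image $F:=\Phi(A)$ is a subgroup of $\Z^r$, hence free of finite rank by Proposition~\ref{prop-hereditary-finite}. Because $F$ is free, Lemma~\ref{lem-free-projective} produces a section $\sg\:F\to A$ for the surjection $A\to F$ induced by $\Phi$, so $\Phi\circ\sg=1_F$. Proposition~\ref{prop-half-split} then splits the short exact sequence
\[ 0\to\ker(\Phi)\to A\to F\to 0, \]
yielding a decomposition $A=B\oplus F'$, where $B:=\ker(\Phi)$ and $F':=\sg(F)$ is free of finite rank (isomorphic to $F$). This settles the structural half of the claim.

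Finally, I would check that $\Hom(B,\Z)=0$, which is where the basis choice pays off. Given any $\psi\:B\to\Z$, extend it through the retraction $\pi\:A=B\oplus F'\to B$ to obtain $\tilde\psi=\psi\circ\pi\in\Hom(A,\Z)$ with $\tilde\psi|_B=\psi$. Since the $\phi_i$ form a basis, we can write $\tilde\psi=\sum_i n_i\phi_i$ uniquely in $\Z$. But by construction $B=\ker(\Phi)=\bigcap_i\ker(\phi_i)$, so each $\phi_i$ vanishes on $B$; restricting the relation to $B$ gives $\psi=0$, as required.

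The only conceptual step is recognising that $\Phi$ should be built from a basis of the dual rather than from an arbitrary generating set: a generating set would still produce a free summand $F'$, but the argument showing $\Hom(B,\Z)=0$ uses essentially that \emph{every} element of $\Hom(A,\Z)$ is a $\Z$-combination of the $\phi_i$, which is exactly the basis property. The rest reduces to the hereditary and projectivity properties of free abelian groups, so I expect no genuine obstacle.
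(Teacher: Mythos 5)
Your proof is correct and follows essentially the same approach as the paper: build a map $A\to\Z^r$ from elements of $\Hom(A,\Z)$, use freeness of subgroups of $\Z^r$ together with projectivity to split off a free summand, and set $B=\ker$. One small correction to your closing commentary: the claim that a basis (rather than an arbitrary finite generating set) is essential for showing $\Hom(B,\Z)=0$ is not right. The final step only uses that $\tilde\psi$ can be written as some $\Z$-combination $\sum_i n_i\phi_i$ — that is, spanning — and uniqueness of the coefficients plays no role. The paper works with an arbitrary generating set and the argument goes through unchanged; your preliminary reduction (that $\Hom(A,\Z)$ is torsion-free, hence free) is therefore a harmless but unnecessary detour.
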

\begin{proof}
 Choose maps $f_1,\dotsc,f_r\:A\to\Z$ that generate $\Hom(A,\Z)$, and
 define $f\:A\to\Z^r$ by $f(a)=(f_1(a),\dotsc,f_r(a))$.  Now $f(A)$ is
 a subgroup of $\Z^r$, so it is free, with basis
 $f(a_1),\dotsc,f(a_s)$ say.  Put $B=\ker(f)$, and let $F\leq A$ be
 the subgroup generated by $a_1,\dotsc,a_s$.  We find that
 $f\:F\to f(A)\simeq\Z^s$ is an isomorphism, and thus that $A=B\oplus F$.
 Consider a homomorphism $g\:B\to\Z$.  Then the composite 
 \[ A = B\oplus F \xra{\text{proj}} B \xra{g} \Z \]
 is an element of the group $\Hom(A,\Z)$, which is generated by the
 maps $f_i$, but $f_i(B)=0$, so we see that $g=0$.  This proves
 that $\Hom(B,\Z)=0$.
\end{proof}

\begin{lemma}\label{lem-hom-ext-zero}
 Suppose that $\Hom(A,\Z)=\Ext(A,\Z)=0$.  Then $A=0$. 
\end{lemma}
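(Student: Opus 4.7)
The plan is to deduce step by step that $A$ must be a $\Q$-vector space, and then derive a contradiction from the nonvanishing of $\Ext(\Q,\Z)$ computed in Example~\ref{eg-ext-QZ}.

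First I would show $A$ is torsion-free. Applying Proposition~\ref{prop-ext-six-b} to the short exact sequence $\tors(A)\mra A\era A/\tors(A)$ with $V=\Z$, and using $\Hom(\tors(A),\Z)=0$ (since $\Z$ is torsion-free), the hypothesis $\Ext(A,\Z)=0$ forces $\Ext(\tors(A),\Z)=0$. The last clause of Proposition~\ref{prop-ext-AZ} then yields $\tors(A)=0$.

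Next I would show $A$ is divisible. Feeding $\Z\xra{n}\Z\to\Z/n$ into the long exact sequence of Proposition~\ref{prop-ext-six} applied with $\Hom(A,-)$ and $\Ext(A,-)$, the hypotheses $\Hom(A,\Z)=\Ext(A,\Z)=0$ collapse the sequence to give $\Hom(A,\Z/n)=0$ for every $n>0$. Since $\Hom(A,\Z/n)\cong\Hom(A/nA,\Z/n)$ and any nonzero homomorphism from an $n$-torsion group to $\QZ$ factors through $\QZ[n]\cong\Z/n$, Proposition~\ref{prop-QZ-cogen} applied to $A/nA$ forces $A/nA=0$. Thus $A$ is divisible.

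Combining these two facts, $A$ is torsion-free and divisible, hence carries a canonical $\Q$-module structure. Suppose for contradiction that $A\neq 0$, and pick a nonzero element $a\in A$. Then $\Q a\leq A$ is a subgroup isomorphic to $\Q$. Applying Proposition~\ref{prop-ext-six-b} to $\Q a\mra A\era A/\Q a$ with $V=\Z$ gives an exact sequence
\[ \Hom(\Q a,\Z)\to\Ext(A/\Q a,\Z)\to\Ext(A,\Z)\to\Ext(\Q a,\Z)\to 0. \]
The leftmost term vanishes since $\Q a\cong\Q$ is divisible and $\Z$ is reduced, while $\Ext(A,\Z)=0$ by hypothesis. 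Exactness forces $\Ext(\Q a,\Z)=0$, contradicting Example~\ref{eg-ext-QZ}, where $\Ext(\Q,\Z)$ is shown to be uncountable. The main obstacle is the transition from the two abstract vanishing conditions to the concrete structural statement that $A$ is a $\Q$-vector space; once that is in hand, the example supplies the contradiction immediately.
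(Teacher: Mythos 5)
Your proof is correct and follows the same three-stage skeleton as the paper's (torsion-free $\Rightarrow$ divisible $\Rightarrow$ contradiction via $\Ext(\Q,\Z)\ne 0$), but the middle and final steps are executed by genuinely different means. For divisibility, the paper applies the six-term sequence in the \emph{second} variable to $A\xra{n}A\to A/n$, obtains $\Ext(A/n,\Z)=0$, and invokes Proposition~\ref{prop-ext-AZ} a second time to force $A/n=0$; you instead apply the six-term sequence in the \emph{first} variable to $\Z\xra{n}\Z\to\Z/n$, obtain $\Hom(A,\Z/n)=0$, and then use the cogenerator property of $\QZ$ (Proposition~\ref{prop-QZ-cogen}) to kill $A/nA$. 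Your route is arguably lighter, since it only needs the relatively elementary fact that $\QZ$ separates points, rather than re-invoking the structural analysis of $\Ext(A,\Z)$ for torsion $A$. For the final contradiction, the paper picks $\Q$ as a \emph{direct summand} of the $\Q$-vector space $A$ and uses additivity of $\Ext$; you pick the \emph{subgroup} $\Q a\leq A$ and use that restriction $\Ext(A,\Z)\to\Ext(\Q a,\Z)$ is surjective (Corollary~\ref{cor-ext-epi}, implicit in the exact sequence you write). This avoids the linear-algebra splitting, at the cost of invoking the surjectivity of $\Ext$ restriction. Two small remarks: the last exact sequence you write already gives $\Ext(\Q a,\Z)=0$ from the vanishing of $\Ext(A,\Z)$ alone, so the observation that $\Hom(\Q a,\Z)=0$ is superfluous; and the term \emph{reduced} is not defined in the paper (though $\Hom(\Q,\Z)=0$ is established in Example~\ref{eg-ext-QZ}, so the fact you need is available).
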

\begin{proof}
 Corollary~\ref{cor-ext-epi} implies that $\Ext(\tors(A),\Z)=0$, so
 Proposition~\ref{prop-ext-AZ} gives $\tors(A)=0$, so $A$ is torsion
 free.  We thus have short exact sequences $A\xra{n}A\to A/n$ for all
 $n>0$, and using the resulting six term sequences we deduce that
 $\Ext(A/n,\Z)=0$.  As $A/n$ is torsion we can use
 Proposition~\ref{prop-ext-AZ} again to see that $A/n=0$, so $n.1_A$ is
 surjective. It is also injective because $\tors(A)=0$, so it is an
 isomorphism.  We can thus make $A$ into a vector space over $\Q$ by
 the rule $(m/n).a=(n.1_A)^{-1}(ma)$.  Linear algebra therefore tells
 us that either $A$ is zero, or it has $\Q$ as a summand.  In the
 latter case $\Ext(A,\Z)$ would contain the uncountable group
 $\Ext(\Q,\Z)$ as a summand, which is impossible as $\Ext(A,\Z)=0$.
 We therefore have $A=0$ as claimed.
\end{proof}

\begin{proof}[Proof of Proposition~\ref{prop-hom-ext-fg}]
 Using Lemma~\ref{lem-free-summand} we reduce to the case where
 $\Hom(A,\Z)=0$.  We next claim that there are only finitely many
 primes $p$ for which $A[p]\neq 0$.  Indeed, for any such $p$ we see
 (by linear algebra over the field $\Z/p$) that $\Hom(A[p],\Z/p)\neq
 0$.  If there are infinitely many such primes, we deduce that the
 group $P=\prod_p\Hom(A[p],\Z/p)$ is uncountable, which is impossible
 as Proposition~\ref{prop-ext-AZ} tells us that $P$ is a quotient of
 the finitely generated group $\Ext(A,\Z)$.  We can thus choose $p$
 such that $A[p]=0$, so we have a short exact sequence
 $A\xra{p}A\xra{}A/p$.  Proposition~\ref{prop-ext-six-b} then tells us
 that multiplication by $p$ is surjective on $\Ext(A,\Z)$.  By the
 structure theory of finitely generated groups, we see that
 $\Ext(A,\Z)$ must be finite, of order $n$ say, and that $n$ must be
 coprime to $p$.

 Next we have short exact sequences $A[n]\xra{i}A\xra{f}nA$ and
 $nA\xra{j}A\xra{g}A/n$, where $f(a)=na$ and $g$ is the quotient map.
 By assumption we have $\Hom(A,\Z)=0$, and also
 $\Hom(A[n],\Z)=\Hom(A/n,\Z)=0$ because $\Z$ is torsion free.  From
 the six term sequences we find that $\Hom(nA,\Z)=0$.  We also find
 that $j^*\:\Ext(A,\Z)\to\Ext(nA,\Z)$ is surjective and
 $f^*\:\Ext(nA,\Z)\to\Ext(A,\Z)$ is injective, but the composite
 $f^*j^*=(jf)^*$ is just multiplication by $n$.  As $n$ was defined to
 be the order of $\Ext(A,\Z)$ we deduce that $f^*j^*=0$, which implies
 that $\Ext(nA,\Z)=0$.  Lemma~\ref{lem-hom-ext-zero} therefore tells
 us that $nA=0$, so $A$ is torsion and $\Ext(A,\Z)=\Hom(A,\QZ)=A^*$.
 This is finitely generated by assumption.  Moreover, we have $nA=0$
 and therefore $nA^*=0$, so $A^*$ is a finite group.  It follows that
 $A^{**}$ is finite but Remark~\ref{rem-double-dual} gives an
 embedding $A\to A^{**}$ so $A$ is finite, as required.
\end{proof}

\section{Localisation}

\begin{definition}
 A \emph{multiplicative set} is a set $S$ of positive integers that
 contains $1$ and is closed under multiplication.
\end{definition}

\begin{definition}\label{defn-localisation}
 Let $A$ be an abelian group, and let $S$ be a multiplicative set.  We
 introduce an equivalence relation on the set $A\tm S$ by declaring
 that $(a,s)\sim (b,t)$ iff $bsx=atx$ for some $x\in S$.  We write
 $a/s$ for the equivalence class of the pair $(a,s)$, and we write
 $A[S^{-1}]$ for the set of equivalence classes.  We make this into an
 abelian group by the rule
 \[ a/s + b/t = (at+bs)/st. \]
\end{definition}

\begin{remark}\label{rem-localisation}
 Various checks are required to ensure that this definition is
 meaningful.  First, we must show that the given relation really is an
 equivalence relation.  It is clearly reflexive (as we can take $x=1$)
 and symmetric.  Suppose that $(a,s)\sim (b,t)\sim (c,u)$, so there
 are elements $x,y\in S$ with $atx=bsx$ and $buy=cty$.  Put
 $z=txy\in S$ and note that
 \[ auz = (au)(txy) = (atx)(uy) = (bsx)(uy) = (buy)(sx)
        = (cty)(sx) = (cs)(txy) = csz, \]
 so $(a,s)\sim(c,u)$, as required.

 Next, we should check that addition is well-defined.  More
 specifically, suppose that $(a_0,s_0)\sim (a_1,s_1)$ and
 $(b_0,t_0)\sim (b_1,t_1)$.  Put $(c_k,u_k)=(a_kt_k+b_ks_k,s_kt_k)$;
 we must show that $(c_0,u_0)\sim (c_1,u_1)$.  By hypothesis there are
 elements $x,y\in S$ such that $a_0s_1x=a_1s_0x$ and
 $b_0t_1y=b_1t_0y$.  We multiply these two equations by $t_0t_1y$ and
 $s_0s_1x$ respectively, and then add them together to get
 \[ a_0s_1xt_0t_1y + b_0t_1ys_0s_1x = 
    a_1s_0xt_0t_1y + b_1t_0ys_0s_1x, 
 \]
 or equivalently
 \[ (a_0t_0+b_0s_0)(s_1t_1)xy = (a_1t_1+b_1s_1)(s_0t_0)xy. \]
 If we put $z=xy\in S$ this can be rewritten as 
 $c_0u_1z=c_1u_0z$, so $(c_0,u_0)\sim(c_1,u_1)$ as required.

 Finally, we should show that addition is commutative and associative,
 that the element $0/1$ is an additive identity, and that $(-a)/s$ is
 an additive inverse for $a/s$.  All this is left to the reader.
\end{remark}

\begin{remark}\label{rem-local-zero}
 From the definitions we see that $a/s=0$ in $A[S^{-1}]$ if and only
 if there exists $t\in S$ with $at=0$.
\end{remark}

\begin{remark}\label{rem-ZS-Q}
 In the case $A=\Z$ it is not hard to see that $\Z[S^{-1}]$ can be
 identified with the set $\{n/s\in\Q\st n\in\Z,\;s\in S\}$, which is a
 subring of $\Q$.
\end{remark}

\begin{remark}\label{rem-local-split}
 Consider the case where $A$ is finite, so $A$ is the direct sum of
 its Sylow subgroups, say $A=A_1\oplus\dotsb\oplus A_r$ with
 $|A_i|=p_i^{v_i}$ for some primes $p_1,\dotsc,p_r$ and integers
 $v_i>0$.  We then find that $A[S^{-1}]=\bigoplus_kA_k[S^{-1}]$.
 Suppose that there exists $n\in S$ that is divisible by $p_k$.  In
 $A_k[S^{-1}]$ we then have
 $a/s=(an^{v_k})/(sn^{v_k})=0/(sn^{v_k})=0$, so $A_k[S^{-1}]=0$.  On the
 other hand, if there is no such $n$ then for each $s\in S$ we see
 that $s.1_{A_k}$ is invertible for all $s\in S$, and using this we
 will see later that $A_k[S^{-1}]=A_k$.  Thus, $A[S^{-1}]$ is just the
 direct sum of some subset of the Sylow subgroups. 
\end{remark}

\begin{definition}\label{defn-special-localisations}
 We use different notation for the most popular cases, as follows:
 \begin{itemize}
  \item[(a)] If $S=\{n^k\st k\geq 0\}$ we write $A[n^{-1}]$ or
   $A[1/n]$ for $A[S^{-1}]$.
  \item[(b)] If $p$ is prime and
   $S=\{n>0\st n\neq 0\pmod{p}\}=\N\sm p\N$ then we write $A_{(p)}$
   for $A[S^{-1}]$.  This is called the \emph{$p$-localisation} of
   $A$.
  \item[(c)] If $S=\{n\in\N\st n>0\}$ then we write $A\Q$ or $A_{(0)}$
   for $A[S^{-1}]$.  This is called the \emph{rationalisation} of $A$.
 \end{itemize}
\end{definition}

\begin{proposition}\label{prop-localisation-functor}
 Let $S$ be a multiplicative set.  Then any homomorphism $f\:A\to B$
 gives a homomorphism $f[S^{-1}]\:A[S^{-1}]\to B[S^{-1}]$ by the rule
 $f[S^{-1}](a/s)=f(a)/s$.  This construction gives an additive
 functor, and there is a natural map $\eta\:A\to A[S^{-1}]$ given by
 $\eta(a)=a/1$.  
\end{proposition}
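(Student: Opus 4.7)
The plan is to verify the required properties in the order (i) well-definedness of $f[S^{-1}]$ on equivalence classes, (ii) the homomorphism property, (iii) functoriality, (iv) additivity, and finally (v) that $\eta$ is a natural homomorphism. None of these steps require ideas beyond directly unwinding Definition~\ref{defn-localisation}; the only place where any genuine checking is needed is step~(i), so I view that as the main (very mild) obstacle.

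For step~(i), suppose $(a,s)\sim(a',s')$, so that $as'x=a'sx$ for some $x\in S$. Applying the homomorphism $f$ to both sides gives $f(a)s'x=f(a')sx$, which is exactly the statement that $(f(a),s)\sim(f(a'),s')$, so $f(a)/s=f(a')/s'$ in $B[S^{-1}]$. Thus the formula $f[S^{-1}](a/s)=f(a)/s$ gives a well-defined function. Step~(ii) is a direct calculation using the definition of addition:
\[
 f[S^{-1}](a/s+b/t)=f[S^{-1}]((at+bs)/st)=(f(a)t+f(b)s)/st=f(a)/s+f(b)/t.
\]

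For step~(iii), applying the formulas gives $(1_A)[S^{-1}](a/s)=1_A(a)/s=a/s$, and for composable $A\xra{f}B\xra{g}C$ we have $(gf)[S^{-1}](a/s)=(gf)(a)/s=g(f(a))/s=g[S^{-1}](f(a)/s)=g[S^{-1}]f[S^{-1}](a/s)$. For step~(iv), given $f_0,f_1\:A\to B$, I would compute
\[
 (f_0+f_1)[S^{-1}](a/s)=(f_0(a)+f_1(a))/s,
\]
and then verify directly from Definition~\ref{defn-localisation} that this equals $f_0(a)/s+f_1(a)/s$, since the latter equals $(f_0(a)s+f_1(a)s)/s^2$, which represents the same equivalence class as $(f_0(a)+f_1(a))/s$ (taking $x=1$ in the relation).

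Finally, for step~(v), the formula $\eta(a+b)=(a+b)/1=a/1+b/1=\eta(a)+\eta(b)$ shows $\eta$ is a homomorphism, and naturality amounts to noting that for $f\:A\to B$ both $\eta_B\circ f$ and $f[S^{-1}]\circ\eta_A$ send $a$ to $f(a)/1$.
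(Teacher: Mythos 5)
Your proof is correct and follows essentially the same approach as the paper: verify well-definedness on equivalence classes, then check the homomorphism property, functoriality, additivity, and naturality of $\eta$ directly from Definition~\ref{defn-localisation}. The only difference is that you spell out the additivity calculation explicitly, which the paper leaves as a remark.
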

\begin{proof}
 First, we see from the definitions that if $(a_0,s_0)\sim(a_1,s_1)$
 then $(f(a_0),s_0)\sim(f(a_1),s_1)$.  This shows that $f[S^{-1}]$ is
 well-defined.  It also follows directly from the definitions that it
 is a homomorphism.  Next, if we have maps $A\xra{f}B\xra{g}C$ then 
 \[ (gf)[S^{-1}](a/s)=gf(a)/s = g[S^{-1}](f(a)/s) =
      g[S^{-1}](f[S^{-1}](a/s)),
 \]
 which shows that our construction is functorial.  We also claim that
 $\eta$ is natural, which means that for any $f\:A\to B$ the square 
 \begin{center}
  \begin{tikzcd}
   A \arrow[rr,"f"] \arrow[d,"\eta"'] && B \arrow[d,"\eta"] \\
   A[S^{-1}] \arrow[rr,"{f[S^{-1}]}"'] && B[S^{-1}]
  \end{tikzcd}
 \end{center}
 commutes.  This is again straightforward.
\end{proof}

\begin{remark}
 When there is no danger of confusion, we will just write $f$ rather
 than $f[S^{-1}]$ for the induced map $A[S^{-1}]\to B[S^{-1}]$.
\end{remark}

\begin{proposition}\label{prop-localisation-exact}
 If $A\xra{f}B\xra{g}C$ is exact (or short exact), then so is the
 localised sequence 
 \[ A[S^{-1}]\xra{f[S^{-1}]} B[S^{-1}]\xra{g[S^{-1}]} C[S^{-1}]. \]
\end{proposition}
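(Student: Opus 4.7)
The plan is to verify directly the three ingredients of (short) exactness for the localised sequence, leveraging functoriality of $[S^{-1}]$ (Proposition~\ref{prop-localisation-functor}) together with the characterisation of zero elements in Remark~\ref{rem-local-zero}: $a/s=0$ in $A[S^{-1}]$ iff $at=0$ for some $t\in S$. This characterisation is the key tool for converting a vanishing statement in the localisation into a concrete identity in the original group, which then lets us apply the given exactness of $A\xra{f}B\xra{g}C$.

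First I would show that $g[S^{-1}]\circ f[S^{-1}]=0$. By functoriality this equals $(gf)[S^{-1}]$, and $gf=0$ implies $(gf)[S^{-1}](a/s)=0/s=0$ for all $a/s$. This gives $\image(f[S^{-1}])\leq\ker(g[S^{-1}])$.

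The main content is the reverse inclusion. Given $b/s\in\ker(g[S^{-1}])$, we have $g(b)/s=0$, so by Remark~\ref{rem-local-zero} there exists $t\in S$ with $tg(b)=g(tb)=0$. Exactness of the original sequence at $B$ then yields $a\in A$ with $f(a)=tb$, and I would check that $a/(ts)\in A[S^{-1}]$ maps to $b/s$: indeed $f[S^{-1}](a/(ts))=f(a)/(ts)=(tb)/(ts)=b/s$, the last equality being immediate from the equivalence relation defining $A[S^{-1}]$ (take the witness $x=1$).

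For the short exact case two extra checks suffice. Injectivity of $f[S^{-1}]$: if $f(a)/s=0$, then there exists $t\in S$ with $f(ta)=tf(a)=0$, so $ta=0$ by injectivity of $f$, which then gives $a/s=0$ by Remark~\ref{rem-local-zero} again. Surjectivity of $g[S^{-1}]$: any $c/s\in C[S^{-1}]$ is $g[S^{-1}](b/s)$ for any $b\in g^{-1}(c)$, which exists by surjectivity of $g$. No step presents a real obstacle; the only thing to keep straight is the systematic use of the auxiliary element $t\in S$ from Remark~\ref{rem-local-zero} to pass back and forth between the localised and unlocalised settings.
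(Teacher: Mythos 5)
Your proof is correct and follows essentially the same route as the paper: functoriality gives the containment $\image(f[S^{-1}])\le\ker(g[S^{-1}])$, and the characterisation of zero elements via an auxiliary $t\in S$ clears denominators to reduce the reverse inclusion to exactness of the original sequence. The only cosmetic difference is in the short-exact case, where you verify injectivity and surjectivity directly, while the paper derives them by applying the already-proved exactness result to the degenerate sequences $0\to A\to B$ and $B\to C\to 0$; both are equally valid and of comparable length.
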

\begin{proof}
 First, as $gf=0$ and localisation is functorial we see that
 $g[S^{-1}]f[S^{-1}]=(gf)[S^{-1}]=0$, so
 $\image(f[S^{-1}])\leq\ker(g[S^{-1}])$.  Now consider an element
 $b/s\in\ker(g[S^{-1}])$.  We then have $g(b)/s=0/1$ in $B[S^{-1}]$, or
 equivalently $g(b)\,x=0$ for some $x\in S$.  This means that
 $g(xb)=0$ so $xb\in\ker(g)=\image(f)$, so there exists $a\in A$ with
 $f(a)=xb$.  This implies that
 $f[S^{-1}](a/(xs))=f(a)/(xs)=xb/xs=b/s$, so
 $b/s\in\image(f[S^{-1}])$.  We now see that the sequence 
 $A[S^{-1}]\xra{f[S^{-1}]} B[S^{-1}]\xra{g[S^{-1}]} C[S^{-1}]$ is
 exact as claimed.  Now suppose that the original sequence is short
 exact.  It is equivalent to say that the sequences $0\to A\to B$ and
 $A\to B\to C$ and $B\to C\to 0$ are all exact, and it follows from
 this that the sequences $0\to A[S^{-1}]\to B[S^{-1}]$ and
 $A[S^{-1}]\to B[S^{-1}]\to C[S^{-1}]$ and
 $B[S^{-1}]\to C[S^{-1}]\to 0$ are also exact.  We can then reassemble
 these pieces to see that the sequence
 $A[S^{-1}]\xra{f[S^{-1}]} B[S^{-1}]\xra{g[S^{-1}]} C[S^{-1}]$ is
 again short exact.
\end{proof}

\begin{proposition}\label{prop-local-tensor}
 There is a natural isomorphism $\mu\:\Z[S^{-1}]\ot A\to A[S^{-1}]$
 given by $\mu((n/s)\ot a)=(na)/s$.
\end{proposition}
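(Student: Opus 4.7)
The plan is to construct $\mu$ explicitly from a bilinear map, and then build an inverse by hand and verify the two maps compose to the identity on generators.

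First I would define a function $\beta \colon \Z[S^{-1}]\tm A\to A[S^{-1}]$ by $\beta(n/s,a)=(na)/s$. The substantive check is well-definedness in the first variable: if $n/s=n'/s'$ in $\Z[S^{-1}]$ then there exists $x\in S$ with $ns'x=n'sx$, and multiplying by $a$ gives $(na)s'x=(n'a)sx$, so $(na)/s=(n'a)/s'$ in $A[S^{-1}]$. Bilinearity is routine from the definition of addition in a localisation and from distributivity of the $\Z$-action on $A$. Proposition~\ref{prop-tensor-bilinear} then produces the homomorphism $\mu\:\Z[S^{-1}]\ot A\to A[S^{-1}]$ with $\mu((n/s)\ot a)=(na)/s$.

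Next I would define $\nu\:A[S^{-1}]\to\Z[S^{-1}]\ot A$ by $\nu(a/s)=(1/s)\ot a$. Well-definedness is the main obstacle: suppose $a/s=a'/s'$ with witness $x\in S$, so $as'x=a'sx$. Then inside $\Z[S^{-1}]\ot A$ one has
\[ (1/s)\ot a = (s'x/(ss'x))\ot a = (1/(ss'x))\ot(s'xa), \]
\[ (1/s')\ot a' = (sx/(ss'x))\ot a' = (1/(ss'x))\ot(sxa'), \]
and these agree because $s'xa=sxa'$. Additivity of $\nu$ follows by applying the same rewriting trick to a common denominator: given $a/s$ and $b/t$, write $(1/s)\ot a=(t/(st))\ot a=(1/(st))\ot(ta)$ and similarly $(1/t)\ot b=(1/(st))\ot(sb)$, so their sum equals $(1/(st))\ot(at+bs)=\nu((at+bs)/st)=\nu(a/s+b/t)$.

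Finally I would check that $\mu$ and $\nu$ are mutually inverse. On one side, $\mu\nu(a/s)=\mu((1/s)\ot a)=(1\cdot a)/s=a/s$. On the other side, $\nu\mu((n/s)\ot a)=\nu(na/s)=(1/s)\ot(na)=n((1/s)\ot a)=(n/s)\ot a$, and since such elementary tensors generate $\Z[S^{-1}]\ot A$ this suffices. Naturality in $A$ is immediate since $\mu$ is defined by a formula on elementary tensors that visibly commutes with any $1\ot f$. The only real subtlety in the whole argument is the well-definedness of $\nu$, which is what the key rewriting step above handles.
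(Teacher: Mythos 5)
Your proof is correct and follows essentially the same route as the paper's: build $\mu$ from the bilinear map via the universal property, define $\nu(a/s)=(1/s)\ot a$, verify well-definedness of $\nu$ by passing to a common denominator using the relation $m u\ot v=u\ot mv$, and check the two composites on generators. You spell out a few steps the paper leaves to the reader (well-definedness of $\beta$ in the first variable and additivity of $\nu$), but there is no difference in method.
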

\begin{proof}
 First, it is straightforward to check that there is a well-defined
 bilinear map $\mu_0\:\Z[S^{-1}]\tm A\to A[S^{-1}]$ given by
 $\mu_0(n/s,a)=(na)/s$.  By the universal property of tensor products,
 this gives a homomorphism $\mu\:\Z[S^{-1}]\ot A\to A[S^{-1}]$ with
 $\mu((n/s)\ot a)=(na)/s$.  In the opposite direction, we would like
 to define $\nu\:A[S^{-1}]\to \Z[S^{-1}]\ot A$ by
 $\nu(a/s)=(1/s)\ot a$.  To see that this is well-defined, suppose
 that $a/s=b/t$, so $xta=xsb$ for some $x\in S$.  From the definition
 of tensor products, we have $mu\ot v=u\ot mv$ in $U\ot V$ for all
 $u\in U$, $v\in V$ and $m\in\Z$.  We can apply this with
 $u=1/(stx)\in\Z[S^{-1}]$ and $v=a$ and $m=tx$ to get
 $(1/s)\ot a=(1/(stx))\ot xta$.  By a symmetrical argument, we have
 $(1/t)\ot b=(1/(stx))\ot xsb$, but $xta=xsb$ so we find that
 $(1/s)\ot a=(1/t)\ot b$, as required.  It is clear that
 $\mu\nu=1_{A[S^{-1}]}$.  The other way around, we have 
 \[ \nu\mu((n/s)\ot a) = \nu((na)/s) = (1/s)\ot na = (n/s)\ot a. \]
 Thus, $\nu$ is inverse to $\mu$.
\end{proof}

\begin{definition}\label{defn-S-local}
 Let $S$ be a multiplicative set, and let $A$ be an abelian group.  We
 say that $A$ is \emph{$S$-torsion} if for all $a\in A$ there exists
 $s\in S$ with $sa=0$.  We say that $A$ is \emph{$S$-local} if for
 each $s\in S$, the endomorphism $s.1_A\:A\to A$ is invertible.
\end{definition}

Some care is needed in relating the above definition to the
traditional terminology in the most popular cases:
\begin{definition}\label{defn-local-special}\nl
 \begin{itemize}
  \item[(a)] Definition~\ref{defn-torsion}(c) is equivalent to the
   following: we say that $A$ is a \emph{torsion group} if it is
   $S_0$-torsion, where $S_0=\{n\in\N\st n>0\}$.
  \item[(b)] We say that $A$ is \emph{rational} if it is $S_0$-local.
   (We will see that in this case, $A$ can be regarded as a vector
   space over $\Q$.)
  \item[(c)] Now let $p$ be a prime number.  We say that $A$ is
   \emph{$p$-torsion} if it is $p^{\N}$-torsion, where
   $p^{\N}=\{p^n\st n\in\N\}$.
  \item[(d)] However, we say that $A$ is \emph{$p$-local} if it is
   $S_p$-local, where $S_p=\N\sm p\N$.
 \end{itemize}
\end{definition}

\begin{proposition}\label{prop-local-omni}\nl
 \begin{itemize}
  \item[(a)] The group $A[S^{-1}]$ is always $S$-local.
  \item[(b)] The map $\eta\:A\to A[S^{-1}]$ is an isomorphism if and
   only if $A$ is $S$-local.
  \item[(c)] If $A$ is $S$-local then it can be regarded as a module
   over the ring $\Z[S^{-1}]\leq\Q$ by the rule
   $(n/s).a=(s.1_A)^{-1}(na)$.
  \item[(d)] Suppose that $f\:A\to B$ is a homomorphism, and that $B$
   is $S$-local.  Then there is a unique homomorphism
   $f'\:A[S^{-1}]\to B$ such that $f'\circ\eta=f\:A\to B$.
 \end{itemize}
\end{proposition}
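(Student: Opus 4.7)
The plan is to treat the four parts in order, with each building on the previous, and the crux being well-definedness of the $\Z[S^{-1}]$-module structure in~(c) and of the extension map in~(d).

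For~(a), I will verify directly that $s.1_{A[S^{-1}]}$ is a bijection. Injectivity: if $s(a/t)=(sa)/t=0$, then by Remark~\ref{rem-local-zero} there exists $x\in S$ with $sax=0$; but then $(sx)a=0$ with $sx\in S$, so $a/t=0$. Surjectivity: given $a/t$, the element $a/(st)$ satisfies $s.(a/(st))=(sa)/(st)=a/t$ (both are the class of $(a,t)$, as $sa\cdot t\cdot 1=a\cdot st\cdot 1$). For~(b), one direction is immediate from~(a). For the other, assume $A$ is $S$-local. If $\eta(a)=a/1=0$, there is $s\in S$ with $sa=0$, but $s.1_A$ is invertible hence injective, so $a=0$; thus $\eta$ is injective. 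Given $a/s\in A[S^{-1}]$, set $b=(s.1_A)^{-1}(a)$; then $sb=a$, so $b/1=a/s$ (witnessed by $x=1$), proving surjectivity.

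For~(c), the key step is to show $(n/s).a:=(s.1_A)^{-1}(na)$ is well-defined. Suppose $n/s=m/t$ in $\Z[S^{-1}]$, so there exists $x\in S$ with $ntx=msx$ in $\Z$; since $x>0$ and $\Z$ is an integral domain this forces $nt=ms$. I want $(s.1_A)^{-1}(na)=(t.1_A)^{-1}(ma)$. Applying the invertible endomorphism $(st).1_A$ to both sides reduces this to $t(na)=s(ma)$, i.e.\ $(nt)a=(ms)a$, which holds. The module axioms (compatibility with sums, products, and $1$) then follow by another routine application of $st.1_A$ (or the appropriate denominator), and the fact that the $\Z$-action already agrees with $n.a$ when $s=1$.

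For~(d), I propose to define $f'(a/s)=(s.1_B)^{-1}(f(a))$, the canonical choice forced by $S$-locality of $B$. Well-definedness: if $a/s=b/t$, then $atx=bsx$ for some $x\in S$, so $x\,f(ta)=x\,f(sb)$ in $B$; injectivity of $x.1_B$ yields $tf(a)=sf(b)$, and then applying $(st.1_B)^{-1}$ gives $(s.1_B)^{-1}f(a)=(t.1_B)^{-1}f(b)$. Additivity follows from multiplying out with $st.1_B$, and $f'\circ\eta=f$ is immediate since $(1.1_B)^{-1}=1_B$. For uniqueness, observe that $s.(a/s)=\eta(a)$ in $A[S^{-1}]$ (both are represented by $(a,1)$), so any homomorphism $f''\:A[S^{-1}]\to B$ with $f''\eta=f$ must satisfy $s.f''(a/s)=f(a)$; as $s.1_B$ is invertible this pins down $f''(a/s)=(s.1_B)^{-1}(f(a))=f'(a/s)$. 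The main obstacle throughout is ensuring well-definedness of constructions depending on the choice of representative, and each such check is handled by multiplying by an appropriate element of $S$ and using that $S$-invertibility (or, for integers, positivity) converts an equation valid up to some $x\in S$ into an equation on the nose.
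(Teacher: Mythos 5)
Your proposal is correct and follows essentially the same route as the paper's proof: the explicit formulas you give for the inverse of $\eta$, the $\Z[S^{-1}]$-action, and the extension $f'$ are precisely the ones in the paper, and the well-definedness checks (multiplying out by elements of $S$ and exploiting invertibility in the target) are the same in substance, merely expanded in some places where the paper says ``we leave it to the reader.''
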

\begin{proof}\nl
 \begin{itemize}
  \item[(a)] One can check that for each $s\in S$ there is a
   well-defined map $d_s\:A[S^{-1}]\to A[S^{-1}]$ given by
   $d_s(a/t)=a/(st)$.  This is inverse to $s.1_{A[S^{-1}]}$.
  \item[(b)] If $\eta$ is an isomorphism then it follows from~(a) that
   $A$ is $S$-local.  Conversely, if $A$ is $S$-local one can check
   that the formula $\zt(a/s)=(s.1_A)^{-1}(a)$ gives a well-defined
   map $\zt\:A[S^{-1}]\to A$, and that this is inverse to $\eta$.
  \item[(c)] First we must check that the multiplication rule is
   well-defined.  Suppose that $n/s=m/t$, so $ntx=msx$ for some
   $x\in S$.  As this is an equation in $\Z$ and $x>0$ it reduces to
   $tn=ms$.  If we write $n_A$ for $n.1_A$ and so on, we deduce that
   $t_An_A=m_As_A\:A\to A$.  We can compose on the left by $t_A^{-1}$
   and on the right by $s_A^{-1}$ to get $n_As_A^{-1}=t_A^{-1}m_A$.  As
   $s_A^{-1}$ is a homomorphism, it commutes with multiplication by
   $n$, so $s_A^{-1}n_A=t_A^{-1}m_A$.  This means that the definition
   of multiplication is consistent.  We will leave it to the reader
   that it has the usual associativity and distributivity properties.
  \item[(d)] We define $f'\:A[S^{-1}]\to B$ by
   $f'(a/s)=(s.1_B)^{-1}(f(a))$.  We leave it to the reader to show
   that this is well-defined and is a homomorphism.  Note that
   $f'(a/1)=f(a)$, so $f'\eta=f$.  If $f''$ is another homomorphism
   with $f''\eta=f$, we have 
   \[ s_A(f''(a/s)) = s.f''(a/s) = f''(s.(a/s)) = f''(a/1) =
       f''(\eta(a)) = f(a).
   \]
   As $s_A$ is invertible we can rewrite this as
   $f''(a/s)=s_A^{-1}(f(a))=f'(a/s)$.  As $a/s$ was arbitrary this
   means that $f''=f'$, which gives that claimed uniqueness statement.   
 \end{itemize}
\end{proof}

\begin{remark}\label{rem-idempotent-monad}
 As a consequence of~(b), we can identify $A[S^{-1}]$ with
 $A[S^{-1}][S^{-1}]$.  There is a slight subtlety here: there are two
 apparently different isomorphisms $A[S^{-1}]\to A[S^{-1}][S^{-1}]$,
 and to keep everything straight it is necessary to prove that they
 are the same.  Indeed, for any $B$, we have a map
 $\eta_B\:B\to B[S^{-1}]$.  We can specialise to the case $B=A[S^{-1}]$
 to get a map $\eta_{A[S^{-1}]}\:A[S^{-1}]\to A[S^{-1}][S^{-1}]$,
 given by $\eta_{A[S^{-1}]}(a/s)=(a/s)/1$.  Alternatively, we can
 apply Proposition~\ref{prop-localisation-functor} to the map
 $\eta_A\:A\to A[S^{-1}]$ to get another map
 $\eta_A[S^{-1}]\:A[S^{-1}]\to A[S^{-1}][S^{-1}]$, given by
 $(\eta_A[S^{-1}])(a/s)=(a/1)/s$.  It is clear that
 \[ s.\eta_{A[S^{-1}]}(a/s) = (a/1)/1 =
    s.(\eta_A[S^{-1}])(a/s),
 \]
 and multiplication by $s$ is an isomorphism on $A[S^{-1}][S^{-1}]$,
 so $\eta_{A[S^{-1}]}=\eta_A[S^{-1}]$.
\end{remark}

\begin{proposition}\label{prop-local-category}
 \begin{itemize}
  \item[(a)] If we have a short exact sequence $A\to B\to C$ in which
   two of the three terms are $S$-local, then so is the third.
  \item[(b)] Direct sums, products and retracts of $S$-local groups are
   $S$-local. 
  \item[(c)] The kernel, cokernel and image of any homomorphism
   between $S$-local groups are $S$-local.
  \item[(d)] $p$-torsion groups are $p$-local.
 \end{itemize}
\end{proposition}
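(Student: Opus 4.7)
The plan is to handle the four parts separately: part~(a) via the snake lemma, part~(b) via the coordinate-wise action of multiplication by $s$, part~(c) by combining (a) with two direct computations, and part~(d) via a Bezout argument.

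For (a), apply Proposition~\ref{prop-snake-lemma} to the commutative diagram whose two rows are both the given short exact sequence $A\to B\to C$ and whose three vertical arrows are multiplication by $s$, for an arbitrary $s\in S$. The resulting six-term exact sequence interlinks $\ker(s.1_X)$ and $\cok(s.1_X)$ for $X\in\{A,B,C\}$, and $S$-locality of $X$ (at this particular $s$) is exactly the statement that both of these groups vanish. A short diagram chase in the six-term sequence then shows that vanishing of any two of the three kernel-cokernel pairs forces vanishing of the third. Running this argument for every $s\in S$ completes (a).

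For (b), multiplication by $s$ on $\prod_i A_i$ or $\bigoplus_i A_i$ acts coordinate-wise with coordinate-wise inverse, so products and direct sums of $S$-local groups are $S$-local. For a retract $r\colon B\to A$, $i\colon A\to B$ with $ri=1_A$ of an $S$-local group $B$, the endomorphism $e=ir$ is idempotent, so Proposition~\ref{prop-idempotent-splitting} identifies $A\simeq\image(e)$ as a direct summand of $B$; the direct-sum case then applies.

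For (c), I first show $\ker(f)$ and $\cok(f)$ are $S$-local directly. For $\ker(f)$, given $a\in\ker(f)$ and $s\in S$, the unique $a'\in A$ with $sa'=a$ satisfies $sf(a')=f(a)=0$, and injectivity of $s.1_B$ forces $f(a')=0$. For $\cok(f)$, surjectivity of $s.1_{\cok(f)}$ is inherited from surjectivity of $s.1_B$; injectivity is a short lifting argument using $S$-locality of $A$ to pull a preimage back and then injectivity of $s.1_B$ to conclude. Part~(a) applied to the short exact sequence $\ker(f)\to A\to\image(f)$ then yields that $\image(f)$ is $S$-local as well. Finally, for (d), if $A$ is $p$-torsion, $s$ is coprime to $p$, and $a\in A$ satisfies $p^k a=0$, Bezout gives $u,v\in\Z$ with $us+vp^k=1$; then $a=(us+vp^k)a=s(ua)$ proves surjectivity of $s.1_A$, and the same identity shows $sa=0$ forces $a=0$, proving injectivity. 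The snake lemma used in (a) is the only piece of structural machinery; no step presents a substantive obstacle.
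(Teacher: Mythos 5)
Your proposal is correct and, modulo presentation, follows the paper's approach. Part (a) in the paper invokes Proposition~\ref{prop-Zn-six}, which is itself exactly the Snake Lemma applied to the diagram you describe, so this is the same argument. Parts (b) and (d) match the paper essentially verbatim. The one genuine (though minor) divergence is in (c): you prove $\ker(f)$ and $\cok(f)$ are $S$-local by direct calculation and then deduce $\image(f)$ via (a), whereas the paper goes the other way round --- it observes that $\image(f)$ is $S$-local essentially for free (its $n$-torsion sits inside $B[n]=0$, and it is a quotient of $A$ so $\image(f)/n$ is a quotient of $A/n=0$), and then obtains $\ker(f)$ and $\cok(f)$ by two applications of (a) to the sequences $\ker(f)\to A\to\image(f)$ and $\image(f)\to B\to\cok(f)$. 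The paper's ordering is slightly more economical, avoiding the explicit lifting argument you sketch for the injectivity of $s.1_{\cok(f)}$, but both routes are valid and of comparable difficulty.
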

\begin{proof}\nl
 \begin{itemize}
  \item[(a)] Note that $U$ is $S$-local iff for each $n\in S$ we have
   $U[n]=0$ and $U/n=0$.  Recall also that
   Proposition~\ref{prop-Zn-six} gives exact sequences
   \[ 0 \to A[n] \xra{j} B[n] \xra{q} C[n] \xra{\dl}
          A/n \xra{j} B/n \xra{q} C/n \to 0.
   \]
   The claim follows by diagram chasing.
  \item[(b)] This is clear, because direct sums, products and retracts
   of isomorphisms are isomorphisms.
  \item[(c)] Let $f\:A\to B$ be a homomorphism between $S$-local
   groups.  Then $\img(f)$ is a subgroup of $B$ so for $n\in S$ we
   have $\img(f)[n]\leq B[n]=0$.  Similarly, $\img(f)$ is a quotient
   of $A$ so $\img(f)/n$ is a quotient of $A/n$ and so is zero.  It
   follows that $\img(f)$ is $S$-local.  We can therefore apply~(a) to
   the short exact sequences $\ker(f)\to A\xra{f}\img(f)$ and
   $\img(f)\to B\to\cok(f)$ to see that $\ker(f)$ and $\cok(f)$ are
   also $S$-local.
  \item[(d)] Let $A$ be a $p$-torsion group.  Consider
   $m\in\Z\sm p\Z$; we must show that $m.1_A$ is an isomorphism.  For
   any $a\in A$ we can choose $k\geq 0$ such that $p^ka=0$, and $p^k$
   is coprime with $m$ so we can choose $r,s\in\Z$ with $p^kr+ms=1$.
   It follows that $a=msa=(m.1_A)(sa)$.  Using this we see that
   $m.1_A$ is surjective.  Also, if $ma=0$ then $a=sma=0$, so
   $A[m]=0$, so $m.1_A$ is also injective.
 \end{itemize}
\end{proof}

\begin{proposition}\label{prop-local-torsion}
 If $A$ is a torsion group, then $A_{(0)}=0$ and
 $A_{(p)}=\tors_p(A)$.  
\end{proposition}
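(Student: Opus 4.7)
The plan is to prove both statements directly, using the characterisation of zero in a localisation (Remark~\ref{rem-local-zero}) together with the primary decomposition $A=\bigoplus_q\tors_q(A)$ supplied by Proposition~\ref{prop-tors-split}.

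For $A_{(0)}=0$, the argument is essentially immediate. Any element of $A_{(0)}$ has the form $a/s$ with $a\in A$ and $s\in S_0$; since $A$ is a torsion group there exists $n>0$ with $na=0$, and such $n$ lies in $S_0$. Remark~\ref{rem-local-zero} then forces $a/s=0$, so $A_{(0)}=0$.

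For $A_{(p)}\cong\tors_p(A)$, I would consider the natural map $\phi\:\tors_p(A)\to A_{(p)}$ defined by $\phi(a)=a/1$ (the restriction to $\tors_p(A)$ of the universal map $\eta$), and verify that it is bijective. Injectivity is a Bezout argument: if $\phi(a)=0$ then $ta=0$ for some $t\in S_p$, and $a\in\tors_p(A)$ gives $p^k a=0$ for some $k$; since $\gcd(t,p^k)=1$, writing $rt+sp^k=1$ yields $a=rta+sp^ka=0$. For surjectivity, given $a/s\in A_{(p)}$, apply Proposition~\ref{prop-tors-split} to decompose $a=\sum_q a_q$ with $a_q\in\tors_q(A)$. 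For each prime $q\neq p$ the element $a_q$ is annihilated by some power of $q$, and such a power lies in $S_p$, so by Remark~\ref{rem-local-zero} the class $a_q/s$ is zero; hence $a/s=a_p/s$. Finally, Proposition~\ref{prop-local-omni}(d) shows that $\tors_p(A)$ is $p$-local, so multiplication by $s$ is an automorphism of $\tors_p(A)$; choosing $b\in\tors_p(A)$ with $sb=a_p$ gives $a/s=sb/s=b/1=\phi(b)$, as required.

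The only step requiring genuine care is the surjectivity: one must separate the $p$-primary and away-from-$p$ parts of a representative $a$ using the primary decomposition \emph{before} inverting $s$, and then use the $p$-locality of $\tors_p(A)$ to solve $sb=a_p$ inside $\tors_p(A)$. Injectivity and the calculation for $A_{(0)}$ are then straightforward applications of Remark~\ref{rem-local-zero}.
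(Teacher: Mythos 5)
Your proof is correct, and the underlying ingredients are the same as in the paper (the primary decomposition of Proposition~\ref{prop-tors-split}, the characterisation of zero in Remark~\ref{rem-local-zero}, and the $p$-locality of $p$-torsion groups), but the organisation is genuinely different. The paper argues ``object-wise'': it first shows that $\tors_q(A)_{(p)}=0$ for $q\neq p$ and that $\tors_p(A)_{(p)}=\tors_p(A)$, and then pushes $(-)_{(p)}$ through $A=\bigoplus_q\tors_q(A)$. That last step implicitly invokes the fact that localisation commutes with (possibly infinite) direct sums, which the paper leaves to the reader (it follows from Propositions~\ref{prop-local-tensor} and~\ref{prop-tensor-distrib}). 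You instead build the single natural map $\phi\colon\tors_p(A)\to A_{(p)}$, $a\mapsto a/1$, and verify bijectivity at the level of fractions, using the primary decomposition only locally inside the surjectivity argument; this sidesteps the direct-sum compatibility entirely at the cost of a somewhat longer case analysis on representatives $a/s$. One small slip: the fact that $\tors_p(A)$ is $p$-local is Proposition~\ref{prop-local-category}(d), not Proposition~\ref{prop-local-omni}(d); your own Bezout argument for injectivity is in effect a re-proof of that statement, so the substance is unaffected.
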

\begin{proof}
 First suppose that $A$ is a $p$-torsion group, so
 $A=\bigcup_kA[p^k]$.  If $m$ is not divisible by $p$ then we can find
 $n>0$ with $mn=1\pmod{p^k}$, so $n.1_{A[p^k]}$ is inverse to
 $m.1_{A[p^k]}$.  It follows that $m.1_A$ is also an isomorphism.
 This holds for all $m\in\Z\sm p\Z$, so $A$ is $p$-local, so
 $A=A_{(p)}$.  Now suppose instead that $A$ is a $q$-torsion group for
 some prime $q\neq p$.  For any element $a\in A$ we have $q^va=0$ for
 some $v\geq 0$, so in $A_{(p)}$ we have
 $a/m=(q^va)/(q^vm)=0/(q^vm)=0$.  This shows that $A_{(p)}=0$.
 Finally, for a general torsion group $A$ we have
 $A=\bigoplus_q\tors_q(A)$ by Proposition~\ref{prop-tors-split}, so
 $A_{(p)}=\bigoplus_q\tors_q(A)_{(p)}$.  By the special cases that we
 have just discussed, this sum contains only the single factor
 $\tors_p(A)_{(p)}=\tors_p(A)$, as claimed.  It is also clear from
 Remark~\ref{rem-local-zero} that $A_{(0)}=0$. 
\end{proof}

\section{Colimits of sequences}

\begin{definition}\label{defn-sequence}
 By a \emph{sequence} we mean a diagram of the form
 \[ A_0 \xra{f_0} A_1 \xra{f_1} A_2 \xra{f_2} A_3 \xra{f_3} \dotsb.
 \]
 Given such a sequence and natural numbers $i\leq j$, we write
 $f_{ij}$ for the composite
 \[ A_i \xra{f_i} A_{i+1} \xra{} \dotsb \xra{f_{j-1}} A_j. \]
 In particular, $f_{ii}$ is the identity map of $A_i$, and
 $f_{i,i+1}=f_i$.  
\end{definition}

\begin{definition}\label{defn-colimit}
 Given such a sequence, we consider the group $A_+=\bigoplus_iA_i$.
 For each $k$ we have an inclusion $\iota_k\:A_k\to A_+$ and also a
 homomorphism $\iota_{k+1}\circ f_k\:A_k\to A_+$.  We let $R_k$ denote
 the image of $(\iota_k-\iota_{k+1}f_k)\:A_k\to A_+$, and put
 $R_+=\sum_kR_k\leq A_+$ and $\colim_iA_i=A_+/R_+$.  This group is
 called the \emph{colimit} of the sequence. 

 Next, we write $\ib_k$ for the composite 
 \[ A_k \xra{\iota_k} A_+ \xra{} A_+/R_+ = \colim_iA_i. \]
 By construction we have $\ib_k=\ib_{k+1}f_k$, so the following
 diagram commutes:
 \begin{center}
  \begin{tikzcd}
   A_0 \arrow[d,"\ib_0"'] \arrow[r,"f_0"] &
   A_1 \arrow[d,"\ib_1"'] \arrow[r,"f_1"] &
   A_2 \arrow[d,"\ib_2"'] \arrow[r,"f_2"] &
   A_3 \arrow[d,"\ib_3"'] \arrow[r,"f_3"] &
   \dotsb \\
   \colim_iA_i \arrow[equal,r] &
   \colim_iA_i \arrow[equal,r] &
   \colim_iA_i \arrow[equal,r] &
   \colim_iA_i \arrow[equal,r] &
   \dotsb
  \end{tikzcd}
 \end{center}  
 This implies that $\ib_k=\ib_m f_{k,m}$ whenever $k\leq m$.
\end{definition}

\begin{remark}\label{rem-shift}
 The definition can be reformulated slightly as follows.  We can
 define an endomorphism $S$ of $A_+$ by  
 \[ S(a_0,a_1,a_2,a_3,\dotsc) = 
      (0,f_0(a_0),f_1(a_1),f_2(a_2),f_3(a_3),\dotsb).
 \]
 Equivalently, $S$ is the unique map such that the following diagram
 commutes for all $k$:
 \begin{center}
  \begin{tikzcd}
   A_k \arrow[rightarrowtail,d,"\iota_k"'] \arrow[r,"f_k"] &
   A_{k+1} \arrow[d,"\iota_{k+1}"] \\
   A_+ \arrow[r,"S"'] &
   A_+
  \end{tikzcd}
 \end{center}  
 We can then say that $\colim_iA_i$ is the cokernel of
 $1-S\:A_+\to A_+$.  Note that if $a_i$ is the first nonzero entry in
 $a$ then the $i$'th entry in $(1-S)(a)$ is again $a_i$, so
 $(1-S)(a)\neq 0$.  This shows that $1-S$ is injective, so we actually
 have a short exact sequence
 \begin{center}
  \begin{tikzcd}
   A_+ \arrow[rightarrowtail,r,"1-S"] &
   A_+ \arrow[twoheadrightarrow,r] &
   \colim_iA_i.
  \end{tikzcd}
 \end{center}
\end{remark}

\begin{remark}\label{rem-cocone}
 The colimit can also be characterised by a universal property, as
 follows.  A \emph{cone} for the sequence is a group $B$ with a
 collection of maps $u_k\:A_k\to B$ such that $u_{k+1}f_k=u_k$ for all
 $k\geq 0$.  By construction, the maps $\ib_k\:A_k\to\colim_iA_i$ form
 a cone.  We claim that for any cone $\{A_k\xra{u_k}B\}_{k\in\N}$
 there is a unique homomorphism $u_\infty\:\colim_iA_i\to B$ such that
 $u_\infty\ib_k=u_k$ for all $k$.  Indeed,
 Proposition~\ref{prop-categorical-coproduct} gives us a unique map
 $v\:A_+\to B$ with $vi_k=u_k$ for all $k$, and the cone property
 tells us that $v(R_k)=0$ for all $k$, so $v(R_+)=0$, so $v$ induces a
 map $u_\infty\:\colim_iA_i=A_+/R_+\to B$.  This is easily seen to be
 the unique map such that $u_\infty\ib_k=u_k$ for all $k$.
\end{remark}

In many cases colimits are just unions, as we now explain.

\begin{proposition}\label{prop-colimit-structure}
 We have
 \[ \ib_0(A_0) \leq \ib_1(A_1) \leq \ib_2(A_2) \leq \dotsb
      \leq \colim_iA_i.
 \]
 Moreover, $\colim_iA_i$ is the union of the groups $\ib_k(A_k)$, and
 we have $\ib_k(a)=0$ iff $f_{k,m}(a)=0$ for some $m\geq k$.
\end{proposition}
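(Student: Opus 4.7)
The plan is to leverage Remark~\ref{rem-shift}, which presents $\colim_i A_i$ as the cokernel of the injection $1-S\:A_+\to A_+$. All three claims will follow by translating statements about $\colim_i A_i$ into statements about $A_+$ modulo $\img(1-S)$, together with the key telescoping identity $\ib_k=\ib_m\circ f_{k,m}$ already established in Definition~\ref{defn-colimit}.

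For the chain of inclusions, I would simply note that if $x\in\ib_k(A_k)$, say $x=\ib_k(a)$, then $x=\ib_{k+1}(f_k(a))\in\ib_{k+1}(A_{k+1})$, and iterate. For the union statement, any element of $\colim_i A_i$ is represented by some $y=\sum_{k\in F}\iota_k(a_k)\in A_+$ with $F$ a finite subset of $\N$. Its image in the colimit is $\sum_{k\in F}\ib_k(a_k)$; setting $m=\max(F)$ and using $\ib_k=\ib_m\circ f_{k,m}$ for each $k\leq m$ rewrites this sum as $\ib_m\bigl(\sum_{k\in F}f_{k,m}(a_k)\bigr)$, which lies in $\ib_m(A_m)$.

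The main obstacle is the nontrivial direction of the kernel statement: if $\ib_k(a)=0$, produce $m\geq k$ with $f_{k,m}(a)=0$. (The reverse direction is immediate since $\ib_k(a)=\ib_m(f_{k,m}(a))=\ib_m(0)=0$.) The plan is to use the exact sequence from Remark~\ref{rem-shift}: $\ib_k(a)=0$ means $\iota_k(a)\in\img(1-S)$, so $\iota_k(a)=(1-S)(y)$ for some $y=(y_0,y_1,y_2,\dotsc)\in A_+$ of finite support. The definition of $S$ gives $(1-S)(y)_0=y_0$ and $(1-S)(y)_i=y_i-f_{i-1}(y_{i-1})$ for $i\geq 1$. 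Comparing coordinate-by-coordinate with $\iota_k(a)$ forces $y_i=0$ for $i<k$ (by induction on $i$), then $y_k=a$, and finally $y_i=f_{i-1}(y_{i-1})$ for all $i>k$, which inductively yields $y_i=f_{k,i}(a)$ for $i\geq k$. Since $y$ has finite support, choosing any $m\geq k$ large enough that $y_m=0$ gives $f_{k,m}(a)=0$, as required.
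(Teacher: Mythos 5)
Your proof is correct. The first two parts coincide with the paper's argument, but your treatment of the kernel criterion takes a genuinely different route. The paper writes $\iota_k(a)$ as an explicit finite sum of generators of $R_+$ and then defines an auxiliary homomorphism $h\:\bigoplus_{m=0}^N A_m\to A_N$ (given by $f_{mN}$ on each summand) that annihilates each generator, so applying $h$ to the equation directly yields $f_{kN}(a)=0$. Your argument instead works with the reformulation $\colim_iA_i=\cok(1-S)$ from Remark~\ref{rem-shift}: starting from $\iota_k(a)=(1-S)(y)$, you solve for $y$ coordinate-by-coordinate and find that $y_i=f_{k,i}(a)$ for all $i\geq k$; the finite support of $y$ then forces $f_{k,m}(a)=0$ for $m$ large. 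Both arguments ultimately exploit the finiteness built into the direct sum $A_+$, but in complementary ways: the paper collapses the relation via a retraction onto $A_N$, whereas you reconstruct the unique preimage under $1-S$ and read off the vanishing. Your version has the small advantage of making manifest exactly which $y$ witnesses the relation (and would even identify the minimal such $m$), at the cost of a slightly longer coordinate analysis; the paper's is a bit slicker but less explicit about where the element $a$ ``dies.''
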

\begin{proof}
 First, when $k\leq m$ we have $\ib_k=\ib_m f_{k,m}$, and this implies
 that $\ib_k(A_k)\leq\ib_m(A_m)$.  Next, as $\colim_iA_i$ is a
 quotient of $A_+$, we see that every element $a\in\colim_iA_i$ can be
 written as $a=\sum_{k=0}^N\ib_k(a_k)$ for some $N\geq 0$ and
 $a_k\in A_k$.  Now $\ib_k(a_k)\in\ib_k(A_k)\leq\ib_N(A_N)$ for all
 $k$, so $a\in\ib_N(A_N)$.  Thus $\colim_iA_i=\bigcup_N\ib_N(A_N)$ as
 claimed.  Now suppose we have $a\in A_k$ and that $f_{km}(a)=0$ for
 some $m\geq k$.  Using $\ib_k=\ib_mf_{km}$ we deduce that
 $\ib_k(a)=0$.  Conversely, suppose that $\ib_k(a)=0$, so
 $i_k(a)\in R_+$, so 
 \[ i_k(a) = \sum_{m=0}^{N-1} (i_m(b_m)-i_{m+1}(f_m(b_m))) \]
 for some $N>k$ and some $b_0,\dotsc,b_{N-1}$ with $b_i\in A_i$.  Now
 let $h\:\bigoplus_{m=0}^NA_m\to A_N$ be the map given by $f_{mN}$ on
 $A_m$, or more formally the unique map with $hi_m=f_{mN}$.  We note
 that 
 \[ h(i_m(b_m)-i_{m+1}(f_m(b_m))) = 
      f_{m,N}(b_m) - f_{m+1,N}(f_m(b_m)) = 0,
 \] 
 so we can apply $h$ to the above equation for $i_k(a)$ to get
 $f_{kN}(a)=0$ as required.
\end{proof}

\begin{corollary}\label{cor-cone-universal}
 Suppose we have a sequence $\{A_i\}_{i\in\N}$ and a cone
 $\{u_i\:A_i\to B\}_{i\in\N}$ giving rise to a homomorphism
 $u_\infty\:\colim_iA_i\to B$.  Then 
 \begin{itemize}
  \item[(a)] The image of $u_\infty$ is the union of the subgroups
   $u_k(A_k)$.
  \item[(b)] The map $u_\infty$ is injective iff whenever $u_k(a)=0$,
   there exists $m\geq k$ with $f_{km}(a)=0$.
 \end{itemize}
\end{corollary}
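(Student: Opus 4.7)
The plan is to deduce both parts directly from Proposition~\ref{prop-colimit-structure} by using the defining property $u_\infty\circ\ib_k=u_k$ for all $k$.

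For part~(a), I would compute
\[ \img(u_\infty) = u_\infty\!\left(\bigcup_k \ib_k(A_k)\right) = \bigcup_k u_\infty(\ib_k(A_k)) = \bigcup_k u_k(A_k), \]
where the first equality uses that $\colim_i A_i$ is the union of the subgroups $\ib_k(A_k)$ (Proposition~\ref{prop-colimit-structure}) together with the fact that the image of a union is the union of the images, and the last equality uses $u_\infty\ib_k=u_k$.

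For part~(b), I would prove each direction separately. For the forward direction, suppose $u_\infty$ is injective and $u_k(a)=0$. Then $u_\infty(\ib_k(a))=u_k(a)=0$, so $\ib_k(a)=0$, and the second claim of Proposition~\ref{prop-colimit-structure} produces $m\geq k$ with $f_{km}(a)=0$. For the reverse direction, suppose the stated condition holds and let $x\in\ker(u_\infty)$. Using Proposition~\ref{prop-colimit-structure} again, I write $x=\ib_k(a)$ for some $k$ and $a\in A_k$, observe that $u_k(a)=u_\infty\ib_k(a)=u_\infty(x)=0$, invoke the hypothesis to get $m\geq k$ with $f_{km}(a)=0$, and conclude $x=\ib_k(a)=\ib_m(f_{km}(a))=0$.

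There is no real obstacle here: the corollary is a direct consequence of Proposition~\ref{prop-colimit-structure} once one keeps track of the identity $u_\infty\circ\ib_k=u_k$ supplied by Remark~\ref{rem-cocone}. The only point to be mindful of is, in the reverse direction of~(b), to use the freedom to choose the index $k$ so that $x$ lies in $\ib_k(A_k)$ before applying the kernel criterion for $\ib_k$.
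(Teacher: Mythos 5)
Your proof is correct and follows exactly the route the paper intends: the paper's own proof is simply the remark that the corollary ``is clear from the Proposition,'' namely Proposition~\ref{prop-colimit-structure}, and you have spelled out precisely the chain of equalities and the two implications that make this ``clear.'' There is nothing to add or correct.
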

\begin{proof}
 This is clear from the Proposition.
\end{proof}

\begin{example}\label{eg-union-colimit}
 Let $A$ be any abelian group.  Suppose we have a chain of subgroups 
 \[ A_0 \leq A_1 \leq A_2 \leq A_3 \leq \dotsb \leq A. \]
 Let $f_k\:A_k\to A_{k+1}$ be the inclusion.  Then the colimit of the
 resulting sequence is just $\bigcup_iA_i$.  Indeed, the inclusions
 $A_n\to\bigcup_iA_i$ form a cone, which clearly has both the
 properties in Corollary~\ref{cor-cone-universal}.
\end{example}

The following examples are instructive as well as useful.
\begin{proposition}\label{prop-rational-colimit}
 Let $A$ be an arbitrary abelian group.  Then the colimit of the
 sequence 
 \[ A \xra{n} A \xra{n} A \xra{n} A \xra{n} A \xra{} \dotsb \]
 is $A[1/n]$, whereas the colimit of the sequence
 \[ A \xra{1} A \xra{2} A \xra{3} A \xra{4} A \xra{} \dotsb \]
 is the rationalisation $A_{(0)}$.
\end{proposition}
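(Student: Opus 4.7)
The plan is to apply Corollary~\ref{cor-cone-universal} in both cases. In each case I would write down the obvious cone landing in the target localisation, let $u_\infty$ denote the induced map out of the colimit, and then verify the surjectivity criterion (a) and injectivity criterion (b) of that corollary.

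For the first sequence $A\xra{n}A\xra{n}A\xra{}\dotsb$, I would define $u_k\:A_k\to A[1/n]$ by $u_k(a)=a/n^k$. The cone condition $u_{k+1}(na)=u_k(a)$ is immediate from the arithmetic of fractions. Every element of $A[1/n]$ has the form $a/n^k$ by Definition~\ref{defn-special-localisations}(a), so $u_k(A_k)$ fills out $A[1/n]$ as $k$ varies, giving surjectivity. For injectivity, suppose $u_k(a)=a/n^k=0$; by Remark~\ref{rem-local-zero} there exists $n^j$ in the multiplicative set with $n^j a=0$, and then $f_{k,k+j}(a)=n^j a=0$, which is exactly the condition in Corollary~\ref{cor-cone-universal}(b).

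For the second sequence $A\xra{1}A\xra{2}A\xra{3}\dotsb$, the transition map $f_k$ is multiplication by $k+1$, so the telescoped map $f_{i,j}$ is multiplication by $j!/i!$. I would define $u_k\:A_k\to A_{(0)}$ by $u_k(a)=a/k!$; the cone identity $u_{k+1}((k+1)a)=(k+1)a/(k+1)!=a/k!=u_k(a)$ is again automatic. For surjectivity, an arbitrary element $a/s\in A_{(0)}$ can be rewritten as $((s-1)!\,a)/s!=u_s((s-1)!\,a)$, so it lies in $u_s(A_s)$. For injectivity, suppose $u_k(a)=0$; by Remark~\ref{rem-local-zero} there is some positive integer $t$ with $ta=0$. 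Taking $m=k+t$, the quotient $m!/k!=m(m-1)\cdots(k+1)$ is a product of $t$ consecutive positive integers and is therefore divisible by $t$, so $f_{k,m}(a)=(m!/k!)\,a=0$, as required.

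The proofs are largely bookkeeping; the only mildly substantive point is the elementary observation that any $t$ consecutive positive integers contain a multiple of $t$, which is what makes the injectivity verification work in the second case. Everything else reduces to unwinding the definitions of $A[1/n]$ and $A_{(0)}$ against the criteria supplied by Corollary~\ref{cor-cone-universal}.
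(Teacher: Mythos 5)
Your proof is correct and follows essentially the same route as the paper's: both set up the cone $u_k(a)=a/k!$ (resp.\ $a/n^k$), invoke Corollary~\ref{cor-cone-universal}, and verify surjectivity and injectivity by unwinding Remark~\ref{rem-local-zero}. The only cosmetic difference is at the divisibility step for injectivity in the second case: the paper observes that $(k+m)!/k! = m!\binom{k+m}{k}$ is divisible by $m!$ and hence by $m$, whereas you observe directly that a product of $t$ consecutive positive integers contains a multiple of $t$; both are elementary and correct.
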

\begin{proof}
 We will prove the second statement; the first is similar but easier.
 Let $C$ denote the colimit, so we have maps $\ib_k\:A\to C$ with
 $\ib_k(a)=(k+1)\ib_{k+1}(a)$.  Define $u_n\:A\to A_{(0)}$ by
 $u_n(a)=a/n!$.  As $((n+1)a)/(n+1)!=a/n!$ we see that these maps form
 a cone, so there is a unique map $u_\infty\:C\to A_{(0)}$ with
 $u_\infty\ib_k=u_k$ for all $k$.  Any element $a\in A_{(0)}$ can be
 written as $a=a'/n$ for some $a'\in A$ and $n>0$, so
 $a=((n-1)!a')/n!=u_n((n-1)!a')$, so $A_{(0)}$ is the union of the
 images of the maps $u_n$.  Now suppose that $u_n(a')=0$.  By the
 definition of $A_{(0)}$ this just means that $ma'=0$ for some $m>0$.
 The map $f_{n,n+m}$ in our sequence is multiplication by the integer
 \[ p = (n+1)(n+2)\dotsb(n+m) = \frac{(n+m)!}{n!} = m!\bcf{n+m}{n},
 \]
 which is divisible by $m$, so $f_{n,n+m}(a')=0$.  The claim follows
 by Corollary~\ref{cor-cone-universal}.
\end{proof}

\begin{proposition}\label{prop-colimit-maps}
 Suppose we have a commutative diagram as shown
 \begin{center}
  \begin{tikzcd}
   A_0 \arrow[r,"f_0"] \arrow[d,"p_0"'] &
   A_1 \arrow[r,"f_1"] \arrow[d,"p_1"'] &
   A_2 \arrow[r,"f_2"] \arrow[d,"p_2"'] &
   A_3 \arrow[r,"f_3"] \arrow[d,"p_3"'] &
   \dotsb \\
   B_0 \arrow[r,"g_0"'] &
   B_1 \arrow[r,"g_1"'] &
   B_2 \arrow[r,"g_2"'] &
   B_3 \arrow[r,"g_3"'] &
   \dotsb
  \end{tikzcd}
 \end{center}  
 Let $\ib_k$ be the canonical map $A_k\to\colim_iA_i$, and let $\jb_k$
 be the canonical map $B_k\to\colim_iB_i$.  Then there is a unique map
 $p_\infty$ such that the diagram 
 \begin{center}
  \begin{tikzcd}
   A_k \arrow[d,"p_k"'] \arrow[r,"\ib_k"] & \colim_iA_i \arrow[d,"p_\infty"] \\
   B_k \arrow[r,"\jb_k"'] & \colim_iB_i
  \end{tikzcd}
 \end{center}
 
 commutes for all $k$.  Moreover, if all the maps $p_k$ are injective,
 or surjective, or bijective, then $p_\infty$ has the same property.
\end{proposition}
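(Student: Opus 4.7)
The plan is to construct $p_\infty$ via the universal property of the colimit (Remark~\ref{rem-cocone}), and then analyze injectivity and surjectivity separately using the explicit description of the colimit from Proposition~\ref{prop-colimit-structure} and Corollary~\ref{cor-cone-universal}.

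First I would define the family of maps $v_k = \jb_k \circ p_k \colon A_k \to \colim_i B_i$ and check that this is a cone over $\{A_i, f_i\}$. The cone condition $v_{k+1} \circ f_k = v_k$ follows from commutativity of the given ladder and the cone property of the $\jb_k$: $\jb_{k+1} p_{k+1} f_k = \jb_{k+1} g_k p_k = \jb_k p_k$. By Remark~\ref{rem-cocone}, there is then a unique $p_\infty \colon \colim_i A_i \to \colim_i B_i$ with $p_\infty \circ \ib_k = v_k = \jb_k \circ p_k$ for all $k$, giving existence and uniqueness of the square commuting for all $k$ simultaneously.

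Next I would handle surjectivity. Assume each $p_k$ is surjective. By Proposition~\ref{prop-colimit-structure}, $\colim_i B_i = \bigcup_k \jb_k(B_k)$. For each $k$, $\jb_k(B_k) = \jb_k(p_k(A_k)) = p_\infty(\ib_k(A_k)) \sse \img(p_\infty)$, so $p_\infty$ is surjective.

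For injectivity, assume each $p_k$ is injective. Suppose $x \in \ker(p_\infty)$. By Proposition~\ref{prop-colimit-structure}, $x = \ib_k(a)$ for some $k$ and some $a \in A_k$. Then $\jb_k(p_k(a)) = p_\infty(\ib_k(a)) = 0$, so by the kernel criterion in Proposition~\ref{prop-colimit-structure} applied to the $B$-sequence, there exists $m \geq k$ with $g_{km}(p_k(a)) = 0$. Commutativity of the ladder iterated gives $g_{km} \circ p_k = p_m \circ f_{km}$, so $p_m(f_{km}(a)) = 0$, and the injectivity of $p_m$ yields $f_{km}(a) = 0$. Therefore $x = \ib_k(a) = \ib_m(f_{km}(a)) = 0$, proving $p_\infty$ is injective. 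The bijective case is the conjunction of the two. No step is truly hard here; the only point to handle carefully is matching the kernel criterion of Corollary~\ref{cor-cone-universal} against the ladder's commutativity, which is exactly what allows the injectivity of a single $p_m$ to propagate through the colimit.
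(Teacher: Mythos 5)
Your proof is correct and follows essentially the same route as the paper: construct $p_\infty$ from the cone $\jb_k p_k$ via the universal property, then use Proposition~\ref{prop-colimit-structure} (image is the union of the $\ib_k(A_k)$, and the kernel criterion via $f_{km}$) to deduce surjectivity and injectivity respectively, with bijectivity as the conjunction. The only cosmetic difference is in the surjectivity step, where you argue at the level of subgroups ($\jb_k(B_k)\subseteq\img(p_\infty)$) rather than element-by-element, but the content is identical.
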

\begin{proof}
 The maps $\jb_k p_k\:A_k\to\colim_iB_i$ satisfy 
 \[ \jb_k p_k = \jb_{k+1}g_k p_k = \jb_{k+1}p_{k+1}f_k, \]
 so they form a cone for the sequence $\{A_i\}$.  There is thus a
 unique map $p_\infty\:\colim_iA_i\to\colim_iB_i$ with
 $p_\infty\ib_k=\jb_kp_k$ for all $k$, as claimed. 

 \begin{itemize} 
  \item[(a)] Now suppose that all the maps $p_k$ are injective.
   Consider an element $a\in\ker(p_\infty)$.  By
   Proposition~\ref{prop-colimit-structure} we have $a=\ib_k(a')$ for
   some $k$ and some $a'\in A_k$.  We then have
   $\jb_k(p_k(a'))=p_\infty(\ib_k(a'))=p_\infty(a)=0$.  The same
   proposition therefore tells us that $g_{km}(p_k(a'))=0$ for some
   $m\geq k$.  Now $g_{km}p_k=p_mf_{km}$ and $p_m$ is injective, so
   $f_{km}(a')=0$.  This means that $a=i_k(a')=i_m(f_{km}(a'))=0$.
   Thus, $p_\infty$ is injective as claimed.
  \item[(b)] Suppose instead that all the maps $p_k$ are surjective.
   Consider an element $b\in\colim_iB_i$.  By
   Proposition~\ref{prop-colimit-structure} we have $b=\jb_k(b')$ for
   some $k$ and some $b'\in B_k$.  As $p_k$ is surjective, we can
   choose $a'\in A_k$ with $p_k(a')=b'$, and then put $b=\ib_k(a)$;
   we find that $p_\infty(a)=b$.  Thus, $p_\infty$ is also
   surjective.
  \item[(c)] If the maps $p_k$ are all isomorphisms, then~(a) and~(b)
   together imply that $p_\infty$ is an isomorphism.
 \end{itemize}
\end{proof}

\begin{proposition}\label{prop-colimit-exact}
 Suppose we have a commutative diagram as shown, in which all the
 columns are exact:
 \begin{center}
  \begin{tikzcd}
   A_0 \arrow[r,"f_0"] \arrow[d,"p_0"'] &
   A_1 \arrow[r,"f_1"] \arrow[d,"p_1"'] &
   A_2 \arrow[r,"f_2"] \arrow[d,"p_2"'] &
   A_3 \arrow[r,"f_3"] \arrow[d,"p_3"'] &
   \dotsb \\
   B_0 \arrow[r,"g_0"'] \arrow[d,"q_0"'] &
   B_1 \arrow[r,"g_1"'] \arrow[d,"q_1"'] &
   B_2 \arrow[r,"g_2"'] \arrow[d,"q_2"'] &
   B_3 \arrow[r,"g_3"'] \arrow[d,"q_3"'] &
   \dotsb \\
   C_0 \arrow[r,"h_0"'] &
   C_1 \arrow[r,"h_1"'] &
   C_2 \arrow[r,"h_2"'] &
   C_3 \arrow[r,"h_3"'] &
   \dotsb
  \end{tikzcd}
 \end{center}  
 Then the resulting sequence
 \[ \colim_iA_i \xra{p_\infty}
    \colim_iB_i \xra{q_\infty}
    \colim_iC_i
 \]
 is also exact.
\end{proposition}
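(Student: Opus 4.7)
The plan is to reduce the exactness of the colimit sequence to a diagram chase, using the explicit description of elements of a colimit and the criterion for an element to become zero provided by Proposition~\ref{prop-colimit-structure} (or equivalently Corollary~\ref{cor-cone-universal}). Both of these come down to the observation that every element of $\colim_i X_i$ is of the form $\overline{\iota}_k(x)$ for some $k$ and some $x \in X_k$, and that $\overline{\iota}_k(x) = 0$ iff $x$ is eventually killed by the structure maps.

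First I would show that $q_\infty \circ p_\infty = 0$. By exactness of the columns at $B_i$ we have $q_i p_i = 0$ for all $i$, so the maps $0\colon A_i \to \colim_j C_j$ and the maps $q_\infty p_\infty \overline{\iota}_i = \overline{\ell}_i q_i p_i = 0$ both factor the zero cone; the universal property of the colimit then gives $q_\infty p_\infty = 0$, so $\image(p_\infty) \leq \ker(q_\infty)$.

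For the reverse inclusion, take $b \in \ker(q_\infty) \leq \colim_i B_i$. By Proposition~\ref{prop-colimit-structure} we can write $b = \overline{\jmath}_k(b')$ for some $k$ and some $b' \in B_k$. Then $\overline{\ell}_k(q_k(b')) = q_\infty(\overline{\jmath}_k(b')) = q_\infty(b) = 0$, so (again by Proposition~\ref{prop-colimit-structure}) there exists $m \geq k$ with $h_{km}(q_k(b')) = 0$. The commutativity of the diagram gives $h_{km} q_k = q_m g_{km}$, so $q_m(g_{km}(b')) = 0$. Now the column at position $m$ is exact, so $g_{km}(b') \in \image(p_m)$, and we can choose $a \in A_m$ with $p_m(a) = g_{km}(b')$. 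Setting $\alpha = \overline{\iota}_m(a) \in \colim_i A_i$, we compute
\[ p_\infty(\alpha) = p_\infty(\overline{\iota}_m(a)) = \overline{\jmath}_m(p_m(a)) = \overline{\jmath}_m(g_{km}(b')) = \overline{\jmath}_k(b') = b, \]
so $b \in \image(p_\infty)$, as required.

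I do not expect any real obstacle: the whole argument is a direct translation of the exactness hypothesis through the concrete description of colimits. The only points that need to be invoked carefully are the universal property of the colimit (to get $q_\infty p_\infty = 0$ and to define $p_\infty$ on the chosen representative), the commutativity of the given diagram (to move between $h_{km} q_k$ and $q_m g_{km}$, and between $p_\infty \overline{\iota}_m$ and $\overline{\jmath}_m p_m$), and Proposition~\ref{prop-colimit-structure} applied in both the $\colim_i B_i$ direction (to choose a representative $b'$) and the $\colim_i C_i$ direction (to advance the index $k$ to $m$ so that $g_{km}(b')$ actually lands in $\image(p_m)$ rather than merely in the kernel of $q_k$ up to a later stage).
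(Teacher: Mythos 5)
Your proof is correct and follows essentially the same approach as the paper: express elements of the colimit via $\overline{\jmath}_k$, use Proposition~\ref{prop-colimit-structure} to advance the index to some $m$ where $h_{km}(q_k(b'))$ actually vanishes, then invoke exactness of the $m$th column. The only cosmetic difference is that you establish $q_\infty p_\infty = 0$ via the universal property of the colimit, while the paper chases an element; both are equally valid one-liners.
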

\begin{proof}
 First, any element $a\in\colim_iA_i$ has the form $a=\ib_n(a')$ for
 some $n$ and $a'$.  We can then chase $a'$ around the diagram 
 \begin{center}
  \begin{tikzcd}
   A_n \arrow[r,"\ib_n"] \arrow[d,"p_n"'] & 
   \colim_iA_i \arrow[d,"p_\infty"] \\
   B_n \arrow[r,"\jb_n"] \arrow[d,"q_n"'] & 
   \colim_iA_i \arrow[d,"q_\infty"] \\
   C_n \arrow[r,"\kb_n"]  & 
   \colim_iA_i
  \end{tikzcd}
 \end{center}  
 to see that $q_\infty(p_\infty(a))=0$.  Conversely, suppose we have
 an element $b\in\ker(q_\infty)$.  We then have $b=\jb_n(b')$ for some
 $n$ and some $b'\in B_n$.  We then have
 $\kb_n(q_n(b'))=q_\infty(\jb_n(b'))=q_\infty(b)=0$, so
 $h_{n,m}(q_n(b'))=0$ for some $m\geq n$.  Now
 $h_{n,m}(q_n(b'))=q_m(g_{n,m}(b'))$, so
 $g_{n,m}(b')\in\ker(q_m)=\image(p_m)$, so we can find $a'\in A_m$
 with $p_m(a')=g_{n,m}(b')$.  Now put $a=\ib_m(a')$.  We find that 
 \[ p_\infty(a)=\jb_m(p_m(a'))=\jb_m(g_{n,m}(b'))=\jb_n(b')=b, \]
 so $b\in\image(p_\infty)$.  The claim follows.
\end{proof}

\begin{proposition}\label{prop-colimit-cofinal}
 Suppose we have a sequence
 \[ A_0 \xra{f_0} A_1 \xra{f_1} A_2 \xra{f_2} A_3 \xra{f_3} \dotsb \]
 and a nondecreasing function $u\:\N\to\N$ such that $u(i)\to\infty$
 as $i\to\infty$.  Put 
 \[ g_i = f_{u(i),u(i+1)} = 
     (A_{u(i)} \xra{f_{u(i)}} A_{u(i)+1} \xra{f_{u(i)+1}} \dotsb 
       \xra{f_{u(i+1)-1}} A_{u(i+1)}),
 \]
 so we have a sequence
 \[ A_{u(0)} \xra{g_0} A_{u(1)} \xra{g_1} A_{u(2)}
     \xra{g_2} A_{u(3)} \xra{g_3} \dotsb
 \]
 Then there is a canonical isomorphism $\colim_jA_{u(j)}=\colim_iA_i$.
\end{proposition}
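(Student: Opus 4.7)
The plan is to use the universal property of colimits combined with Corollary~\ref{cor-cone-universal}. First I would observe that the maps $\ib_{u(j)}\:A_{u(j)}\to\colim_i A_i$ form a cone for the subsequence: since $\ib_k = \ib_m f_{k,m}$ whenever $k\leq m$, applying this with $k=u(j)$ and $m=u(j+1)$ gives $\ib_{u(j)}=\ib_{u(j+1)}f_{u(j),u(j+1)}=\ib_{u(j+1)}g_j$. By Remark~\ref{rem-cocone} there is therefore a unique homomorphism
\[ \phi\:\colim_j A_{u(j)} \to \colim_i A_i \]
with $\phi\circ\jb_j=\ib_{u(j)}$ for all $j$, where $\jb_j$ denotes the canonical map into $\colim_j A_{u(j)}$. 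The task is to show $\phi$ is an isomorphism.

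Next I would apply Corollary~\ref{cor-cone-universal} to the cone $\{\ib_{u(j)}\}$. For surjectivity, take $b\in\colim_i A_i$. By Proposition~\ref{prop-colimit-structure}, $b=\ib_k(a)$ for some $k$ and $a\in A_k$. Because $u(i)\to\infty$, we can choose $j$ with $u(j)\geq k$, and then $b=\ib_k(a)=\ib_{u(j)}(f_{k,u(j)}(a))$, which lies in the image of $\ib_{u(j)}$. Since $\colim_i A_i$ equals the union of the images of the maps $\ib_{u(j)}$, part~(a) of the corollary gives surjectivity of $\phi$.

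For injectivity, suppose $\ib_{u(j)}(a)=0$ for some $a\in A_{u(j)}$. Proposition~\ref{prop-colimit-structure} (applied to the original sequence) yields some $m\geq u(j)$ with $f_{u(j),m}(a)=0$. Using $u(i)\to\infty$ again, pick $j'\geq j$ with $u(j')\geq m$; then
\[ g_{j,j'}(a) = f_{u(j),u(j')}(a) = f_{m,u(j')}\bigl(f_{u(j),m}(a)\bigr) = 0, \]
where $g_{j,j'}=g_{j'-1}\circ\dotsb\circ g_j$. This verifies the hypothesis of Corollary~\ref{cor-cone-universal}(b), so $\phi$ is injective.

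There is no real obstacle here; the only point that requires care is keeping track of the two different colimit constructions and making sure that the cofinality condition ``$u(i)\to\infty$'' is used in both halves of the argument (to jump past any given index $k$ or $m$ in the original sequence). Once that is done, the isomorphism is the canonical map produced by the universal property, so the statement ``canonical'' in the proposition is automatic.
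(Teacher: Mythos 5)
Your proof is correct, and it takes a genuinely different route from the paper.  Both arguments begin identically: the maps $\ib_{u(j)}$ form a cone for the subsequence, yielding a canonical map $\phi\:\colim_jA_{u(j)}\to\colim_iA_i$ by the universal property.  From there the paper constructs an explicit inverse $q\:\colim_iA_i\to\colim_jA_{u(j)}$: for each $n$ it chooses some $k$ with $u(k)\geq n$, sets $q_n=\jb_k f_{n,u(k)}$, checks that $q_n$ is independent of the choice of $k$, verifies the cone condition, and then confirms that $pq$ and $qp$ are identities.  You instead invoke Corollary~\ref{cor-cone-universal} to check surjectivity and injectivity of $\phi$ directly from the structure of $\colim_iA_i$ given by Proposition~\ref{prop-colimit-structure}.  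This avoids the bookkeeping around the auxiliary choice of $k$ in the definition of $q_n$ and is somewhat shorter; what it gives up is the explicit description of the inverse map, which the paper's proof produces as a by-product.  Both uses of the cofinality hypothesis ``$u(i)\to\infty$'' in your argument (for surjectivity and for injectivity) are correctly placed, and the small point of ensuring $j'\geq j$ when choosing $u(j')\geq m$ is handled properly since $u$ is nondecreasing.
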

\begin{proof}
 Let $\ib_n\:A_n\to\colim_iA_i$ and
 $\jb_n\:A_{u(n)}\to\colim_jA_{u(j)}$ be the usual maps.  As
 $\ib_n=\ib_{n+1}f_n$ for all $n$ we find by induction that
 $\ib_n=\ib_mf_{n,m}$ for all $n\leq m$.  By applying this to the pair
 $u(k)\leq u(k+1)$, we see that $\ib_{u(k)}=\ib_{u(k+1)}g_k$, so the
 maps $\ib_{u(k)}$ form a cone for the sequence
 $\{A_{u(j)}\}_{j\in\N}$, so there is a unique map
 $p\:\colim_jA_{u(j)}\to\colim_iA_i$ with $p\jb_k=\ib_{u(k)}$ for all
 $k$.  In the opposite direction, suppose we have $n\in\N$.  As
 $u(j)\to\infty$ as $j\to\infty$, we can choose $k$ such that
 $n\leq u(k)$, and form the composite 
 \[ q_{nk} = (A_n\xra{f_{n,u(k)}}A_{u(k)}\xra{\jb_k}\colim_jA_{u(j)}).
 \]
 As $\jb_k=\jb_{k+1}g_k=\jb_{k+1}f_{u(k),u(k+1)}$ and
 $f_{u(k),u(k+1)}f_{n,u(k)}=f_{n,u(k+1)}$ we see that
 $q_{n,k}=q_{n,k+1}$.  Thus $q_{nk}$ is independent of $k$, so we can
 denote it by $q_n$.  We also find that $q_n=q_{n+1}f_n$, so the maps
 $q_n$ form a cone for the sequence $\{A_i\}_{i\in\N}$, so there is a
 unique map $q\:\colim_iA_i\to\colim_jA_{u(j)}$ with $q\ib_n=q_n$ for
 all $n$.  

 Now note that any element $a\in\colim_iA_i$ has the form $a=\ib_n(a')$
 for some $n$ and $a'\in A_n$.  If we choose $k$ with $u(k)\geq n$ we
 have 
 \[ pq(a) = pq_n(a') = p\jb_k f_{n,u(k)}(a') = 
     \ib_{u(k)} f_{n,u(k)}(a')= \ib_n(a') = a,
 \]
 so $pq$ is the identity.  A similar argument shows that $qp$ is the
 identity. 
\end{proof}

\begin{proposition}\label{prop-colimit-fubini}
 Suppose we have a commutative diagram 
 \begin{center}
  \begin{tikzcd}
   A_{00} \arrow[r,"f_{00}"] \arrow[d,"g_{00}"'] &
   A_{01} \arrow[r,"f_{01}"] \arrow[d,"g_{01}"'] &
   A_{02} \arrow[r,"f_{02}"] \arrow[d,"g_{02}"'] &
   A_{03} \arrow[r,"f_{03}"] \arrow[d,"g_{03}"'] &
   \dotsb \\
   A_{10} \arrow[r,"f_{10}"] \arrow[d,"g_{10}"'] &
   A_{11} \arrow[r,"f_{11}"] \arrow[d,"g_{11}"'] &
   A_{12} \arrow[r,"f_{12}"] \arrow[d,"g_{12}"'] &
   A_{13} \arrow[r,"f_{13}"] \arrow[d,"g_{13}"'] &
   \dotsb \\
   A_{20} \arrow[r,"f_{20}"] \arrow[d,"g_{20}"'] &
   A_{21} \arrow[r,"f_{21}"] \arrow[d,"g_{21}"'] &
   A_{22} \arrow[r,"f_{22}"] \arrow[d,"g_{22}"'] &
   A_{23} \arrow[r,"f_{23}"] \arrow[d,"g_{23}"'] &
   \dotsb \\
   A_{30} \arrow[r,"f_{30}"] \arrow[d,"g_{30}"'] &
   A_{31} \arrow[r,"f_{31}"] \arrow[d,"g_{31}"'] &
   A_{32} \arrow[r,"f_{32}"] \arrow[d,"g_{32}"'] &
   A_{33} \arrow[r,"f_{33}"] \arrow[d,"g_{33}"'] &
   \dotsb \\
   \dotsb & \dotsb & \dotsb & \dotsb 
  \end{tikzcd}
 \end{center} 
 and thus sequences
 \[ \colim_jA_{0j} \xra{g_{0\infty}}
    \colim_jA_{1j} \xra{g_{1\infty}}
    \colim_jA_{2j} \xra{g_{2\infty}}
    \colim_jA_{3j} \xra{g_{3\infty}} \dotsb
 \]
 and
 \[ \colim_iA_{i0} \xra{f_{\infty 0}}
    \colim_iA_{i1} \xra{f_{\infty 1}}
    \colim_iA_{i2} \xra{f_{\infty 2}}
    \colim_iA_{i3} \xra{f_{\infty 3}} \dotsb
 \]
 Suppose we also put 
 \[ h_i = g_{i,j+1}f_{ij} = f_{i+1,j}g_{ij} \: A_{ii} \to A_{i+1,i+1}, \]
 giving a third sequence 
 \[ A_{00} \xra{h_0} A_{11} \xra{h_1} A_{22}
     \xra{h_2} A_{33} \xra{h_3} \dotsb
 \]
 Then there are canonical isomorphisms
 \[ \colim_i\colim_j A_{ij} \simeq
    \colim_iA_{ii} \simeq 
    \colim_j\colim_i A_{ij}.
 \]
\end{proposition}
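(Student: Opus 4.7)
The strategy is to characterise both iterated colimits and the diagonal colimit by the same universal property. Call a \emph{double cone} on the grid a group $V$ together with homomorphisms $v_{ij}\colon A_{ij}\to V$ satisfying $v_{i,j+1}\circ f_{ij}=v_{ij}$ and $v_{i+1,j}\circ g_{ij}=v_{ij}$ for all $i,j\geq 0$. Two applications of Remark~\ref{rem-cocone} show that $\colim_i\colim_j A_{ij}$ is the initial double cone: given $V$, for each fixed $i$ the family $\{v_{ij}\}_j$ is a cone on the $i$-th row and yields $w_i\colon\colim_j A_{ij}\to V$; the $g$-compatibility then makes $\{w_i\}_i$ a cone on the column of row-colimits, producing a unique factorisation through $\colim_i\colim_j A_{ij}$. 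The symmetric statement identifies $\colim_j\colim_i A_{ij}$ as an initial double cone, so it suffices to show that $\colim_i A_{ii}$ is also one.

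Because every square in the grid commutes, for each pair $(i,j)\leq(i',j')$ in the product order all monotone lattice paths from $A_{ij}$ to $A_{i'j'}$ made of $f$'s and $g$'s define the same composite; denote it $F_{ij}^{i'j'}$. In particular $F_{kk}^{k+1,k+1}=h_k$. Write $\ib_k\colon A_{kk}\to\colim_i A_{ii}$ for the canonical maps and set $k(i,j):=\max(i,j)$; define
\[ \mu_{ij}\;:=\;\ib_{k(i,j)}\circ F_{ij}^{k(i,j),k(i,j)}\colon A_{ij}\to\colim_i A_{ii}. \]
A short case analysis on whether $i<j$, $i=j$, or $i>j$, using $F_{kk}^{k+1,k+1}=h_k$ together with $\ib_{k+1}\circ h_k=\ib_k$, verifies $\mu_{i,j+1}\circ f_{ij}=\mu_{ij}$ and $\mu_{i+1,j}\circ g_{ij}=\mu_{ij}$, so $\{\mu_{ij}\}$ makes $\colim_i A_{ii}$ into a double cone.

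For universality, let $V$ be any double cone. The restricted maps $v_{ii}\colon A_{ii}\to V$ satisfy $v_{i+1,i+1}\circ h_i=v_{i+1,i+1}\circ f_{i+1,i}\circ g_{ii}=v_{i+1,i}\circ g_{ii}=v_{ii}$, so they form a cone on the diagonal sequence and, by Remark~\ref{rem-cocone}, induce a unique $\psi\colon\colim_i A_{ii}\to V$ with $\psi\circ\ib_i=v_{ii}$. For arbitrary $(i,j)$, one computes $\psi\circ\mu_{ij}=v_{k(i,j),k(i,j)}\circ F_{ij}^{k(i,j),k(i,j)}$, and iterating the two double-cone identities collapses this to $v_{ij}$. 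Uniqueness of $\psi$ as a double-cone morphism is then automatic: any such map is forced on each $\ib_i(A_{ii})$, and these exhaust $\colim_i A_{ii}$ by Proposition~\ref{prop-colimit-structure}. The only real obstacle is the combinatorial bookkeeping around the $\max(i,j)$ case split; conceptually everything reduces to iterated use of the 1D universal property of a sequential colimit.
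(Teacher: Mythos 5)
Your proof is correct, and it takes a genuinely different route from the paper's. The paper constructs an explicit presentation: it forms $A_{++}=\bigoplus_{n,m}A_{nm}$ with relation subgroups $P_{nm}$ and $Q_{nm}$, identifies $\colim_i\colim_jA_{ij}$ and $\colim_j\colim_iA_{ij}$ with the common quotient $A_{++}/(\sum P+\sum Q)=:A_{\infty\infty}$, and then builds mutually inverse maps $p\:\colim_iA_{ii}\to A_{\infty\infty}$ and $q\:A_{\infty\infty}\to\colim_iA_{ii}$ by hand (with the verification $pq=1$, $qp=1$ left to the reader). You instead isolate the notion of a double cone and show that \emph{all three} objects satisfy the same initiality property, which makes the two canonical isomorphisms fall out simultaneously and symmetrically without any generator-and-relation bookkeeping. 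The trade-off is that the paper's version is more concrete (everything is an explicit element of a quotient of a direct sum, which can be useful for later computations), while yours is cleaner and more uniform but relies on readers being comfortable with universal-property reasoning; the main technical work you actually have to do is the $\max(i,j)$ case analysis showing that the diagonal colimit carries a double-cone structure, which you correctly flag as the only real bookkeeping burden. Both proofs are sound.
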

\begin{example}
 In conjunction with Proposition~\ref{prop-rational-colimit}, this
 will give 
 \[ A[\frac{1}{n}][\frac{1}{m}] =
    A[\frac{1}{nm}] =
    A[\frac{1}{m}][\frac{1}{n}]
 \]
\end{example}
\begin{proof}
 Put $A_{++}=\bigoplus_{n,m}A_{nm}$, and let
 $i_{nm}\:A_{nm}\to A_{++}$ be the canonical inclusion.  Put 
 \begin{align*}
  P_{nm} &= \image(i_{nm}-i_{n,m+1}f_{nm}\:A_{nm}\to A_{++}) \\
  Q_{nm} &= \image(i_{nm}-i_{n+1,m}g_{nm}\:A_{nm}\to A_{++}).
 \end{align*}
 From the definitions we have
 \begin{align*}
  \bigoplus_n\colim_jA_{nj} &= A_{++}/\sum_{n,m}P_{nm} \\
  \bigoplus_m\colim_iA_{im} &= A_{++}/\sum_{n,m}Q_{nm}.
 \end{align*}
 It follows easily that 
 \[ \colim_i\colim_j A_{ij} = 
     A_{++}/\left(\sum_{n,m}P_{nm} + \sum_{n,m}Q_{nm}\right) = 
    \colim_j\colim_i A_{ij}.
 \]
 We write $A_{\infty\infty}$ for this group, and we write $\ib_{nm}$
 for the obvious map $A_{nm}\to A_{\infty\infty}$.  By construction,
 the following diagram commutes:
 \begin{center}
  \begin{tikzcd}
  A_{nm} \arrow[r,"f_{nm}"] \arrow[d,"g_{nm}"'] \arrow[dr,"\ib_{nm}"] &
  A_{n,m+1} \arrow[d,"\ib_{n,m+1}"] \\
  A_{n+1,m} \arrow[r,"\ib_{n+1,m}"'] &
  A_{\infty\infty}.
 \end{tikzcd}
\end{center}

 It follows that $\ib_{kk}=\ib_{k+1,k+1}h_k$, so the maps $\ib_{kk}$
 form a cone for the sequence $\{A_{ii}\}_{i\in\N}$.  Thus, if we
 write $\jb_k$ for the usual map $A_{kk}\to\colim_iA_{ii}$, we find
 that there is a unique map $p\:\colim_iA_{ii}\to A_{\infty\infty}$
 with $p\jb_k=\ib_{kk}$ for all $k$.  In the opposite direction,
 suppose we have $n,m,k\in\N$ with $n,m\leq k$.  By composing $f$'s
 and $g$'s in various orders we can form a number of maps
 $A_{nm}\to A_{kk}$ but they are all the same because the original
 diagram is commutative.  We write $u_{nmk}$ for this map, and put
 $q_{nmk}=\jb_k u_{nmk}\:A_{nm}\to\colim_iA_{ii}$.  Now
 $\jb_k=\jb_{k+1}h_k$ and $h_ku_{nmk}=u_{n,m,k+1}$ so
 $q_{n,m,k}=q_{n,m,k+1}$.  Thus $q_{nmk}$ is independent of $k$
 (provided that $k\geq\max(n,m)$) so we can denote it by $q_{nm}$.  We
 can now put the maps $q_{nm}$ together to give a map
 $q'\:A_{++}\to\colim_iA_{ii}$ with $q'i_{nm}=q_{nm}$.  When
 $k>\max(n,m)$ we have $u_{nmk}=u_{n,m+1,k}f_{nm}=u_{n+1,m,k}g_{nm}$,
 and using this we see that $q'(P_{nm})=0=q'(Q_{nm})$.  There is thus
 an induced map $A_{\infty\infty}\to\colim_iA_{ii}$ with
 $q\ib_{nm}=q_{nm}$.  We leave it to the reader to check that $q$ is
 inverse to $p$. 
\end{proof}

\section{Limits and derived limits of towers}
\label{sec-towers}

\begin{definition}\label{defn-tower}
 A \emph{tower} is a diagram of the form 
 \[ B_0 \xla{f_0} B_1 \xla{f_1} B_2 \xla{f_2} B_3 \xla{f_3} \dotsb \]
 Given such a tower an integers $i\geq j$, we write $f_{ij}$ for the
 composite 
 \[ B_i \xra{f_{i-1}} B_{i-1} \xra{f_{i-2}} \dotsb \xra{f_j} B_j. \]
 Note that $f_{ii}$ is the identity map, and $f_{i+1,i}=f_i$, and
 $f_{jk}f_{ij}=f_{ik}$ whenever $i\geq j\geq k$.
\end{definition}

\begin{definition}\label{defn-invlim}
 Suppose we have a tower as above.  The \emph{limit} (or \emph{inverse
 limit}) of the tower is the group 
 \[ \invlim_i B_i =
      \{a\in\prod_iB_i \st a_i=f_i(a_{i+1}) \text{ for all } i\}.
 \]
 Equivalently, if we define $D\:\prod_iB_i\to\prod_iB_i$ by
 $D(a)_i=a_i-f_i(a_{i+1})$, then $\invlim_iB_i=\ker(D)$.  We also
 write $\invlim{}_i^1B_i$ for the cokernel of $D$.  We define
 $p_n\:\invlim_iB_i\to B_n$ by $p_n(a)=a_n$, so $p_n=f_np_{n+1}$.
\end{definition}
\begin{remark}\label{rem-cone}
 The limit can also be characterised by a universal property, as
 follows.  A \emph{cone} for the tower is a group $A$ with a
 collection of maps $u_k\:A\to B_k$ such that $u_k=f_ku_{k+1}$ for all
 $k$.  Tautologically, the maps $p_k\:\invlim_iB_i\to B_k$ form a
 cone.  Moreover, for any cone $\{A\xra{u_k}B_k\}_{k\in\N}$ we can
 define $u_\infty\:A\to\invlim_iB_i$ by 
 \[ u_\infty(a) = (u_0(a),u_1(a),u_2(a),\dotsc), \]
 and this is the unique map with $p_ku_\infty=u_k$ for all $k$. 
\end{remark}

\begin{example}\label{eg-invlim-intersect}
 Suppose we have a chain of subgroups 
 \[ B_0 \geq B_1 \geq B_2 \geq \dotsb \]
 and we take the maps $f_i$ to be the inclusion maps.  Then we see
 from the definitions that 
 \[ \invlim_iB_i = 
     \{(b,b,b,\dotsb)\st b\in\bigcap_iB_i\} 
      \simeq\bigcap_iB_i.
 \]
\end{example}
\begin{example}\label{eg-invlim-projections}
 Suppose we have a system of groups $C_i$ (for $i\in\N$).  Put 
 $B_k=\prod_{i=0}^kC_i$ and let $f_k\:B_{k+1}\to B_k$ be the
 obvious projection map.  If $b\in\invlim_kB_k$ then
 $b_k\in\prod_{i=0}^kC_i$ so $b_{kk}\in C_k$.  We can thus define
 \[ d \: \invlim_kB_k \to \prod_kC_k \]
 by 
 \[ d(b) = (b_{00},b_{11},b_{22},\dotsc). \]
 By the definition of $\invlim_kB_k$, we have $b_{kj}=b_{jj}$ for
 $j\leq k$.  Using this, we see that $d$ is an isomorphism. 
\end{example}
\begin{example}\label{eg-invlim-zero}
 Suppose we have a tower in which the groups are arbitrary but the
 maps are all zero.  Then the map $D$ is the identity, so both
 $\invlim$ and $\invlim^1$ are zero.
\end{example}
\begin{example}\label{eg-lim-adjunction}
 Suppose we have a sequence
 \[ A_0 \xra{f_0} A_1 \xra{f_1} A_2 \xra{f_2} A_3 \xra{f_3} \dotsb,
 \]
 and another group $B$.  This gives us a tower
 \[ \Hom(A_0,B) \xla{f_0^*} 
    \Hom(A_1,B) \xla{f_1^*} 
    \Hom(A_2,B) \xla{f_2^*} 
    \Hom(A_3,B) \xla{f_3^*} \dotsb
 \]
 The elements of $\invlim_i\Hom(A_i,B)$ are precisely the cones from
 the sequence $\{A_i\}_{i\in\N}$ to $B$, which biject with
 homomorphisms from $\colim_iA_i$ to $B$.  In other words, we have
 \[ \invlim_i\Hom(A_i,B) = \Hom(\colim_iA_i,B). \] 
\end{example}
\begin{example}\label{eg-padic-tower}
 Fix a prime $p$.  We then have a tower
 \[ \Z/p \xla{} \Z/p^2 \xla{} \Z/p^3 \xla{} \Z/p^4 \xla{} \dotsb. \]
 The inverse limit is called \emph{the ring of $p$-adic integers}, and
 is denoted by $\Zp$.  We will investigate it in more detail in
 Section~\ref{sec-completion}.   We can also form a tower
 \[ \Z/0! \xla{} \Z/1! \xla{} \Z/2! \xla{} \Z/3! \xla{} \dotsb \]
 The inverse limit is called \emph{the profinite completion of $\Z$},
 and is denoted by $\widehat{\Z}$.  Using the Chinese Remainder
 Theorem (Proposition~\ref{prop-chinese}) one can show that
 $\widehat{\Z}=\prod_p\Zp$.  For yet another description, recall that 
 \[ \QZ = \bigcup_n(\QZ)[n!] = \colim_n(\QZ)[n!], \]
 so 
 \[ \End(\QZ) = \invlim_n\Hom((\QZ)[n!],\QZ). \]
 Now we have a map $m\:\Z\to\End(\QZ)$ defined by $m(k)=k.1_{\QZ}$,
 and this fits in a commutative diagram
 \begin{center}
  \begin{tikzcd}
      \Z \arrow[rr,"m"] \arrow[twoheadrightarrow,d] && \End(\QZ) \arrow[d,"\text{restrict}"] \\
      \Z/n! \arrow[rr,"m_n"'] && \Hom((\QZ)[n!],\QZ).
     \end{tikzcd}
    \end{center}
    
 Using the fact that $(\QZ)[n!]$ is generated by $(1/n!)+\Z$ we see
 that $m_n$ is an isomorphism.  By passing to inverse limits, we
 obtain an isomorphism $m_\infty\:\widehat{\Z}\to\End(\QZ)$.
\end{example}

\begin{proposition}
 Suppose we have a commutative diagram as shown
 \begin{center}
  \begin{tikzcd}
   A_0 \arrow[d,"p_0"'] &
   A_1 \arrow[d,"p_1"'] \arrow[l,"f_0"'] &
   A_2 \arrow[d,"p_2"'] \arrow[l,"f_1"'] &
   A_3 \arrow[d,"p_3"'] \arrow[l,"f_2"'] &
   \dotsb           \arrow[l,"f_3"'] \\
   B_0 &
   B_1              \arrow[l,"g_0"] &
   B_2              \arrow[l,"g_1"] &
   B_3              \arrow[l,"g_2"] &
   \dotsb           \arrow[l,"g_3"]
  \end{tikzcd}
 \end{center}  
 and we define $p=\prod_ip_i\:\prod_iA_i\to\prod_iB_i$.  Then the
 central square below commutes, so there are induced maps $p_\infty$
 and $p^1_\infty$ as shown.
 \begin{center}
  \begin{tikzcd}
   \invlim_iA_i \arrow[d,"p_\infty"'] \arrow[rightarrowtail,r] &
   \prod_iA_i \arrow[d,"p"'] \arrow[r,"D"] &
   \prod_iA_i \arrow[d,"p"] \arrow[twoheadrightarrow,r] &
   \invlim{}_i^1A_i \arrow[d,"p^1_\infty"] \\
   \invlim_iB_i \arrow[rightarrowtail,r] &
   \prod_iB_i \arrow[r,"D"'] &
   \prod_iB_i \arrow[twoheadrightarrow,r] &
   \invlim_i{}^1B_i.
  \end{tikzcd}
 \end{center}
 
\end{proposition}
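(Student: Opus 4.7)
The plan is to verify the commutativity of the central square by a direct elementwise computation, and then obtain the two induced maps as the standard restriction and quotient induced by any map of short exact sequences. Everything in sight is completely formal; there is no genuine obstacle.

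First I would unpack the definitions. For $a\in\prod_iA_i$ we have, by definition of the differential on the $A$-tower,
\[ (D_Ap\ ?)(\text{vs.}) \; p D_A \]
more precisely $(D_Ap\text{-nothing})$; let me just compute both composites at index $i$. We have
\[ (p\circ D_A(a))_i = p_i(a_i - f_i(a_{i+1})) = p_i(a_i) - p_i f_i(a_{i+1}), \]
while
\[ (D_B\circ p(a))_i = p_i(a_i) - g_i(p_{i+1}(a_{i+1})). \]
The commutativity hypothesis $p_i f_i = g_i p_{i+1}$ for every $i$ makes these agree, so $p\circ D_A=D_B\circ p$, which is exactly the statement that the central square commutes.

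Given this, the induced maps are immediate. Since $p$ intertwines $D_A$ and $D_B$, it sends $\ker(D_A)$ into $\ker(D_B)$, and since $\invlim_iA_i=\ker(D_A)$ and $\invlim_iB_i=\ker(D_B)$ by Definition~\ref{defn-invlim}, the restriction of $p$ gives the desired map $p_\infty\colon\invlim_iA_i\to\invlim_iB_i$. Dually, the composite
\[ \prod_iA_i \xra{p} \prod_iB_i \xra{} \prod_iB_i/\image(D_B) = \invlim{}^1_iB_i \]
vanishes on $\image(D_A)$, because for any $a\in\prod_iA_i$ the element $p(D_A(a)) = D_B(p(a))$ lies in $\image(D_B)$. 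Hence there is a unique induced map $p^1_\infty\colon\invlim{}^1_iA_i\to\invlim{}^1_iB_i$ making the right-hand square commute, as required. Both constructions are functorial in the evident sense, by the uniqueness parts of the universal properties of kernel and cokernel.
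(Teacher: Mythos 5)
Your proof is correct and takes essentially the same approach as the paper, which disposes of the proposition with the one-line remark that it is ``clear from the definitions''; you have simply spelled out the elementwise verification of $p\circ D_A = D_B\circ p$ using $p_if_i = g_ip_{i+1}$, and then invoked the universal properties of kernel and cokernel. The only issue is cosmetic: the fragment ``$(D_Ap\ ?)(\text{vs.}) \; p D_A$ \dots\ more precisely $(D_Ap\text{-nothing})$'' reads as stray thinking-aloud text and should be deleted, since the genuine computation that follows it is complete and correct on its own.
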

\begin{proof}
 Clear from the definitions.
\end{proof}

\begin{proposition}\label{prop-limit-exact}
 Suppose we have a commutative diagram as shown, in which all the
 columns are short exact:
 \begin{center}
  \begin{tikzcd}
   A_0 \arrow[rightarrowtail,d,"p_0"'] &
   A_1 \arrow[rightarrowtail,d,"p_1"'] \arrow[l,"f_0"'] &
   A_2 \arrow[rightarrowtail,d,"p_2"'] \arrow[l,"f_1"'] &
   A_3 \arrow[rightarrowtail,d,"p_3"'] \arrow[l,"f_2"'] &
   \dotsb            \arrow[l,"f_3"'] \\
   B_0 \arrow[twoheadrightarrow,d,"q_0"'] &
   B_1 \arrow[twoheadrightarrow,d,"q_1"'] \arrow[l,"g_0"'] &
   B_2 \arrow[twoheadrightarrow,d,"q_2"'] \arrow[l,"g_1"'] &
   B_3 \arrow[twoheadrightarrow,d,"q_3"'] \arrow[l,"g_2"'] &
   \dotsb            \arrow[l,"g_3"'] \\
   C_0 &
   C_1               \arrow[l,"h_0"] &
   C_2               \arrow[l,"h_1"] &
   C_3               \arrow[l,"h_2"] &
   \dotsb            \arrow[l,"h_3"]
  \end{tikzcd}
 \end{center}  
 Then there is an associated exact sequence
 \begin{center}
  \begin{tikzcd}
   \invlim_iA_i \arrow[rightarrowtail,r,"p_\infty"] &
   \invlim_iB_i \arrow[r,"q_\infty"] &
   \invlim_iC_i \arrow[r,"\dl"] &
   \invlim^1_iA_i \arrow[r,"p^1_\infty"] &
   \invlim^1_iB_i \arrow[twoheadrightarrow,r,"q^1_\infty"] &
   \invlim^1_iC_i
  \end{tikzcd}
 \end{center}  
\end{proposition}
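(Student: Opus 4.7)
The plan is to derive the six-term sequence as a direct application of the Snake Lemma (Proposition~\ref{prop-snake-lemma}) to the commutative diagram
\begin{center}
 \begin{tikzcd}
  \prod_i A_i \arrow[rightarrowtail,r] \arrow[d,"D_A"'] &
  \prod_i B_i \arrow[twoheadrightarrow,r] \arrow[d,"D_B"'] &
  \prod_i C_i \arrow[d,"D_C"] \\
  \prod_i A_i \arrow[rightarrowtail,r] &
  \prod_i B_i \arrow[twoheadrightarrow,r] &
  \prod_i C_i,
 \end{tikzcd}
\end{center}
whose horizontal maps are the component-wise inclusions and surjections induced from $p_i$ and $q_i$, and whose vertical maps are the differences $D_X(x)_i = x_i - (\text{tower map})(x_{i+1})$ for $X = A,B,C$.

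First I would check the diagram: the rows are short exact because products of short exact sequences are short exact (each product $\prod_i A_i \to \prod_i B_i \to \prod_i C_i$ is exact in each coordinate, and the kernel/image/surjectivity relations are inherited coordinate-wise). Commutativity of each of the two squares is a direct unwinding: applying $D_B \circ (\prod p_i)$ to $(a_i)$ yields $(p_i(a_i) - g_i(p_{i+1}(a_{i+1})))_i$, which equals $(p_i(a_i) - p_i(f_i(a_{i+1})))_i = (\prod p_i)(D_A(a))$ by the given commutativity $g_i p_{i+1} = p_i f_i$; the other square is identical.

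Next, I would apply the Snake Lemma to this diagram, producing an exact sequence
\[ 0 \to \ker(D_A) \to \ker(D_B) \to \ker(D_C) \xrightarrow{\delta} \cok(D_A) \to \cok(D_B) \to \cok(D_C) \to 0. \]
By the definitions $\invlim_i X_i = \ker(D_X)$ and $\invlim{}^1_i X_i = \cok(D_X)$, the outer maps in this sequence are exactly $p_\infty$, $q_\infty$, $p^1_\infty$, $q^1_\infty$, and the connecting map $\delta$ provides the desired map $\invlim_i C_i \to \invlim{}^1_i A_i$. The statement to be proved drops the leading $0$ and the trailing $\to 0$, so it follows immediately from what the Snake Lemma gives.

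There is essentially no obstacle: the only minor points to verify are that products preserve short exactness (routine, as in Remark~\ref{rem-exact-omni}) and that the vertical maps $D_X$ really do fit into a commutative diagram with the horizontal product maps (routine from the hypothesis that the original diagram commutes). Everything else is a mechanical invocation of Proposition~\ref{prop-snake-lemma} together with the definitions of $\invlim$ and $\invlim^1$.
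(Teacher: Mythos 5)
Your proposal is correct and takes exactly the same approach as the paper: applying the Snake Lemma to the diagram of products with the vertical $D$ maps and the horizontal $\prod p_i$, $\prod q_i$ maps, then identifying the kernels and cokernels with $\invlim$ and $\invlim^1$. The paper's proof is essentially a one-line version of what you wrote.
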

\begin{proof}
 Apply the Snake Lemma to the diagram
 \begin{center}
  \begin{tikzcd}
   \prod_iA_i \arrow[rightarrowtail,r,"\prod_ip_i"] \arrow[d,"D"'] &
   \prod_iB_i \arrow[twoheadrightarrow,r,"\prod_iq_i"] \arrow[d,"D"'] &
   \prod_iC_i                      \arrow[d,"D"] \\
   \prod_iA_i \arrow[rightarrowtail,r,"\prod_ip_i"'] &
   \prod_iB_i \arrow[twoheadrightarrow,r,"\prod_iq_i"'] &
   \prod_iC_i
  \end{tikzcd}
 \end{center}
 in which the rows are easily seen to be short exact. 
\end{proof}

In practice the groups $\invlim^1_iA_i$ are usually either zero, or
enormous and untractable.  We will thus be very interested in results
that force them to be zero.  

\begin{proposition}\label{prop-surjective-tower}
 Suppose we have a tower in which the maps $f_i\:A_{i+1}\to A_i$ are
 all surjective.  Then $\invlim^1_iA_i=0$, and the projection maps
 $p_k\:\invlim_iA_i\to A_k$ are all surjective.  
\end{proposition}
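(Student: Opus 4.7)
The plan is to unpack both claims directly from the definition of $\invlim^1$ via the map $D\:\prod_iA_i\to\prod_iA_i$, $D(a)_i = a_i - f_i(a_{i+1})$, whose kernel and cokernel are $\invlim_iA_i$ and $\invlim^1_iA_i$ respectively. Thus $\invlim^1_iA_i=0$ is equivalent to $D$ being surjective, and surjectivity of $p_k$ is the statement that every $c\in A_k$ can be lifted to a compatible sequence.

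For the first claim, I would fix an arbitrary element $b=(b_0,b_1,b_2,\dotsc)\in\prod_iA_i$ and construct $a$ with $D(a)=b$ by recursion on $i$. Start by setting $a_0=0\in A_0$. Given $a_i$, we need $a_{i+1}$ to satisfy $a_i-f_i(a_{i+1})=b_i$, i.e.\ $f_i(a_{i+1})=a_i-b_i$; since $f_i\:A_{i+1}\to A_i$ is surjective, such an $a_{i+1}$ exists (make an arbitrary choice, invoking the Axiom of Choice once over all $i$). The resulting $a\in\prod_iA_i$ satisfies $D(a)=b$ by construction, so $D$ is surjective and $\invlim^1_iA_i=0$.

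For the second claim, fix $k$ and $c\in A_k$. I would build $a\in\invlim_iA_i$ with $a_k=c$ in two steps: downwards, put $a_i=f_{k,i}(c)$ for $0\le i\le k$, which automatically satisfies $a_i=f_i(a_{i+1})$ for $i<k$ because $f_if_{k,i+1}=f_{k,i}$; upwards, for $i\ge k$, given $a_i$, use surjectivity of $f_i$ to pick $a_{i+1}\in A_{i+1}$ with $f_i(a_{i+1})=a_i$. The resulting sequence lies in $\invlim_iA_i$ and has $k$th coordinate equal to $c=f_{k,k}(c)$.

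There is no serious obstacle: the whole argument is just a one-step-at-a-time recursive lifting, made possible precisely because each $f_i$ is surjective. The only subtlety worth flagging is that both constructions require infinitely many simultaneous choices and so implicitly use the Axiom of Choice (guaranteed already by Theorem~\ref{thm-well-order}'s ambient setting), but no other nontrivial ingredient is needed.
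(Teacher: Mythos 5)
Your proof is correct and follows essentially the same argument as the paper: a one-step-at-a-time recursive lifting using surjectivity of each $f_i$, first to show $D$ is onto (hence $\invlim^1 = 0$), and then to extend an arbitrary $c\in A_k$ forwards by lifting and backwards by pushing along the structure maps. The only differences are notational.
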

\begin{proof}
 Consider an element $a\in\prod_iA_i$.  We will choose elements
 $b_k\in A_k$ recursively as follows: we start with $b_0=0$, and then
 take $b_n$ to be any element with $f_{n-1}(b_n)=b_{n-1}-a_{n-1}$.
 These elements $b_k$ give an element $b\in\prod_iA_i$ with $D(b)=a$,
 so $D$ is surjective and $\invlim^1_iA_i=\cok(D)=0$.

 Now suppose we have an element $a\in A_k$.  Define $c_i=f_{k,i}(a)$
 for all $i\leq k$.  Then define $c_i\in A_i$ recursively for $i>k$ by 
 choosing $c_i$ to be any element with $f_{i-1}(c_i)=c_{i-1}$.  This
 gives an element $c\in\invlim_iA_i$ with $p_k(c)=a$.
\end{proof}

\begin{definition}\label{defn-nilpotent-tower}
 We say that a tower $A_0\xla{f_0}A_1\xla{f_1}\dotsb$ is
 \emph{nilpotent} if for all $i$ there exists $j>i$ such that
 $f_{ji}=0\:A_j\to A_i$.
\end{definition}

\begin{proposition}\label{prop-nilpotent-tower}
 For a nilpotent tower as above, we have
 $\invlim_iA_i=\invlim_i^1A_i=0$. 
\end{proposition}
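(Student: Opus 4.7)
The plan is to handle the two vanishing statements separately, exploiting nilpotence in slightly different ways. For $\invlim_i A_i$, the argument is essentially immediate: any element $a \in \invlim_i A_i$ satisfies $a_i = f_{ji}(a_j)$ for all $j \geq i$ (by iterating the defining relation $a_i = f_i(a_{i+1})$). Fix $i$; nilpotence supplies some $j > i$ with $f_{ji} = 0$, and hence $a_i = f_{ji}(a_j) = 0$. Since $i$ was arbitrary, $a = 0$.

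For $\invlim^1_i A_i = \cok(D)$ the goal is to show that $D \colon \prod_i A_i \to \prod_i A_i$, given by $D(b)_i = b_i - f_i(b_{i+1})$, is surjective. Given $c \in \prod_i A_i$, I would like to recursively solve $b_i = c_i + f_i(b_{i+1})$; unrolling this recursion formally gives
\[ b_i = \sum_{k \geq 0} f_{i+k,i}(c_{i+k}), \]
where $f_{i,i}$ is the identity. The key point is that nilpotence makes this a \emph{finite} sum: choosing $n_i$ with $f_{i+n_i,i} = 0$, every composite $f_{i+k,i}$ for $k \geq n_i$ factors through $f_{i+n_i,i}$ and is therefore zero, so only the terms with $k < n_i$ can contribute. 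Thus $b_i$ is a well-defined element of $A_i$, and $b \in \prod_i A_i$.

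It then remains to verify that $D(b) = c$, which is a telescoping calculation:
\[ b_i - f_i(b_{i+1}) = \sum_{k \geq 0} f_{i+k,i}(c_{i+k})
    - \sum_{k \geq 0} f_{i+1+k,i}(c_{i+1+k}) = c_i, \]
using the identity $f_i \circ f_{i+1+k,i+1} = f_{i+1+k,i}$ and the fact that reindexing the second sum by $k \mapsto k-1$ cancels every term of the first sum except the $k = 0$ one. This shows $D$ is surjective, so $\invlim^1_i A_i = 0$.

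I expect no serious obstacle; the only subtlety is to observe that nilpotence converts the a priori infinite telescoping sum defining a preimage under $D$ into a legitimate finite sum in each coordinate, at which point the calculation is routine.
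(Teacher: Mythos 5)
Your proposal is correct and uses essentially the same idea as the paper: the finite sum $\sum_{k\geq 0} f_{i+k,i}(c_{i+k})$ (well-defined by nilpotence) is exactly the map $E$ the paper writes down, the only cosmetic difference being that the paper verifies $DE=ED=1$ in one stroke to get both vanishings at once, whereas you treat $\invlim=\ker(D)=0$ directly and then separately show $D$ is surjective.
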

\begin{proof}
 Define $E\:\prod_iA_i\to\prod_iA_i$ by 
 \[ E(a)_i = \sum_{j=i}^\infty f_{j,i}(a_j). \]
 Although the sum is formally infinite, the nilpotence hypothesis
 means that there are only finitely many nonzero terms, so the
 expression is meaningful.  It is then not hard to check that
 $DE=ED=1$, so the kernel and cokernel of $D$ are zero.
\end{proof}

\begin{definition}\label{defn-mittag-leffler}
 Consider a tower $A_0\xla{f_0}A_1\xla{f_1}\dotsb$, so for each $i$ we
 have a descending chain of subgroups
 \[ A_i \geq
     f_{i+1,i}(A_{i+1}) \geq
     f_{i+2,i}(A_{i+2}) \geq
     f_{i+3,i}(A_{i+3}) \geq \dotsb 
 \]
 We say that the tower is \emph{Mittag-Leffler} if for each $i$ there
 exists $j\geq i$ such that $f_{ki}(A_k)=f_{ji}(A_j)$ for all
 $k\geq j$ (so the above chain is eventually constant).
\end{definition}

\begin{example}\label{eg-mittag-leffler}
 Towers of surjections are Mittag-Leffler, as are nilpotent towers.
\end{example}

\begin{proposition}\label{prop-finite-ml}
 If all the groups $A_i$ are finite, then the tower is
 Mittag-Leffler.  Similarly, if the groups $A_i$ are
 finite-dimensional vector spaces over a field $K$, and the maps $f_i$
 are all $K$-linear, then the tower is Mittag-Leffler.
\end{proposition}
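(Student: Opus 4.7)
The plan is to exploit the observation already recorded in Definition~\ref{defn-mittag-leffler}: for each fixed $i$, the images $f_{ki}(A_k)$ (for $k \geq i$) form a descending chain of subgroups of $A_i$, and one only needs this chain to eventually stabilise. So the strategy is simply to argue that in each of the two settings, any descending chain of subobjects of $A_i$ must stabilise.

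First I would handle the finite case. Fix $i$, and consider the descending chain
\[ A_i \geq f_{i+1,i}(A_{i+1}) \geq f_{i+2,i}(A_{i+2}) \geq \dotsb \]
Since $A_i$ is finite, the orders $|f_{ki}(A_k)|$ form a weakly decreasing sequence of positive integers, which must therefore be eventually constant, say from index $j$ onwards. An inclusion $H' \leq H$ of finite groups with $|H'|=|H|$ forces $H'=H$, so $f_{ki}(A_k)=f_{ji}(A_j)$ for all $k \geq j$, which is exactly the Mittag-Leffler condition at $i$.

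For the vector space case, the same chain consists of $K$-subspaces of $A_i$, because each $f_{ki}$ is $K$-linear. Their dimensions form a weakly decreasing sequence bounded below by $0$ and above by $\dim_K(A_i)$, so the dimensions stabilise, and an inclusion of $K$-subspaces of the same dimension is an equality; this yields the Mittag-Leffler condition just as before.

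There is no real obstacle here: the proof is essentially a one-line appeal to the descending chain condition, valid in both categories. The only thing to be careful about is to note that $f_{ki}(A_k)$ is automatically a subgroup (or subspace) of $A_i$ for each $k$, so that the chain makes sense and stabilisation of the invariant (order in the finite case, dimension in the linear case) really does force equality of the images.
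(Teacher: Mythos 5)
Your proof is correct and follows the same approach as the paper: both arguments exploit that the orders (resp.\ dimensions) of the images $f_{ki}(A_k)$ form a non-increasing sequence of non-negative integers, hence stabilise, and that equality of order (resp.\ dimension) together with inclusion forces equality of the subgroups (resp.\ subspaces).
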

\begin{proof}
 In the first case, we just choose $j\geq i$ such that the order
 $|f_{ji}(A_j)|$ is as small as possible; it then follows that
 $f_{ki}(A_k)=f_{ji}(A_j)$ for $k\geq i$.  In the second case, use
 dimensions instead of orders.
\end{proof}

\begin{proposition}\label{prop-mittag-leffler}
 If $A$ is a Mittag-Leffler tower, we have $\invlim_i^1A_i=0$.
\end{proposition}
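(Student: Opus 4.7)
The plan is to reduce to the two cases already handled, namely towers of surjections (Proposition~\ref{prop-surjective-tower}) and nilpotent towers (Proposition~\ref{prop-nilpotent-tower}), by producing a short exact sequence of towers and invoking Proposition~\ref{prop-limit-exact}.

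First I would define the \emph{stable image} $A'_i = \bigcap_{j\geq i} f_{ji}(A_j) \leq A_i$. The Mittag-Leffler hypothesis means that this intersection is actually attained: for each $i$ there exists $j\geq i$ with $A'_i=f_{ki}(A_k)$ for all $k\geq j$. It is clear that $f_i(A'_{i+1})\sse A'_i$, so the groups $A'_i$ form a subtower. The key first claim is that the restricted maps $f_i\:A'_{i+1}\to A'_i$ are surjective. Given $x\in A'_i$, choose $k$ so large that both $f_{ki}(A_k)=A'_i$ and $f_{k,i+1}(A_k)=A'_{i+1}$. Pick $y\in A_k$ with $f_{ki}(y)=x$; then $z=f_{k,i+1}(y)\in A'_{i+1}$ and $f_i(z)=f_{ki}(y)=x$, proving surjectivity.

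Next I would pass to the quotient tower $A_i/A'_i$ with the induced maps $\bar f_i$, and claim it is nilpotent in the sense of Definition~\ref{defn-nilpotent-tower}. Indeed, given $i$, choose $j\geq i$ with $f_{ji}(A_j)=A'_i$; then for $k\geq j$ the image $f_{ki}(A_k)\sse f_{ji}(A_j)=A'_i$, so the induced map $\bar f_{ki}\:A_k/A'_k\to A_i/A'_i$ is zero. Thus Proposition~\ref{prop-nilpotent-tower} gives $\invlim^1_i(A_i/A'_i)=0$, and Proposition~\ref{prop-surjective-tower} gives $\invlim^1_iA'_i=0$.

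Finally I would assemble these inputs via Proposition~\ref{prop-limit-exact} applied to the short exact sequence of towers $A'_i\mra A_i\era A_i/A'_i$, which yields the six-term exact sequence
\[
 \dotsb \to \invlim^1_iA'_i \to \invlim^1_iA_i \to \invlim^1_i(A_i/A'_i) \to 0.
\]
The outer two groups vanish, forcing $\invlim^1_iA_i=0$, as required. The only mildly delicate step is the surjectivity claim for the restricted maps on $A'_i$; one must choose a single $k$ large enough to simultaneously stabilise the images in both $A_i$ and $A_{i+1}$, but this is immediate since the Mittag-Leffler condition holds index-by-index and we need it only at two consecutive indices.
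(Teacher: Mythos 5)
Your proof is correct and takes essentially the same approach as the paper: define the stable images $A'_i$, show the restricted tower is surjective, show the quotient tower is nilpotent, and combine via the six-term sequence of Proposition~\ref{prop-limit-exact}.
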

\begin{proof}
 By the Mittag-Leffler condition, there is a subgroup $A'_i\leq A_i$
 such that $f_{ji}(A_j)=A'_i$ for all sufficiently large $j$.  Thus,
 for $j$ very large we have both $A'_{i+1}=f_{j,i+1}(A_j)$ and
 $A'_i=f_{ji}(A_j)=f_i(f_{j,i+1}(A_j))=f_i(A'_{i+1})$.  Thus, the
 groups $A'_i$ form a subtower of $A$, with surjective maps
 $f'_i\:A'_{i+1}\to A'_i$.  Now put $A''_i=A_i/A'_i$, so there are
 induced maps $f''_i\:A''_{i+1}\to A''_i$, giving a third tower.  If
 $j$ is much larger than $i$ we have $f_{ji}(A_j)=A'_i$ and so
 $f''_{ji}=0\:A''_j\to A''_i$; this shows that the tower $A''$ is
 nilpotent.  Now apply Proposition~\ref{prop-limit-exact} to the
 short exact sequence $A'\to A\to A''$ to give an exact sequence 
 \begin{center}
  \begin{tikzcd}
   \invlim_iA'_i \arrow[rightarrowtail,r] &
   \invlim_iA_i \arrow[r] &
   \invlim_iA''_i \arrow[r,"\dl"] &
   \invlim^1_iA'_i \arrow[r] &
   \invlim^1_iA_i \arrow[twoheadrightarrow,r] &
   \invlim^1_iA''_i
  \end{tikzcd}
 \end{center}  
 Here $\invlim_i^1A'_i=0$ because the maps in $A'$ are surjective, and
 $\invlim_i^1A''_i=0$ because $A''$ is nilpotent, so
 $\invlim_i^1A_i=0$ as claimed.  (We also have $\invlim_iA''_i=0$ and
 so $\invlim_iA'_i=\invlim_iA_i$.)
\end{proof}

We also have the following result analogous to
Proposition~\ref{prop-colimit-cofinal}, which again indicates that
$\invlim_iA_i$ and $\invlim^1_iA_i$ only depend on the asymptotic
behaviour of the tower.

\begin{proposition}\label{prop-limit-cofinal}
 Suppose we have a tower
 \[ A_0 \xla{f_0} A_1 \xla{f_1} A_2 \xla{f_2} A_3 \xla{f_3} \dotsb \]
 and a nondecreasing function $u\:\N\to\N$ such that $u(i)\to\infty$
 as $i\to\infty$.  Put 
 \[ g_i = f_{u(i+1),u(i)} = 
     (A_{u(i+1)} \xra{f_{u(i+1)-1}} A_{u(i+1)-1}  \dotsb 
       \xra{f_{u(i)}} A_{u(i)}),
 \]
 so we have a tower
 \[ A_{u(0)} \xla{g_0} A_{u(1)} \xla{g_1} A_{u(2)}
     \xla{g_2} A_{u(3)} \xla{g_3} \dotsb
 \]
 Then there are canonical isomorphisms
 $\invlim_jA_{u(j)}=\invlim_iA_i$ and 
 $\invlim^1_jA_{u(j)}=\invlim^1_iA_i$.
\end{proposition}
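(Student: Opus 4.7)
Recall from Definition~\ref{defn-invlim} that $\invlim_iA_i$ and $\invlim^1_iA_i$ are the kernel and cokernel of the map $D\:\prod_iA_i\to\prod_iA_i$ given by $D(a)_i = a_i - f_i(a_{i+1})$; let $D'$ denote the analogous map for the subtower $(A_{u(j)})_j$ with transitions $g_j = f_{u(j+1),u(j)}$. The plan is to produce maps $\Phi^0,\Phi^1\:\prod_iA_i\to\prod_jA_{u(j)}$ with $\Phi^1D = D'\Phi^0$, so that $(\Phi^0,\Phi^1)$ becomes a chain map between the two-term complexes, and then verify that the induced maps on kernels and cokernels are both isomorphisms.

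Take $\Phi^0(a)_j = a_{u(j)}$; it sends $\ker(D)$ into $\ker(D')$ because any $a\in\invlim_iA_i$ satisfies $a_n = f_{m,n}(a_m)$ for $n\leq m$. Given $b\in\invlim_jA_{u(j)}$, the coherence $b_k = f_{u(k'),u(k)}(b_{k'})$ makes $\ov{a}_i = f_{u(k),i}(b_k)$ independent of the choice of $k$ with $u(k)\geq i$ (such $k$ exists by the hypothesis $u(j)\to\infty$), and a direct check gives $\ov{a}\in\invlim_iA_i$ and $b\mapsto\ov{a}$ inverse to $\Phi^0|_{\ker(D)}$; this gives the first isomorphism.

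Take $\Phi^1(a)_j = \sum_{i=u(j)}^{u(j+1)-1}f_{i,u(j)}(a_i)$, with the empty sum (when $u(j)=u(j+1)$) understood as zero. The telescoping computation
\[ \Phi^1(D(a))_j = \sum_{i=u(j)}^{u(j+1)-1}\bigl(f_{i,u(j)}(a_i) - f_{i+1,u(j)}(a_{i+1})\bigr) = a_{u(j)} - f_{u(j+1),u(j)}(a_{u(j+1)}) = D'(\Phi^0(a))_j \]
verifies the chain map identity (the empty-sum case is $0=a_{u(j)}-a_{u(j)}$), yielding an induced map $\ov\Phi\:\invlim^1_iA_i\to\invlim^1_jA_{u(j)}$. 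When $u$ is strictly increasing, $\Phi^1$ is visibly surjective (set $a_{u(j)}=b_j$ and $a_i=0$ for $i\notin u(\N)$), so $\ov\Phi$ is too; for injectivity, assume $\Phi^1(a)=D'(c)$ and define $c'_i = f_{u(j+1),i}(c_{j+1})+\sum_{k=i}^{u(j+1)-1}f_{k,i}(a_k)$ for $u(j)\leq i<u(j+1)$, then verify $D(c')=a$ by telescoping. For general non-decreasing $u$, let $v\:\N\to\N$ strictly enumerate the image $u(\N)$ and let $w\:\N\to\N$ be a strictly increasing function selecting one $j$ with $u(j)=v(l)$ for each $l$; applying the strictly-increasing case to $(A_i)_i$ with $v$ and independently to $(A_{u(j)})_j$ with $w$ both exhibit the tower $(A_{v(l)})_l$ (with its natural transitions $f_{v(l+1),v(l)}$), so $\invlim_iA_i\simeq\invlim_lA_{v(l)}\simeq\invlim_jA_{u(j)}$ and similarly for $\invlim^1$.

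The main obstacle is the injectivity half of the $\invlim^1$ isomorphism in the strictly-increasing case. The telescoping formula for $c'$ within a single block $[u(j),u(j+1))$ is essentially forced, but one must check that the value $c'_{u(j+1)-1}$ at the right endpoint of one block matches seamlessly onto $c'_{u(j+1)}$ at the left endpoint of the next block; this matching is precisely where the hypothesis $\Phi^1(a)_{j+1}=D'(c)_{j+1}$ gets consumed to substitute $c_{j+1} = f_{u(j+2),u(j+1)}(c_{j+2}) + \sum_{k=u(j+1)}^{u(j+2)-1}f_{k,u(j+1)}(a_k)$ and close up the telescoping across blocks.
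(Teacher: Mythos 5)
Your argument for $\invlim$ is essentially the paper's: you use the same chain map $(\Phi^0,\Phi^1)=(\psi,\mu)$, and the inverse on kernels that you describe (picking any $k$ with $u(k)\geq i$ and pushing $b_k$ down) is exactly the paper's $\phi$ restricted to $\ker(D')$.

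For $\invlim^1$ your route is genuinely different. The paper introduces a second map $\lm\:\prod_jA_{u(j)}\to\prod_iA_i$ with $\lm(b)_i=\sum_{u(j)=i}b_j$, shows the square $D\phi=\lm D'$ commutes as well, and then produces explicit chain homotopies $\sg,\tau$ satisfying $\lm\mu+D\sg=1$ and $\mu\lm=1+D'\tau$, which forces both induced maps on cokernels to be isomorphisms. You avoid $\lm,\sg,\tau$ entirely: in the strictly increasing case you observe that $\mu$ is already surjective at the level of products (take $a$ supported on $\img(u)$), and you establish injectivity of the induced map by explicitly building a preimage $c'$ under $D$, block by block, consuming the hypothesis $\Phi^1(a)=D'(c)$ precisely at each block boundary to close the telescope. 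Then you pass to general non-decreasing $u$ by a pleasant factorization: both the original tower (reindexed along the strictly increasing enumeration $v$ of $\img(u)$) and the subtower $(A_{u(j)})_j$ (reindexed along a strictly increasing section $w$ with $u\circ w=v$) restrict to the same tower $(A_{v(l)})_l$ with transitions $f_{v(l+1),v(l)}$, so two applications of the strictly increasing case chain together. This is cleaner than the paper's handling of repeated indices, at the (small) cost of not producing a single explicit homotopy inverse.

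One point needs patching: your formula $c'_i=f_{u(j+1),i}(c_{j+1})+\sum_{k=i}^{u(j+1)-1}f_{k,i}(a_k)$ only covers $i\geq u(0)$. If $u(0)>0$ you must also define $c'_i$ on the initial segment $0\leq i<u(0)$; the natural extension $c'_i=f_{u(0),i}(c_0)+\sum_{k=i}^{u(0)-1}f_{k,i}(a_k)$ works, and the boundary matching $c'_{u(0)}=c_0$ follows from $\Phi^1(a)_0=D'(c)_0$ by the same mechanism you already identify as the crux of the inter-block matching. With that filled in, the argument is complete.
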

\begin{proof}
 Define $v\:\N\to\N$ by $v(i)=\min\{j\st u(j)\geq i\}$.  We will
 construct a diagram as follows:
 \begin{center}
  \begin{tikzcd}
   \prod_jA_{u(j)} \arrow[r,"\phi"] \arrow[d,"D'"'] &
   \prod_iA_i      \arrow[r,"\psi"] \arrow[d,"D"']    &
   \prod_jA_{u(j)} \arrow[d,"D'"] \\
   \prod_jA_{u(j)} \arrow[r,"\lm"']  &
   \prod_iA_i      \arrow[r,"\mu"']  &
   \prod_jA_{u(j)} 
  \end{tikzcd}
 \end{center}  
 The maps are:
 \begin{align*}
  D'(b)_j &= b_j - f_{u(j+1),u(j)}(b_{j+1}) &
  D(a)_i &= a_i - f_{i+1,i}(a_{i+1}) \\
  \phi(b)_i &= f_{uv(i),i}(b_{v(i)}) &
  \psi(a)_j &= a_{u(j)} \\
  \lm(b)_i &= \sum_{u(j)=i}b_j &
  \mu(a)_j &= \sum_{u(j)\leq i<u(j+1)} f_{i,u(j)}(a_i).
 \end{align*}
 Thus $D$ and $D'$ are the usual maps whose kernels and cokernels are
 the $\invlim$ and $\invlim^1$ groups under consideration.  We claim
 that the diagram commutes.  To see this, consider a point
 $b\in\prod_jA_{u(j)}$.  We then have 
 \begin{align*}
  D(\phi(b))_i &= \phi(b)_i - f_{i+1,i}(\phi(b)_{i+1}) \\
               &= f_{uv(i),i}(b_{v(i)}) - f_{uv(i+1),i}(b_{v(i+1)}) \\
  \lm(D'(b))_i &= \sum_{u(j)=i} D'(b)_j 
               = \sum_{u(j)=i}(b_j-f_{u(j+1),u(j)}b_{j+1}).
 \end{align*}
 If $u^{-1}\{i\}=\emptyset$ we find that $v(i+1)=v(i)$ and so
 $D(\phi(b))_i=0=\lm(D'(b))_i$.  If $u^{-1}\{i\}$ is nonempty then it
 will be an interval, say $u^{-1}\{i\}=\{j_0,\dotsc,j_1-1\}$.  In our
 expression for $\lm(D'(b))_i$, the map $f_{u(j+1),u(j)}$ is just the
 identity except when $j=j_1-1$.  The expression therefore cancels
 down to $b_{j_0}-f_{u(j_1),i}(b_{j_1})$.  On the
 other hand, we also find that $v(i)=j_0$ and $v(i+1)=j_1$, so
 \[ D(\phi(b))_i = f_{ii}(b_{j_0}) - f_{u(j_1),i}(b_{j_1}) =
     \lm(D'(b))_i.
 \]
 Thus, the left square commutes.  For the right square, consider an
 element $a\in\prod_iA_i$.  We have
 \begin{align*}
  \mu(D(a))_j
    &= \sum_{u(j)\leq i<u(j+1)} f_{i,u(j)}(D(a)_i) \\
    &= \sum_{u(j)\leq i<u(j+1)}
           (f_{i,u(j)}(a_i)-f_{i+1,u(j)}(a_{i+1})) \\
    &= a_{u(j)} - f_{u(j+1),u(j)}(a_{u(j+1)}) \\
    &= \psi(a)_j - f_{u(j+1),u(j)}(\psi(a)_{j+1}) = D'(\psi(a))_j
 \end{align*}
 as required.  We therefore have induced maps
 \[ \invlim_jA_{u(j)} \xra{\phi'} \invlim_iA_i
      \xra{\psi'} \invlim_jA_{u(j)}
 \]
 and 
 \[ \invlim^1_jA_{u(j)} \xra{\lm'} \invlim^1_iA_i
      \xra{\mu'} \invlim^1_jA_{u(j)}.
 \]
 It is straightforward to check that $\phi'\psi'$ and $\psi'\phi'$ are
 the respective identity maps.  Now define
 $\sg\:\prod_iA_i\to\prod_iA_i$ by 
 \[ \sg(a)_i = \sum_{i\leq h<uv(i)}f_{h,i}(a_h). \]
 We claim that $\lm\mu+D\sg=1$ (which implies that $\lm'\mu'=1$).
 The proof that $\lm(\mu(a))_i+D(\sg(a))_i=a_i$ splits into two cases,
 depending on whether $i\in\image(u)$ or not.  If $i\not\in\image(u)$
 we find that the inequality $i\leq u(v(i))$ cannot be an equality, so
 $v(i+1)=v(i)$.  Using this we deduce that $D(\sg(a))_i$ is the
 difference between two sums that mostly have the same terms, and thus
 that $D(\sg(a))_i=a_i$.  On the other hand, we have
 $\lm(\mu(a))_i=\sum_{u(j)=i}\mu(a)_j$ which is zero as the sum has no
 terms.  Now consider instead the case where $i\in\image(u)$.  Let $j_1$
 be the largest integer such that $u(j_1)=i$, and put $i'=u(j_1+1)>i$.
 From the definitions we have 
 \[ \lm(\mu(a))_i =
      \sum_{u(j)=i}\sum_{u(j)\leq h<u(j+1)}f_{h,u(j)}(a_h).
 \]
 However, the inner summation is empty unless $j=j_1$, so the formula
 reduces to $\lm(\mu(a))_i=\sum_{i\leq h<i'}f_{h,i}(a_h)$.  We also
 have $uv(i)=i$, so $\sg(a)_i=0$, and $uv(i+1)=u(j_1+1)=i'$, so 
 $\sg(a)_{i+1}=\sum_{i<h<i'}h_{h,i+1}(a_h)$.  From this it is easy to
 see that $\lm(\mu(a))_i+D(\sg(a))_i=a_i$ as required.

 We now consider instead the map $\mu\lm$.  Define
 $\tau\:\prod_jA_{u(j)}\to\prod_jA_{u(j)}$ by 
 \[ \tau(b)_j = \sum_{k<j,\;u(k)=u(j)} b_k. \]
 We claim that $\mu\lm=1+D\tau$ (which implies that $\mu'\lm'=1$).
 The proof that $\mu(\lm(b))_j=b_j+D(\tau(b))_j$ again splits into two
 cases.  First suppose that $u(j+1)=u(j)$.  It is then immediate from
 the definitions that $\mu(\lm(b))_j=0$.  On the other hand, the sums
 defining $\tau(b)_j$ and $\tau(b)_{j+1}$ differ only by a single
 term, so $D(\tau(b))_j=-b_j$, as required.  Now suppose instead
 that $u(j+1)>u(j)$.  In this case we have $\tau(b)_{j+1}=0$, so
 \[ b_j+D(\tau(b))_j = b_j+\tau(b)_j =
     b_j + \sum_{k<j,\;u(k)=u(j)} b_k = \sum_{u(k)=u(j)} b_k.
 \]
 On the other hand, we have
 \[ \mu(\lm(b))_j =
     \sum_{u(j)\leq i<u(j+1)}\sum_{u(k)=i}f_{i,u(j)}(b_k).
 \]
 The inner sum has no terms unless $i=u(j)$, and in that context
 $f_{i,u(j)}$ is the identity, so the above reduces to 
 \[ \mu(\lm(b))_j = \sum_{u(k)=u(j)} b_k = b_j+D(\tau(b))_j \]
 as required.
\end{proof}

\section{Completion and derived completion}
\label{sec-completion}

\begin{definition}\label{defn-p-completion}
 Let $p$ be a prime.  For any abelian group $A$ we have a tower of
 surjections 
 \[ 0 = A/p^0 \xla{} A/p \xla{} A/p^2 \xla{} A/p^3 \xla{} \dotsb \]
 We write $A_p$ for the inverse limit of this tower, and call this the
 \emph{$p$-completion} of $A$.  In particular, we have a group $\Zp$,
 whose elements are called \emph{$p$-adic integers}.  For any $A$ we
 have a homomorphism $\eta\:A\to A_p$ given by 
 \[ \eta(a) = (a+p^0A,a+p^1A,a+p^2A,a+p^3A,\dotsc). \]
 We say that $A$ is \emph{$p$-complete} if $\eta$ is an isomorphism.
\end{definition}

\begin{example}\label{eg-finite-completion}
 Let $A$ be a finite abelian group, so $A$ splits as $B\oplus C$ say,
 where $|B|$ is a power of $p$ and $|C|$ is coprime to $p$.  For all
 $k$ we have $p^kC=C$, and for large $k$ we have $p^kB=0$.  It follows
 that $A_p=B$.  In particular, we see from this that finite abelian
 $p$-groups are $p$-complete.
\end{example}
\begin{example}\label{eg-divisible-completion}
 Suppose that $A$ is divisible.  Then for all $k$ we have $p^kA=A$, so
 $A/p^k=0$; it follows that $A_p=0$.  In particular, we have $\Q_p=0$
 and $(\Q/\Z)_p=0$.
\end{example}
\begin{remark}\label{rem-padic-rationals}
 The symbol $\Q_p$ is often used for $\Q\ot\Zp$, which is not zero;
 it is known as the field of $p$-adic rationals.  However, this is
 different from the group that we have called $\Q_p$, which is
 trivial. 
\end{remark}

\begin{proposition}\label{prop-completion-idempotent}
 For all $A$ and $k\geq 0$ the projection $\pi_k\:A\to A/p^kA$ induces
 an isomorphism $A_p/p^kA_p=A/p^kA$.
\end{proposition}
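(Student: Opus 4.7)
The plan is to show the natural projection $A_p\to A/p^kA$ is surjective with kernel exactly $p^kA_p$, which yields the desired isomorphism $A_p/p^kA_p\cong A/p^kA$. Surjectivity is immediate from Proposition~\ref{prop-surjective-tower}: the tower $\{A/p^nA\}_n$ has surjective transition maps, so every projection from $A_p$ is surjective. The inclusion $p^kA_p\subseteq\ker(A_p\to A/p^kA)$ is also immediate, since any element $p^k\beta$ with $\beta=(b_n)\in A_p$ has $k$-th coordinate $p^kb_k\in p^kA$, which vanishes in $A/p^kA$.

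The substantive task is the reverse inclusion: given $\alpha=(a_n)\in A_p$ with $a_k=0$, we must produce $\beta=(b_m)\in A_p$ with $p^k\beta=\alpha$. The plan is a direct inductive construction. First, by induction on $n$ we select compatible representatives $\tilde a_n\in A$ of $a_n$ with $\tilde a_{n+1}\equiv\tilde a_n\pmod{p^nA}$ (existence is routine: pick $\tilde a_0=0$, then lift $a_{n+1}$ within the coset $\tilde a_n+p^nA$). Since $a_k=0$ forces $\tilde a_n\in p^kA$ for all $n\geq k$, we may write $\tilde a_n=p^kc_n$, and the candidate $\beta$ will be assembled from the $c_n$'s.

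The key obstacle --- and the main technical point --- is that the $c_n$'s are not automatically a cone for the shifted tower: the compatibility $\tilde a_{n+1}-\tilde a_n\in p^nA$ only yields $c_{n+1}-c_n\in p^{n-k}A+A[p^k]$, leaving a potential $A[p^k]$-discrepancy that blocks us from just reading off $\beta$. To absorb this, we redefine $\hat c_n$ inductively by $\hat c_k:=c_k$ and $\hat c_{n+1}:=\hat c_n+p^{n-k}d_n$, where $d_n\in A$ is any element with $p^nd_n=\tilde a_{n+1}-\tilde a_n$ (which exists by the above compatibility, and any such choice works). A brief induction on $n\geq k$ then gives $p^k\hat c_n=\tilde a_n$ exactly, while the $\hat c_n$'s are manifestly compatible modulo $p^{n-k}A$. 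Setting $b_m:=\hat c_{m+k}+p^mA\in A/p^mA$ then produces the desired $\beta\in A_p$ with $p^k\beta=\alpha$, completing the proof.
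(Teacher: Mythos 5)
Your proof is correct and takes essentially the same approach as the paper's: surjectivity of the $k$-th coordinate projection comes from Proposition~\ref{prop-surjective-tower}, and the kernel is identified with $p^kA_p$ by an explicit inductive construction of a $p^k$-th root obtained by summing suitable increments $d_n$. The only difference is cosmetic: the paper reduces to $k=1$ via the cofinality result (Proposition~\ref{prop-limit-cofinal}) and then builds the limit element directly from the increments, whereas you treat general $k$ at once, which is why you have the small extra step of correcting the naive lifts $c_n$ to the compatible $\hat c_n$.
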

\begin{proof}
 For notational simplicity, we will treat only the case $k=1$.  The
 general case is essentially the same, after we have used
 Proposition~\ref{prop-limit-cofinal} to identify $A_p$ with the
 inverse limit of the sequence
 \[ A/p^k \xla{} A/p^{2k} \xla{} A/p^{3k} \xla{} \dotsb. \]
 First, the projection $\pi_1\:\prod_i A/p^i\to A/p$ restricts to give
 a homomorphism $\phi\:A_p=\invlim_iA/p^i\to A/p$.
 Proposition~\ref{prop-surjective-tower} tells us that this is
 surjective.  As $A/p$ has exponent $p$, the subgroup $pA_p$ is
 contained in the kernel.  We need to prove that the kernel is
 precisely $pA_p$.  Suppose that $a\in\ker(\phi)$.  Choose $a_i\in A$
 representing the component of $a$ in $A/p^iA$.  As $A/p^0A=0$ and
 $\phi(a)=0$ we can take $a_0=a_1=0$.  By the definition of the
 inverse limit we see that $a_i$ is the image of $a_{i+1}$ in
 $A/p^iA$, so $a_{i+1}=a_i+p^ib_i$ for some $b_i\in A$ (and we may
 take $b_0=0$).  Now put $c_n=\sum_{i=1}^np^{i-1}b_i\in A$.  It is
 visible that $c_{n+1}=c_n+p^nb_{n+1}=c_n\pmod{p^nA_n}$, so the cosets
 $c_n+p^nA$ define an element $c\in A_p$.  We also see by induction
 that $pc_k=a_{k+1}=a_k\pmod{p^kA_k}$, so $pc=a$.  Thus $a\in pA_p$ as
 claimed.  
\end{proof}
\begin{remark}\label{rem-completion-idempotent}
 We see from the proposition that $A_p=(A_p)_p$, so $A_p$ is
 $p$-complete, as one would expect.  There is a subtlety analogous to
 Remark~\ref{rem-idempotent-monad} here; we leave it to the reader to
 check that the two natural maps $A_p\to(A_p)_p$ are the same, and
 they are both isomorphisms. 
\end{remark}

We next examine the structure of $\Zp$ in more detail.  First, as the
groups $\Z/p^k$ have canonical ring structures, and the maps
$\Z/p^{k+1}\to\Z/p^k$ are ring maps, we see that $\Zp$ is a subring
of $\prod_k\Z/p^k$.   We will write $\pi_k$ for the projection
$\Zp\to\Z/p^k$, which is a surjective ring homomorphism.  Note also
that for $n\in\Z$ we have $\eta(n)=0$ iff $\pi_k\eta(n)=0$ for all $k$
iff $n$ is divisible by $p^k$ for all $k$ iff $n=0$.  Thus, $\eta$
gives an injective ring map $\Z\to\Zp$.  We will usually suppress
notation for this and regard $\Z$ as a subring of $\Zp$.

\begin{definition}\label{defn-padic-metric}
 For $a\in\Zp$ we define $v(a)=\min\{k\st\pi_k(a)\neq 0\}$ (or
 $v(a)=\infty$ if $\pi_k(a)=0$ for all $k$, which means that $a=0$).
 We also define $d(a,b)=p^{-v(a-b)}$, with the convention
 $p^{-\infty}=0$.  
\end{definition}

\begin{proposition}\label{prop-padic-metric}
 The function $d$ defines a metric on $\Zp$ (called the
 \emph{$p$-adic metric}), with respect to which it is complete and
 compact.  Moreover, the subspace $\Z$ is dense.
\end{proposition}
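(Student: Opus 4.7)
The plan is to first pin down the key algebraic fact that $\ker(\pi_k)=p^k\Zp$ for each $k\geq 0$; this is exactly Proposition~\ref{prop-completion-idempotent} applied to $A=\Z$, which identifies $\Zp/p^k\Zp$ with $\Z/p^k\Z$ via $\pi_k$. Consequently $v(a)\geq k$ iff $a\in p^k\Zp$. From this one reads off the ultrametric estimate: if $v(a)\geq k$ and $v(b)\geq k$ then $a+b\in p^k\Zp$, so $v(a+b)\geq\min(v(a),v(b))$.

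With this in hand, verifying that $d$ is a metric is routine. Symmetry is immediate since $v(a-b)=v(b-a)$. For positivity, $d(a,b)=0$ iff $v(a-b)=\infty$ iff $\pi_k(a-b)=0$ for all $k$, and since $\Zp$ embeds in $\prod_k\Z/p^k$ this forces $a=b$. For the triangle inequality, writing $a-c=(a-b)+(b-c)$ and applying the valuation estimate gives $d(a,c)\leq\max(d(a,b),d(b,c))\leq d(a,b)+d(b,c)$, so in fact $d$ is an ultrametric. Density of $\Z$ is equally direct: given $a\in\Zp$ and $k\geq 0$, choose any integer $n$ with $\pi_k(\eta(n))=\pi_k(a)$ in $\Z/p^k$ (possible because $\pi_k\circ\eta\:\Z\to\Z/p^k$ is the usual surjection); then $v(a-n)\geq k$, so $d(a,n)\leq p^{-k}$.

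For completeness, I would take a Cauchy sequence $(a_n)_{n\geq 0}$ in $\Zp$. For each $k$ the Cauchy condition together with the discrete nature of $\Z/p^k$ forces the sequence $(\pi_k(a_n))_n$ to be eventually constant at some value $b_k\in\Z/p^k$. Compatibility of the $b_k$ under the tower maps $\Z/p^{k+1}\to\Z/p^k$ is inherited from compatibility of the $\pi_k(a_n)$, so $(b_k)$ defines an element $b\in\invlim_k\Z/p^k=\Zp$, and by construction $d(a_n,b)\to 0$.

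The main point to get right is compactness. My plan is to prove that $\Zp$ is totally bounded and then combine with completeness; the closed balls of radius $p^{-k}$ around elements of $\Z$ cover $\Zp$ (by density), and since two integers differing by a multiple of $p^k$ lie in the same ball, the finitely many integers $0,1,\dotsc,p^k-1$ already suffice. Any sequence in a complete totally bounded metric space admits a Cauchy subsequence and hence converges, which yields sequential compactness; since $\Zp$ is metric, this is equivalent to compactness. The only subtle aspect is being careful with the ultrametric nature when producing the finite $p^{-k}$-net, but this is essentially bookkeeping rather than a genuine obstacle.
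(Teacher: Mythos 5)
Your proof is correct and follows essentially the same route as the paper's: verify the ultrametric inequality, prove completeness by observing that each coordinate sequence $\pi_k(a_n)$ is eventually constant, establish density of $\Z$ via the representatives $\{0,\dotsc,p^k-1\}$, and deduce compactness from completeness plus total boundedness using that same finite net. The only cosmetic difference is that you anchor the valuation estimate $v(a+b)\geq\min(v(a),v(b))$ on the algebraic identity $\ker(\pi_k)=p^k\Zp$ from Proposition~\ref{prop-completion-idempotent}, whereas the paper argues directly from the definition $v(a)=\min\{k\st\pi_k(a)\neq 0\}$; these amount to the same thing.
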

\begin{proof}
 It is clear that $d(a,b)=d(b,a)$, and that this is nonnegative and
 vanishes if and only if $a=b$.  This just leaves the triangle
 inequality $d(a,c)\leq d(a,b)+d(b,c)$.  This is clear if $a=b$ or
 $b=c$, so suppose that $a\neq b\neq c$.  Put
 $m=\min(v(a-b),v(b-c))$.  For $k<m$ we have
 $\pi_k(a)=\pi_k(b)=\pi_k(c)$.  It follows that $v(c-a)\geq m$, so
 \[ d(a,c)\leq p^{-m}=\max(d(a,b),d(b,c))\leq d(a,b)+d(b,c) \]
 as required.  

 Now consider a Cauchy sequence $(a_0,a_1,a_2,\dotsc)$ in $\Zp$.
 Given $k\in\N$ we can choose $m$ such that $d(a_i,a_j)<p^{-k}$ for
 all $i,j\geq m$.  From the definition of $d$, this means that the
 element $\pi_k(a_i)\in\Z/p^k$ is independent of $i$ for $i\geq m$.
 Let $b_k$ denote this element.  If $i$ is large enough we will have
 $b_k=\pi_k(a_i)$ and also $b_{k+1}=\pi_{k+1}(a_i)$; using this we see
 that the projection $\Z/p^{k+1}\to\Z/p^k$ sends $b_{k+1}$ to $b_k$.
 Thus, sequence $(b_0,b_1,b_2,\dotsc)$ is an element $b\in\Zp$, and
 by construction $a_i\to b$ as $i\to\infty$.  This shows that $\Zp$
 is compact.

 Next, suppose we are given $k\in\N$, and we put
 $T_k=\{0,1,\dotsc,p^k-1\}$ and $a\in\Zp$.  The map $T_k\to\Z/p^k$ is
 a bijection, so for any $a\in\Zp$ there is a unique $m\in T_k$ with
 $\pi_k(m)=\pi_k(a)$, so $d(m,a)<p^{-k}$.  Using this we see that $\Z$
 is dense in $\Zp$.  Next, recall that an \emph{$\ep$-net} in a
 metric space $X$ is a finite set $F\sse X$ such that every point is
 within $\ep$ of a point in $F$, that $X$ is \emph{totally bounded} if
 it has an $\ep$-net for every $\ep>0$, and that a complete metric
 space is compact if and only if it is totally bounded.  The set $T_k$
 is a $2^{-k}$-net, so $\Zp$ is totally bounded, so it is compact.
\end{proof}

\begin{proposition}\label{prop-digits}
 Put $D=\{0,1,2,\dotsc,p-1\}$ (the set of $p$-adic digits).  Then
 there is a bijection  
 \[ \sg \: \prod_{i=0}^\infty D \to \Zp \]
 given by $\sg(u)=\sum_iu_ip^i$ (a convergent sum with respect to the
 $p$-adic metric).  In particular, $\Zp$ is uncountable.  We call
 $\sg^{-1}(a)$ the \emph{base $p$ expansion} of $a$. 
\end{proposition}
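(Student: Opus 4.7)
My plan is to verify first that $\sigma$ is well-defined (the series converges), then prove injectivity, then surjectivity, and finally note the cardinality consequence.

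For well-definedness, I would consider the partial sums $s_n=\sum_{i=0}^{n-1}u_ip^i\in\Z\sse\Zp$. Since $s_{n+1}-s_n=u_np^n$ and $\pi_k(p^n)=0$ for $k\leq n$, we have $v(s_{n+1}-s_n)\geq n$, so $d(s_n,s_{n+1})\leq p^{-n}$. An easy estimate using the ultrametric inequality (which was implicit in the triangle inequality argument of Proposition~\ref{prop-padic-metric}) shows that $(s_n)$ is Cauchy, and by Proposition~\ref{prop-padic-metric} the sequence converges to a limit in $\Zp$; this limit is by definition $\sigma(u)$.

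For injectivity, suppose $\sigma(u)=\sigma(v)$ but $u\neq v$, and let $k$ be the least index with $u_k\neq v_k$. Then $\sigma(u)-\sigma(v)=\sum_{i\geq k}(u_i-v_i)p^i$. Reducing mod $p^{k+1}$ using $\pi_{k+1}$, all terms with $i\geq k+1$ vanish, leaving $\pi_{k+1}(\sigma(u)-\sigma(v))=(u_k-v_k)p^k+p^{k+1}\Z$. Since $u_k,v_k\in D$ with $u_k\neq v_k$, the difference $u_k-v_k$ is a nonzero element of $\{-(p-1),\dots,p-1\}$, hence not divisible by $p$, so $(u_k-v_k)p^k\not\equiv 0\pmod{p^{k+1}}$, a contradiction.

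For surjectivity, given $a\in\Zp$ I would inductively construct digits $u_0,u_1,\dots\in D$ and partial sums $s_n=\sum_{i<n}u_ip^i$ satisfying $\pi_n(a)=\pi_n(s_n)$ for all $n$. Start with $s_0=0$, which trivially works. Given $s_n$ with the property, consider $\pi_{n+1}(a-s_n)\in\Z/p^{n+1}$. Its image in $\Z/p^n$ is zero by the induction hypothesis, so it lies in the subgroup $p^n\Z/p^{n+1}$, which is generated by $p^n+p^{n+1}\Z$ and has exactly $p$ elements. Hence there is a unique $u_n\in D$ with $\pi_{n+1}(a-s_n)=u_np^n+p^{n+1}\Z$, and then $\pi_{n+1}(s_{n+1})=\pi_{n+1}(s_n+u_np^n)=\pi_{n+1}(a)$ as required. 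This yields $d(a,s_n)\leq p^{-n}\to 0$, so the partial sums converge to $a$; thus $\sigma(u)=a$. The inductive choice of $u_n$ was forced, which also gives an alternative proof of injectivity.

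The main step to get right is the inductive surjectivity argument, in particular ensuring that at each stage $\pi_{n+1}(a-s_n)$ really lies in the subgroup $p^n\Z/p^{n+1}$ so that a unique digit $u_n\in D$ exists; this is where the compatibility built into the inverse limit is used. Finally, the uncountability follows because $|D|=p\geq 2$ so $\prod_{i=0}^\infty D$ is uncountable by a standard Cantor diagonal argument, and $\sigma$ transports this bijectively to $\Zp$.
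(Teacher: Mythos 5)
Your proof is correct, but it takes a more analytic route than the paper's. The paper's proof is a two-line algebraic argument: it observes that the finite-level map $\prod_{i=0}^{k-1}D\to\Z/p^k$ (digit expansion of residues modulo $p^k$) is a bijection, and then passes to inverse limits, using the identification $\Zp=\invlim_k\Z/p^k$ together with the fact (implicit in Example~\ref{eg-invlim-projections}) that $\invlim_k\prod_{i=0}^{k-1}D=\prod_{i=0}^\infty D$. You instead work directly with the $p$-adic metric: you verify convergence of the partial sums via the ultrametric inequality and the completeness established in Proposition~\ref{prop-padic-metric}, prove injectivity by applying $\pi_{k+1}$ to the difference of two series, and prove surjectivity by recursively extracting digits so that $\pi_n(a-s_n)=0$. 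The mathematical content is the same --- your recursive digit extraction is exactly the paper's finite-level bijection, unpacked one level at a time --- but your version makes the convergence of $\sum u_ip^i$ explicit, whereas the paper absorbs that into the inverse-limit formalism. The paper's phrasing is shorter and leverages the machinery already built in Section~\ref{sec-towers}; yours is more self-contained and is closer to the way one would actually compute with base-$p$ expansions. Both are valid; neither has a gap. One tiny cosmetic point: with the paper's conventions $v(u_np^n)=n+1$ when $u_n\neq 0$, so $d(s_n,s_{n+1})\leq p^{-(n+1)}$; your weaker bound $p^{-n}$ is of course still sufficient.
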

\begin{proof}
 It is elementary that the corresponding map
 $\prod_{i=0}^{k-1}D\to\Z/p^k$ is a bijection, and the claim follows
 by passing to inverse limits.
\end{proof}

\begin{proposition}\label{prop-Zp-omni}
 The ring $\Zp$ is torsion free and is an integral domain.  It is
 also a local ring, with $p.\Zp$ being the unique maximal ideal.  The
 group of units is 
 \[ \Zp^\tm = \{a\in\Zp\st\pi_0(a)\neq 0\} = \Zp\sm p\Zp, \]
 and every nonzero element is a unit times $p^k$ for some $k$.
\end{proposition}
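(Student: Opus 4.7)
The plan is to extract everything from one key identification: by Proposition~\ref{prop-completion-idempotent}, the projection $\pi_k\:\Zp\to\Z/p^k$ has kernel exactly $p^k\Zp$, so in particular $\bigcap_{k\geq 0}p^k\Zp=0$ by the definition of the inverse limit. Combined with the fact that each $\pi_k$ is a ring homomorphism (noted in the paragraph preceding Definition~\ref{defn-padic-metric}), this will deliver all four claims with very little extra work.

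First I would establish the characterisation $\Zp^\tm=\Zp\sm p\Zp$. One direction is immediate: if $ab=1$, applying $\pi_1$ gives $\pi_1(a)\pi_1(b)=1$ in $\Z/p$, so $\pi_1(a)\neq 0$ and $a\notin p\Zp$. For the reverse direction, suppose $a\notin p\Zp$, so $\pi_1(a)\in\Z/p$ is nonzero. Every element of $\Z/p^k$ whose image in $\Z/p$ is nonzero is coprime to $p$ and hence a unit, so $\pi_k(a)$ has an inverse $b_k\in\Z/p^k$ for every $k$. By uniqueness of inverses the $b_k$ are automatically compatible with the tower maps, so they assemble into an element $b\in\Zp$ satisfying $\pi_k(ab)=1$ for all $k$, hence $ab=1$.

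Next I would prove the normal form: every nonzero $a\in\Zp$ has the form $a=p^{k_0}c$ with $c$ a unit. The identity $\bigcap_k p^k\Zp=0$ makes the set $\{k\geq 0\st a\in p^k\Zp\}$ have a finite maximum $k_0$; write $a=p^{k_0}c$. If $c$ were in $p\Zp$, then $a\in p^{k_0+1}\Zp$ would contradict maximality, so $c$ is a unit by the previous step. From this factorisation both torsion-freeness and the integral-domain property follow. If $na=0$ with $a\neq 0$, factor $a=p^{k_0}u$ and $n=p^m r$ with $\gcd(r,p)=1$; since $\pi_1(r)\neq 0$, the integer $r$ is a unit in $\Zp$, so $na=p^{k_0+m}(ru)=0$ with $ru$ a unit forces $p^{k_0+m}=0$, contradicting the fact that $\pi_{k_0+m+1}(p^{k_0+m})\neq 0$ in $\Z/p^{k_0+m+1}$. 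The same argument with $a=p^ju$ and $b=p^\ell v$ shows $ab\neq 0$ whenever $a,b$ are both nonzero, so $\Zp$ is an integral domain.

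Finally, the local-ring claim drops out formally: the set of non-units is exactly $p\Zp$, which is an ideal (it is the kernel of the ring homomorphism $\pi_1$). Any proper ideal contains no units and is therefore contained in $p\Zp$, making $p\Zp$ the unique maximal ideal. I do not anticipate any real obstacle; the whole argument is driven by Proposition~\ref{prop-completion-idempotent} and the ring structure on each $\Z/p^k$, and the only thing to keep careful track of is that the $\pi_k$ respect multiplication, which is routine.
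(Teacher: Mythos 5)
Your proof is correct, and it takes a noticeably different route from the paper's. For the invertibility of $a\notin p\Zp$, the paper first treats the case $\pi_0(a)=1$ by writing $a=1-px$ and summing the geometric series $\sum_i(px)^i$ (which requires the metric completeness established in Proposition~\ref{prop-padic-metric}), then reduces the general case to this one. You instead invert $\pi_k(a)$ in each finite quotient $\Z/p^k$ and observe that uniqueness of inverses forces these to be compatible with the tower maps, so they assemble into an inverse in $\invlim_k\Z/p^k$. For the normal form $a=p^{k_0}\cdot(\text{unit})$, the paper invokes the base $p$ expansions of Proposition~\ref{prop-digits}; you instead deduce it from the abstract fact that $\bigcap_k p^k\Zp=0$, which follows from Proposition~\ref{prop-completion-idempotent} together with the definition of the inverse limit. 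Your approach is more purely algebraic and avoids the metric and digit-string machinery entirely, at the small cost of needing the observation about compatible inverses; the paper's approach ties the ring structure to the topology and the concrete description of elements, which it wants for other purposes anyway. Both are rigorous and both lean on Proposition~\ref{prop-completion-idempotent} as the key input. (You consistently write $\pi_1$ for the projection to $\Z/p$, which quietly corrects what looks like an off-by-one slip in the paper's statement, where $\pi_0$ is written but $\pi_0$ maps to $\Z/p^0=0$.)
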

\begin{proof}
 First, consider an element $a\in\Zp$ with $\pi_0(a)=1$.  We see from
 Proposition~\ref{prop-completion-idempotent} that $a=1-px$ for some
 $x\in\Zp$.  It follows easily that the series $\sum_i(px)^i$ is
 Cauchy, so it converges to some $b\in\Zp$, and we find that $ab=1$.
 More generally, suppose merely that $\pi_0(a)\neq 0$ in $\Z/p$.  As
 $\Z/p$ is a field, we can find $b\in\Z$ such that $\pi_0(b)$ is
 inverse to $\pi_0(a)$.  We then find that $ab$ is invertible by the
 previous case, and it follows that $a$ is invertible.  Conversely, as
 $\pi_0$ is a ring map it certainly sends units to nonzero elements.
 We therefore see that 
 \[ \Zp^\tm = \{a\in\Zp\st\pi_0(a)\neq 0\} = \Zp\sm p\Zp \]
 as claimed.  Note also that $\Zp/p\Zp$ is the field $\Z/p$, so
 $p\Zp$ is a maximal ideal.  If $\mathfrak{m}$ is any maximal ideal
 we must have $\mathfrak{m}\cap\Zp^\tm=\emptyset$, so
 $\mathfrak{m}\leq p\Zp$, so $\mathfrak{m}=p\Zp$ by maximality.
 Thus, $\Zp$ is a local ring.  

 Next, using base $p$ expansions we see easily that multiplication by
 $p$ is injective, and that every nonzero element $a\in\Zp$ can be
 written as $a=p^kb$ for some $k\geq 0$ and $b$ with $\pi_0(b)\neq 0$,
 so $b\in\Zp^\tm$.  As multiplication by $p^k$ is injective and $b$
 is invertible we see that multiplication by $a$ is injective.  This
 means that $\Zp$ is an integral domain.  By considering
 $a\in\Z\subset\Zp$ we also see that $\Zp$ is torsion free. 
\end{proof}

We can now understand the completion of free modules.
\begin{definition}\label{defn-AZ}
 Let $I$ be a set, and let $f$ be a function from $I$ to $\Zp$.  We
 say that $f$ is \emph{asymptotically zero} if for all $k$, the set
 $\{i\st v(f(i))<k\}$ is finite.  We write $AZ(I)$ for the set of
 asymptotically zero maps, which is a group under addition.
\end{definition}
\begin{proposition}
 The completion $\Z[I]_p$ is naturally isomorphic to $AZ(I)$.
\end{proposition}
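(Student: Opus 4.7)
The plan is to unwind $\Z[I]_p$ as an inverse limit of groups of finitely supported functions and then identify the inverse limit with $AZ(I)$ by a termwise construction.

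First I would note that $\Z[I] = \bigoplus_{i\in I}\Z$, and since quotients commute with direct sums we have a natural identification $\Z[I]/p^k\Z[I] \simeq \bigoplus_{i\in I}\Z/p^k = \Map_0(I,\Z/p^k)$. Consequently
\[ \Z[I]_p = \invlim_k \Z[I]/p^k \simeq \invlim_k \Map_0(I,\Z/p^k). \]
A compatible system in this inverse limit is a family $(f_k)_{k\geq 0}$ with $f_k\in\Map_0(I,\Z/p^k)$ such that $f_{k+1}$ reduces mod $p^k$ to $f_k$.

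Next I would define the candidate isomorphism $\Phi\:\invlim_k\Map_0(I,\Z/p^k)\to AZ(I)$ pointwise: given $(f_k)$, for each $i\in I$ the sequence $(f_k(i))_k$ is a compatible system in $\invlim_k \Z/p^k = \Zp$, so it determines an element $\Phi((f_k))(i)\in\Zp$, and we set $\Phi((f_k))$ to be the function $i\mapsto\Phi((f_k))(i)$. I would check that this lands in $AZ(I)$: for any $k$, the set $\{i\st v(\Phi((f_k))(i))<k\}$ equals $\{i\st\pi_{k-1}(\Phi((f_k))(i))\neq 0\} = \supp(f_{k-1})$, which is finite by construction. Additivity of $\Phi$ is immediate since the operations on both sides are pointwise.

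In the reverse direction, I would define $\Psi\:AZ(I)\to\invlim_k\Map_0(I,\Z/p^k)$ by $\Psi(g)_k(i) = \pi_k(g(i))$. The key point to verify is that $\Psi(g)_k$ has finite support: $\pi_k(g(i))\neq 0$ is equivalent to $v(g(i))\leq k$ (because the tower of projections $\Z/p^{k+1}\to\Z/p^k$ makes the condition $\pi_k(a)\neq 0$ stable as $k$ grows past $v(a)$), and the asymptotic-zero hypothesis makes this set finite. Compatibility in $k$ is automatic because each $\pi_k$ is the projection of $\pi_{k+1}$. It is then immediate from the constructions that $\Phi$ and $\Psi$ are mutually inverse homomorphisms.

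Finally I would note that the composite $\Z[I]\xra{\eta}\Z[I]_p\xra{\Phi}AZ(I)$ is just the obvious inclusion $\Map_0(I,\Z)\hookrightarrow AZ(I)$ (sending a finitely supported $\Z$-valued function to itself, viewed through $\Z\subset\Zp$), and that the construction is natural in $I$ (maps of index sets induce compatible maps on both sides). The only conceptually delicate point — and what I would flag as the main step to handle carefully — is the bookkeeping that translates the finiteness of $\supp(f_k)$ on the inverse-limit side into the asymptotic-zero condition on the $AZ(I)$ side, and vice versa; everything else reduces to pointwise inspection.
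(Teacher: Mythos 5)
Your proof is correct and follows essentially the same route as the paper: both identify $\Z[I]/p^k\Z[I]$ with the finitely supported functions $I\to\Z/p^k$, realize $\Z[I]_p$ as the corresponding inverse limit, and match this up with $AZ(I)$ by a pointwise construction. The only cosmetic difference is that you exhibit both directions ($\Phi$ and $\Psi$) and check they are mutually inverse, whereas the paper constructs a single map $AZ(I)\to\Z[I]_p$ and verifies injectivity and surjectivity directly; the bookkeeping with $v$ and the asymptotic-zero condition that you flag as the delicate point is handled the same way in both.
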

\begin{proof}
 First, we put 
 \[ (\Z/p^k)[I] =
     \{f\:I\to\Z/p^k\st \{i\st f(i)\neq 0\} \text{ is finite }\}.
 \]
 We write $\pi_k$ for the projection $\Z\to\Z/p^k$, or the projection
 $\Zp\to\Z/p^k$.  We then write $\pi'_k(f)=\pi_k\circ f$; this defines
 a map $AZ(I)\to(\Z/p^k)[I]$, which we can restrict to
 $\Z[I]\leq AZ(I)$.  Note that $\pi'_k(f)=0$ iff $f(i)\in p^k\Z_p$ for
 all $i$, in which case we can define $g=f/p^k\:I\to\Z_p$ and we find
 that $g$ is again asymptotically zero, so $f\in p^kAZ(I)$.  This
 shows that $\pi'_k$ induces an isomorphism
 $AZ(I)/p^kAZ(I)\to(\Z/p^k)[I]$.  By a similar argument, it also
 induces an isomorphism $\Z[I]/p^k\Z[I]\to(\Z/p^k)[I]$.  Thus, we
 have $\Z[I]_p=\invlim_k(\Z/p^k)[I]$.  The maps
 $\pi'_k\:AZ(I)\to(\Z/p^k)[I]$ therefore assemble to give a
 homomorphism $\pi'\:AZ(I)\to\Z[I]_p$.  In the opposite direction,
 suppose we have $g\in\Z[I]_p$.  For each $k\geq 0$ we therefore have
 $\pi_k(g)\in\Z[I]/p^k\Z[I]=(\Z/p^k)[I]$, so there is a unique map 
 \[ g_k \: I \to \{0,1,\dotsc,p^k-1\} \]
 with $\pi_k(g)(i) = g_k(i)\pmod{p^k}$ for all $k$.  Note that the set
 $\{i\st g_k(i)\neq 0\}$ is finite for all $k$.  If we fix $i$, we
 find that the sequence $\{g_k(i)\}_{k\geq 0}$ is Cauchy, converging
 to some element $g_\infty(i)\in\Zp$ say, and we have
 $g_\infty(i)=g_k(i)\pmod{p^k}$ for all $k$.  This means that
 $g_\infty\in AZ(I)$ and $\pi'(g_\infty)=g$, so $\pi'$ is surjective.
 We also have $\pi'(h)=0$ iff $h(i)$ is divisible by $p^k$ for all $i$
 and $k$, which implies that $h=0$.  Thus, the map $\pi'$ is an
 isomorphism. 
\end{proof}

\begin{proposition}\label{prop-completion-monoidal}
 For any abelian groups $A$ and $B$ there is a natural map
 $\mu\:A_p\ot B_p\to(A\ot B)_p$, which induces an isomorphism
 $(A_p\ot B_p)_p\to(A\ot B)_p$.   
\end{proposition}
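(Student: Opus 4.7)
The plan is to first construct $\mu$ by observing that for each $k$ there is a natural bilinear map $A/p^k\tm B/p^k\to(A\ot B)/p^k$ sending $(\ov{a},\ov{b})$ to $\overline{a\ot b}$; this is well-defined because if $a-a'\in p^k A$ then $(a-a')\ot b\in p^k(A\ot B)$, and similarly in the second variable. Given $a=(a_k)_k\in A_p$ and $b=(b_k)_k\in B_p$, the system $\{a_k\ot b_k\}_k\in\prod_k(A\ot B)/p^k$ is compatible with the projection maps and so defines an element of the inverse limit $(A\ot B)_p$. The resulting function $A_p\tm B_p\to(A\ot B)_p$ is visibly bilinear, so Proposition~\ref{prop-tensor-bilinear} supplies the required homomorphism $\mu\:A_p\ot B_p\to(A\ot B)_p$. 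Applying $(-)_p$ and invoking Remark~\ref{rem-completion-idempotent} then yields $\widetilde{\mu}\:(A_p\ot B_p)_p\to(A\ot B)_p$.

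The key calculation is that for each $k\geq 0$, the reduction of $\mu$ modulo $p^k$ is an isomorphism $(A_p\ot B_p)/p^k(A_p\ot B_p)\to(A\ot B)/p^k(A\ot B)$. Using that $(\Z/p^k)\ot X\simeq X/p^k X$ (Proposition~\ref{prop-cyclic-tensor}) together with associativity and commutativity of $\ot$, one can rewrite
\[ (A_p\ot B_p)/p^k \simeq (\Z/p^k)\ot A_p\ot B_p \simeq (A_p/p^k A_p)\ot(B_p/p^k B_p). \]
Proposition~\ref{prop-completion-idempotent} identifies $A_p/p^k A_p$ with $A/p^k A$ and similarly for $B$, and running the same identifications in reverse rewrites $(A/p^k)\ot(B/p^k)$ as $(A\ot B)/p^k$. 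A direct check, using only the defining formula $\mu(a\ot b)=(a_k\ot b_k)_k$, shows that this composite of canonical isomorphisms coincides with the map induced by $\mu$.

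To finish, we pass to inverse limits: by Proposition~\ref{prop-completion-idempotent} we have $(A_p\ot B_p)_p=\invlim_k(A_p\ot B_p)/p^k$ and $(A\ot B)_p=\invlim_k(A\ot B)/p^k$, and the level-wise isomorphisms of the previous paragraph assemble to an isomorphism of limits, which by naturality is exactly $\widetilde{\mu}$. The main obstacle is the bookkeeping in the second paragraph: both sides of the displayed isomorphism are quotients of tensor products of inverse limits, and one must verify that the chain of canonical identifications is genuinely natural and matches the map defined componentwise by $\mu$, rather than differing from it by some automorphism. Once that compatibility is in hand the argument is purely formal.
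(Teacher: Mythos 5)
Your plan rests on exactly the same core calculation as the paper's proof: identify $(A_p\ot B_p)/p^k$ with $(A/p^k)\ot(B/p^k)$ with $(A\ot B)/p^k$ via Proposition~\ref{prop-completion-idempotent} and right exactness of $\ot$, and then pass to inverse limits. The one organizational difference is that you construct $\mu$ explicitly up front (via the componentwise bilinear formula) and must then check that the chain of canonical identifications agrees with the map it induces; the paper instead assembles the isomorphism $(A_p\ot B_p)_p\simeq(A\ot B)_p$ directly from the level-wise identifications and simply \emph{defines} $\mu$ as its composite with $\eta\:A_p\ot B_p\to(A_p\ot B_p)_p$, which sidesteps the compatibility check you correctly flag as the main bookkeeping burden.
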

\begin{proof}
 Note that $p^k.1_{A\ot B}=(p^k.1_A)\ot 1_B=1_A\ot(p^k.1_B)$.  Using
 the right exactness of tensor products we see that  
 \[ (A\ot B)/p^k = (A/p^k)\ot B = A\ot(B/p^k) = (A/p^k)\ot(B/p^k). \]
 Combining this with Proposition~\ref{prop-completion-idempotent}
 gives 
 \[ (A_p\ot B_p)/p^k = 
     (A_p/p^k)\ot(B_p/p^k) = 
      (A/p^k)\ot (B/p^k) = (A\ot B)/p^k.
 \] 
 Passing to inverse limits gives an isomorphism
 $(A_p\ot B_p)_p=(A\ot B)_p$.  We can compose this with the map
 $\eta\:A_p\ot B_p\to(A_p\ot B_p)_p$ to get a map
 $\mu\:A_p\ot B_p\to(A\ot B)_p$, which is what we most often need for
 applications. 
\end{proof}

\begin{proposition}\label{prop-completion-tensor}
 There is a natural map $\Zp\ot A\to A_p$, which is an isomorphism
 when $A$ is finitely generated.
\end{proposition}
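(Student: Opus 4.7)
My plan is to first construct the natural map, then reduce to cyclic cases using additivity, then verify each case by direct computation.

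For the construction, I would exploit the fact that each quotient $A/p^k A$ carries a natural $\Z/p^k$-module structure compatible with the transition maps $A/p^{k+1} \to A/p^k$ and the ring surjections $\Z/p^{k+1} \to \Z/p^k$. Passing to inverse limits gives $A_p$ a natural $\Zp$-module structure, which is bilinear in $(x,a)$ for $x \in \Zp$, $a \in A_p$. Precomposing with $\eta_A \: A \to A_p$ in the second slot gives a bilinear map $\Zp \tm A \to A_p$, and Proposition~\ref{prop-tensor-bilinear} produces the desired homomorphism $\al_A \: \Zp \ot A \to A_p$ sending $x \ot a \mapsto x\cdot\eta_A(a)$. (Alternatively, this coincides with the composite $\Zp\ot A \xrightarrow{1\ot\eta_A} \Zp\ot A_p \xrightarrow{\mu} (\Z\ot A)_p = A_p$ coming from Proposition~\ref{prop-completion-monoidal}.) Naturality in $A$ is automatic from the construction.

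Next I would observe that both functors $A \mapsto \Zp\ot A$ and $A \mapsto A_p$ send finite direct sums to finite direct sums: the first by Proposition~\ref{prop-tensor-distrib}, the second because $(A\oplus B)/p^k = A/p^k \oplus B/p^k$ and finite direct sums commute with inverse limits (since they are finite products). The map $\al_A$ is natural in $A$ and so respects these decompositions. By Theorem~\ref{thm-fin-gen}, any finitely generated $A$ is a finite direct sum of copies of $\Z$ and of $\Z/q^v$ for various primes $q$ and $v>0$, so it suffices to verify that $\al_A$ is an isomorphism in each of the cases $A=\Z$, $A=\Z/p^v$ (the prime of completion), and $A=\Z/q^v$ with $q\neq p$.

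The three cases are then direct computations. For $A=\Z$, both sides equal $\Zp$ and $\al_\Z$ is the canonical identification $\Zp\ot\Z\simeq\Zp$. For $A=\Z/p^v$, Proposition~\ref{prop-cyclic-tensor} gives $\Zp\ot\Z/p^v = \Zp/p^v\Zp$, which by Proposition~\ref{prop-completion-idempotent} (applied with $A=\Z$) equals $\Z/p^v$; on the other hand, $A/p^kA$ stabilises at $\Z/p^v$ for $k\geq v$, so $A_p = \Z/p^v$, and unwinding the definitions shows $\al_A$ is the identity. For $A=\Z/q^v$ with $q\neq p$, Proposition~\ref{prop-Zp-omni} tells us that $q$ is a unit in $\Zp$, so $\Zp\ot\Z/q^v = \Zp/q^v\Zp = 0$; simultaneously $p^k A = A$ for all $k\geq v$ (since $p$ is invertible mod $q^v$), hence $A_p = 0$, and $\al_A$ is vacuously an isomorphism.

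There is no real obstacle here; the only point that requires mild care is checking that the map $\al_A$ really is the identity in the case $A=\Z/p^v$, which amounts to tracing through the isomorphism $\Zp/p^v\Zp \simeq \Z/p^v$ of Proposition~\ref{prop-completion-idempotent} and confirming it matches the one induced by $\eta_A$. Everything else is routine case-checking given the general framework established earlier in the document.
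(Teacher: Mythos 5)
Your proposal is correct and follows essentially the same route as the paper: construct the natural map as the composite $\Zp\ot A\to\Zp\ot A_p\to(\Z\ot A)_p=A_p$ (or equivalently via the $\Zp$-module structure on $A_p$), reduce to cyclic summands by additivity of both functors, and check the three cases $\Z$, $\Z/p^v$, $\Z/q^v$ using Proposition~\ref{prop-cyclic-tensor}, Proposition~\ref{prop-completion-idempotent}, and the invertibility of $q$ in $\Zp$. Your remark that one should trace through the identification in the $\Z/p^v$ case to confirm $\al_A$ itself is the isomorphism (and not merely that the two sides are abstractly isomorphic) is a point the paper passes over silently, but it is a routine check and your treatment of it is fine.
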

\begin{proof}
 The map is just the composite 
 \[ \Zp\ot A\xra{1\ot\eta} \Zp\ot A_p \xra{\mu} (\Z\ot A)_p = A_p \]
 (or it can be defined more directly by the method used for $\mu$).
 By the classification of finitely generated abelian groups, it will
 suffice to prove that we have an isomorphism when $A=\Z$ or
 $A=\Z/p^k$ or $A=\Z/q^k$ for some prime $q\neq p$.  The case $A=\Z$
 is clear.  When $A=\Z/p^k$ we have $A_p=A$ as in
 Example~\ref{eg-finite-completion}.  We also $\Zp\ot A=\Zp/p^k\Zp$
 by the right exactness of tensoring, and this is the same as
 $\Z/p^k=A$ by Proposition~\ref{prop-completion-idempotent}, so
 $\Zp\ot A=A_p$ as claimed.  Finally, if $q$ is different from $p$
 then $q^k$ is invertible in $\Zp$ so $\Zp\ot\Z/q^k=\Zp/q^k\Zp=0$,
 and similarly $(\Z/q^k)_p=0$ as in
 Example~\ref{eg-finite-completion} again. 
\end{proof}
\begin{corollary}\label{cor-completion-exact}
 If $A\to B\to C$ is an exact sequence of finitely generated abelian
 groups, then the resulting sequence $A_p\to B_p\to C_p$ is also
 exact. 
\end{corollary}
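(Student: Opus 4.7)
The plan is to combine Proposition~\ref{prop-completion-tensor} (identifying $(-)_p$ with $\Zp\ot(-)$ on finitely generated groups) with the flatness of $\Zp$ (i.e.\ exactness of $\Zp\ot(-)$), which comes from $\Zp$ being torsion-free by Proposition~\ref{prop-Zp-omni}. The key input is Proposition~\ref{prop-tensor-exact}: part (a) gives right exactness of tensoring in general, and part (b) says that tensoring with a torsion-free group preserves injections. Together these imply that tensoring with $\Zp$ preserves all short exact sequences, and hence all exact sequences.

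First I would reduce to the tensor-product formulation. For finitely generated $A$, $B$, $C$, Proposition~\ref{prop-completion-tensor} gives natural isomorphisms $A_p\simeq\Zp\ot A$ etc., and naturality identifies the sequence $A_p\to B_p\to C_p$ with
\[ \Zp\ot A \xra{1\ot f} \Zp\ot B \xra{1\ot g} \Zp\ot C. \]
So it suffices to show this latter sequence is exact at $\Zp\ot B$.

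Next I would factor $A\xra{f}B\xra{g}C$ through $K=\image(f)=\ker(g)$, obtaining a surjection $A\era K$, the short exact sequence $0\to K\mra B\era B/K\to 0$, and an injection $B/K\mra C$ (noting $B/K\simeq\image(g)$). All of $K$, $B/K$ and $\image(g)$ are finitely generated by Corollary~\ref{cor-subgroup-fg}, so this is a diagram of finitely generated groups. Applying $\Zp\ot(-)$: by the right exactness part of Proposition~\ref{prop-tensor-exact}(a), $\Zp\ot A\to\Zp\ot K$ remains surjective, so $\image(1\ot f)=\Zp\ot K$; by Proposition~\ref{prop-tensor-exact}(b) together with $\Zp$ being torsion-free (Proposition~\ref{prop-Zp-omni}), the map $\Zp\ot K\to\Zp\ot B$ stays injective and the map $\Zp\ot(B/K)\to\Zp\ot C$ stays injective; and again by right exactness the short exact sequence $0\to K\to B\to B/K\to 0$ yields a short exact sequence $0\to\Zp\ot K\to\Zp\ot B\to\Zp\ot(B/K)\to 0$. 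Hence $\ker(1\ot g)$ equals $\Zp\ot K$ (the kernel of the composite $\Zp\ot B\to\Zp\ot(B/K)\mra\Zp\ot C$), matching $\image(1\ot f)$.

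There is no real obstacle here; the only thing to double-check is that the hypothesis of finite generation passes to the subquotients $K$, $B/K$ and $\image(g)$, which it does by Corollary~\ref{cor-subgroup-fg}, and that the natural isomorphism $A_p\simeq\Zp\ot A$ is natural in $A$ so that the diagrams really do commute after identification.
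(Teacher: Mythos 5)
Your proof is correct and is essentially the paper's own argument: identify $A_p\simeq\Zp\ot A$ (etc.) via Proposition~\ref{prop-completion-tensor}, then use that $\Zp$ is torsion-free so tensoring with it preserves exactness via Proposition~\ref{prop-tensor-exact}. You merely spell out the standard factorization through $K=\image(f)=\ker(g)$ that the paper leaves implicit (and incidentally, the finite generation of $K$, $B/K$, $\image(g)$ plays no role once you have passed to tensor products---torsion-freeness of $\Zp$ is all that is needed there; finite generation is only used for the initial identification $(-)_p\simeq\Zp\ot(-)$).
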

\begin{proof}
 As $\Zp$ is torsion free, Proposition~\ref{prop-tensor-exact} tells
 us that the sequence $\Zp\ot A\to\Zp\ot B\to\Zp\ot C$ is exact.  
\end{proof}

We can now assemble our results to prove something closely analogous
to Proposition~\ref{prop-local-omni}:
\begin{proposition}\label{prop-complete-omni}\nl
 \begin{itemize}
  \item[(a)] The group $A_p$ is always $p$-complete.
  \item[(b)] The map $\eta\:A\to A_p$ is an isomorphism if and
   only if $A$ is $p$-complete.
  \item[(c)] If $A$ is $p$-complete then it can be regarded as a module
   over $\Zp$.
  \item[(d)] Suppose that $f\:A\to B$ is a homomorphism, and that $B$
   is $p$-complete.  Then there is a unique homomorphism
   $f'\:A_p\to B$ such that $f'\circ\eta=f\:A\to B$.
 \end{itemize}
\end{proposition}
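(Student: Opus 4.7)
The plan is to handle the four parts in order, closely mirroring the proof of Proposition~\ref{prop-local-omni} but with Proposition~\ref{prop-completion-idempotent} playing the role of the key tool.

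For (a), I would apply Proposition~\ref{prop-completion-idempotent} twice: once to $A$ and once to $A_p$. The first application gives $A_p/p^k A_p \simeq A/p^k A$ compatibly in $k$, and the second (with $A$ replaced by $A_p$) gives $(A_p)_p = \invlim_k A_p/p^k A_p$. Stitching these together identifies $(A_p)_p$ with $\invlim_k A/p^k A = A_p$. The subtlety flagged in Remark~\ref{rem-completion-idempotent} is that there are two candidate maps $A_p \to (A_p)_p$, namely $\eta_{A_p}$ and $(\eta_A)_p$; as in Remark~\ref{rem-idempotent-monad}, one checks they agree modulo $p^k$ for every $k$, hence are equal, so either may be used and both are isomorphisms. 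Part (b) is then essentially tautological: the definition of $p$-complete is exactly that $\eta$ is an isomorphism, and combined with (a) this gives both implications.

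For (c), I would exploit the presentations $\Zp = \invlim_k \Z/p^k$ and $A = A_p = \invlim_k A/p^k A$ given by (b). For $a = (a_k)_k \in \Zp$ and $x = (x_k)_k \in A_p$, the components $a_k x_k \in A/p^k A$ (using the tautological $\Z/p^k$-module structure on $A/p^k A$) are compatible under the projections $A/p^{k+1}A \to A/p^k A$, so they assemble into an element of $A_p \simeq A$. Verifying the module axioms reduces to checking them component by component, where they are immediate.

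For (d), given $f\colon A \to B$ with $B$ being $p$-complete, I note that $f(p^k A)\leq p^k B$, so $f$ induces compatible maps $f_k\colon A/p^k A \to B/p^k B$; passing to inverse limits and composing with $\eta_B^{-1}\colon B_p \to B$ (which exists by (b)) produces the desired $f'\colon A_p \to B$ with $f'\eta_A = f$. For uniqueness, any homomorphism $g\colon A_p \to B$ automatically satisfies $g(p^k A_p)\leq p^k B$ and so descends to $g_k\colon A_p/p^k A_p \to B/p^k B$; by part (a) the domain of $g_k$ is canonically $A/p^k A$, and under this identification $g_k$ depends only on $g\eta_A$. Thus $g\eta_A=f=f'\eta_A$ forces $g_k=f'_k$ for all $k$, and then $g=f'$ since $B = \invlim_k B/p^k B$. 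The main obstacle is bookkeeping in part (a), specifically pinning down that the two natural maps $A_p\to (A_p)_p$ coincide; once that is settled, parts (b)--(d) are fairly mechanical consequences of (a) and the universal property of inverse limits.
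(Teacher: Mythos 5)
Your proof is correct and tracks the paper's argument closely, with two places where you diverge in a worthwhile way. For part (c), the paper defines the scalar multiplication by first invoking Proposition~\ref{prop-completion-monoidal} to build a map $\Zp\ot A\to A$ and then observing it is characterised by $\pi_k(ra)=\pi_k(r)\pi_k(a)$; you bypass the tensor-product machinery entirely and define the product componentwise from the start. This is more elementary and self-contained (the two definitions coincide, since the paper's characterisation is exactly the componentwise rule). For part (d), the paper is very terse: it simply says $f$ induces $f_p\colon A_p\to B_p$ and sets $f'=\eta_B^{-1}\circ f_p$, without spelling out why $f'$ is unique. Your argument fills that gap properly: any candidate $g\colon A_p\to B$ respects the $p$-adic filtrations, is determined modulo $p^k$ by its restriction along $\eta_A$ (using the isomorphism $A/p^kA\simeq A_p/p^kA_p$ from Proposition~\ref{prop-completion-idempotent}), and is then pinned down because $B\simeq\invlim_k B/p^kB$ forces $\bigcap_k p^kB=0$. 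Parts (a) and (b), including the remark about reconciling the two natural maps $A_p\to(A_p)_p$, match the paper's intent exactly.
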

\begin{proof}\nl
 \begin{itemize}
  \item[(a)] As mentioned previously, this follows from
   Proposition~\ref{prop-completion-idempotent} by taking inverse
   limits.  
  \item[(b)] This is true by definition, and is only mentioned to
   complete the correspondence with
   Proposition~\ref{prop-local-omni}. 
  \item[(c)] Proposition~\ref{prop-completion-monoidal} gives a map 
   \[ \Zp\ot A = \Zp\ot A_p \to (\Z\ot A)_p = A_p = A. \]
   For $r\in\Zp$ and $a\in A$ we can thus define $ra=\mu(r\ot a)$.
   Equivalently, this is characterised by the fact that
   $\pi_k(ra)=\pi_k(r)\pi_k(a)$ in $A/p^kA$ (where we have used the
   obvious structure of $A/p^kA$ as a module over $\Z/p^k$).  From
   this description it is clear that our multiplication rule is
   associative, unital and distributive, so it makes $A$ into a module
   over $\Zp$. 
  \item[(d)] The map $f\:A\to B$ induces an isomorphism
   $f_p\:A_p\to B_p$, and we have an isomorphism $\eta\:B\to B_p$.  We
   can and must take $f'=\eta^{-1}\circ f_p$.
 \end{itemize}
\end{proof}

While our definition of completion is quite natural and
straightforward, its exactness properties for infinitely generated
groups are very delicate, and they do not relate well to topological
constructions.  We will therefore introduce a different definition
that often agrees with completion, but has better formal properties.

\begin{definition}\label{defn-Loi}
 For any abelian group $A$, we let $A\psb{x}$ denote the group of
 formal power series $v(x)=\sum_{i=0}^\infty a_ix^i$ with $a_i\in A$
 for all $i$.  This is a module over $\Z\psb{x}$ by the obvious rule
 \[ \left(\sum_in_ix^i\right)\left(\sum_ja_jx^j\right) =
     \sum_k\left(\sum_{i=0}^kn_ia_{k-i}\right)x^k.
 \]
 We define
 \begin{align*}
  L_0A &= A\psb{x}/((x-p).A\psb{x}) \\
  L_1A &= \{v(x)\in A\psb{x}\st (x-p)v(x)=0\}.
 \end{align*}
 We can identify $A$ with the set of constant series in $A\psb{x}$,
 and then restrict the quotient map $A\psb{x}\to L_0A$ to $A$ to get a
 natural map $\eta\:A\to L_0A$.  We say that $A$ is
 \emph{Ext-$p$-complete} if $\eta\:A\to L_0A$ is an isomorphism and
 $L_1A=0$.  We also call $L_0A$ the \emph{derived completion} of $A$.
\end{definition}

\begin{remark}\label{rem-Loi-derived}
 Readers familiar with the general theory of derived functors should
 consult Corollary~\ref{cor-Loi-derived} to see why the term is
 appropriate here.
\end{remark}
\begin{remark}\label{rem-Li-auto}
 It will follow from Proposition~\ref{prop-functors-complete} that the
 condition $L_1A=0$ is actually automatic when $\eta\:A\to L_0A$ is an
 isomorphism.  However, it is easier to develop the theory if we have
 both conditions in the initial definition.
\end{remark}

\begin{remark}\label{rem-Lo-monoidal}
 There is an evident product map 
 \[ \mu\:A\psb{x}\ot B\psb{x}\to (A\ot B)\psb{x} \]
 given by
 \[ \mu\left(\left(\sum_ia_ix^i\right)\ot\left(\sum_jb_jx^j\right)\right)
     = \sum_k \left(\sum_{k=i+j}a_i\ot b_j\right)x^k.
 \]
 This induces a map $\mu\:(L_0A)\ot(L_0B)\to L_0(A\ot B)$, which fits
 in a commutative diagram
 \begin{center}
  \begin{tikzcd}
   & A\ot B \arrow[dl,"\eta\ot\eta"'] \arrow[dr,"\eta"] \\
   (L_0A)\ot(L_0B) \arrow[rr,"\mu"'] & & L_0(A\ot B).
  \end{tikzcd}
 \end{center}
\end{remark}

\begin{remark}\label{rem-Loi-six}
 If we have a short exact sequence $A\xra{}B\xra{}C$, we can apply the
 Snake Lemma to the diagram
 \begin{center}
  \begin{tikzcd}
   A\psb{x} \arrow[rightarrowtail,r] \arrow[d,"x-p"'] &
   B\psb{x} \arrow[twoheadrightarrow,r] \arrow[d,"x-p"'] &
   C\psb{x}         \arrow[d,"x-p"] \\
   A\psb{x} \arrow[rightarrowtail,r] &
   B\psb{x} \arrow[twoheadrightarrow,r] &
   C\psb{x}
  \end{tikzcd}
 \end{center}
 to obtain an exact sequence 
 \[ L_1A \mra L_1B \to L_1C \to L_0A \to L_0B \era L_0 C. \]
\end{remark}

\begin{proposition}\label{prop-ext-complete}
 \begin{itemize}
  \item[(a)] If we have a short exact sequence $A\to B\to C$ in which
   two of the three terms are Ext-$p$-complete, then so is the third.
  \item[(b)] Finite sums and retracts of Ext-$p$-complete groups are
   Ext-$p$-complete. 
  \item[(c)] The kernel, cokernel and image of any homomorphism
   between Ext-$p$-complete groups are Ext-$p$-complete.
  \item[(d)] The product of any (possibly infinite) family of
   Ext-$p$-complete groups is Ext-$p$-complete.
  \item[(e)] If $p^k.1_A=0$ for some $k$ then $A$ is
   Ext-$p$-complete. 
  \item[(f)] If $A$ is $p$-complete then it is Ext-$p$-complete.
 \end{itemize}
\end{proposition}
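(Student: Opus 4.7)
The plan is to treat the six parts in order, since each later part either reuses an earlier one or runs on the same basic engine: the six-term sequence of Remark~\ref{rem-Loi-six} together with naturality of $\eta\:\mathrm{id}\to L_0$. Throughout we use that a group $A$ is Ext-$p$-complete iff $\eta_A$ is an isomorphism and $L_1A=0$.

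For part~(a), given a short exact sequence $A\mra B\era C$, Remark~\ref{rem-Loi-six} provides
\[ 0\to L_1A\to L_1B\to L_1C\xra{\dl} L_0A\to L_0B\to L_0C\to 0, \]
and naturality of $\eta$ supplies a commutative ladder with top row $A\mra B\era C$ and with $L_0$'s along the bottom. In each of the three sub-cases I would feed the vanishing of two pairs $(L_1,\eta)$ into the six-term sequence to extract a short exact sequence $0\to L_0A\to L_0B\to L_0C\to 0$, and then apply the Five Lemma (Proposition~\ref{lem-five}) to conclude that the remaining $\eta$ is an isomorphism, while reading $L_1=0$ for the third group directly off the six-term sequence. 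The only slightly delicate case is when $A,B$ are complete, where I first need to note that the map $L_0A\to L_0B$ is identified via $\eta_A,\eta_B$ with the inclusion $A\mra B$ in order to conclude it is injective, and therefore $L_1C=0$.

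Part~(b) is essentially formal: the functor $(-)\psb{x}$ clearly commutes with finite direct sums and with multiplication by $x-p$, so $L_0$ and $L_1$ commute with finite direct sums, and $\eta_{A\oplus A'}=\eta_A\oplus\eta_{A'}$. For retracts I would appeal to Proposition~\ref{prop-idempotent-splitting} to realise any retract of $B$ as a direct summand of $B$, reducing to the direct-sum case. Part~(d) is identical in spirit: $(-)\psb{x}$ commutes with arbitrary products, as do kernels and cokernels, so $L_0$ and $L_1$ commute with products and the result is immediate.

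Part~(c) is where the first real chasing happens. Factor $f\:A\to B$ as $A\era I\mra B$ with $I=\image(f)$, and write down the two short exact sequences $0\to\ker f\to A\to I\to 0$ and $0\to I\to B\to\cok f\to 0$. Running the six-term sequence on each and using $L_1A=L_1B=0$ immediately yields $L_1(\ker f)=L_1 I=0$ and short exact sequences $0\to L_0(\ker f)\to L_0A\to L_0I\to 0$ and $0\to L_1(\cok f)\to L_0I\to L_0B\to L_0(\cok f)\to 0$. Apply the Snake Lemma to the two naturality ladders of $\eta$: the first (with $\eta_A$ an iso) shows $\eta_{\ker f}$ injective, $\eta_I$ surjective, and $\ker\eta_I\simeq\cok\eta_{\ker f}$; the second (with $\eta_B$ an iso, after quotienting $L_0I$ by $L_1(\cok f)$) shows that $\eta_I$ composed with the quotient $L_0I\to L_0I/L_1(\cok f)$ is injective, giving $\eta_I$ injective, which by the first diagram forces $\eta_{\ker f}$ and $\eta_I$ to be isomorphisms, and thence $L_1(\cok f)=0$ and $\eta_{\cok f}$ an isomorphism. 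The main obstacle here is simply keeping the two snake diagrams straight; no new ideas are needed.

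For parts~(e) and~(f) I need to analyse $A\psb{x}$ directly. In~(e), the key observation is the telescoping identity
\[ (x-p)\bigl(ax^{j-1}+apx^{j-2}+\dotsb+ap^{j-1}\bigr) = ax^j - ap^j, \]
so $ax^j\equiv ap^j\pmod{(x-p)A\psb{x}}$ for every $j$. When $p^k.1_A=0$ the formal sum $\sum a_ip^i$ has only finitely many non-zero terms, so ``evaluation at $p$'' is a well-defined group homomorphism $A\psb{x}\to A$ which kills $(x-p)A\psb{x}$; using the telescoping identity it descends to an inverse of $\eta_A$, and the obvious recursion $a_{j-1}=pa_j$ on $L_1A$ collapses since $a_0=p^ka_k=0$. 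Part~(f) is the main technical step: assuming $A$ is $p$-complete, the same recursion for $L_1A$ places each $a_j$ in $\bigcap_kp^kA$, which vanishes because the canonical map $A\to A_p$ is injective. Surjectivity of $\eta_A$ requires solving the recurrence $b_{j-1}=a_j+pb_j$, and the hard part is to produce $b_j\in A$; I would define $b_j$ as the element of $A=\invlim_kA/p^k$ whose class mod $p^k$ is the finite partial sum $\sum_{i=0}^{k-1}p^ia_{j+1+i}$, check this is a compatible system, and verify that $v-a_0-pb_0=(x-p)\sum b_jx^j$ modulo $\bigcap p^kA=0$. This appeal to $p$-completeness to sum a formally convergent series is the real content of~(f); everything else is bookkeeping.
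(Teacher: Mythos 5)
Your proof of parts~(a)--(d) follows the same path as the paper's: (a) via the six-term sequence of Remark~\ref{rem-Loi-six} plus the Five Lemma, (b) and (d) by the observation that $(-)\psb{x}$ commutes with finite sums, retracts, and arbitrary products, and (c) by factoring through the image and running the six-term sequence on both short exact sequences. (The paper streamlines (c) slightly by first showing $\img(f)$ is Ext-$p$-complete via two naturality squares and then citing~(a) for $\ker(f)$ and $\cok(f)$, whereas you chase the snake diagrams directly; both are fine.)

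Your treatment of parts~(e) and~(f) is genuinely different from the paper's and more computational. For~(e), the paper handles $k=1$ essentially for free---when $pA=0$ the operator $x-p$ on $A\psb{x}$ is just multiplication by $x$, so $L_0A=A$ and $L_1A=0$ are immediate---and then inducts on $k$ via part~(a) applied to $pA\mra A\era A/pA$. For~(f), the paper avoids all power-series manipulation: since $\{A/p^k\}$ is a tower of surjections, $\invlim^1=0$, so $p$-completeness gives a short exact sequence $A\mra\prod_k A/p^k\era\prod_k A/p^k$, and parts~(d), (e), (a) finish. Your route instead unwinds the definitions: the telescoping identity $ax^j-ap^j=(x-p)(ax^{j-1}+\dotsb+ap^{j-1})$ for~(e), and in~(f) the explicit construction of $b_j\in A=\invlim_k A/p^k$ as the compatible system of partial sums $\sum_{i<k}p^ia_{j+1+i}$. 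This works, and it has the pedagogical merit of exhibiting the inverse of $\eta$ concretely, but it is substantially more work than the paper's inductive argument and requires care with infinite formal sums. The paper's formulation of~(f) is purposely set up so that the $p$-completeness hypothesis is used only through the statement $A=\invlim_k A/p^k$, $\invlim^1_kA/p^k=0$; your argument uses that same data but re-derives the consequences by hand inside $A\psb{x}$.

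One small omission in~(f): you establish $L_1A=0$ and surjectivity of $\eta_A$, but you never address injectivity of $\eta_A$. This is not hard---if the constant $a$ equals $(x-p)w(x)$ with $w=\sum_jb_jx^j$ then $b_{j-1}=pb_j$ for $j\ge1$, so each $b_j\in\bigcap_kp^kA=\ker(A\to A_p)=0$, hence $a=-pb_0=0$---and it is the same $\bigcap_kp^kA=0$ argument you already used for $L_1A$. But it does need to be said, since $\eta_A$ being an isomorphism is half of the definition of Ext-$p$-complete.
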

\begin{proof}\nl
 \begin{itemize}
  \item[(a)] Chase the diagram 
   \begin{center}
    \begin{tikzcd}
     &&&
     A \arrow[rightarrowtail,r] \arrow[d] &
     B \arrow[twoheadrightarrow,r] \arrow[d] &
     C \arrow[d] \\
     L_1A \arrow[rightarrowtail,r] &
     L_1B \arrow[r] &
     L_1C \arrow[r] &
     L_0A \arrow[r] &
     L_0B \arrow[twoheadrightarrow,r] &
     L_0C
    \end{tikzcd}
   \end{center}
  \item[(b)] Clear.
  \item[(c)] Consider a homomorphism $f\:A\to B$
   between Ext-$p$-complete groups, and the resulting short exact
   sequences $\img(f)\to B\to\cok(f)$ and $\ker(f)\to A\to\img(f)$.
   These give diagrams
   \begin{center}
    \begin{tikzcd}
     &&&
     \img(f) \arrow[rightarrowtail,r] \arrow[d] &
     B \arrow[twoheadrightarrow,r] \arrow[d,"\simeq"] &
     \cok(f) \arrow[d] \\
     L_1\img(f) \arrow[rightarrowtail,r] &
     0 \arrow[r]  &
     L_1\cok(f) \arrow[r] &
     L_0\img(f) \arrow[r] &
     L_0B \arrow[twoheadrightarrow,r] &
     L_0\cok(f)
    \end{tikzcd}
   \end{center}
   and
   \begin{center}
    \begin{tikzcd}
     &&&
     \ker(f) \arrow[rightarrowtail,r] \arrow[d] &
     A \arrow[twoheadrightarrow,r] \arrow[d,"\simeq"] &
     \img(f) \arrow[d] \\
     L_1\ker(f) \arrow[rightarrowtail,r] &
     0 \arrow[r] &
     L_1\img(f) \arrow[r] &
     L_0\ker(f) \arrow[r] &
     L_0A \arrow[twoheadrightarrow,r] &
     L_0\img(f).
    \end{tikzcd}
   \end{center}
   From the first diagram we see that $L_1\img(f)=0$ and that the map
   $\img(f)\to L_0\img(f)$ is injective, and from the second we see that
   the map $\img(f)\to L_0\img(f)$ is surjective; thus $\img(f)$ is
   Ext-$p$-complete.  Given this, it is a special case of~(a) that
   $\ker(f)$ and $\cok(f)$ are also Ext-$p$-complete as claimed.
  \item[(d)] This is clear, because
   $(\prod_iA_i)\psb{x}=\prod_iA_i\psb{x}$.
  \item[(e)] If $k=1$ the definitions give
   $L_0A=A\psb{x}/(x.A\psb{x})=A$ and
   $L_1A=\{v(x)\in A\psb{x}\st xv(x)=0\}=0$ as required.  The general
   case follows by induction using~(a) and the short exact sequence
   $pA\to A\to A/pA$.
  \item[(f)] As the tower $\{A/p^k\}$ consists of surjections, the
   $\invlim^1$ term is zero.  As $A$ is $p$-complete, we therefore
   have a short exact sequence
   \[ A \mra \prod_kA/p^k \era \prod_kA/p^k. \]
   The second and third terms are Ext-$p$-complete by parts~(e)
   and~(d), so $A$ is Ext-$p$-complete by part~(a).
 \end{itemize}
\end{proof}

\begin{proposition}\label{prop-Li}
 Let $A$ be any abelian group.  Then $L_1A=\invlim_kA[p^k]$ and there
 is a natural short exact sequence
 \begin{center}
  \begin{tikzcd}
   \invlim_k^1 A[p^k] \arrow[rightarrowtail,r,"\xi"] &
   L_0A \arrow[twoheadrightarrow,r,"\zt"] &
   A_p.
  \end{tikzcd}
 \end{center}
 The limit symbols here refer to the tower
 \[ 0=A[p^0] \xla{p} A[p] \xla{p} A[p^2] \xla{p} A[p^3] \xla{p} \dotsb \]
\end{proposition}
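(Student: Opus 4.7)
The plan is to work directly from the defining four-term exact sequence
\[ 0 \to L_1A \to A\psb{x} \xra{x-p} A\psb{x} \to L_0A \to 0 \]
and extract everything by manipulating coefficients of power series. Writing $v(x) = \sum_{i\geq 0} a_ix^i$, the coefficient of $x^k$ in $(x-p)v(x)$ is $a_{k-1}-pa_k$ (with the convention $a_{-1}=0$), so every computation reduces to linear arithmetic on the coefficients.

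For $L_1A$, I would observe that $(x-p)v(x)=0$ forces $pa_0=0$ and $a_{k-1}=pa_k$ for $k\geq 1$. A routine induction then gives $a_k\in A[p^{k+1}]$, and the assignment $v\mapsto(0,a_0,a_1,a_2,\dotsc)$ is immediately seen to be an isomorphism onto $\invlim_k A[p^k]$, where the tower maps are multiplication by $p$.

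For the short exact sequence, first I would define $\tilde\zt\:A\psb{x}\to A_p$ by the ``partial evaluation at $p$'' formula $\tilde\zt(v)_k=\sum_{i=0}^{k-1}a_ip^i\pmod{p^k}$; a direct calculation shows the components are compatible in the tower defining $A_p$, and another direct calculation shows $\tilde\zt((x-p)v)_k\equiv -a_{k-1}p^k\equiv 0\pmod{p^k}$. Hence $\tilde\zt$ descends to $\zt\:L_0A\to A_p$. Surjectivity is then easy: given a compatible $(b_k+p^kA)_k$, write $b_{k+1}-b_k=p^ka_k$ and let $a_0=b_1$, and the resulting $v=\sum a_ix^i$ satisfies $\zt(v)=(b_k)_k$. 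Next I would construct $\xi\:\invlim^1_kA[p^k]\to L_0A$ by $\xi(c)=\sum c_kx^k$: using $e_k=-d_{k+1}$ one checks that if $c_k=d_k-pd_{k+1}$ with $d_k\in A[p^k]$ then $\sum c_kx^k=-(x-p)\sum d_{k+1}x^k$, so $\xi$ is well-defined. The composite $\zt\circ\xi$ vanishes because $p^ic_i=0$ for $c_i\in A[p^i]$.

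The two nontrivial steps are middle exactness and injectivity of $\xi$, and these are the main bookkeeping obstacle. For middle exactness: if $\zt(v)=0$, write $b_k=p^kt_k$ with $t_0=0$; then $a_k=(pt_{k+1}-t_k)+c_k$ with $c_k\in A[p^k]$, and the series $\sum(pt_{k+1}-t_k)x^k$ equals $(x-p)\sum(-t_{k+1})x^k$, showing $v\equiv\xi(c)$ in $L_0A$. For injectivity of $\xi$: if $\sum c_kx^k=(x-p)w(x)$ with $w=\sum e_kx^k$, then $c_k=e_{k-1}-pe_k$; an induction on $k$ using $p^kc_k=0$ and $p^ke_{k-1}=0$ gives $e_k\in A[p^{k+1}]$, after which setting $d_k=-e_{k-1}\in A[p^k]$ yields $c_k=d_k-pd_{k+1}$, so $c\in\image(D)$ and represents $0$ in $\invlim^1$. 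Combining these four assertions produces the desired short exact sequence $\invlim^1_kA[p^k]\mra L_0A\era A_p$; the index-juggling in the last two paragraphs is the only delicate part, and it is all elementary once the formula for $\zt$ is chosen correctly.
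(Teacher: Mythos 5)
Your proof follows essentially the same route as the paper's: identify $L_1A$ with $\invlim_k A[p^k]$ by reading off the coefficient recursion for $(x-p)v(x)=0$, define $\zt$ via the partial sums $\sum_{i<k}a_ip^i \pmod{p^k}$, define $\xi$ by $c\mapsto\sum c_kx^k$, and verify exactness by the same coefficient bookkeeping. There are two harmless sign slips — in fact $\sum c_kx^k=+(x-p)\sum d_{k+1}x^k$, and in the injectivity step $d_k=e_{k-1}$ (without the minus) gives $c_k=d_k-pd_{k+1}$ — but since $(x-p)A\psb{x}$ and $\image(D)$ are closed under negation, neither affects the argument.
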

\begin{proof}
 First, an element of $L_1A$ is a series $v(x)=\sum_ia_ix^i$ with
 $(x-p)v(x)=0$, which means that $pa_0=0$ and $pa_{i+1}=a_i$ for all
 $i\geq 0$.  It follows inductively that $p^{i+1}a_i=0$ for all $i$,
 so the sequence $(0,a_0,a_1,\dotsc)$ is an element of
 $\invlim_iA[p^i]$.  All steps here can be reversed so
 $L_1A=\invlim_iA[p^i]$.  Next, define $\zt'_i\:A\psb{x}\to A/p^iA$ by 
 \[ \zt'_i(\sum_ja_jx^j) = \sum_{j<i}a_jp^j + p^iA. \]
 It is clear that $\zt'_i(v(x))=\zt'_{i+1}(v(x))\pmod{p^iA}$, so the
 maps $\zt'_i$ fit together to give a homomorphism
 $\zt'\:A\psb{x}\to A_p$, which can be described heuristically as
 $\zt'(v(x))=v(p)$.  It is also easy to check that $\zt'((x-p)w(x))=0$
 for all $w(x)$, so there is an induced map $\zt\:L_0A\to A_p$.  Given
 an arbitrary element $b\in A_p$ we can choose $b_i\in A$ representing
 the coset $\pi_i(b)\in A/p^iA$.  These will then satisfy
 $b_{i+1}=b_i\pmod{p^iA}$, so we can choose $a_i\in A$ with
 $b_{i+1}=p^ia_i+b_i$.  The series $v(x)=\sum_ia_ix^i$ then has
 $\zt(v(x))=b$, so we see that $\zt$ is surjective.

 Next, for any $c\in\prod_iA[p^i]$ put
 $\xi''(c)=\sum_ic_ix^i\in A\psb{x}$, and let $\xi'(c)$ denote the image
 of $\xi''(c)$ in $L_0A$.  It is clear by construction that $\zt\xi'=0$.
 Suppose that $c$ lies in the image of the map
 $D\:\prod_iA[p^i]\to\prod_iA[p^i]$, so there is a sequence
 $(d_i)_{i\geq 0}$ with $p^id_i=0$ and $c_i=d_i-pd_{i+1}$ for all
 $i$.  Note that $d_0=p^0d_0=0$ and put $w(x)=\sum_id_{i+1}x^i$; we
 find that $\xi''(c)=(x-p)w(x)$ and so $\xi'(c)=0$.  We thus have an
 induced map $\xi\:\cok(D)=\invlim^1_iA[p^i]\to L_0A$ with $\zt\xi=0$.
 Consider an arbitrary element $v(x)=\sum_ia_ix^i\in A\psb{x}$ with
 $\zt'(v(x))=0$.  This  means that for all $i\geq 0$ we can choose
 $b_i\in A$ with $\sum_{j<i}a_jp^j=p^ib_i$.  It follows that $b_0=0$
 and $p^ib_i+p^ia_i=p^{i+1}b_{i+1}$, so the element
 $c_i=b_i+a_i-pb_{i+1}$ has $p^ic_i=0$, so $c\in\prod_iA[p^i]$.  If we
 put $w(x)=\sum_ib_{i+1}x^i$ we find that 
 \[ v(x) = \xi''(c) - (x-p) w(x), \]
 so in $L_0A$ we have $v=\xi(c)$.  This proves that the map
 $\xi\:\invlim^1_iA[p^i]\to\ker(\zt)$ is surjective.  

 Finally, suppose we have $c\in\prod_iA[p^i]$ with $\xi'(c)=0$.  This
 means that there exists $u(x)=\sum_ib_ix^i\in A\psb{x}$ with
 $\xi''(c)=(x-p)u(x)$, so $c_0=-pb_0$ and $c_{i+1}=b_i-pb_{i+1}$ for
 all $i\geq 0$.  Form this it follows easily that $p^{i+1}b_i=0$ for
 all $i\geq 0$, so we have an element
 $b'=(0,b_0,b_1,\dotsc)\in\prod_iA[p^i]$.  We now see that $c=D(b')$,
 so $c$ represents the zero element of $\invlim^1_iA[p^i]$, thus, the
 map $\xi\:\invlim^1_iA[p^i]\to\ker(\zt)$ is injective.
\end{proof}

\begin{remark}
 It is clear from the definitions that the diagram
 \begin{center}
  \begin{tikzcd}
   & A \arrow[dl,"\eta"'] \arrow[dr,"\eta"] \\
   L_0A \arrow[rr,"\zt"'] & & A_p
  \end{tikzcd}
 \end{center}
 commutes.
\end{remark}

\begin{definition}\label{defn-bounded-torsion}
 We say that an abelian group $A$ has \emph{bounded $p$-torsion} if
 there exists $k\geq 0$ such that $p^k.\tors_p(A)=0$.
\end{definition}

\begin{corollary}\label{cor-Loi-free}
 If $A$ has bounded $p$-torsion (in particular, if $A$ is a free
 abelian group) then $L_1A=0$ and $L_0A=A_p$.
\end{corollary}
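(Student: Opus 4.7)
The plan is to deduce everything from Proposition~\ref{prop-Li}, which tells us that $L_1A=\invlim_k A[p^k]$ and that there is a short exact sequence
\[ \invlim_k^1 A[p^k] \mra L_0A \era A_p, \]
where the tower has connecting maps given by multiplication by $p$. Thus it suffices to show that both $\invlim_k A[p^k]$ and $\invlim_k^1 A[p^k]$ vanish when $A$ has bounded $p$-torsion.

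First I would unpack the hypothesis: if $p^k\cdot\tors_p(A)=0$, then every $p$-power torsion element is killed by $p^k$, so $\tors_p(A)\sse A[p^k]$, and since the reverse inclusion is automatic we get $A[p^k]=\tors_p(A)$. Consequently $A[p^j]=A[p^k]$ for all $j\geq k$. This means the tail of the tower, starting at index $k$, has the constant group $A[p^k]$ in every slot with multiplication by $p$ as every structure map.

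The next step is to observe that this tail is nilpotent in the sense of Definition~\ref{defn-nilpotent-tower}: for each $i\geq k$, the composite of $k$ consecutive structure maps $A[p^{i+k}]\to A[p^i]$ is multiplication by $p^k$, which is zero on $A[p^k]=A[p^{i+k}]$. Proposition~\ref{prop-nilpotent-tower} then gives $\invlim_{j\geq k} A[p^j]=\invlim^1_{j\geq k} A[p^j]=0$. Finally, Proposition~\ref{prop-limit-cofinal} (applied to the cofinal function $u(j)=j+k$) identifies these with $\invlim_j A[p^j]$ and $\invlim^1_j A[p^j]$, so both are zero. Feeding this back into Proposition~\ref{prop-Li}, we get $L_1A=0$ and $\zt\:L_0A\to A_p$ an isomorphism, which is exactly the claim.

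There is no real obstacle here — the only thing one must be careful about is that the structure map of the tower is multiplication by $p$ (not the inclusion $A[p^{k+1}]\hookrightarrow A$), which is precisely what makes the eventually-constant tail nilpotent rather than merely Mittag-Leffler; a Mittag-Leffler argument would kill only $\invlim^1$, not $\invlim$ itself. For the parenthetical claim about free abelian groups, one just notes that any free abelian group (indeed any torsion-free group) has $\tors_p(A)=0$, which is $p$-torsion bounded by $k=0$, so the general statement applies.
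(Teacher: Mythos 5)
Your proof is correct and matches the paper's argument: both invoke Proposition~\ref{prop-Li} and then observe that the tower $\{A[p^i]\}$ with transition maps multiplication by $p$ is nilpotent, so Proposition~\ref{prop-nilpotent-tower} kills both $\invlim$ and $\invlim^1$. The detour through Proposition~\ref{prop-limit-cofinal} is harmless but unnecessary, since the full tower (not just the tail) is already nilpotent: for any $i$ the map $A[p^{i+k}]\to A[p^i]$ is multiplication by $p^k$, which annihilates all of $\tors_p(A)\supseteq A[p^{i+k}]$.
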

\begin{proof}
 The tower $\{A[p^i]\}$ is nilpotent, so
 $\invlim_iA[p^i]=\invlim_i^1A[p^i]=0$ by
 Proposition~\ref{prop-nilpotent-tower}. 
\end{proof}

\begin{corollary}\label{cor-Loi-derived}
 Suppose we have a short exact sequence $P\xra{f}Q\to A$ where $P$ and
 $Q$ are free abelian groups.  Then $L_0A$ and $L_1A$ are the cokernel
 and kernel of the induced map $P_p\to Q_p$.  
\end{corollary}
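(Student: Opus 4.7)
The plan is to apply the six-term exact sequence from Remark~\ref{rem-Loi-six} to the short exact sequence $P\xra{f}Q\to A$, and then exploit the fact that $P$ and $Q$ are free in order to kill the outer terms.

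First I would write down the six-term sequence
\[ L_1P \mra L_1Q \to L_1A \to L_0P \to L_0Q \era L_0A, \]
which is available because $P\to Q\to A$ is short exact. Next I would invoke Corollary~\ref{cor-Loi-free} to conclude that $L_1P=L_1Q=0$ and that the natural maps $L_0P\to P_p$ and $L_0Q\to Q_p$ are isomorphisms (free abelian groups have trivial $p$-torsion, so certainly bounded $p$-torsion).

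Substituting these identifications, the six-term sequence reduces to
\[ 0 \to L_1A \to P_p \xra{f_p} Q_p \to L_0A \to 0, \]
and exactness immediately gives $L_1A=\ker(f_p)$ and $L_0A=\cok(f_p)$, as required.

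There is no real obstacle here: the work has already been done in Remark~\ref{rem-Loi-six} (the construction of the six-term sequence via the snake lemma on the $(x-p)$-multiplication diagram) and in Corollary~\ref{cor-Loi-free} (the vanishing of $L_1$ and the agreement $L_0\simeq(-)_p$ on bounded-$p$-torsion groups). The only point to mention explicitly is that the map $P_p\to Q_p$ appearing in the reduced sequence is indeed the $p$-completion of $f$, which follows from naturality of $\zt\:L_0(-)\to(-)_p$ in Proposition~\ref{prop-Li}.
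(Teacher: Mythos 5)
Your proof is correct and takes exactly the same route as the paper's: apply the six-term sequence of Remark~\ref{rem-Loi-six} to $P\to Q\to A$, then use Corollary~\ref{cor-Loi-free} to kill the $L_1$ terms and identify $L_0P$, $L_0Q$ with $P_p$, $Q_p$. The paper simply compresses this into one sentence; you have spelled out the same argument in full detail.
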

\begin{proof}
 This is immediate from Corollary~\ref{cor-Loi-free} and
 Remark~\ref{rem-Loi-six}. 
\end{proof}

\begin{proposition}\label{prop-functors-complete}
 For any abelian group $A$, the groups $L_0A$, $L_1A$, $A_p$ and
 $\invlim_k^1A[p^k]$ are all Ext-$p$-complete.
\end{proposition}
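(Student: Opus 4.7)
The plan is to deduce all four statements from the closure properties collected in Proposition~\ref{prop-ext-complete}, combined with the identification of $L_0A$ and $L_1A$ provided by Proposition~\ref{prop-Li}. The overall strategy is: build up Ext-$p$-completeness from the trivial case of groups killed by a power of $p$, propagate it through products, and then recognise each of the four groups of interest as a kernel, cokernel, or middle term of a short exact sequence whose other terms are already known to be Ext-$p$-complete.

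First I would note that every $A/p^kA$ and every $A[p^k]$ is annihilated by $p^k$, hence Ext-$p$-complete by Proposition~\ref{prop-ext-complete}(e). By part~(d) of the same proposition, the products $\prod_k A/p^kA$ and $\prod_k A[p^k]$ are also Ext-$p$-complete. To handle $A_p$, I would use that the tower $\{A/p^kA\}$ consists of surjections, so Proposition~\ref{prop-surjective-tower} gives $\invlim^1_k A/p^kA = 0$, and Definition~\ref{defn-invlim} then yields a short exact sequence
\[ A_p \mra \prod_k A/p^kA \era \prod_k A/p^kA. \]
Applying Proposition~\ref{prop-ext-complete}(a) shows $A_p$ is Ext-$p$-complete; this is essentially the same argument that was used to prove part~(f) of that proposition.

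For $L_1A$ and $\invlim^1_k A[p^k]$ I would use Definition~\ref{defn-invlim} more directly: these groups are the kernel and cokernel, respectively, of the homomorphism $D\:\prod_k A[p^k]\to\prod_k A[p^k]$ (after identifying $L_1A$ with $\invlim_k A[p^k]$ via Proposition~\ref{prop-Li}). Both the domain and codomain of $D$ are Ext-$p$-complete by the previous paragraph, so Proposition~\ref{prop-ext-complete}(c) shows that both $L_1A$ and $\invlim^1_k A[p^k]$ are Ext-$p$-complete. Finally, $L_0A$ sits in the short exact sequence
\[ \invlim^1_k A[p^k] \mra L_0 A \era A_p \]
of Proposition~\ref{prop-Li}; both outer terms are Ext-$p$-complete, so Proposition~\ref{prop-ext-complete}(a) finishes the proof.

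There is no genuine obstacle here; the argument is a straightforward assembly of already-established machinery. The only point requiring any attention is to recognise that all four groups in the statement are accessible through the closure properties of Proposition~\ref{prop-ext-complete}, with the short exact sequence of Proposition~\ref{prop-Li} providing the bridge between $L_0A$ and the two groups $A_p$ and $\invlim^1_k A[p^k]$.
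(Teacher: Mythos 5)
Your proposal is correct and follows essentially the same route as the paper: build the products $\prod_kA/p^k$ and $\prod_kA[p^k]$ from part (e) via (d), recognise $L_1A$, $\invlim^1_kA[p^k]$ and $A_p$ through the map $D$, and finish $L_0A$ with the short exact sequence of Proposition~\ref{prop-Li} and part (a). The only cosmetic difference is that for $A_p$ the paper invokes part (c) directly (kernel of $D$ between Ext-$p$-complete groups), whereas you rebuild the short-exact-sequence argument from the proof of part (f); both are instances of the same closure machinery.
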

\begin{proof}
 The groups $A[p^k]$ and $A/p^k$ are Ext-$p$-complete by part~(e) of
 Proposition~\ref{prop-ext-complete}.  It follows by part~(d) that
 $\prod_kA[p^k]$ and $\prod_kA/p^k$ are Ext-$p$-complete, and then by
 part~(c) that the groups $\invlim_kA[p^k]=L_1A$, $\invlim^1_kA[p^k]$
 and $\invlim_kA/p^k=A_p$ are Ext-$p$-complete.  We can thus apply
 part~(a) to the short exact sequence 
 \begin{center}
  \begin{tikzcd}
   \invlim_k^1 A[p^k] \arrow[rightarrowtail,r,"\xi"] &
   L_0A \arrow[twoheadrightarrow,r,"\zt"] &
   A_p
  \end{tikzcd}
 \end{center}
 to deduce that $L_0A$ is Ext-$p$-complete.
\end{proof}

\begin{proposition}\label{prop-Lo-vanishing}
 For any abelian group $A$, we have $L_0A=0$ iff $A_p=0$ iff $A/p=0$.
\end{proposition}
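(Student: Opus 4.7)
The plan is to prove the chain of implications $L_0A=0 \Rightarrow A_p=0 \Rightarrow A/p=0 \Rightarrow L_0A=0$, using the structural results already established.

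First, I would observe that Proposition~\ref{prop-Li} gives the short exact sequence
\[ \invlim_k^1 A[p^k] \xra{\xi} L_0A \xra{\zt} A_p, \]
so $A_p$ is a quotient of $L_0A$, giving the first implication immediately. For the second implication, Proposition~\ref{prop-completion-idempotent} (applied with $k=1$) yields $A_p/pA_p \simeq A/pA$, so if $A_p=0$ then $A/pA=0$.

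The remaining implication $A/p=0 \Rightarrow L_0A=0$ is the heart of the proof, and I would attack it directly from the definition $L_0A = A\psb{x}/(x-p)A\psb{x}$. Given any series $v(x)=\sum_{i\geq 0}a_ix^i\in A\psb{x}$, the goal is to produce $w(x)=\sum_{i\geq 0}b_ix^i$ with $(x-p)w(x)=v(x)$. Expanding, this requires
\[ -pb_0 = a_0 \quad\text{and}\quad b_{i-1}-pb_i = a_i \text{ for all } i\geq 1. \]
The hypothesis $A/p=0$ means that multiplication by $p$ is surjective on $A$, so we may choose $b_0\in A$ with $pb_0=-a_0$, and then recursively choose $b_i\in A$ with $pb_i=b_{i-1}-a_i$ for each $i\geq 1$. (This is a countable recursive selection, so only the usual dependent choice is required.) The resulting $w(x)$ satisfies $(x-p)w(x)=v(x)$, proving that the class of $v(x)$ in $L_0A$ is zero. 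Since $v(x)$ was arbitrary, $L_0A=0$.

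I do not expect serious obstacles: each of the three implications is short and uses results already in hand. The only place to be slightly careful is in the recursive selection of the $b_i$ in the final step, but this is routine since at each stage we merely need to solve a single equation of the form $py=c$ in a group where multiplication by $p$ is surjective.
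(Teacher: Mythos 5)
Your proof is correct, and the first two implications are exactly as in the paper: $A_p$ is a quotient of $L_0A$ by the short exact sequence of Proposition~\ref{prop-Li}, and $A/p$ is a quotient of $A_p$ by Proposition~\ref{prop-completion-idempotent}. Where you diverge is in the third implication. The paper argues that $A/p=0$ forces $p.1_A$ to be surjective, hence $A/p^k=0$ for all $k$ (so $A_p=0$), and hence the tower $\{A[p^k]\}$ consists of surjections so that $\invlim^1_kA[p^k]=0$ by Proposition~\ref{prop-mittag-leffler}; it then applies the short exact sequence of Proposition~\ref{prop-Li} once more to conclude $L_0A=0$. You instead work straight from the definition $L_0A=A\psb{x}/(x-p)A\psb{x}$ and show that $x-p$ is surjective on $A\psb{x}$ by a direct recursive construction using surjectivity of $p.1_A$. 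Your route is more elementary and self-contained (it bypasses Proposition~\ref{prop-Li} and the Mittag-Leffler machinery entirely for this implication), while the paper's route reuses the $\invlim$/$\invlim^1$ apparatus it has already built and thereby keeps $A_p$ and $\invlim^1_kA[p^k]$ visible in the argument. Both are sound, and your remark about dependent choice is accurate but no more of a concern here than it is in Proposition~\ref{prop-surjective-tower}, which the paper's route also relies on.
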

\begin{proof}
 Proposition~\ref{prop-Li} shows that $A_p$ is a quotient of $L_0A$,
 and Proposition~\ref{prop-completion-idempotent} shows that $A/p$ is
 a quotient of $A_p$.  Conversely, if $A/p=0$ then $p.1_A$ is
 surjective, so $A/p^k=0$ for all $k$ and the maps in the tower
 $\{A[p^k]\}$ are all surjective.  It follows that
 $A_p=\invlim_kA/p^k=0$ and (using
 Proposition~\ref{prop-mittag-leffler}) that $\invlim^1_kA[p^k]=0$, so
 the short exact sequence in Proposition~\ref{prop-Li} shows that
 $L_0A=0$. 
\end{proof}

We next explain a more traditional construction of the functors $L_0$
and $L_1$.  This involves a group known as $\Zpi$.

\begin{definition}\label{defn-Zpi}
 Define $f_k\:\Z/p^k\to\Z/p^{k+1}$ by $f_k(a+p^k\Z)=pa+p^{k+1}\Z$, so
 we have a sequence
 \begin{center}
  \begin{tikzcd}
   0=
   \Z/p^0 \arrow[rightarrowtail,r,"f_0"] & 
   \Z/p   \arrow[rightarrowtail,r,"f_1"] &
   \Z/p^2 \arrow[rightarrowtail,r,"f_2"] &
   \Z/p^3 \arrow[rightarrowtail,r,"f_3"] &
   \Z/p^4 \arrow[rightarrowtail,r,"f_4"] &
   \dotsb 
  \end{tikzcd}
 \end{center}
 We define $\Zpi$ to be the colimit of this sequence.
\end{definition}

\begin{proposition}
 There are canonical isomorphisms
 \[ \Zpi = \Z[1/p]/\Z = \tors_p(\QZ) =  (\QZ)_{(p)} = \Q/\Zpl. \]
\end{proposition}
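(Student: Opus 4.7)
The plan is to prove each of the four isomorphisms as a separate identification, then assemble them. I would introduce for each $k\geq 1$ the map
\[ g_k \: \Z/p^k \to \Q/\Z, \qquad g_k(a+p^k\Z) = a/p^k+\Z, \]
which serves as the common thread: it lands inside $\tors_p(\QZ)$, and its image is precisely the cyclic subgroup generated by $1/p^k+\Z$.

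First I would handle $\Zpi \cong \tors_p(\QZ)$. Since $g_{k+1}(f_k(a+p^k\Z))=g_{k+1}(pa+p^{k+1}\Z)=pa/p^{k+1}+\Z=a/p^k+\Z=g_k(a+p^k\Z)$, the maps $g_k$ form a cone on the defining sequence of $\Zpi$, and thus induce a homomorphism $g_\infty \: \Zpi \to \tors_p(\QZ)$. Each $g_k$ is injective (as $a/p^k\in\Z$ forces $p^k\mid a$), and any $p$-torsion element of $\QZ$ has the form $a/p^k+\Z$, hence lies in the image of some $g_k$. Corollary~\ref{cor-cone-universal} then gives that $g_\infty$ is an isomorphism.

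Next I would identify $\Z[1/p]/\Z \cong \tors_p(\QZ)$: the inclusion $\Z[1/p]\hookrightarrow\Q$ induces $\Z[1/p]/\Z\to\Q/\Z$, which is injective because $\Z[1/p]\cap\Z=\Z$ in $\Q$, and whose image is exactly the set of cosets $a/p^k+\Z$, i.e.\ $\tors_p(\QZ)$. Then $(\QZ)_{(p)}\cong\tors_p(\QZ)$ is an immediate application of Proposition~\ref{prop-local-torsion}, since $\QZ$ is a torsion group.

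Finally I would obtain $(\QZ)_{(p)} \cong \Q/\Zpl$ by applying the localisation functor $(-)_{(p)}$ to the short exact sequence $\Z\to\Q\to\QZ$. By Proposition~\ref{prop-localisation-exact} this gives an exact sequence $\Zpl \to \Q_{(p)} \to (\QZ)_{(p)} \to 0$, with the first map injective since $\Z\to\Q$ is. By Remark~\ref{rem-ZS-Q}, $\Q_{(p)}=\Q$ (every positive integer is a unit times an element of $\N\sm p\N$ after inverting, and $\Q$ is already rational), yielding $(\QZ)_{(p)}\cong\Q/\Zpl$. There is no serious obstacle here; the only subtlety is keeping all four identifications compatible with each other so the displayed chain of equalities is really a chain of canonical isomorphisms — most naturally one verifies that under each pair of adjacent identifications, the generator $1/p^k+\Z$ corresponds to itself.
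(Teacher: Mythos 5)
Your proof is correct, and for the first isomorphism you take a genuinely more direct route than the paper. The paper obtains $\Zpi=\Z[1/p]/\Z$ by building a three-row diagram of sequences (constant $\Z$; the $p$-multiplication sequence whose colimit is $\Z[1/p]$ by Proposition~\ref{prop-rational-colimit}; and the defining sequence of $\Zpi$) and then invoking Proposition~\ref{prop-colimit-exact}, after which it identifies $\Z[1/p]/\Z$ with $\tors_p(\QZ)$ by a pointwise argument. You instead build the cone $g_k(a+p^k\Z)=a/p^k+\Z$ into $\tors_p(\QZ)$ directly, check it is compatible with the structure maps, and apply Corollary~\ref{cor-cone-universal} to conclude $g_\infty$ is an isomorphism onto $\tors_p(\QZ)$; you then identify $\Z[1/p]/\Z$ with $\tors_p(\QZ)$ separately via the inclusion $\Z[1/p]\hookrightarrow\Q$. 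Your version has the virtue of producing the explicit isomorphism $\Zpi\cong\tors_p(\QZ)$ in one step without going through the short exact sequence, at the cost of not displaying the relationship to the short exact sequence $\Z\to\Z[1/p]\to\Zpi$, which the paper gets for free. The remaining steps ($(\QZ)_{(p)}\cong\tors_p(\QZ)$ via Proposition~\ref{prop-local-torsion}, and $(\QZ)_{(p)}\cong\Q/\Zpl$ via Proposition~\ref{prop-localisation-exact}) match the paper. One small citation slip: to justify $\Q_{(p)}=\Q$ the relevant fact is that $\Q$ is $S_p$-local (every nonzero integer acts invertibly), so Proposition~\ref{prop-local-omni}(b) applies; Remark~\ref{rem-ZS-Q} only describes $\Z[S^{-1}]$ as a subring of $\Q$ and does not by itself give the claim. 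The conclusion is still correct.
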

\begin{proof}
 First, consider the diagram 
 \begin{center}
  \begin{tikzcd}
   \Z \arrow[equal,r] \arrow[rightarrowtail,d,"1"'] & 
   \Z \arrow[equal,r] \arrow[rightarrowtail,d,"p"'] & 
   \Z \arrow[equal,r] \arrow[rightarrowtail,d,"p^2"'] & 
   \Z \arrow[equal,r] \arrow[rightarrowtail,d,"p^3"'] & 
   \Z \arrow[equal,r] \arrow[rightarrowtail,d,"p^4"'] &
   \dotsb \\
   \Z \arrow[rightarrowtail,r,"p"] \arrow[twoheadrightarrow,d] & 
   \Z \arrow[rightarrowtail,r,"p"] \arrow[twoheadrightarrow,d] & 
   \Z \arrow[rightarrowtail,r,"p"] \arrow[twoheadrightarrow,d] & 
   \Z \arrow[rightarrowtail,r,"p"] \arrow[twoheadrightarrow,d] & 
   \Z \arrow[rightarrowtail,r,"p"] \arrow[twoheadrightarrow,d] & 
   \dotsb \\
   \Z/p^0 \arrow[rightarrowtail,r,"f_0"'] &
   \Z/p^1 \arrow[rightarrowtail,r,"f_1"'] &
   \Z/p^2 \arrow[rightarrowtail,r,"f_2"'] &
   \Z/p^3 \arrow[rightarrowtail,r,"f_3"'] &
   \Z/p^4 \arrow[rightarrowtail,r,"f_4"'] &
   \dotsb
  \end{tikzcd}
 \end{center}
 Using Propositions~\ref{prop-colimit-exact}
 and~\ref{prop-rational-colimit} we obtain a short exact sequence
 $\Z\to\Z[1/p]\to\Z/p^{\infty}$, so $\Zpi=\Z[1/p]/\Z$.  Next,
 for $a\in\Q$ we note that $a+\Z$ is a $p$-torsion element in $\QZ$
 iff $p^ka\in\Z$ for some $k$, iff $a\in\Z[1/p]$.  It follows that
 $\tors_p(\QZ)=\Z[1/p]/\Z$.  We also know from
 Proposition~\ref{prop-local-torsion} that
 $\tors_p(\QZ)=(\QZ)_{(p)}$.  It is clear that $\Q$ is $p$-local, so
 $\Q_{(p)}=\Q$.  We can thus apply
 Proposition~\ref{prop-localisation-exact} to the sequence
 $\Z\mra\Q\era\QZ$ to see that $(\QZ)_{(p)}=\Q/\Zpl$.
\end{proof}

\begin{proposition}
 There are natural isomorphisms $L_0A=\Ext(\Zpi,A)$ and
 $L_1A=\Hom(\Zpi,A)$.  
\end{proposition}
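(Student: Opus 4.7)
The plan is to construct a length-one free resolution
\[ 0 \to P \xra{\alpha} Q \to \Zpi \to 0 \]
so that, after applying $\Hom(-,A)$, the map $\alpha^*$ becomes (up to sign) multiplication by $x-p$ on $A\psb{x}$. Granted this, Proposition~\ref{prop-ext-six-b} produces a six-term exact sequence whose outer $\Ext$-terms vanish by Corollary~\ref{cor-free-ext} (since $P$ and $Q$ are free), leaving
\[ \Hom(\Zpi,A) = \ker(\alpha^*) = L_1A, \qquad \Ext(\Zpi,A) = \cok(\alpha^*) = L_0A, \]
where the second equalities in each chain are Definition~\ref{defn-Loi}.

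For the resolution I take $P=\bigoplus_{k\geq 1}\Z r_k$ and $Q=\bigoplus_{k\geq 1}\Z f_k$, with $\alpha(r_1)=pf_1$ and $\alpha(r_{k+1})=pf_{k+1}-f_k$ for $k\geq 1$, and with augmentation $Q\to\Zpi$ sending $f_k\mapsto 1/p^k+\Z$ (using the identification $\Zpi=\Z[1/p]/\Z$ from the preceding proposition). Checking short exactness is direct: surjectivity of the augmentation is immediate; $\alpha$ is injective because any element of $\ker(\alpha)$ would force the recursion $n_{k+1}=pn_k$ to hold for all $k$, which is incompatible with finite support unless every $n_k$ vanishes; and exactness at $Q$ follows by induction on the largest index $N$ appearing in a cycle $\sum n_kf_k$ in the kernel of the augmentation, using the divisibility $p\mid n_N$ (obtained by reducing the equation $\sum n_k/p^k\in\Z$ modulo $p$) and the congruence $f_{N-1}\equiv pf_N \pmod{\alpha(P)}$ to strip off the leading term.

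Finally I dualise. Identifying $\Hom(\bigoplus_{k\geq 1}\Z,A)=\prod_{k\geq 1}A$ with $A\psb{x}$ via $\phi\mapsto\sum_{k\geq 0}\phi(e_{k+1})x^k$, a series $v=\sum_{k\geq 0}a_{k+1}x^k$ in $\Hom(Q,A)$ is sent by $\alpha^*$ to the series with coefficients $pa_1,pa_2-a_1,pa_3-a_2,\dotsc$, which is precisely $pv-xv=(p-x)v$. Since multiplication by $-1$ is an automorphism of $A\psb{x}$, the kernel and cokernel of $\alpha^*$ coincide with those of multiplication by $(x-p)$, namely $L_1A$ and $L_0A$ by Definition~\ref{defn-Loi}. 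The main obstacle is simply choosing the signs and indices in $\alpha$ so that $\alpha^*$ becomes the operator $\pm(x-p)\cdot$ exactly; once the resolution is in place, the rest is formal bookkeeping with the six-term sequence.
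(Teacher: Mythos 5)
Your proposal is correct and follows essentially the same route as the paper: you construct a two-term free resolution of $\Zpi=\Z[1/p]/\Z$ that, up to shifting the index by one and an overall sign, is precisely the paper's sequence $F\xra{\phi}F\xra{\psi}\Zpi$ with $\phi(e_i)=e_{i-1}-pe_i$; you then identify $\Hom$ of the countably infinite free group with $A\psb{x}$ and compute that the dual boundary is multiplication by $\pm(x-p)$, after which the six-term sequence and the vanishing of $\Ext$ on free groups finish the job. The only cosmetic difference is that the paper checks exactness of the resolution at the middle term by writing an explicit preimage formula, while you use induction on the top degree; both are fine.
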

\begin{proof}
 In this proof we will identify $\Zpi$ with $\Z[\pinv]/\Z$.  Put
 $F=\bigoplus_{i=0}^\infty\Z$, and let $e_i$ be the $i$'th basis 
 vector in $F$.  Define maps 
 \[ F\xra{\phi}F\xra{\psi} \Zpi \]
 by 
 \[ \phi(e_i)=e_{i-1}-pe_i \hspace{6em} \psi(e_i)=p^{-i-1}+\Z \]
 (where $e_{-1}$ is interpreted as $0$), or equivalently
 \begin{align*}
  \phi(n_0,n_1,n_2,\dotsc) &= 
     (pn_0-n_1,pn_1-n_2,pn_2-n_3,\dotsc) \\
  \phi(m_0,m_1,m_2,\dotsc) &= \sum_im_ip^{-i-1}+\Z.
 \end{align*}
 By considering the first nonzero entry in $n=(n_0,n_1,\dotsc)$, we
 see that $\phi$ is injective.  Any element of $\Zpi$ can be written
 as $k/p^{i+1}+\Z$ for some $i\geq 0$ and $k\in\Z$, and this is the
 same as $\psi(ke_i)$, so $\psi$ is surjective.  It is clear from the
 definitions that $\psi\phi=0$, so $\img(\phi)\leq\ker(\psi)$.
 Conversely, suppose we have $n\in F$ with $\psi(n)=0$, so the number
 $q=\sum_in_ip^{-1-i}$ actually lies in $\Z$.  Put
 $m_i=\sum_{j<i}n_jp^{i-j-1}-p^iq$, and note that $m_i=0$ for
 $i\gg 0$, so $m\in F$.  We find that $\phi(m)=n$, so the sequence
 $F\xra{\phi}F\xra{\psi}\Zpi$ is short exact.  As $F$ is free, this
 gives us an exact sequence
 \begin{center}
  \begin{tikzcd}
   \Hom(\Zpi,A) \arrow[rightarrowtail,r,"\psi^*"] &
   \Hom(F,A) \arrow[r,"\phi^*"] &
   \Hom(F,A) \arrow[twoheadrightarrow,r] &
   \Ext(\Zpi,A).
  \end{tikzcd}
 \end{center}  
 Next, for any series $v(x)=\sum_ia_ix^i\in A\psb{x}$ we have a
 homomorphism $\al(v(x))\:F\to A$ given by $\al(v(x))(e_i)=a_i$ for
 all $i\geq 0$.  This construction gives an isomorphism
 $A\psb{x}\to\Hom(F,A)$.  If we make the conventions $a_{-1}=0$ and
 $e_{-1}=0$ we also have 
 \[ \al((x-p)v(x))(e_j) = 
      \al\left(\sum_i(a_{i-1}-pa_i)x^i\right)(e_j) =
      a_{j-1}-pa_j = \al(v(x))(e_{j-1}-pe_j) = 
      \al(v(x))(\phi(e_j))
 \]
 Thus, multiplication by $x-p$ on $A\psb{x}$ corresponds to $\phi^*$
 on $\Hom(F,A)$, and the claim follows from this.
\end{proof}

\begin{bibdiv}
\begin{biblist}

\bib{boka:hlc}{book}{
  author={Bousfield, A.~K.},
  author={Kan, Daniel~M.},
  title={Homotopy limits, completions and localizations},
  series={Lecture notes in Mathematics},
  publisher={Springer--Verlag},
  date={1972},
  volume={304},
}

\end{biblist}
\end{bibdiv}

\end{document}